\theoremstyle{definition}
\newtheorem{thm}{Theorem}[section]
\newtheorem{dfn}[thm]{Definition}
\newtheorem{lem}[thm]{Lemma}
\newtheorem{prp}[thm]{Proposition}
\newtheorem{cor}[thm]{Corollary}
\newtheorem{rmk}[thm]{Remark}
\newtheorem{rmks}[thm]{Remarks}
\newtheorem{thmA}{Theorem}
\newtheorem{prpA}[thmA]{Proposition}
\newtheorem*{thm*}{Theorem}
\newtheorem*{cor*}{Corollary}
\newtheorem*{prp*}{Proposition}
\newtheorem*{rmk*}{Remark}
\newtheorem*{ntt}{Notation}
\newtheorem*{prb}{Problem}
\newcommand{\N}{\mathbb{N}}
\newcommand{\R}{\mathbb{R}}
\newcommand{\inn}{\in\mathbb{N}}
\newcommand{\e}{\varepsilon}
\newcommand{\al}{\alpha}
\newcommand{\de}{\delta}
\newcommand{\la}{\lambda}
\newcommand{\jc}[1]{_{\tilde{J}(#1)}}
\newcommand{\qpm}{_{q,2}^m}
\newcommand{\jqpm}{_{J_{q,2}^{m}}}
\newcommand{\qpmk}{_{q,2}^{m_k}}
\newcommand{\qpmkprime}{_{q,2}^{m_{k^\prime}}}
\newcommand{\jqpmk}{_{J_{q,2}^{m_{k}}}}
\newcommand{\jqpmkprime}{_{J_{q,2}^{m_{k^\prime}}}}
\DeclareMathOperator{\supp}{supp}
\DeclareMathOperator{\ran}{ran}
\DeclareMathOperator{\dist}{dist}
\long\def\symbolfootnote[#1]#2{\begingroup%
\def\thefootnote{\fnsymbol{footnote}}\footnote[#1]{#2}\endgroup}
\begin{document}

\title[A study of conditional spreading sequences]{A study of conditional spreading sequences}

\author[S. A. Argyros]{Spiros A. Argyros}
\address{National Technical University of Athens, Faculty of Applied Sciences, Department of Mathematics, Zografou Campus, 15780, Athens, Greece}
\email{sargyros@math.ntua.gr}
\author[P. Motakis]{Pavlos Motakis}
\address{Department of Mathematics, Texas A\&M University, College Station, TX 77843-3368, USA}
\email{pavlos@math.tamu.edu}
\author[B. Sar\i]{B\"unyamin Sar\i}
\address{Department of Mathematics, University of North Texas, Denton, TX 76203-5017, USA}
\email{bunyamin@unt.edu}

\thanks{{\em 2010 Mathematics Subject Classification:} Primary 46B03, 46B06, 46B25, 46B45.}
\thanks{Research of the second author was supported by NSF DMS-1600600.}
\thanks{Research of the third author was supported by grant 208290 from the Simons Foundation.}




\begin{abstract}
It is shown that every conditional spreading sequence can be decomposed into two well behaved parts, one being unconditional and the other being convex block homogeneous, i.e. equivalent to its convex block sequences. This decomposition is then used to prove several results concerning the structure of spaces with conditional spreading bases as well as results in the theory of conditional spreading models. Among other things, it is shown that the space $C(\omega^\omega)$ is universal for all spreading models, i.e., it admits all spreading sequences, both conditional and unconditional, as spreading models. Moreover, every conditional spreading sequence is generated as a spreading model by a sequence in a space that is quasi-reflexive of order one.
\end{abstract}

\maketitle

\setcounter{tocdepth}{1}
\tableofcontents

\section{Introduction}
The notion of a spreading model has been in the heart of the theory of Banach spaces since its conception in 1974 by L. Brunel and A. Sucheston \cite{BS-sm}. A bounded sequence $(x_i)_i$ in a Banach space is said to generate a sequence $(e_i)_i$ in a semi-normed space as a spreading models if for any $n\in\N$ and $\e>0$ there is $n_0\in\N$ with the property that for any $n_0\leqslant k_1<\cdots<k_n$ and scalars $a_1,\ldots,a_n$
\begin{equation}
\left|\left\|\sum_{i=1}^na_ix_{k_i}\right\| - \left\|\sum_{i=1}^na_ie_i\right\|\right| < \e. 
\end{equation}
A Banach space, or a subset of that space, is said to admit $(e_i)_i$ as a spreading model if there exists a sequence $(x_i)_i$ in that set generating $(e_i)_i$ as a spreading model. 
As it was proved in \cite{BS-sm} every bounded sequence in a Banach space has a subsequence generating some spreading model. The first and foremost property of a spreading model is that it is a 1-spreading sequence, i.e. it is isometrically equivalent to its subsequences. A spreading sequence need not be Schauder basic, however this paper is focused on those spreading sequences that are conditional Schauder basic. Typical examples of such sequences are the summing basis of $c_0$ and the boundedly compete basis of James space $J$ from \cite{J}. For a thorough study of basic properties of spreading models and spreading sequences we refer the reader to \cite{BL}.

There have been several applications of the concept of spreading models, a concept that describes the asymptotic behavior of a sequence in a Banach space. It has been utilized as a tool to prove several important results as it can be used to witness canonical infinite dimensional structure exhibited on finite, yet increasingly large, segments of an infinite sequence.
A typical example concerns unconditional structure. Although there exist Banach spaces not containing infinite unconditional sequences (see e.g. \cite{GM}), it is well known and not difficult to show that every infinite dimensional Banach space contains a sequence generating an unconditional spreading model.
Perhaps surprisingly, there is also a strong relationship between the spreading models admitted by a Banach space and the behavior of strictly singular operators on that space. In \cite{AOST} it is shown that if a Banach space $X$ contains sequences that generate spreading models with certain properties, then there exists a subspace $W$ of $X$ that fails the scalar-plus-compact property. In \cite{ADST} the authors expand on this connection by studying the interaction between classes of $\mathcal{S}_\xi$-spreading models and compositions of strictly singular operators. Broadly speaking, understanding the $\mathcal{S}_\xi$-spreading models of a Banach space may provide information about whether compositions of certain strictly singular operators are compact. The strong connection between the theory of spreading models and operator theory is made very clear in \cite{AMisp} where the first two authors constructed the first known example of a reflexive Banach space with the invariant subspace property. The proof of this fact is based on manipulating properties of spreading models in the space that allow to draw conclusions concerning as to when compositions of appropriate operators are compact.

A spreading sequence that is also unconditional is called subsymmetric. As mentioned earlier, the main focus of this paper is to study conditional spreading sequences. Of particular interest is the boundedly complete basis $(e_i)_i$ of James space $J$, the first known Banach space that is quasi-reflexive of order one. In this space the norm of a vector $x = \sum_{i=1}^\infty a_ie_i$ is given by the formula
\begin{equation}
\label{jamesnorm}
\left\|x\right\| = \sup\left\{\left(\sum_{k=1}^n\left(\sum_{i\in E_k}a_i\right)^2\right)^{1/2}\right\},
\end{equation}
where the supremum is taken over all possible choices of successive intervals $(E_k)_{k=1}^n$ of $\N$. Similarly, one may replace the $\ell_2$-sum in \eqref{jamesnorm} with the norm over a subsymmetric sequence $(x_i)_i$ to obtain a conditional spreading sequence, called the jamesification of $(x_i)_i$. This was first considered in \cite{BHO}. A feature shared by all aforementioned spreading sequences is that they are equivalent to their convex block sequences, a property we shall refer to as convex block homogeneity. Sequences with this property are either conditional spreading or they are equivalent to the unit vector basis of $\ell_1$. Interestingly, it also turns out to be related to an isometric definition introduced by Brunel and Sucheston in \cite{BS}, namely that of equal signs additive sequences (see Definition \ref{def esa}, Section \ref{convex block homogeneous bases}). The aforementioned paper seems to be the first attempt aimed towards studying conditional spreading sequences, or at least a subclass of them.
\begin{thmA}
Let $X$ be a Banach space with a Schauder basis $(e_i)_i$. Then, the sequence $(e_i)_i$ is convex block homogeneous if and only if $X$ admits an equivalent norm with respect to which $(x_i)_i$ is equal signs additive. 
\end{thmA}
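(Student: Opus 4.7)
The plan is to treat the two implications separately. Throughout I use that $(e_i)_i$ is $1$-spreading and, under the convex block homogeneity hypothesis, that there is a constant $C\geq 1$ with $(1/C)\|\sum_kb_ke_k\|\leq\|\sum_kb_ky_k\|\leq C\|\sum_kb_ke_k\|$ for every convex block sequence $(y_k)_k$ of $(e_i)_i$ and all scalars.

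The forward implication is short. Given a convex block sequence $y_k=\sum_{i\in F_k}c_{k,i}e_i$, every coefficient inside the block $F_k$ of $\sum_kb_ky_k=\sum_k\sum_{i\in F_k}b_kc_{k,i}e_i$ inherits the sign of $b_k$, so the equal signs additive hypothesis collapses each block to the single coordinate $b_k$. The resulting vector $\sum_kb_ke_{m_k}$ with $m_1<m_2<\cdots$ has norm $\|\sum_kb_ke_k\|$ by $1$-spreading, so $(y_k)_k$ is isometrically equivalent to $(e_k)_k$.

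The substantive direction is the converse. My plan is to build the required norm as a supremum over \emph{expansions}. Call $z=\sum_i\sum_j\gamma_{i,j}e_{k(i,j)}$ an expansion of $x=\sum_ia_ie_i$ if, for each $i\in\supp(x)$, the $\gamma_{i,j}$ share the sign of $a_i$ and sum to $a_i$, the indices $k(i,j)$ strictly increase in $j$, and $k(i,m_i)\leq k(i+1,1)$ with equality permitted only when $a_i$ and $a_{i+1}$ share a sign (in which case the coefficient of $e_{k(i,m_i)}$ in $z$ is the sum $\gamma_{i,m_i}+\gamma_{i+1,1}$ of the two overlapping contributions). Define
\begin{equation*}
\n{x}=\sup\bigl\{\|z\|\,:\,z\text{ is an expansion of }x\bigr\}.
\end{equation*}
Allowing the shared boundary is what identifies the underlying vectors arising as expansions of $x$ with those arising as expansions of the merged vector $x'$ obtained by replacing $a_{i_0}e_{i_0}+a_{i_0+1}e_{i_0+1}$ with $(a_{i_0}+a_{i_0+1})e_{i_0}$: one passes between them by splitting a single $\gamma$-entry at the correct partial-sum seam and declaring the seam a shared boundary (or absorbing such a boundary), with the underlying vector $z$ unchanged. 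Hence $\n{x}=\n{x'}$, giving equal signs additivity. The trivial expansion yields $\|x\|\leq\n{x}$; for the upper bound, a no-overlap expansion is of the form $z=\sum_ia_iy_i$ with $y_i=\sum_j(\gamma_{i,j}/a_i)e_{k(i,j)}$ a convex block sequence, so $\|z\|\leq C\|x\|$, and the boundary-overlap case reduces to this via a $1$-spreading shift at the cost of an extra factor of $C$.

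The main obstacle I anticipate is subadditivity, $\n{x+y}\leq\n{x}+\n{y}$. I propose to prove it by decomposing any expansion $w$ of $x+y$ as $w=w_1+w_2$ with $w_1$ an expansion of $x$ and $w_2$ of $y$. Working block by block on $w$, one uses proportional splitting when $a_ib_i\geq 0$, adds non-negative cancelling increments $c_k$ summing to $\min\{|a_i|,|b_i|\}$ when the signs disagree, and inserts auxiliary cancelling pairs at previously unused positions when $a_i+b_i=0$. The case analysis verifying that the pieces $w_1$ and $w_2$ satisfy the sign, summation, and ordering constraints of the expansion definition is where the substance of the argument lies; once it is in place, $\|w\|\leq\|w_1\|+\|w_2\|\leq\n{x}+\n{y}$ follows, and positive homogeneity is immediate.
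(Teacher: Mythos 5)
Your forward implication is fine (it is the paper's own argument for showing equal signs additive bases are isometrically equivalent to their convex block sequences), the ``seam'' verification that $\n{x}=\n{x'}$ under merging two adjacent same-sign coefficients is correct, and the equivalence bounds can be repaired: an expansion with shared boundaries is a non-overlapping expansion of the vector obtained by merging each chain of coefficients joined by shared boundaries, and since the merged coefficient sequence is recovered from $x$ by a convex block sequence (re-splitting each merged coefficient with weights $a_i/b_k$), the two-sided convex block homogeneity gives $\|z\|\leqslant C^2\|x\|$, which is what your ``extra factor of $C$'' should be. The genuine gap is exactly where you place the substance: subadditivity. The proposed decomposition of an arbitrary expansion $w$ of $x+y$ as $w=w_1+w_2$ with $w_1$ an expansion of $x$ and $w_2$ an expansion of $y$ does not exist in general. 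Take $x=e_1-e_2$, $y=-e_1+e_2+e_3$, so $x+y=e_3$, and let $w=\frac12 e_1+\frac12 e_2$, an admissible expansion of $x+y$. Any expansion $w_2$ of $y$ has a first block of nonpositive entries summing to $-1$ placed strictly before all of its positive entries (no shared boundary is allowed there, since $(-1)\cdot 1<0$), so at the last position $p$ of that block one has $s_{[1,p]}(w_2)=-1$. On the other hand every expansion $w_1$ of $x$ has partial sums $s_{[1,p]}(w_1)\in[0,1]$, while $s_{[1,1]}(w)=\frac12$ and $s_{[1,p]}(w)=1$ for $p\geqslant 2$; hence $s_{[1,p]}(w-w_1)\geqslant \frac12-1=-\frac12>-1$ for every $p\geqslant 1$, so $w-w_1$ is never an expansion of $y$. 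Your cancelling increments and auxiliary pairs cannot repair this: any placement of them is subject to the same partial-sum obstruction. So the triangle inequality for $\n{\cdot}$ remains unproved (and it is not even clear that it holds), and with it the whole converse direction.

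The paper avoids this difficulty by going in the opposite direction: instead of a supremum over expansions of $x$, it renorms by the conditional jamesification, a supremum over collapses $x\mapsto \sum_k s_{I_k}(x)e_k$ along consecutive interval systems. For a fixed interval system the collapse is linear in $x$, so subadditivity of the new norm is automatic; equivalence with the original norm is exactly the implication that a convex block homogeneous basis is equivalent to its conditional jamesification (Proposition \ref{equivalentdefinitions}, proved via the averaging projection); and the equal signs additive property follows from the idempotence of conditional jamesification together with the Brunel--Sucheston equivalence of subadditivity and equal signs additivity \cite{BS}. If you want to keep a direct construction, building the norm from collapses rather than expansions is the adjustment that makes the norm axioms come for free, and the work then shifts to the equivalence estimate, where the averaging projection (or your merged-coefficient trick above, which handles sign-consistent collapses) is the needed input.
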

The most elementary process that leads to conditional spreading bases that are not convex block homogeneous is to take the maximum of a subsymmetric norm and the jamesification of another subsymmetric norm. The natural question to ask is whether this is the unique way by which conditional spreading sequences are obtained. As it is explained in the last section of this paper the answer to this question is negative. Nevertheless, there exists a very nice and useful characterization in terms of decomposing a conditional spreading norm into its constituent parts. In the result below, we naturally identify every Schauder basic sequence with the unit vector basis of $c_{00}(\N)$.
\begin{thmA}
\label{maintheoremitro}
Let $\|\cdot\|$ be a norm on $c_{00}(\N)$ with respect to which the unit vector basis $(e_i)_i$ is conditional spreading. Then, there exist two norms $\|\cdot\|_{u}$ and  $\|\cdot\|_{c}$, both defined on $c_{00}(\N)$ satisfying the following.
\begin{itemize}
\item[(i)] The unit vector basis $(e_i)_i$ is subsymmetric with respect to  $\|\cdot\|_u$.
\item[(ii)] The unit vector basis $(e_i)_i$ is conditional and convex block homogeneous with respect to $\|\cdot\|_c$.
\item[(iii)] There exist positive constants $\kappa$ and $K$ so that for every $x\in c_{00}(\N)$
\begin{equation*}
\kappa\max\left\{\|x\|_{u}, \|x\|_{c}\right\} \leqslant \|x\| \leqslant K\max\left\{\|x\|_{u}, \|x\|_{c}\right\}.
\end{equation*}
\end{itemize}
\end{thmA}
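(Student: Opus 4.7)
The strategy is to build $\|\cdot\|_u$ and $\|\cdot\|_c$ by singling out the weak$^*$ limit of $(e_i)_i$ in the bidual and the subsymmetric shift it induces. A conditional spreading basis cannot be equivalent to the $\ell_1$-basis (which is unconditional), so by Rosenthal's $\ell_1$-theorem $(e_i)_i$ admits a weakly Cauchy subsequence; the spreading hypothesis upgrades this to $(e_i)_i$ being weakly Cauchy in $X$. Denote its weak$^*$ limit in $X^{**}$ by $e^{**}$; this is nonzero because a weakly null spreading sequence is unconditional, while $(e_i)_i$ is conditional.

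Set $f_i:=e_i-e^{**}\in X^{**}$. The sequence $(f_i)_i$ is weakly null and $1$-spreading, hence $1$-unconditional by the classical fact that weakly null spreading sequences are unconditional, and therefore subsymmetric. Define
\begin{equation*}
\left\|\sum_i a_ie_i\right\|_u:=\left\|\sum_i a_if_i\right\|_{X^{**}},\qquad
\|x\|_c:=\sup_{E_1<\cdots<E_n}\left\|\sum_{k=1}^n\Bigl(\sum_{i\in E_k}a_i\Bigr)e_k\right\|_u,
\end{equation*}
where the supremum runs over all successive-interval partitions of $\supp(x)$. Property (i) is now immediate. For (ii), $\|\cdot\|_c$ is a jamesification of a subsymmetric norm, and any such jamesification is convex block homogeneous (as noted in the introduction for the main examples); conditionality of $(e_i)_i$ in $\|\cdot\|_c$ will follow a posteriori from the equivalence in (iii), since an unconditional, spreading, convex-block-homogeneous basis must be $\ell_1$-equivalent, which would then force the same for $(e_i)_i$ in $\|\cdot\|$, contradicting the hypothesis.

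The core technical content lies in (iii). Since $e^{**}\neq 0$, some $x^*\in X^*$ satisfies $x^*(e_i)\to\ell\neq 0$, and combined with the spread-invariance of $\|\cdot\|$ this shows that the sum functional $\phi(x):=\sum_i a_i$ is bounded on $(X,\|\cdot\|)$. Writing $\sum_i a_ie_i=\sum_i a_if_i+\phi(x)\,e^{**}$ in $X^{**}$ and applying the triangle inequality yields $\|x\|_u\leq\|x\|+|\phi(x)|\,\|e^{**}\|\leq K'\|x\|$; the converse $\|x\|\leq\|x\|_u+|\phi(x)|\,\|e^{**}\|\leq\|x\|_u+\|e^{**}\|\,\|x\|_c$ comes from the same identity together with the observation that the trivial one-interval partition in the definition of $\|\cdot\|_c$ gives $|\phi(x)|\leq\|x\|_c$. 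The analogous estimate $\|x\|_c\leq K''\|x\|$ requires proving that for a spreading basis one has $\|\sum_k(\sum_{i\in E_k}a_i)e_k\|\leq C\|x\|$ for every interval partition---a block-contraction inequality exploiting bimonotonicity---which is the step I expect to be the most delicate and to depend on fine structural properties of conditional spreading bases.
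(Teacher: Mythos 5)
Your choice of $\|\cdot\|_u$ is essentially the paper's (since $u_i=e_{2i}-e_{2i-1}=f_{2i}-f_{2i-1}$ and, using spreading plus suppression unconditionality of $(f_i)_i$, the sequences $(f_i)_i$ and $(f_{2i}-f_{2i-1})_i$ are $2$-equivalent), so part (i) is fine. The genuine gap is in your definition of $\|\cdot\|_c$: you take the (conditional) jamesification of $\|\cdot\|_u$, and the step you flag as ``most delicate'', namely $\|x\|_c\leqslant C\|x\|$, is in fact \emph{false} in general for this choice. The convex block homogeneous part of a conditional spreading basis is not, in general, equivalent to the jamesification of its own unconditional part. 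A concrete witness is $\|x\|=\max\{\|x\|_{J},\|x\|_{\ell_p}\}$ for $1<p<2$ (the example appearing at the end of Section~\ref{complemented subspaces}). There the unconditional part is equivalent to the $\ell_p$ basis, while the convex block homogeneous part is the James basis, i.e.\ $\tilde J(\ell_2)$, not $\tilde J(\ell_p)$. Your $\|\cdot\|_c$ would be $\tilde J(\ell_p)$, and for $x=\sum_{k=1}^n(-1)^k\sum_{i\in I_k}e_i$ with $\#I_k=b$ one computes $\|x\|_{\tilde J(\ell_p)}\approx bn^{1/p}$, $\|x\|_J\approx bn^{1/2}$, $\|x\|_{\ell_p}=(bn)^{1/p}$; balancing $b\approx n^{(2-p)/(2(p-1))}$ makes the ratio $\|x\|_c/\|x\|\approx n^{(2-p)/(2p)}\to\infty$. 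So no constant $\kappa$ can work.

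The paper avoids this by defining $\|\cdot\|_c$ intrinsically, not via jamesification: $(z_i)_i$ is taken to be the common spreading model of all block sequences of averages of $(e_i)_i$ whose lengths tend to infinity (Theorem~\ref{spreading-characterization} and Lemma~\ref{averagesdecrease}), equivalently the infimum of $\|\sum c_i w_i\|$ over convex block vectors $(w_i)_i$. Because conditional spreading bases dominate their convex block sequences (Lemma~\ref{convexdomination}), the inequality $\|x\|_c\leqslant\|x\|$ is then immediate, and the nontrivial content is the lower bound, supplied by Lemma~\ref{convexandunconditionaldomination}. This intrinsic definition is unavoidable: Section~\ref{counterexamplesection} shows a convex block homogeneous basis that is not equivalent to the jamesification of any subsymmetric sequence, so no jamesification-based description of $\|\cdot\|_c$ can be correct in general. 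To salvage your approach you would have to replace the jamesified norm with the limit-of-averages norm and reprove the upper and lower estimates accordingly.
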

The above theorem can be translated into saying that if $X$ is a Banach space with a conditional spreading basis $(e_i)_i$ then there exist Banach spaces $U$ and $Z$ having Schauder bases $(u_i)_i$ and $(z_i)_i$ that are subsymmetric and convex block homogenous respectively so that $(e_i)_i$ is equivalent to the sequence $(u_i,v_i)_i$ in $U\oplus V$. In other words, $X$ is isomorphic to the diagonal of $U\oplus Z$. We refer to $(u_i)_i$ and $(z_i)_i$ as the unconditional part and the convex block homogeneous part of $(e_i)_i$ respectively. The unconditional part is in fact the sequence $(e_{2i} - e_{2i-1})_i$ whereas the convex block homogeneous part is given by a block sequence of averages of the basis that increase sufficiently rapidly. It was already proved in \cite{FOSZ} that the space $U$ is complemented in $X$, in fact $X\simeq U\oplus X$. It is also true that $Z$ is complemented in $X$ however in interesting cases (i.e. whenever $Z$ is not isomorphic to $c_0$) $X\not\simeq Z\oplus X$.

The aforementioned analysis can be used to prove regularity results for spaces with conditional spreading bases that are uncommon for such broad classes of Banach spaces. This concerns the behavior of block sequences, non-trivial weak Cauchy sequences, as well as complemented subspaces of a space with a conditional spreading basis.
\begin{prpA}
Let $X$ be a Banach space with a conditional spreading basis $(e_i)_i$ and let $(z_i)_i$ be its convex block homogeneous part. Every seminormalized block sequence $(x_i)_i$ in $X$ either has an unconditional subsequence or it has a convex block sequence that is equivalent to $(z_i)_i$. 
\end{prpA}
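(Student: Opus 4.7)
My plan is to invoke Theorem~\ref{maintheoremitro} to write $\|\cdot\|$ as equivalent to $\max\{\|\cdot\|_u,\|\cdot\|_c\}$ and then run a dichotomy driven by the asymptotic sum-of-coefficients of the block vectors. Let $(x_i)_i$ be the given seminormalized block, normalized to $\|x_i\|=1$; write $x_i=\sum_{j\in E_i}a_j^{(i)}e_j$ on successive intervals $(E_i)_i$ of $\N$, and set $\si_i:=\sum_{j\in E_i}a_j^{(i)}$. By Brunel--Sucheston together with a diagonalization I pass to a subsequence along which $(x_i)_i$ generates a spreading model simultaneously with respect to $\|\cdot\|$, $\|\cdot\|_u$ and $\|\cdot\|_c$, and such that $\si_i\to\si_\infty\in\R$. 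The proof then splits according to whether $\si_\infty\neq 0$ or $\si_\infty=0$.

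\textbf{Case $\si_\infty\neq 0$.} After multiplying by $\sgn(\si_i)$ (which preserves seminormalization and the dichotomy sought) I may assume $\si_i\geq\de>0$, and I form averages $y_k:=N_k^{-1}\sum_{i\in F_k}x_i$ over rapidly growing successive intervals $F_k\subset\N$. Up to a uniformly bounded positive factor, each $y_k$ is a convex block of $(e_i)_i$, and by an associativity-of-averages argument $(y_k)_k$ generates a spreading model equivalent to the one generated by direct long averages of $(e_i)_i$, namely $(z_i)_i$. Convex block homogeneity of $\|\cdot\|_c$ gives $(y_k)_k\sim(z_i)_i$ in $\|\cdot\|_c$, and this transfers to the ambient norm via the $\max$ decomposition, the subsymmetric norm $\|\cdot\|_u$ contributing comparable behaviour on the two sequences because both are long averages in the unconditional spreading norm $\|\cdot\|_u$.

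\textbf{Case $\si_\infty=0$.} Up to a norm-null perturbation of order $O(|\si_i|)$, I may assume each $x_i$ has coefficient sum exactly $0$. The conditional norm $\|\cdot\|_c$, in its equal-signs-additive reformulation from Theorem~I, is a supremum of summing functionals over successive intervals of $\N$; evaluated on $\sum_i b_ix_i$ with mean-zero blocks, such a functional picks up only boundary portions of the supports $E_i$, which are in turn controlled by the subsymmetric norm $\|\cdot\|_u$. This yields the uniform estimate $\|\sum_i b_ix_i\|_c\lesssim\|\sum_i b_ix_i\|_u$; consequently the spreading model of $(x_i)_i$ in $\|\cdot\|$ agrees up to constants with its spreading model in $\|\cdot\|_u$, which is subsymmetric, hence unconditional. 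Unconditionality of a subsequence of $(x_i)_i$ then follows by standard spreading-model approximation.

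\textbf{Main obstacle.} The heart of the argument is the mean-zero estimate in the case $\si_\infty=0$: justifying $\|\cdot\|_c\lesssim\|\cdot\|_u$ on sums of mean-zero blocks through the equal-signs-additive characterization. Here I expect the mean-zero hypothesis to make consecutive same-sign contributions in any summing-functional evaluation telescope, reducing matters to boundary terms governed by the subsymmetric norm. A secondary, more routine subtlety in the case $\si_\infty\neq 0$ is calibrating the growth rate of $N_k$ so that the convex block homogeneity constants of $\|\cdot\|_c$ and the comparison of $\|\cdot\|_u$ behaviour between $(y_k)_k$ and $(z_i)_i$ both hold with constants uniform in $k$.
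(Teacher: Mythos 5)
Your proposal diverges from the paper's argument and both of its main steps have genuine gaps. In the case $\sigma_\infty\neq 0$, the assertion that each average $y_k=N_k^{-1}\sum_{i\in F_k}x_i$ is, up to a bounded positive factor, a convex block of $(e_i)_i$ is false: the coefficients $a_j^{(i)}$ inside each $x_i$ may be negative, so $y_k$ need not be a nonnegative combination of basis vectors at all. Consequently Lemma \ref{convexdomination} and convex block homogeneity give you no control on $(y_k)_k$, and the claim that its spreading model is $(z_i)_i$ --- let alone that $(y_k)_k$ itself is \emph{equivalent} to $(z_i)_i$, which is what the statement demands --- is unsupported; controlling the $\|\cdot\|_u$-part of such averages would moreover require an $\ell_1$-bound on the coefficients of the $x_i$'s which seminormalization does not provide. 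The paper circumvents exactly this obstacle differently: after disposing of the $\ell_1$ case (an $\ell_1$-subsequence is already unconditional) one may assume $(x_i)_i$ is weak Cauchy and, after scaling, $s(x_i)=1$; setting $y_i=x_i-e_{k_i}$ with $k_i\in\supp x_i$ produces blocks with $s(y_i)=0$, which are unconditional by Proposition \ref{summingzero} and weak Cauchy, hence weakly null. Mazur's theorem then gives convex blocks of $(x_i)_i$ that are small perturbations of convex blocks of $(e_{k_i})_i$, i.e.\ of genuine convex blocks of the basis, and Proposition \ref{theoremaboutblocksequences}~(iv) upgrades a further convex block to one equivalent to $(z_i)_i$.

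In the case $\sigma_\infty=0$ your key estimate $\|\sum_ib_ix_i\|_c\lesssim\|\sum_ib_ix_i\|_u$ for mean-zero blocks is simply false. Already in James space (where $\|\cdot\|_c$ is the James norm and $\|\cdot\|_u$ is an $\ell_2$-type norm on the skipped differences), the normalized mean-zero block $x=n^{-1}\bigl(\sum_{j=1}^ne_j-\sum_{j=n+1}^{2n}e_j\bigr)$ satisfies $\|x\|_c\approx 1$ while $\|x\|_u\approx n^{-1/2}$, so no uniform constant exists even for a single block. In addition, your final reduction is not valid: knowing that the spreading model of $(x_i)_i$ is subsymmetric does not yield an unconditional subsequence of $(x_i)_i$ --- any weakly null conditional basic sequence generates a suppression unconditional spreading model. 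What makes this case work, and what the paper does, is much more direct: after a perturbation one may assume $s(x_i)=0$ exactly, and Proposition \ref{summingzero} (quoted from \cite{FOSZ}) states precisely that block sequences annihilated by the summing functional are unconditional; no decomposition or spreading-model argument is needed here.
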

In fact, every convex block sequence of the basis that is equivalent to $(z_i)_i$ defines a bounded linear projection onto its linear span. We actually prove a somewhat more general result that concerns arbitrary sequences that are not necessarily block.
\begin{prpA}
Let $X$ be a Banach space with a conditional spreading basis $(e_i)_i$ and let $(z_i)_i$ be its convex block homogeneous part. Every sequence in $X$ that is equivalent to $(z_i)_i$ has a subsequence that spans a complemented subspace of $X$.
\end{prpA}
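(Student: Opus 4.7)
The plan is to reduce the proposition to the fact quoted immediately above the statement, namely that every convex block sequence of $(e_i)_i$ equivalent to $(z_i)_i$ already defines a bounded projection onto its linear span. The reduction is by a sliding-hump perturbation argument. First, I would observe that $(z_i)_i$, being conditional spreading and in particular not equivalent to the unit vector basis of $\ell_1$, is weakly Cauchy; hence so is any $(y_i)_i$ equivalent to $(z_i)_i$, and the consecutive differences $d_i = y_{i+1} - y_i$ are weakly null. A standard sliding-hump argument then produces a subsequence $(i_k)_k$ and a block sequence $(w_k)_k$ of $(e_j)_j$ with $\sum_k \|d_{i_k} - w_k\|$ as small as desired; telescoping gives that $(y_{i_k})_k$ is equivalent to $(y_{i_1} + \tilde s_k)_k$, where $\tilde s_k := \sum_{j<k} w_j$.

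Next, using the convex block homogeneity of $(z_i)_i$ together with the identification of $X$ with the diagonal of $U \oplus Z$ from Theorem~II, I would upgrade $(\tilde s_k)_k$, after passing to a further refinement, to a sequence equivalent to a genuine convex block sequence of $(e_i)_i$ that is itself equivalent to $(z_i)_i$. The already quoted fact then produces a bounded projection onto the closed linear span of this convex block sequence, and a Neumann-series perturbation argument lifts this to a bounded projection onto the closed linear span of the subsequence $(y_{i_k})_k$, which is what we want.

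The hard part will be this upgrading step, since a partial-sum-of-a-block-sequence is not itself a block sequence; showing that $(\tilde s_k)_k$ is equivalent to a convex block sequence of $(e_i)_i$ equivalent to $(z_i)_i$ will have to exploit both the convex block homogeneity of $(z_i)_i$ and the structural information on $X$ given by Theorems~I and~II, in particular the equal signs additive renorming and the separation of the basis into its subsymmetric and convex block homogeneous parts. The sliding-hump and perturbation-of-projections parts of the argument, by contrast, are routine.
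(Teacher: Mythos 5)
Your sliding-hump reduction is a reasonable starting point, but the plan has a concrete gap at exactly the place you flag, and the gap is not just a missing lemma---the strategy as written cannot close. After telescoping you have $y_{i_k} \approx y_{i_1} + \tilde s_k$ with $\tilde s_k = \sum_{j<k} w_j$; the vectors $\tilde s_k$ have nested, ever-growing supports, so no subsequence of them can ever be norm-close to a block sequence of $(e_i)_i$, convex or otherwise. You can certainly arrange (this much is easy, by Proposition~\ref{theoremaboutblocksequences}) that $(\tilde s_k)_k$ is \emph{isomorphically equivalent} to a convex block sequence $(v_k)_k$ of the basis equivalent to $(z_i)_i$, and that the span of $(v_k)_k$ is complemented. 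But a Neumann-series perturbation needs $\sum_k \|\tilde s_k - v_k\| < \infty$, not merely an abstract isomorphism between the spans---being complemented is not an isomorphic invariant of the embedded subspace, so no amount of "equivalence" can be pushed through a perturbation argument here. In short: the reduction to the block case gives you the wrong kind of object, and the final lifting step is a non-starter.

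The paper does something genuinely different (Proposition~\ref{ntwc has complemented subseq}). Rather than approximate $(x_n)_n$ by block vectors, it keeps $(x_n)_n$ and engineers a system of functionals that is almost biorthogonal to it. Using Lemma~\ref{non summing tails are positive} one first finds $\alpha>0$ and, after refinement, intervals $I_n=(i_n,i_{n+1}]$ with $\#I_n\to\infty$ so that $s_{I_n}(x_m)\approx\alpha$ for $m\geqslant n$ and the ``head'' and ``tail'' of $x_n$ outside $I_n$ are controlled via the weak-star limit $x^{**}$. Then a fixed vector $y_0 \in X$ is \emph{subtracted} from the sequence so that $s_{I_m}(x_n-y_0)$ is close to $\alpha\delta_{mn}$; a short biorthogonalization (a lower-triangular correction with summable coefficients) then produces $(\tilde w_n)_n$ with $s_{I_m}(\tilde w_n)=\delta_{mn}$ exactly and $\sum_n\|\alpha\tilde w_n-(x_n-y_0)\|<\infty$. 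The map $Px = \sum_n s_{I_n}(x)\tilde w_n$ is then a bounded projection because it factors through the averaging projection $P_{\bar I}$ of Proposition~\ref{averagingprojection} (here is where \eqref{what is the convex block homogeneous part exactly formula} and Lemma~\ref{Zxembeds} are used), and a small-perturbation argument transports this to $[(x_n-y_0)_n]$; the extra one-dimensional piece coming from $y_0$ is absorbed by a Hahn--Banach functional. Crucially, the perturbation at the very end is legitimate precisely because $(\tilde w_n)_n$ was built to be summably close to $(\alpha^{-1}(x_n-y_0))_n$, not just equivalent to it. The case where $(e_i)_i$ is not strongly summing (so $(z_i)_i$ is the summing basis of $c_0$) is handled separately by the separable injectivity of $c_0$, which your proposal also omits but which is routine.

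So: your instinct that partial sums cannot be converted into blocks is correct, but the fix is not to upgrade $(\tilde s_k)_k$---it is to discard the sliding-hump reduction entirely and build the complementing functionals directly on the original sequence via $y_0$ and biorthogonalization.
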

We can combine the above Propositions to obtain a result that concerns complemented subspaces of non-reflexive subspaces of a space $X$ with a conditional spreading basis, namely every non-reflexive subspace of $X$ contains a further subspace that is complemented in $X$. This complemented subspace can be chosen to be very specific, a fact again manifesting the regularity of the space $X$. 
\begin{thmA}
Let $X$ be a Banach space with a conditional spreading basis $(e_i)_i$ and let $(z_i)_i$ be its convex block homogeneous part. Let $(x_i)_i$ be a non-trivial weak Cauchy sequence in $X$. Then, $(x_i)_i$ has a convex block sequence $(w_i)_i$ that is either equivalent to the summing basis of $c_0$ or to $(z_i)_i$. Furthermore, the closed linear span of $(w_i)_i$ is complemented in $X$. 
\end{thmA}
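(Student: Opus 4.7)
The plan is to analyze the difference sequence $y_i := x_{i+1}-x_i$ via the earlier block dichotomy, and in each of the two resulting cases produce the convex block and the complementing projection. First, I pass to a subsequence of $(x_i)_i$ and perform a standard gliding-hump perturbation (permitted since $(y_i)_i$ is weakly null as $(x_i)_i$ is weak Cauchy, and $X$ has a basis) so that $(y_i)_i$ is a seminormalized block basis of $(e_j)_j$. Boundedness of $(x_i)_i$ yields $\sup_n\|\sum_{i\leqslant n}y_i\|<\infty$. Applying the earlier block dichotomy Proposition to $(y_i)_i$ gives two cases: either (A) $(y_i)_i$ has an unconditional subsequence, or (B) a convex block of $(y_i)_i$ is equivalent to $(z_i)_i$.

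In case (A), the unconditional subsequence is weakly null with uniformly bounded partial sums; suppression-unconditionality gives $\|\sum a_iy_i\|\lesssim\|a\|_\infty\cdot\sup_n\|\sum_{i\leqslant n}y_i\|$, so the subsequence is equivalent to the unit vector basis of $c_0$. Hence the corresponding subsequence of $(x_i)_i$ is, up to the fixed vector $x_1$, equivalent to the summing basis of $c_0$; a two-term convex block $w_k:=\tfrac12(x_{2k}+x_{2k+1})$ absorbs the shift and produces a genuine convex block of $(x_i)_i$ equivalent to the summing basis of $c_0$. Since $\overline{\spn}(y_i)_i\cong c_0$ is complemented in the separable space $X$ by Sobczyk's theorem, and $\overline{\spn}(w_k)_k$ differs from it by at most a one-dimensional extension, $\overline{\spn}(w_k)_k$ is also complemented.

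In case (B), let $\tilde y_k=\sum_{i\in I_k}\mu_i^k y_i$ be the convex block equivalent to $(z_k)_k$. The crux is to transfer this equivalence to a genuine convex block $(w_k)_k$ of $(x_i)_i$ itself. To do this, I choose the intervals $I_k$ with rapidly increasing gaps and, after further averaging (harmless since further convex blocks of $(\tilde y_k)_k$ remain equivalent to $(z_k)_k$ by convex block homogeneity), I reduce to uniform coefficients $\mu_i^k=1/|I_k|$. Then I construct $w_k$ as a symmetric convex combination of $(x_i)_i$ on $I_k$ and a comparable preceding interval, and a direct computation invoking the spreading property of $(e_i)_i$, the equal-signs-additive reformulation from Theorem I, and the convex block homogeneity of $(z_k)_k$ establishes $(w_k)_k\sim(z_k)_k$. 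Complementedness of $\overline{\spn}(w_k)_k$ then follows immediately from the second earlier Proposition.

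The main obstacle is case (B): the conversion of a convex block of differences—which, when written in the basis $(x_i)_i$, has coefficients summing to zero—into an honest convex block of $(x_i)_i$ with the same equivalence class. This requires a delicate combination of the spreading structure and the convex block homogeneity of $(z_k)_k$. The remaining ingredients, namely the gliding-hump reduction, the $c_0$-unconditional argument in case (A), and the final invocation of the complementation proposition, are either standard or directly provided by earlier results in the paper.
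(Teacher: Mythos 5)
Your strategy of differencing breaks down at the very first step, and in a way that cannot be repaired. The sequence $y_i=x_{i+1}-x_i$ is weakly null (apply any $f\in X^*$), hence so is every convex block sequence of $(y_i)_i$; but $(z_i)_i$ is a \emph{conditional} spreading sequence on which the summing functional is bounded with $s(z_i)=1$, so $(z_i)_i$ is not weakly null, and no weakly null sequence can be equivalent to it (an equivalence is an isomorphism of closed spans, and weak nullity passes through it). Thus your case (B) is vacuous: the dichotomy of Proposition \ref{theoremaboutblocksequences} (v), applied to $(y_i)_i$, can only ever return an unconditional subsequence. Consequently your scheme can never produce the $(z_i)_i$ alternative, which is however the only possible outcome whenever $c_0$ does not embed into $X$ (e.g.\ $X=J$ with its boundedly complete basis): differencing destroys exactly the conditional information the theorem is about. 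Moreover, case (A) as written is also flawed: the bound $\sup_n\|\sum_{i\leqslant n}y_i\|<\infty$ comes from telescoping and holds only for the \emph{full} consecutive-difference sequence; for the unconditional subsequence $\sum_{k\leqslant m}(x_{i_k+1}-x_{i_k})$ there is no a priori bound (unconditionality of the subsequence gives no control over its initial sums relative to the full sequence), and for the same telescoping reason the $c_0$-behavior of a subsequence of differences does not translate back into a subsequence of $(x_i)_i$ equivalent, up to $x_1$, to the summing basis. Finally, the ``crux'' conversion in case (B) is only asserted, not argued, but this is moot given the above.

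For comparison, the paper avoids differences entirely: it first splits according to whether $(e_i)_i$ is strongly summing (Proposition \ref{equivalentfornotsumming}); if not, $X$ embeds into a space with an unconditional basis and Rosenthal's theorem yields a convex block of $(x_i)_i$ equivalent to the summing basis of $c_0$. If $(e_i)_i$ is strongly summing, one subtracts from $(x_i)_i$ either the norm-convergent vector $x=\sum_i x^{**}(e_i^*)e_i$ or, when that series diverges, the partial sums $y_i=\sum_{j=1}^{n_i}x^{**}(e_j^*)e_j$, which by Lemma \ref{xdoublestartrailsc0} behave like the summing basis of $c_0$. The resulting sequence is pointwise null (hence essentially a block sequence) but, unlike your $(y_i)_i$, remains non-trivially weak Cauchy in the key case, so Proposition \ref{theoremaboutblocksequences} (v) forces a convex block equivalent to $(z_i)_i$, and a short argument transfers this back to a convex block of $(x_i)_i$. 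Complementation then comes from Sobczyk's theorem in the $c_0$ case and from Proposition \ref{ntwc has complemented subseq} in the $(z_i)_i$ case; your final step is in the right spirit there, but it rests on part (i), which your argument does not establish.
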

The above result demonstrates the highly homogeneous structure of non-reflexive subspaces of spaces with conditional spreading bases. If the convex block homogeneous part of the basis is not equivalent to the summing basis of $c_0$ one can also deduce the following.
\begin{thmA}
Let $X$ be a Banach space with a conditional spreading basis $(e_i)_i$ and let $(z_i)_i$ be its convex block homogeneous part. Assume that $(z_i)_i$ is not equivalent to the summing basis of $c_0$. Then, if $X = Y\oplus W$, exactly one of the spaces $Y$ and $W$ contain a subspace $\tilde Z$ that is isomorphic to $Z = [(z_i)_i]$. Furthermore, $\tilde Z$ is complemented in $X$.
\end{thmA}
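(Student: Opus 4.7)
The plan is to split the argument into existence---finding a complemented copy of $Z$ in at least one of $Y$ or $W$---and uniqueness, showing at most one of them can contain such a copy. Existence will come from iterating the previous Theorem on non-trivial weak Cauchy sequences; uniqueness is the main obstacle and requires pinning down a one-dimensional ``$Z$-direction'' in $X^{**}/X$.

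For existence, let $P_Y, P_W$ be the projections for $X = Y \oplus W$, and write $z_i = P_Y z_i + P_W z_i =: y_i + w_i$. Being a conditional spreading sequence, $(z_i)$ is non-trivially weak Cauchy in $X$; using $X^{**} = Y^{**} \oplus W^{**}$ to decompose its weak$^*$-limit $z^{**} \in X^{**}\setminus X$, at least one of $(y_i), (w_i)$, say $(y_i)$, is non-trivially weak Cauchy. The previous Theorem applied to $(y_i)$ yields a convex block $(\tilde y_k) = \sum_i a_{k,i} y_i$ equivalent either to the summing basis of $c_0$ or to $(z_i)$. Form the companion $(\tilde w_k) = \sum_i a_{k,i} w_i$; if it is still non-trivially weak Cauchy, apply the Theorem once more to it and diagonalise to produce convex blocks $(\hat y_\ell), (\hat w_\ell)$, each individually equivalent to summing $c_0$ or to $(z_i)$ (both candidate sequences being convex block homogeneous, iterated blocking preserves the type), with $\hat y_\ell + \hat w_\ell$ still a convex block of $(z_i)$ and hence equivalent to $(z_i)$.

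The case where both $(\hat y_\ell)$ and $(\hat w_\ell)$ are summing-$c_0$-equivalent is then excluded by the hypothesis: since $\hat y_\ell \in Y$ and $\hat w_\ell \in W$, the direct-sum norm yields $\|\sum c_\ell (\hat y_\ell + \hat w_\ell)\|_X \asymp \max(\|\sum c_\ell \hat y_\ell\|, \|\sum c_\ell \hat w_\ell\|)$, so the sum would itself be summing-$c_0$-equivalent, forcing $(z_i) \sim$ summing basis of $c_0$, contrary to assumption. Hence without loss of generality $(\hat y_\ell) \sim (z_i)$ is a sequence in $Y$, and the preceding complementation Proposition gives a subsequence whose closed span is a complemented subspace $\tilde Z \subset Y$ of $X$ isomorphic to $Z$.

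For uniqueness, suppose, toward a contradiction, that both $Y$ and $W$ contain copies $\tilde Z_Y, \tilde Z_W \simeq Z$. Pick $(z_i)$-equivalent sequences in each and consider their weak$^*$-limits $\xi_Y \in Y^{**}\setminus Y$ and $\xi_W \in W^{**}\setminus W$, which determine two linearly independent classes in $X^{**}/X$ via $X^{**}/X = (Y^{**}/Y) \oplus (W^{**}/W)$. The main obstacle is to derive a contradiction from this. I would argue that the hypothesis $(z_i) \not\sim$ summing basis of $c_0$ forces $X$ to be quasi-reflexive of order one, i.e.\ $\dim(X^{**}/X) = 1$, which precludes two independent non-reflexive classes. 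This quasi-reflexivity should follow from the structure theorem $X \simeq \mathrm{diag}(U \oplus Z)$ combined with $\dim(Z^{**}/Z) = 1$ (a consequence of $(z_i)$ being conditional, convex block homogeneous, and not summing-$c_0$-equivalent, placing $Z$ in the James-like class) and from the observation that the subsymmetric unconditional part $(u_i)$ cannot contribute an additional non-reflexive direction without violating the conditional spreading of the diagonal basis $(u_i + z_i)$.
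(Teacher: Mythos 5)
Your existence half is essentially the paper's argument: split along the projection, apply Theorem \ref{maintheoremweakcauchy} (i), rule out the case where both components are equivalent to the summing basis of $c_0$ using the hypothesis, and get complementation from Proposition \ref{ntwc has complemented subseq}. (A small loose end: you never treat the case where the companion sequence $(\tilde w_k)$ is weakly convergent, but that case is handled easily via Mazur and Lemma \ref{weaklycauchyminusvector}.)

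The genuine gap is in the uniqueness half. Your plan rests on the claim that the hypothesis ``$(z_i)$ not equivalent to the summing basis of $c_0$'' forces $X$ (or at least $Z$) to be quasi-reflexive of order one, so that $X^{**}/X$ cannot carry two independent non-reflexive classes. This is false. Quasi-reflexivity of a space with a conditional spreading basis requires in addition that the space contain neither $c_0$ nor $\ell_1$ (this is the result of \cite{FOSZ} cited in Section \ref{Baire-1}), and strong summing does not exclude $\ell_1$. Concretely, let $Z$ be the jamesification of a Lorentz space $d(w,1)$ with $w\in c_0\setminus\ell_1$: its basis is conditional, convex block homogeneous, and not equivalent to the summing basis of $c_0$ (since $\|\sum_{i=1}^n e_i\|\geqslant \sum_{i=1}^n w_i\to\infty$), yet by Remark \ref{remarksjamesification} (iii) its unconditional part spans a copy of $d(w,1)$, which contains $\ell_1$ (witnessed by normalized long flat blocks, which in $Z$ have summing functional zero, so they stay bounded). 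Hence $\dim(Z^{**}/Z)=\infty$, and taking $X=Z$ the same example refutes $\dim(X^{**}/X)=1$; your two weak-star limits $\xi_Y,\xi_W$ therefore produce no contradiction. The paper's mechanism for ``at most one'' is different and does not count dimensions in $X^{**}/X$: it is Theorem \ref{squares} (i), that $Z\oplus Z$ does not embed into $X$ when $(e_i)_i$ is strongly summing. That, in turn, is proved by using Proposition \ref{ntwc has complemented subseq} to produce a projection onto a copy of $Z$ whose kernel is isomorphic to the kernel of an averaging projection, hence has an unconditional FDD (Proposition \ref{averagingprojectionkernel}); a second copy of $Z$ would then embed into that kernel, and a non-trivial weak Cauchy sequence in a space with a UFDD has a convex block sequence equivalent to the summing basis of $c_0$, contradicting Proposition \ref{equivalentfornotsumming}. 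To repair your proof you would need to replace the quasi-reflexivity step with an argument of this kind, i.e. prove directly that $Z\oplus Z$ does not embed into $X$.
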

This brings spaces that have a convex block homogeneous basis very close to being primary, a fact that was proved for James space by P. G. Casazza in \cite{C}. Whether such spaces are actually primary or not is unknown to us and it poses an interesting question as to whether such canonical behavior can be witnessed by such a general class of Banach spaces. It is worth mentioning that Rosenthal's dichotomy from \cite{R} plays a very important role in conditional spreading bases. According to this dichotomy, a weakly Cauchy sequence in a Banach space either has a strongly summing subsequence or a convex block sequence that is equivalent to the summing basis of $c_0$. It easily follows that a conditional spreading sequence is strongly summing if and only if its convex block homogeneous part is not equivalent to the summing basis of $c_0$. As it is made evident from the above theorem, strongly summing conditional spreading sequences have special properties. For example, if $X$ has such a basis then $X$ is not isomorphic to its square.

Theorem \ref{maintheoremitro} is not only useful for studying the structure of Banach spaces with conditional spreading bases themselves, it can also be applied to study conditional spreading models of Banach spaces. In fact, the motivation behind the proof of Theorem \ref{maintheoremitro} originates in questions surrounding conditional spreading models. A fairly simple observation is the following.
\begin{prpA}
Every non-reflexive Banach space admits a spreading model that is 1-convex block homogeneous.
\end{prpA}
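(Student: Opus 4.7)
The plan is to split the argument into two cases according to whether $X$ contains an isomorphic copy of $\ell_1$.

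If $X$ contains $\ell_1$ isomorphically, then by James' $\ell_1$-distortion theorem, for each $n \in \mathbb{N}$ there exists a sequence in $X$ that is $(1+1/n)$-equivalent to the $\ell_1$-unit vector basis, and hence generates a spreading model $(1+1/n)$-equivalent to that basis. A standard diagonalization over $n$ --- taking deep enough tails of the $n$-th approximate $\ell_1$-basis and concatenating them --- produces a single sequence $(y_k) \subseteq X$ whose spreading model is isometrically equivalent to the $\ell_1$-basis. The $\ell_1$-basis is trivially $1$-convex block homogeneous: for any convex block $(v_k)$ one has $\|\sum_k b_k v_k\|_1 = \sum_k |b_k| = \|\sum_k b_k e_k\|_1$.

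If $X$ does not contain $\ell_1$, then Rosenthal's $\ell_1$-theorem guarantees that every bounded sequence in $X$ has a weakly Cauchy subsequence, while non-reflexivity supplies a bounded sequence with no weakly convergent subsequence; together these produce a non-trivially weakly Cauchy sequence $(x_i) \subseteq X$. I would consider convex averages $y_k = \frac{1}{n_k}\sum_{i \in F_k} x_i$ over successive finite blocks $F_k \subset \mathbb{N}$ with sizes $n_k \nearrow \infty$ at a sufficiently rapid rate, and pass to a subsequence of $(y_k)$ generating a spreading model $(\tilde e_i)$. The claim is that this $(\tilde e_i)$ is $1$-convex block homogeneous.

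To verify the claim, for an arbitrary convex block $(v_j)$ of $(\tilde e_i)$ one compares $\|\sum_j b_j v_j\|$ with $\|\sum_j b_j \tilde e_j\|$. Both are realized, along deep enough indices of the generating sequence, as limits in $X$ of norms of convex combinations of $(x_i)$. The two families of combinations differ only by weakly null perturbations, because $(x_i)$ converges weak-$*$ in $X^{**}$ to a single limit $x^{**}$ and any sufficiently deep convex combination has the same weak-$*$ limit $(\sum_j b_j) x^{**}$. Now Theorem II decomposes $(\tilde e_i)$ into an unconditional part --- which, by the excerpt, is precisely $(\tilde e_{2i} - \tilde e_{2i-1})_i$ and arises as the spreading model of the weakly null sequence $(x_{2i} - x_{2i-1})$ --- and a convex block homogeneous part. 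Since $X \not\supseteq \ell_1$, the unconditional part cannot be $\ell_1$-equivalent, so as a subsymmetric sequence it has vanishing Cesàro means. The rapid averaging defining $(y_k)$ therefore annihilates the weakly null perturbations, and the two spreading model norms agree exactly.

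The main technical obstacle is to promote this asymptotic comparison into the exact isometric identity required by $1$-convex block homogeneity. This demands a careful choice of the growth rate of $n_k$ depending on quantitative control of the Cesàro decay of the unconditional part, an iterated Brunel--Sucheston stabilization ensuring that spreading model limits are genuinely attained, and a diagonal passage to handle all convex blocks uniformly. Theorem II identifies the obstruction and confines it to a tractable form; the technical heart of the proof is the limiting argument that forces its isometric disappearance, leaving the convex block homogeneous part stabilized with constant exactly one.
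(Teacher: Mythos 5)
Your sketch does not actually prove the statement: the entire content of the proposition is the constant $1$, and that is exactly the step you label ``the main technical obstacle'' and leave undone. Moreover, the way you propose to obtain it cannot work as described. You want to fix, in advance, a ``sufficiently rapid'' growth rate for the averaging lengths $n_k$, calibrated to the Cesàro decay of the unconditional part of the spreading model of $(x_i)$. But the spreading model generated by a convex block sequence is only $1$-dominated by the original one, and it can be \emph{strictly} smaller; after any fixed averaging the new spreading model can again admit strictly smaller spreading models under further convex blocking (this is the iterated spreading model phenomenon of Beauzamy--Maurey). So no single a priori choice of $n_k$ forces the isometric identity. The paper resolves precisely this point by a stabilization argument: a transfinite ($\omega_1$-indexed) hierarchy of convex block sequences together with the pigeonhole of Lemma 8.2 produces one convex block sequence $(\tilde x_i)$ such that \emph{every} further convex block sequence generates the same spreading model isometrically (Proposition 8.3); $1$-convex block homogeneity of that spreading model is then immediate, since any convex block of the spreading model is realized asymptotically by convex blocks of $(\tilde x_i)$. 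Note also that this argument is uniform and needs no $\ell_1$ dichotomy; your Case 1 has the same defect in miniature: concatenating tails of $(1+1/n)$-equivalent $\ell_1$-sequences does not yield an isometric $\ell_1$ spreading model, because spreads mixing vectors from different approximating sequences only satisfy the worst of the lower constants, so once again a stabilization is needed to reach constant exactly $1$.

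There is also a concretely false inference in Case 2: ``Since $X \not\supseteq \ell_1$, the unconditional part cannot be $\ell_1$-equivalent.'' A weakly null sequence in a space containing no copy of $\ell_1$ can perfectly well generate an $\ell_1$ spreading model -- the Schreier space and its unit vector basis is the standard example -- so the vanishing of Cesàro means of the unconditional part does not follow from $\ell_1 \not\hookrightarrow X$. In the paper the analogous non-$\ell_1$ statement (in Proposition 10.4) is deduced from the \emph{conditionality} of the spreading model together with the $1$-domination remark for convex blocks, not from the absence of $\ell_1$ in $X$. Relatedly, Theorem II only decomposes \emph{conditional} spreading sequences, while a non-trivial weak Cauchy sequence in a space without $\ell_1$ may still generate an unconditional (for instance $\ell_1$) spreading model, so your case division does not match the hypotheses under which the decomposition is available.
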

This ought to be compared to \cite[Theorem 5, page 296]{BS} where it is shown that non-superreflexive Banach spaces that are B-convex have equal signs additive sequences finitely representable in them. In a more specific setting concerning the underlying space, in \cite{O} it is proved that every subsymmetric sequence is admitted as a spreading model by the space $C(\omega^\omega)$. This is interesting because $\omega^\omega$ is the first infinite ordinal number $\alpha$ for which $C(\alpha)$ is not isomorphic to $c_0$, a space with much poorer spreading model structure. In fact, every spreading model of $c_0$ is equivalent to a sequence in $c_0$, namely either unit vector basis of $c_0$ or the summing basis of $c_0$. We extend the result from \cite{O} as described below.
\begin{thmA}
\label{cottoaaps}
The space $C(\omega^\omega)$ admits all possible spreading models, both the conditional and the unconditional ones.
\end{thmA}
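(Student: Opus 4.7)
The plan is to use Theorem II to reduce the problem to realizing the two canonical components of a conditional spreading sequence separately inside $C(\omega^\omega)$. Given an arbitrary conditional spreading sequence $(e_i)_i$ with norm $\|\cdot\|$, Theorem II produces a subsymmetric norm $\|\cdot\|_u$ and a convex block homogeneous (conditional) norm $\|\cdot\|_c$ on $c_{00}(\N)$ such that $\|\cdot\|$ is equivalent to $\max\{\|\cdot\|_u,\|\cdot\|_c\}$. The subsymmetric half is already handled by the theorem of Odell cited in the introduction, which produces a sequence $(g_i)_i\subset C(\omega^\omega)$ generating the subsymmetric spreading model associated with $\|\cdot\|_u$.

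The convex block homogeneous half I would handle by a direct construction. The idea is to start from a suitable subsymmetric realization inside $C(\omega^\omega)$ and form a block sequence of long cumulative sums (or long averages) whose asymptotic behaviour mimics an equal signs additive sequence in the sense of Theorem I; the sup-norm structure of $C(\omega^\omega)$ is well suited for this, since sums of Odell-type functions produce a summing-type conditional dependence which, together with convex block homogeneity, should reproduce $\|\cdot\|_c$ in the limit. To merge the two realizations, I would exploit that $\omega^\omega$ is homeomorphic to the disjoint union $\omega^\omega\sqcup\omega^\omega$: place the subsymmetric realization on one clopen copy and the conditional realization on the other. Because the sup norm on $C(\omega^\omega)$ computes the pointwise maximum of functions with disjoint supports, the combined sequence $x_i=g_i+h_i$ satisfies
\[
\Bigl\|\sum_i a_i x_{k_i}\Bigr\| = \max\Bigl\{\Bigl\|\sum_i a_i g_{k_i}\Bigr\|,\,\Bigl\|\sum_i a_i h_{k_i}\Bigr\|\Bigr\}
\]
at every stage, so the spreading model of $(x_i)_i$ is precisely $\max\{\|\cdot\|_u,\|\cdot\|_c\}$.

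The main obstacle is the gap between equivalence and isometry. Theorem II only delivers norm equivalence with constants $\kappa,K$, whereas generating $(e_i)_i$ as a spreading model requires asymptotic equality of norms, not merely equivalence. To bridge this I would refine the decomposition for the specific $(e_i)_i$ at hand, taking the canonical unconditional part $(e_{2i}-e_{2i-1})_i$ together with an explicit block sequence of long averages extracted from $(e_i)_i$ itself as the convex block homogeneous part, and use the $1$-spreading property of $(e_i)_i$ to show that, in the asymptotic regime governing spreading-model convergence, the maximum of the two component norms coincides with $\|\cdot\|$. Combined with the clopen-splitting construction above, this should produce a sequence in $C(\omega^\omega)$ whose spreading model is isometrically $(e_i)_i$, completing the proof.
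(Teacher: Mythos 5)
Your overall architecture matches the paper's: reduce via the decomposition (Theorem \ref{spreading-characterization}) to realizing the subsymmetric part and the convex block homogeneous part separately, invoke Odell for the former, and merge the two realizations using the fact that $C(\omega^\omega)$ absorbs its square (the paper glues the two pieces through $(C_0(\omega^\omega)\oplus C_0(\omega^\omega))_0\hookrightarrow C(\omega^\omega)$; your literal claim that $[0,\omega^\omega]$ is homeomorphic to two disjoint clopen copies of itself is false, since the doubled space has two points in its $\omega$-th derived set, but this is harmless because $C(\omega^\omega\cdot 2)\simeq C(\omega^\omega)$ and only equivalence is needed). The genuine gap is in the step that carries all the weight: producing, inside $C(\omega^\omega)$, a sequence generating an arbitrary convex block homogeneous spreading model.

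Your sketch for that step --- take a subsymmetric realization and form long cumulative sums so that ``summing-type conditional dependence \ldots should reproduce $\|\cdot\|_c$ in the limit'' --- is, in substance, the jamesification route, and it cannot work in general: Section \ref{counterexamplesection} (Proposition \ref{dual of James is not jamesification}) exhibits a convex block homogeneous basis, namely $(s_{[1,n]})_n$ in $J^*$, that is not equivalent to the jamesification of \emph{any} subsymmetric sequence (by Remark \ref{youcanonlybethejamesificationofyourunconditionalpart} the only candidate would be its own unconditional part, and that fails). Even in the jamesifiable case, your sketch verifies neither of the two points that actually need proof: that block sums of an Odell-type realization generate the intended \emph{conditional} spreading model (Odell's norming functionals are set-based and do not encode the interval/summing structure), nor that the construction stays inside $C(\omega^\omega)$, i.e.\ that the norming family has Cantor--Bendixson index only $\omega+1$. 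The paper resolves this with a different construction, the conditional Schreierification $\tilde{\mathcal{S}}(Z)$: after renorming the convex block homogeneous part to be bimonotone and equal signs additive (Corollary \ref{renormto1cbh}), it norms $c_{00}(\N)$ by a Schreier-admissible compact family $\mathcal{K}_Z$ of functionals whose coefficients are constant on consecutive intervals and drawn from finite-dimensional dual balls of $Z$; the admissibility condition $n\leqslant m_1$ keeps $\mathcal{K}_Z$ homeomorphic to $\omega^\omega$, and the equal-signs-additive inequality yields the exact upper spreading-model estimate (Proposition \ref{if its nice enough its isometric}). Finally, your concern about isometry versus equivalence addresses a non-issue --- the theorem as proved only asserts a spreading model equivalent to $(x_i)_i$ --- and your proposed remedy (that asymptotically the maximum of the two component norms coincides with $\|\cdot\|$) is false in general: the factor $2$ in Theorem \ref{spreading-characterization} is genuine and cannot be removed by the $1$-spreading property alone.
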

The proof involves a construction similar to that of Schreier's space $S$ from \cite{S}, a space that has a weakly null basis generating an $\ell_1$ spreading model, yet the space $S$ embeds into $C(\omega^\omega)$. A noteworthy fact that was proved in \cite{AKT} is that $c_0$ admits every possible spreading sequence as a 2-spreading model. This is a notion of spreading models introduced and studied in \cite{AKT} and \cite{AKThigher} where there is developed an entire theory surrounding the so called $\xi$-spreading models. This theory examines the idea of higher order spreading models in a direction that allows this notion to be taken with respect transfinite ordinal numbers. 

As demonstrated by the results being discussed above, in general, the infinite dimensional structure of the  spreading models generated by a space $X$ can be quite different than the structure of the space $X$ itself. In \cite{B} a reflexive Banach space is constructed that admits the unit vector basis of $\ell_1$ as a spreading model. This is the first known example of a reflexive Banach space admitting a non-reflexive spreading model. It is well known that whenever a Banach space admits a conditional spreading sequence as a spreading model then that space must be non-reflexive. In the spirit of the aforementioned results we show that every conditional spreading sequence is the spreading model of some quasi-reflexive Banach space, i.e. a non-reflexive Banach space that is ``as close as possible to being reflexive''. We furthermore show that every subsymmetric sequence is generated as a spreading model by an unconditional basis of some reflexive Banach space. These two results reveal that a Schauder basic spreading sequence is admitted as a spreading model of a sequence spanning in a sense the ``smallest'' possible type of Banach space,  depending on whether it is conditional or unconditional.
\begin{thmA}
\label{qroocssm}
Let $(x_i)_i$ be a spreading Schauder basic sequence.
\begin{itemize}

  \item[(i)] If $(x_i)_i$ is unconditional, then there exists a reflexive Banach space $X$ with an unconditional Schauder basis $(e_i)_i$ that generates a spreading model equivalent to $(x_i)_i$.

 \item[(ii)] If $(x_i)_i$ is conditional, then there exists a Banach space $X$ that is quasi-reflexive of order one with a Schauder basis $(e_i)_i$ that generates a spreading model equivalent to $(x_i)_i$.
 
\end{itemize}

\end{thmA}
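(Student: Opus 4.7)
The plan is to prove the two parts via Tsirelson-type and Jamesification constructions respectively, linked through the decomposition result in Theorem \ref{maintheoremitro}.

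For part (i), given the subsymmetric basis $(x_i)_i$, I would build a Tsirelson-type mixed space driven by $(x_i)_i$. The $\ell_1$-basis case is covered by the classical Tsirelson space, so assume $(x_i)_i$ is not equivalent to the $\ell_1$-basis. Let $W_x \subset c_{00}(\N)$ denote a 1-norming set for $(x_i)_i$. Define $K \subset c_{00}(\N)$ to be the smallest set containing $\{\pm e_n^* : n \inn\}$, hereditary under restriction to intervals, and closed under the operation: whenever $g_1 < \cdots < g_k$ lie in $K$ with $k \leqslant \min\supp g_1$ (Schreier admissibility) and $\sum_{j=1}^k a_j e_j^* \in W_x$, the functional $\sum_{j=1}^k a_j g_j$ also belongs to $K$. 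The resulting norm $\|\cdot\|_K$ makes $(e_i)_i$ a $1$-unconditional basis, and the standard Tsirelson-style argument using Schreier admissibility together with $(x_i)_i \not\sim \ell_1$ yields that the completion $X$ is reflexive. For spread vectors: if $n \leqslant k_1 < \cdots < k_n$, then $(e_{k_j})_{j=1}^n$ is Schreier admissible, and a tree analysis of any $f \in K$ acting on $\sum_{j=1}^n a_j e_{k_j}$ shows that the only efficient way to norm such a vector is through a single top-level $W_x$-operation, so $\|\sum_j a_j e_{k_j}\|_K \approx \|\sum_j a_j x_j\|_x$ and $(e_i)_i$ generates $(x_i)_i$ as a spreading model.

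For part (ii), I would invoke Theorem \ref{maintheoremitro} to write $(x_i)_i \sim (u_i + v_i)_i$ diagonally in $U \oplus V$, with $(u_i)_i$ subsymmetric and $(v_i)_i$ convex block homogeneous conditional. Part (i) supplies a reflexive space $Y$ with unconditional basis $(y_i)_i$ generating $(u_i)_i$. To realise $(v_i)_i$ as a spreading model of a quasi-reflexive space of order one, I would Jamesify a reflexive ingredient: let $(w_i)_i$ be the unconditional (subsymmetric) difference sequence extracted from $(v_i)_i$, say $w_i = (v_{2i} - v_{2i-1})/2$; by part (i) there is a reflexive space $\tilde W$ with unconditional basis $(\tilde w_i)_i$ generating $(w_i)_i$, and I define $Z$ as the completion of $c_{00}(\N)$ under
\[
\|z\|_Z = \sup \left\{ \left\| \sum_{k=1}^n \left(\sum_{i \in E_k} z(i)\right) \tilde w_k \right\|_{\tilde W} : E_1 < \cdots < E_n \text{ successive intervals of } \N \right\}.
\]
Classical James-type arguments, applicable because $\tilde W$ has a shrinking unconditional basis, show that $Z$ is quasi-reflexive of order one, and the canonical basis of $Z$ generates $(v_i)_i$ (identified through the equal signs additive normalization provided by the paper's first $\thmA$). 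Finally, setting $X := Y \oplus Z$, the diagonal basis $(y_i + z_i)_i$ generates $(u_i + v_i)_i \sim (x_i)_i$, and since $X^{**}/X \cong Y^{**}/Y \oplus Z^{**}/Z$ is one-dimensional, $X$ is itself quasi-reflexive of order one.

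The main obstacle is the analysis of the Jamesification $Z$ in part (ii). Quasi-reflexivity of order one requires adapting the classical James argument from an $\ell_2$-type interior to the shrinking unconditional interior of $\tilde W$: the key step is showing that every bounded sequence in $Z$ has a weakly Cauchy subsequence whose weak$^*$-limit in $Z^{**}$ differs from an element of $Z$ by at most a scalar multiple of the summing functional. Verifying that the canonical basis of $Z$ generates exactly $(v_i)_i$, rather than some other convex block homogeneous sequence, requires matching the successive-intervals supremum defining $\|\cdot\|_Z$ with the averaging description of the convex block homogeneous part in Theorem \ref{maintheoremitro}, together with the equivalence between convex block homogeneous and (up to renorming) equal signs additive supplied by the paper's first $\thmA$.
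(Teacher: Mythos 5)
Both halves of your plan contain genuine gaps. In part (i), the unconstrained Schreier-admissible saturation against $W_x$ delivers neither reflexivity nor the spreading model in general. For reflexivity: take $(x_i)_i$ to be the unit vector basis of $c_0$; any $1$-norming set $W_x$ lies in the unit ball of $\ell_1$, and the operation $\sum_j a_j g_j$ with $\sum_j|a_j|\leqslant 1$ does not increase $\ell_1$-norms, so the whole set $K$ stays inside the $\ell_1$-ball, $\|\cdot\|_K$ is just the supremum norm, and the completion is $c_0$ itself --- not reflexive, although $(x_i)_i$ is subsymmetric and not equivalent to the $\ell_1$-basis. For the spreading model: the claim that a single top-level $W_x$-operation is the only efficient way to norm a spread vector is false; for instance, for the Lorentz space $d(w,1)$ with $w_i=1/i$, a spread vector of $n^2$ ones has $x$-norm of order $\log n$, while two nested admissible $W_x$-operations (inner operations on $n$ blocks of length $n$, then an outer combination of the $n$ inner functionals) produce a functional in $K$ whose value on that vector is of order $(\log n)^2$, so the spreading model generated by $(e_i)_i$ is strictly larger than $(x_i)_i$. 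This is precisely why the paper does not saturate against $(x_i)_i$ directly: it first passes to the Schreierification (Remark \ref{unconditional schreierification}), whose basis already generates $(x_i)_i$ as a spreading model inside a $C(\omega^\omega)$-like space, and then applies the Tsirelson-type saturation \emph{with constraints} of Proposition \ref{saturationconstruction} (weights $1/m_j$, $n_j$-admissibility, very fast growing $\alpha$-averages); the constraint leaves the norm of spread vectors untouched (Proposition \ref{equalonschreier}), while the weighted saturation excludes $c_0$ and $\ell_1$ (Propositions \ref{boundedlycomplete} and \ref{noell1}) and hence yields reflexivity.

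In part (ii), reconstructing the convex block homogeneous part $(v_i)_i$ as the jamesification of its unconditional part is a dead end, and the paper itself contains the obstruction: by Remark \ref{youcanonlybethejamesificationofyourunconditionalpart}, if $(v_i)_i$ were equivalent to the jamesification of some subsymmetric sequence, that sequence would have to be (equivalent to) its unconditional part $(w_i)_i$; yet Proposition \ref{dual of James is not jamesification} in Section \ref{counterexamplesection} exhibits a convex block homogeneous basis, namely $(s_{[1,n]})_n$ in $J^*$, whose unconditional part is equivalent to the $\ell_2$-basis but which is not equivalent to the basis of $J$ (the jamesification of $\ell_2$), since $J\not\simeq J^*$. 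Taking $(x_i)_i=(s_{[1,n]})_n$, your space $Z$ --- even granting that its basis generates the jamesification of $(w_i)_i$ as a spreading model, which is itself not automatic because $(\tilde w_i)_i$ is not spreading and interval groupings landing on small indices of $\tilde W$ are not controlled by the spreading model --- cannot generate $(v_i)_i$. The paper's proof avoids splitting off the convex block homogeneous part at this stage: the whole sequence $(u_i,z_i)_i$ is realized as the spreading model of a single sequence in $C(\omega^\omega)$ via the conditional Schreierification (Proposition \ref{whatonecangetisometricallyincomegaomega}), which in addition has the property that block sequences annihilated by the summing functional are suppression unconditional; the saturation of Proposition \ref{saturationconstruction} is then applied to that sequence, and quasi-reflexivity of order one follows from conclusion (d) of that proposition together with the difference basis and the criterion of \cite{FOSZ}, rather than from a James-type argument for a jamesification.
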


The solution to this problem involves taking a suitable sequence in $C(\omega^\omega)$ and then applying a saturation method with constraints to the norm on that sequence. This is performed in such a way that the desired spreading model is preserved. Related to Theorems \ref{qroocssm} and \ref{cottoaaps} is a result from \cite{AMusm} where a Banach space is constructed admitting all unconditional spreading models in all of its infinite dimensional subspaces. We point out that the space from \cite{AMusm} is hereditarily indecomposable, thus, it does not contain unconditional sequences. This invites the question as to whether there exists a Banach space all subspaces of which admit all possible spreading models, both the conditional and the unconditional ones.

The paper is organized into \ref{counterexamplesection} sections and it can be broken up into four main parts. The first part consists of Sections \ref{preliminaries}, \ref{convex block homogeneous bases}, and \ref{decomposition} in which preliminary facts and definitions are recalled, the definition of convex block homogeneous sequences is studied, and the decomposition of conditional spreading norms is proved. These sections form the foundation for the rest of the paper and all other parts are based on it. In fact, parts two, three, and four can be read independently from one another. The second part consists of Sections \ref{block sequences}, \ref{strongly summing}, \ref{non-trivial weak Cauchy}, \ref{complemented subspaces}, and \ref{Baire-1}. These sections deal with studying the structure of Banach spaces with conditional spreading bases. The third part consists of Sections \ref{nonreflexiveadmitcbhspreadingmodels}, \ref{comegaomegaadmitsall}, and \ref{quasireflexivesm}. In these sections we study conditional spreading sequences as spreading models of certain spaces. The last part consists only of Section \ref{counterexamplesection}. It is devoted to showing that a certain convex block homogeneous sequence is not generated via the jamesification process.

\section{Preliminaries}\label{preliminaries}

We remind some preliminary notions that are integral to the rest of the paper and we state certain known results that will be used repeatedly. We also introduce the concept of the conditional jamesification of a Schauder basic sequence. This is a slight modification of the notion of jamesification introduced in \cite{BHO}.

A sequence $(e_i)_i$ in a Banach space is {\em spreading} if it is equivalent to all of its subsequences and it is called {\em 1-spreading} if the equivalent constants are one. Not all spreading sequences are Schauder basic, however these are the only ones that we shall consider.  The sequence $(e_i)_i$ is {\em subsymmetric} if it is unconditional and spreading. {\em 1-subsymmetric} means 1-unconditional, that is, the norm of vectors  is invariant under changing the signs of coefficients, and 1-spreading. Perhaps, a more often used notion in the theory of spreading sequences is that of {\em suppression unconditionality}; a property of the basis that the norm of vectors does not increase when any subset of coefficients are deleted. In this paper we are mostly interested in {\em conditional spreading} sequences, i.e. Schauder basic spreading sequences that are not unconditional. We say $\|\cdot\|$ is a {\em spreading norm} on $c_{00}(\N)$ if the unit vector basis $(e_i)_i$ is spreading with respect to the norm $\|\cdot\|$.

Let $X$ be a Banach space with a conditional and spreading basis $(e_i)_i$. Then the following are satisfied.
\begin{itemize}

\item[(i)] The summing functional $s(\sum_ia_ie_i) = \sum_ia_i$ is well defined and bounded. If $I$ is an interval of the natural numbers and $P_I$ is the projection onto $I$ associated to the basis $(e_i)_i$, we shall denote
the functional $s\circ P_I$ by $s_I$\label{sIfunctional}.

\item[(ii)] Let $d_1 = e_1$ and $d_{i+1} = e_{i+1} - e_i$ for all $i\inn$. The sequence $(d_i)_i$ forms a Schauder basis for $X$ (\cite[Theorem 2.3 b)]{FOSZ}) and we shall refer to it as {\em the difference basis} of $X$.

\item[(iii)] The sequence $(e_i)_i$ is non-trivial weak Cauchy, that is, weak Cauchy and not weakly convergent. 

\end{itemize}

The following two results were proved in \cite[Theorem 2.3 a) and Theorem 2.8]{FOSZ}, we include their statements as we refer to them quite often in the sequel.

\begin{prp}
\label{summingzero}
Let $X$ be a Banach space with a conditional spreading basis $(e_i)_i$. If $(x_i)_i$ is a block sequence of the basis with $s(x_i) = 0$ for all $i\inn$, then $(x_i)_i$ is unconditional.
If moreover $(e_i)_i$ is 1-spreading, then $(x_i)_i$ is suppression unconditional.
\end{prp}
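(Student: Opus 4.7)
Every spreading basis is equivalent to a $1$-spreading one (by taking the supremum over order-preserving reindexings), and a further renorming via $\||x\|| = \sup_n \|P_{[1,n]}x\|$ makes the basis monotone while preserving the $1$-spreading property, since both norms depend only on the coefficient pattern. So we may assume $(e_i)_i$ is $1$-spreading and monotone; in particular interval projections are contractive. Under these assumptions it suffices to prove the stronger $1$-suppression unconditional conclusion, from which the general unconditional statement for the original norm follows with the constant produced by the equivalence.

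Fix $A\subseteq\{1,\ldots,n\}$, scalars $(a_i)$, and set $y=\sum_i a_ix_i$, $y_A=\sum_{i\in A}a_ix_i$; the goal is $\|y_A\|\leq\|y\|$. Choose a norming functional $\phi\in X^*$ with $\|\phi\|=1$ and $\phi(y_A)=\|y_A\|$. The strategy is to build a modified functional $\tilde\phi\in X^*$ that coincides with $\phi$ on the supports $F_i$ for $i\in A$ and on every index outside $\bigcup_\ell F_\ell$, while on each $F_j$ with $j\in A^c$ it takes a single constant value $c_j$. Because $s(x_j)=0$, the constant restriction of $\tilde\phi$ to $F_j$ annihilates $a_j x_j$:
\[
\tilde\phi(a_jx_j)=c_j\sum_{\ell\in F_j}(a_jx_j)_\ell=c_j a_j s(x_j)=0.
\]
Consequently $\tilde\phi(y)=\tilde\phi(y_A)=\phi(y_A)=\|y_A\|$, and the bound $\|y_A\|\leq\|y\|$ follows as soon as we can guarantee $\|\tilde\phi\|\leq 1$.

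\textbf{Main obstacle.} The critical step is controlling $\|\tilde\phi\|\leq 1$ via an appropriate choice of the constants $c_j$. The natural choice is $c_j=\phi(e_{\min F_j-1})$ (a boundary value of $\phi$ adjacent to $F_j$, replacing $\phi(e_\ell)$ for every $\ell\in F_j$ by this single constant). To verify the norm bound one exploits the $1$-spreading of $(e_i)_i$, which transfers to a flattening invariance on the dual side: any order-preserving shift of the coefficient sequence of $\phi$ yields a functional of the same dual norm, and $\tilde\phi$ can be realized as a weak$^*$ limit of such shifted copies of $\phi$. Carrying out this dual flattening simultaneously on every $F_j$ for $j\in A^c$, while keeping $\phi$ unchanged on the remaining indices, is the technical heart of the proof. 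With $\|\tilde\phi\|\leq 1$ established, the $1$-suppression unconditional conclusion follows at once, and passing back to the original norm by the equivalence yields the unconditionality statement with some constant.
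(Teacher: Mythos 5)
The paper does not prove this proposition at all; it is imported verbatim from \cite[Theorem~2.3\,a)]{FOSZ}, so there is no in-house argument to compare against, and I can only assess your proposal on its own merits. Your overall plan (dualize, replace $\phi$ by a functional $\tilde\phi$ that is constant on each $F_j$ with $j\notin A$, and use $s(x_j)=0$ to annihilate those terms) is sensible, but there is a genuine gap at precisely the step you flag as the ``technical heart,'' and the mechanism you gesture at does not close it. With $c_j=\phi(e_{\min F_j-1})=:b_k$, one computes directly that $\tilde\phi=\phi\circ T$, where $T$ zeroes the coordinates of $x$ on $F_j=\{k+1,\ldots,m\}$ and adds $s_{F_j}(x)$ to the $k$-th coordinate. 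After discarding the inserted zeros, which is legitimate by $1$-spreading, $\|T\|\leqslant 1$ is exactly the \emph{subadditive} property of Brunel--Sucheston (Definition~\ref{def esa}\,(ii), iterated), which by \cite[Lemma~1]{BS} and Proposition~\ref{isometriccharacterization} is equivalent to $(e_i)_i$ being equal signs additive, i.e.\ $1$-convex block homogeneous. This is strictly stronger than $1$-spreading. Concretely, take $\|x\|=\max\bigl\{\sup_n\bigl|\sum_{i\leqslant n}x_i\bigr|,\ \|x\|_{\ell_2}\bigr\}$, which is a conditional $1$-spreading norm: for $x=e_2+e_3-2e_4$ and $F_j=\{2,3\}$ one gets $Tx=2e_1-2e_4$, and $\|Tx\|=2\sqrt2>\sqrt6=\|x\|$. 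So $\|\tilde\phi\|\leqslant 1$ simply fails with your choice of $c_j$.

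The justification you propose also does not hold: the norm-one maps on $X^*$ produced by $1$-spreading are the adjoints $S_\sigma^*$ of spreadings, and these act by \emph{restricting} the coefficient sequence of $\phi$ to a subsequence, $\phi\mapsto (\phi(e_{\sigma(\ell)}))_\ell$; they never \emph{repeat} a coefficient, so a functional whose coefficient sequence inserts many copies of $b_k$ is not a weak$^*$ limit of such images. (The other reading, transplanting the support of $\phi$ along a spreading, is false already for the summing basis of $c_0$: $e_1^*+e_2^*$ has dual norm $1$, while $e_1^*+e_3^*$ has dual norm at least $2$, as seen on $e_1-e_2+e_3$.) A correct argument works on the primal side: for $j_0\notin A$ with $F_{j_0}=\{k+1,\ldots,m\}$ and $1\leqslant r\leqslant N$, let $\sigma_r$ fix $[1,k]$, shift $F_{j_0}$ to $\{k+r,\ldots,m+r-1\}$, and shift $(m,\infty)$ by $N-1$; then $\bigl\|\tfrac1N\sum_{r=1}^N S_{\sigma_r}y\bigr\|\leqslant\|y\|$, and because $s_{F_{j_0}}(y)=a_{j_0}s(x_{j_0})=0$ the middle block of the average is zero except on $O(m-k)$ coordinates of size $O(1/N)$, so the average converges in norm to a spreading of $y$ with the $F_{j_0}$-coordinates deleted, whose norm by $1$-spreading equals $\|y\|$ with those coordinates set to zero. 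Iterating over $j_0\notin A$ gives $\|y_A\|\leqslant\|y\|$. Note this is where the hypothesis $s(x_j)=0$ is used in the norm estimate (your draft only uses it for the algebraic cancellation). Finally, a small slip: your renorming paragraph needs bimonotonicity, not monotonicity, for interval projections to be contractive, though the averaging argument above does not need either.
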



If a sequence of successive finite intervals $(I_i)_i$ of $\N$ satisfies $\max I_i + 1 = \min I_{i+1}$ for all $i$, then we say that the $I_i$'s are consecutive.

\begin{prp}
\label{averagingprojection}
Let $X$ be a Banach space with a conditional spreading basis $(e_i)_i$ and let also $\bar{I} = (I_i)_i$ be a sequence of consecutive intervals of $\N$.
The map $P_{\bar{I}}:X\rightarrow X$ with
$$P_{\bar{I}} x = \sum_{i=1}^\infty s_{I_i}(x)\left(\frac{1}{\#I_i}\sum_{j\in I_i}e_j\right)$$
is a bounded linear projection. If, moreover, $(e_i)_i$ is 1-spreading and $\cup_iI_i = \N$, then $\|P_{\bar{I}}\| \leqslant 3$.
\end{prp}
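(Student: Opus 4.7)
The plan is to verify the easy parts first and then attack the quantitative bound $\|P_{\bar I}\| \leqslant 3$ under the $1$-spreading assumption via a decomposition of $x$ into an averaged part and a zero-summing remainder. Write $a_{I_i} = \tfrac{1}{\#I_i}\sum_{j\in I_i} e_j$ for brevity, so that $P_{\bar I}x = \sum_i s_{I_i}(x)\, a_{I_i}$.

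Linearity of $P_{\bar I}$ is immediate from that of the $s_{I_i}$. The projection property will follow from the identity $s_{I_i}(a_{I_j}) = \delta_{ij}$, which in turn holds because $a_{I_j}$ is supported in $I_j$ and satisfies $s(a_{I_j}) = 1$. Hence $P_{\bar I}(a_{I_j}) = a_{I_j}$, and extending linearly gives $P_{\bar I}^2 = P_{\bar I}$. For boundedness in the general case (without any $1$-spreading assumption), each $|s_{I_i}(x)|$ is controlled by $\|s\|\,\|P_{I_i}\|\,\|x\|$, and hence by the basis constant of $(e_i)$; since the vectors $b_i a_{I_i}$ have pairwise disjoint supports along $(e_j)_j$, standard block-basis estimates then give convergence of the defining series together with a uniform operator bound.

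For the sharp bound $\|P_{\bar I}\| \leqslant 3$ under the extra hypotheses that $(e_i)$ is $1$-spreading and $\bigcup_i I_i = \N$, fix $x\in c_{00}(\N)$ and decompose $x = \sum_i x_i$ with $x_i = P_{I_i} x$ (valid since the $I_i$'s cover $\N$). Set $b_i = s(x_i)$ and $y_i = x_i - b_i a_{I_i}$; then $s(y_i) = 0$, so by Proposition \ref{summingzero} together with $1$-spreading the block sequence $(y_i)_i$ is suppression $1$-unconditional. Since $x = P_{\bar I}x + \sum_i y_i$, the triangle inequality reduces everything to showing $\|\sum_i y_i\| \leqslant 2\|x\|$, which then yields $\|P_{\bar I}x\| \leqslant 3\|x\|$.

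The route I would take for this last estimate is an averaging argument. Each $a_{I_i}$ is the uniform average of $\{e_j : j\in I_i\}$, so $P_{\bar I}x = \mathbb{E}_{(j_i)}[\sum_i b_i e_{j_i}]$ where the expectation runs over independent uniform choices $j_i\in I_i$; since the $I_i$'s are consecutive, every such tuple $(j_i)_i$ is automatically strictly increasing, and by $1$-spreading the quantity $\|\sum_i b_i e_{j_i}\|$ does not depend on the choice. Convexity of the norm then gives $\|P_{\bar I}x\| \leqslant \|\sum_i b_i e_i\|$. Applying the same averaging identity to $\sum_i y_i = x - P_{\bar I}x$ and invoking the suppression $1$-unconditionality of the zero-summing auxiliary block sequences $(x_i - b_i e_{j_i})_i$---again a consequence of Proposition \ref{summingzero}---should produce the needed factor of $2$. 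The main obstacle is precisely this last piece of bookkeeping: combining the averaging, $1$-spreading invariance, and suppression unconditionality so as to extract the \emph{universal} constant $2$ without acquiring a multiplicative factor depending on $\|s\|$ or on the basis constant. The hypothesis $\bigcup_i I_i = \N$ is essential here, since any leftover indices would break the averaging identity and force an additional correction term.
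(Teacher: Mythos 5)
The easy parts of your proposal (linearity, $s_{I_i}(a_{I_j})=\delta_{ij}$, hence the projection identity once boundedness is known) are fine, but both substantive steps fail. For the unquantified boundedness you argue that since the vectors $s_{I_i}(x)\,a_{I_i}$ are disjointly supported with controlled coefficients, ``standard block-basis estimates'' give a uniform bound; this is not a valid inference: disjointly supported vectors of norm at most one can have sums of arbitrarily large norm (already $\bigl\|\sum_{i=1}^n a_{I_i}\bigr\|\approx n$ in James space), so coefficient bounds via $\|s\|$ and the basis constant give nothing. (For what it is worth, the paper offers no proof of Proposition \ref{averagingprojection} at all --- it is quoted from \cite[Theorem 2.8]{FOSZ}; the natural reduction of the general case is to renorm so that the basis is $1$-spreading and invoke the quantitative statement, noting that $\cup_iI_i$ is automatically a tail of $\N$ because the intervals are consecutive.)

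The more serious gap is the bound $\|P_{\bar I}\|\leqslant 3$. Your reduction to ``$\|\sum_i y_i\|\leqslant 2\|x\|$'' is just the triangle inequality and is essentially a restatement of the claim; the estimate itself is exactly what you do not prove, and the tools you propose cannot prove it. Writing $\sum_i y_i$ as the average over choices $(j_i)$ of $\sum_i(x_i-b_ie_{j_i})$ and then estimating a single (or the worst) choice destroys the cancellation you need: for any fixed choice, $\bigl\|\sum_i(x_i-b_ie_{j_i})\bigr\|\geqslant\bigl\|\sum_ib_ie_i\bigr\|-\|x\|$, and the conditional-jamesification quantity $\bigl\|\sum_ib_ie_i\bigr\|$ is \emph{not} dominated by $C\|x\|$ for a general conditional $1$-spreading basis; by Proposition \ref{equivalentdefinitions} such a domination (over all consecutive interval systems) is equivalent to convex block homogeneity, a strictly stronger property. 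Concretely, take $\|x\|=\max\bigl\{\|x\|_{\ell_2},\sup_n|s_{[1,n]}(x)|\bigr\}$, which is $1$-spreading and conditional, let $I_1<\cdots<I_n$ be consecutive intervals of length $n$ and $x=\sum_{k=1}^n(-1)^ka_{I_k}$: then $\|x\|\leqslant 2$, while for every choice $j_k\in I_k$ one has $\bigl\|\sum_k(x_k-s_{I_k}(x)e_{j_k})\bigr\|\geqslant\sqrt{n}-2$, since $\bigl\|\sum_k(-1)^ke_{j_k}\bigr\|=\sqrt{n}$. Likewise, suppression unconditionality of the zero-sum blocks (Proposition \ref{summingzero}) only says that subsums are dominated by the full sum, i.e.\ it yields inequalities in the wrong direction and can never produce an upper bound for $\|\sum_iy_i\|$ in terms of $\|x\|$. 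The same objection applies to your earlier remark that convexity gives $\|P_{\bar I}x\|\leqslant\|\sum_ib_ie_i\|$ (this is Lemma \ref{convexdomination}): true, but useless, since the right-hand side can be of order $\sqrt{n}\,\|x\|$ in the example above. A correct proof must keep $x$ and the averaged vector coupled so that the cancellation between them is exploited before any norm estimate is taken; as written, your argument establishes only the trivial reduction, and the heart of the proposition is missing.
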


\subsection*{Conditional Jamesification}
One way to obtain conditional norms is the following. Let $X$ be a Banach space with a Schauder basis $(x_i)_i$. We define a norm on $c_{00}(\N)$, called the {\em conditional jamesification} of $X$, as follows. For $x\in c_{00}(\N)$ set
\begin{equation}
\label{conditionaljamesificationformula}
\|x\|\jc{X} = \max\left\{\left\|\sum_{i=1}^ns_{I_i}(x)x_i\right\|\right\}
\end{equation}
where the maximum is taken over all consecutive intervals $(I_i)_{i=1}^n$ of $\mathbb{N}$ (i.e. intervals that are successive and have no gaps between them). We also refer to the unit vector basis of $c_{00}(\N)$ endowed with $\|\cdot\|_{\tilde J(X)}$ as the conditional jamesification of $(x_i)_i$. It is clear that if $(x_i)_i$ is spreading, then the unit vector basis $(e_i)_i$ of $c_{00}(\N)$ is spreading with respect to $\|\cdot\|\jc{X}$.

\begin{rmk}
Note that $\|\cdot\|\jc{X}$ really depends on the basis $(x_i)$ of $X$ and $\|\cdot\|\jc{(x_i)}$ would perhaps be a more appropriate notation. However, in what follows the basis in question will always be explicit and will not lead to confusion.
\end{rmk}

\begin{rmk}
We mention that a norm denoted by $\|\cdot\|_{J(X)}$ very similar to the above was considered in \cite{BHO}. The important difference is that in the definition of $\|\cdot\|_{J(X)}$ the intervals are allowed to have gaps whereas in $\|\cdot\|_{\tilde J(X)}$ they are not. We shall adopt the terminology used in that paper and refer to $\|\cdot\|_{J(X)}$ and $\|\cdot\|_{\tilde J(X)}$ as {\em jamesification} and  {\em conditional jamesificiation}, respectively.
\end{rmk}

\begin{rmk}
\label{conditionaljamesificationonsubsymmetric}
The conditional jamesification of a subsymmetric sequence $(x_i)_i$ is equivalent to the jamesification of $(x_i)_i$. In fact, if $(x_i)_i$ is suppression unconditional then its jamesification and its conditional jamesification are isometrically equivalent.
\end{rmk}

\begin{rmk}
\label{conditional jamesification stable}
Iterating the procedure of conditional jamesification will not lead to new norms. That is, if $X$ is a Banach space with a Schauder basis $(x_i)_i$, then the Schauder basis $(e_i)_i$ of the conditional jamesification $\tilde J(X)$ of $X$ is isometrically equivalent to its own conditional jamesification, i.e. $\tilde J(X) = \tilde J(\tilde J(X))$.
\end{rmk}

An easy yet important observation is that conditional spreading sequences dominate their convex block bases.

\begin{lem}
\label{convexdomination}\label{convexdominated}
Let $X$ be a Banach space with a 1-spreading basis $(e_i)_i$. Then, for every $c_1,\ldots,c_n\in\mathbb{R}$ and  convex block vectors $x_1,\ldots,x_n$ of the basis $(e_i)_i$, we have $$\left\|\sum_{i=1}^nc_ix_i\right\| \leqslant \left\|\sum_{i=1}^nc_ie_i\right\|.$$
\end{lem}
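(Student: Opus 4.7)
The plan is to expand each convex block vector and then recognize the left-hand side as a convex combination of ``spread copies'' of the right-hand side, so that the $1$-spreading property plus the triangle inequality close the argument.

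More concretely, I would write each convex block vector as $x_j = \sum_{i\in F_j}\la_i^{(j)} e_i$, where $F_1<F_2<\cdots<F_n$ are successive finite subsets of $\N$, each $\la_i^{(j)}\geqslant 0$, and $\sum_{i\in F_j}\la_i^{(j)} = 1$. Substituting and fully distributing the product of the sums, I would obtain
\begin{equation*}
\sum_{j=1}^n c_j x_j \;=\; \sum_{(i_1,\ldots,i_n)\in F_1\times\cdots\times F_n}\la_{i_1}^{(1)}\cdots\la_{i_n}^{(n)}\left(\sum_{j=1}^n c_j e_{i_j}\right).
\end{equation*}
The key observation is that the non-negative scalars $\la_{i_1}^{(1)}\cdots\la_{i_n}^{(n)}$ sum to $\prod_{j=1}^n\bigl(\sum_{i\in F_j}\la_i^{(j)}\bigr)=1$, so the displayed identity expresses $\sum_j c_j x_j$ as a genuine convex combination.

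Next, since $F_1<F_2<\cdots<F_n$, any tuple $(i_1,\ldots,i_n)$ occurring in the sum automatically satisfies $i_1<i_2<\cdots<i_n$. The $1$-spreading hypothesis on $(e_i)_i$ therefore gives $\|\sum_{j=1}^n c_j e_{i_j}\| = \|\sum_{j=1}^n c_j e_j\|$ for every such tuple. Applying the triangle inequality to the convex combination yields
\begin{equation*}
\left\|\sum_{j=1}^n c_j x_j\right\|\;\leqslant\;\sum_{(i_1,\ldots,i_n)}\la_{i_1}^{(1)}\cdots\la_{i_n}^{(n)}\left\|\sum_{j=1}^n c_j e_{i_j}\right\|\;=\;\left\|\sum_{j=1}^n c_j e_j\right\|,
\end{equation*}
which is the desired inequality.

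There is no real obstacle here: the whole content is the standard trick of writing a product of convex combinations as a single convex combination indexed by tuples, combined with the fact that $1$-spreading turns the norm of the ``pointwise'' vector into exactly $\|\sum c_j e_j\|$. The only thing to be slightly careful about is ensuring the tuples chosen from $F_1\times\cdots\times F_n$ really are strictly increasing, which is automatic from $F_1<\cdots<F_n$; no auxiliary lemma from the paper is needed.
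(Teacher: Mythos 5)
Your argument is correct. The identity writing $\sum_j c_j x_j$ as a convex combination indexed by $F_1\times\cdots\times F_n$ is right, the tuples are automatically increasing because $F_1<\cdots<F_n$, and then $1$-spreading plus the triangle inequality finish it.

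The paper takes a dual route: it fixes a norming functional $x^*$ with $\|x^*\|=1$, and in each block $F_k$ selects the single index $i_k$ maximizing $c_k x^*(e_i)$; since $x_k$ is a convex combination of the $e_i$, $i\in F_k$, one gets $c_k x^*(x_k)\leqslant c_k x^*(e_{i_k})$, hence $x^*\bigl(\sum_k c_k x_k\bigr)\leqslant x^*\bigl(\sum_k c_k e_{i_k}\bigr)\leqslant\bigl\|\sum_k c_k e_{i_k}\bigr\|=\bigl\|\sum_k c_k e_k\bigr\|$. Both arguments are short and elementary, and both hinge on the same two facts (convexity of the blocks and $1$-spreading), but they package the convexity differently. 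Your version pushes the convexity into the vector itself, producing one global convex combination over the Cartesian product of the blocks and then applying the triangle inequality; it avoids any mention of functionals. The paper's version pushes the convexity into the evaluation against a fixed functional, picking one index per block rather than summing over all tuples, so the combinatorics is lighter (linear in $n$ rather than a product over the $F_k$), at the cost of invoking the duality $\|v\|=\sup_{\|x^*\|=1}x^*(v)$. Both buy the same constant and level of generality; the paper's variant is the one that generalizes more directly to Lemma \ref{convexandunconditionaldomination}, where the index $i_k$ has to interact with a second term, so it is natural that the authors chose the functional formulation. Your version, on the other hand, more transparently exhibits $\sum_j c_j x_j$ as lying in the closed convex hull of isometric copies of $\sum_j c_j e_j$, which is a clean way to see why the inequality has constant $1$.
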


\begin{proof}
Assume that $x_k = \sum_{i\in F_k}\la_i^ke_i$ for $k=1,\ldots,n$ and let $x^*$ be a functional with $\|x^*\| = 1$. Choose $i_k\in F_k$ so that $c_kx^*(e_{i_k}) = \max\{c_kx^*(e_{i}): i\in F_k\}$. An easy computation yields the following:
\begin{equation*}
x^*\left(\sum_{k=1}^nc_kx_k\right) \leqslant x^*\left(\sum_{k=1}^nc_ke_{i_k}\right) \leqslant \left\|\sum_{k=1}^nc_ke_{i_k}\right\| = \left\|\sum_{k=1}^nc_ke_k\right\|.
\end{equation*}
\end{proof}


\section{Convex block homogeneous bases}\label{convex block homogeneous bases}
In this section we discuss the central concept introduced in this paper that is also the main tool used herein to study conditional spreading sequences. This is the notion of a convex block homogeneous sequence. Interestingly, it turns out that this is an isomorphic formulation of an isometric property, that of an equal signs additive (ESA) sequence, introduced by A. Brunel and L. Sucheston in \cite{BS}.

\begin{dfn}
\label{cbhdef}
Let $X$ be a Banach space and $(x_i)_i$ be a Schauder basic sequence in $X$. 
\begin{itemize}

\item[(i)] If $(x_i)_i$ is equivalent to all of its convex block sequences then we say that it is convex block homogeneous.

\item[(ii)] If $(x_i)_i$ is isometrically equivalent to all of its convex block sequences then we say that it is 1-convex block homogeneous.

\end{itemize}
\end{dfn}
A convex block homogeneous sequence is clearly spreading, and if it is unconditional, then it is equivalent to the unit vector basis of $\ell_1$.  A 1-convex block homogeneous sequence is clearly 1-spreading. Furthermore, by a  standard argument, if a basis is convex block homogeneous then there exists a constant $C$ so that it is $C$-equivalent to all of its convex block sequences.

The simplest examples of convex block homogeneous bases are the unit vector basis of $\ell_1$ and the summing basis of $c_0$. Another classical example is the boundedly complete basis of James space \cite{J}. In fact, the jamesification of any subsymmetric sequence yields a convex block homogeneous basis. In a later section we investigate whether these are the only possible convex block homogeneous bases.

The first result of this section is a  characterization of convex block homogeneous bases.

\begin{prp}\label{equivalentdefinitions}
Let $X$ be a Banach space with a conditional spreading basis $(e_i)_i$. The following statements are equivalent.
\begin{itemize}

\item[(i)] The basis $(e_i)_i$ is equivalent to its conditional jamesification (see \eqref{conditionaljamesificationformula}, page \pageref{conditionaljamesificationformula}). 

\item[(ii)] The sequence $(s_{[1,n]})_n$ is spreading, i.e. equivalent to its subsequences (for the definition of $(s_{[1,n]})_n$ see page \pageref{sIfunctional}).

\item[(iii)] The basis $(e_i)_i$ is convex block homogeneous.

\item[(iv)] Every block sequence $(x_n)_n$ of averages of the basis is equivalent to $(e_i)_i$.

\end{itemize}
\end{prp}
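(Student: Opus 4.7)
My plan is to prove the three equivalences (iii) $\Leftrightarrow$ (iv), (iii) $\Leftrightarrow$ (i), and (i) $\Leftrightarrow$ (ii) separately. Since all four properties are isomorphic invariants of the basis, I first renorm so that $(e_i)_i$ is $1$-spreading; this yields a uniform bound on the averaging projections of Proposition \ref{averagingprojection} and enables isometric shift arguments.

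The first three equivalences share a common ingredient. For consecutive intervals $(I_k)_k$, if $y_k = \sum_{i\in F_k}\lambda_i^k e_i$ is any convex combination supported in $I_k$, then $s_{I_k}(\sum_\ell c_\ell y_\ell) = c_k$, and so the averaging projection $P_{\bar I}$ sends $\sum_\ell c_\ell y_\ell$ to $\sum_k c_k z_k$, where $z_k = |I_k|^{-1}\sum_{i\in I_k}e_i$. This shows that any convex block sequence $(y_k)_k$ is equivalent (up to $\|P_{\bar I}\|$ and Lemma \ref{convexdominated}) to the associated block of averages $(z_k)_k$, giving (iv) $\Rightarrow$ (iii); the reverse (iii) $\Rightarrow$ (iv) is immediate. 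For (iii) $\Rightarrow$ (i), applied to an arbitrary $x$ the same projection yields $\|\sum_k s_{I_k}(x) e_k\| \lesssim \|\sum_k s_{I_k}(x) z_k\| = \|P_{\bar I}x\| \lesssim \|x\|$, whence $\|x\|\jc{X}\lesssim \|x\|$. For (i) $\Rightarrow$ (iii), take $x = \sum_k c_k y_k$ and consecutive $(I_k)$ with $F_k\subseteq I_k$: then $\|\sum_k c_k e_k\| = \|\sum_k s_{I_k}(x) e_k\| \leqslant \|x\|\jc{X}\lesssim\|x\|$, while Lemma \ref{convexdominated} supplies the reverse inequality.

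The equivalence (i) $\Leftrightarrow$ (ii) is the most delicate part and I expect it to be the main technical obstacle, because $(e_i^*)_i$ is typically not norm-dense in $X^*$ for a conditional basis, so naive duality is inadequate. My plan is to first establish the auxiliary uniform equivalence $\sum_k a_k s_{I_k} \sim \sum_k a_k e_k^*$ in $X^*$, over all scalars $(a_k)$ and all consecutive intervals $(I_k)$, and then translate it to (ii) via an Abel-summation identity. The upper bound $\|\sum a_k s_{I_k}\|\lesssim \|\sum a_k e_k^*\|$ is obtained by dualizing the operator $T_{\bar I}(x) = \sum_k s_{I_k}(x) e_k$ appearing in (i), since $T^*_{\bar I}(e_k^*) = s_{I_k}$; the reverse bound uses the $1$-spreading isometric shift $S\colon e_k \mapsto e_{j_k}$ with $j_k \in I_k$, whose adjoint sends $s_{I_k}$ to $e_k^*$. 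Once the equivalence $(s_{I_k})_k \sim (e_k^*)_k$ is in hand, the Abel identity $\sum_j c_j s_{[1,k_j]} = \sum_\ell D_\ell s_{I_\ell}$ (with $I_\ell = (k_{\ell-1},k_\ell]$ and $D_\ell = \sum_{j\geqslant \ell} c_j$), juxtaposed with $\sum_j c_j s_{[1,j]} = \sum_\ell D_\ell e_\ell^*$, yields (ii). For the converse (ii) $\Rightarrow$ (i), the Abel computation is reversed (extending to consecutive intervals not starting at $1$ via the $1$-spreading shift) to recover the $X^*$-equivalence; then for $x \in c_{00}(\N)$ and $y^* \in X^*$ with $\|y^*\|\leqslant 1$, setting $b_k = y^*(e_k)$ and using the basis-constant bound $\|\sum_k b_k e_k^*\|\leqslant K_0\|y^*\|$ together with the equivalence gives $|y^*(T_{\bar I} x)| = |x(\sum_k b_k s_{I_k})| \lesssim \|x\|$, and the supremum over $y^*$ yields (i).
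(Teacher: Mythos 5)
Your proof is correct, and while it reaches the same four equivalences, its organization and the internals of the duality step differ genuinely from the paper's. The paper proves the cycle (i)$\Rightarrow$(ii)$\Rightarrow$(iii)$\Rightarrow$(iv)$\Rightarrow$(i), whereas you establish three pairwise equivalences. Your handling of (iii)$\Leftrightarrow$(iv) and (iii)$\Leftrightarrow$(i), hinging on the observation that $P_{\bar I}\bigl(\sum_\ell c_\ell y_\ell\bigr)=\sum_k c_k z_k$ for any convex block $(y_k)$ inscribed in $\bar I$, packages the same use of Proposition \ref{averagingprojection} that the paper makes in (iv)$\Rightarrow$(i), but lets it do double duty. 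The real divergence is in (i)$\Leftrightarrow$(ii). You first prove the uniform norm equivalence $\|\sum_k a_k s_{I_k}\|\sim\|\sum_k a_k e_k^*\|$ over all consecutive $\bar I$ (upper bound from $T_{\bar I}^*(e_k^*)=s_{I_k}$ together with (i), lower bound from the adjoint of the $1$-spreading isometry $e_k\mapsto e_{j_k}$, $j_k\in I_k$), and then convert to (ii) by Abel summation; the paper instead works directly with the summing functionals, introducing the auxiliary vector $\tilde x=\sum s_{I_i}(x)e_i$ and using $\sum\lambda_i s_{[1,k_i]}(x)=\sum\lambda_i s_{[1,i]}(\tilde x)$ for (i)$\Rightarrow$(ii), and the identity $x^*(e_k)=y^*(x_k)$ with $y^*=\sum\mu_k s_{[1,j_k]}$ for (ii)$\Rightarrow$(iii). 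Your route gives a cleaner intermediate statement — the equivalence of $(s_{I_k})_k$ with $(e_k^*)_k$ as sequences in $X^*$ — at the small cost of an extra reduction: for the (ii)$\Rightarrow$(i) direction you must handle interval systems $\bar I$ with $\min I_1>1$, and your ``$1$-spreading shift'' remark covers this, but it is worth making explicit that you also need the bimonotone renorm (so that $\|P_{\cup_k I_k}x\|\leqslant\|x\|$) in conjunction with $1$-spreading; the paper's (iv)$\Rightarrow$(i) step makes the same bimonotonicity assumption and the two renormings are compatible.
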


\begin{proof}
We may assume without loss of generality that $(e_i)_i$ is 1-spreading.

(i)$\Rightarrow$(ii): Choose $C>0$ such that for every $x\in c_{00}$ and consecutive intervals $(I_i)_{i=1}^n$ of $\mathbb{N}$ we have $\|\sum_{i=1}^ns_i(x)e_i\| \leqslant C\|x\|$. Let now $n\inn$, $\la_1,\ldots,\la_n\in\mathbb{R}$ and $k_1<\cdots<k_n\in\mathbb{N}$. An easy argument using the spreading property of the basis $(e_i)_i$ yields that $\|\sum_{i=1}^n\la_is_{[1,i]}\| \leqslant \|\sum_{i=1}^n\la_is_{[1,k_i]}\|$. Let now $x\in c_{00}(\N)$ with $\|x\| =1$, set $I_1 = \{1,\ldots,k_1\}, I_i = \{k_{i-1}+1,\ldots,k_i\}$ for $i\geqslant 2$ and set $\tilde{x} = \sum_{i=1}^ns_{I_i}(x)e_i$. We have that $\|\tilde{x}\| \leqslant C$. A calculation yields the following:
\begin{equation*}
\sum_{i=1}^n\la_is_{[1,k_i]}(x) = \sum_{i=1}^n\la_is_{[1,i]}(\tilde{x}) \leqslant C\left\|\sum_{i=1}^n\la_is_{[1,i]}\right\|.
\end{equation*}

(ii)$\Rightarrow$(iii): Choose $C>0$ so that for every $n\inn$, $\la_1,\ldots,\la_n\in\mathbb{R}$ and $k_1<\cdots<k_n\in\mathbb{N}$ we have $\|\sum_{i=1}^n\la_is_{[1,k_i]}\| \leqslant C\|\sum_{i=1}^n\la_is_{[1,i]}\|$. Let $(x_k)_k$ be a convex block sequence of the basis, where for each $k$ we have $x_k = \sum_{i\in F_k}\la_i^ke_i$ and let $c_1,\ldots,c_n\in\mathbb{R}$. By Lemma \ref{convexdomination} we immediately obtain that $\|\sum_{i=1}^n\la_ix_i\| \leqslant \|\sum_{i=1}^n\la_ie_i\|$.
On the other hand, let $x^* = \sum_{k=1}^m\mu_ks_{[1,k]}$, with $\|x^*\| = 1$. Set $j_k = \max F_k$. If $y^* = \sum_{k=1}^m\mu_ks_{[1,j_k]}$, then $\|y^*\|\leqslant C$. A calculation yields the following.
\begin{equation*}
x^*\left(\sum_{k=1}^nc_ke_k\right) = \sum_{k=1}^nc_kx^*(e_k) = \sum_{k=1}^nc_ky^*(x_k) \leqslant C\left\|\sum_{k=1}^nc_kx_k\right\|.
\end{equation*}

(iii)$\Rightarrow$(iv): This is trivial.

(iv)$\Rightarrow$(i): For simplicity, we assume that $(e_i)_i$ is bimonotone. Choose $C>0$ such that for every $\la_1,\ldots,\la_n\in\mathbb{R}$ and  block vectors $x_1,\ldots,x_n$ that are averages of the basis, we have that $\|\sum_{i=1}^n\la_ie_i\| \leqslant C\|\sum_{i=1}^n\la_ix_i\|$. Let $x\in c_{00}(\N)$ and $I_1<\cdots<I_n$ be consecutive intervals of $\mathbb{N}$. Set $x_i = (\#I_i)^{-1}\sum_{j\in I_i}e_j$ and let $\tilde{x}$ the restriction of $x$ onto the interval $\cup_is_i$. Then $\|\tilde{x}\| \leqslant \|x\|$. Using Proposition \ref{averagingprojection} we conclude:
\begin{eqnarray*}
\left\|\sum_{i=1}^ns_{I_i}(x)e_i\right\| &\leqslant& C\left\|\sum_{i=1}^ns_{I_i}(x)x_i\right\| = C\left\|\sum_{i=1}^ns_{I_i}(\tilde{x})\left(\frac{1}{\#I_i}\sum_{j\in I_i}e_j\right)\right\|\\
 &\leqslant& 3C\|\tilde{x}\| \leqslant 3C\|x\|.
\end{eqnarray*}
\end{proof}

\subsection*{Equal signs additive (ESA) bases}

The following isometric definitions are from \cite[p.~288]{BS}. As it is proved there, \cite[Lemma 1]{BS}, they are equivalent.
\begin{dfn}\label{def esa}
Let $X$ be a Banach space with a Schauder basis $(e_i)_i$.
\begin{itemize}

\item[(i)] The basis $(e_i)_i$ is called equal signs additive, if for any sequence of scalars $(a_k)_k$, for any natural number $k$ so that $a_ka_{k+1}\geqslant 0$:
$$\left\|\sum_{i=1}^{k-1}a_ie_i + (a_k + a_{k+1})e_k + \sum_{i=k+2}^\infty a_ie_i\right\| = \left\|\sum_{i=1}^\infty a_ie_i\right\|.$$

\item[(ii)] The basis $(e_i)_i$ is called subadditive, if for any sequence of scalars $(a_k)_k$, for any natural number $k$:
$$\left\|\sum_{i=1}^{k-1}a_ie_i + (a_k + a_{k+1})e_k + \sum_{i=k+2}^\infty a_ie_i\right\| \leqslant \left\|\sum_{i=1}^\infty a_ie_i\right\|.$$

\end{itemize}
\end{dfn}

As it turns out, convex block homogeneity characterizes sequences that admit equivalent equal signs additive norms. Actually, a sequence is equal signs additive if and only if it is 1-convex block homogeneous.

\begin{prp}
\label{isometriccharacterization}
Let $X$ be a Banach space with a Schauder basis $(e_i)_i$. Then, the sequence $(e_i)_i$ is equal signs additive if and only if it is 1-convex block homogeneous. 
\end{prp}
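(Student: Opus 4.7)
The plan is to prove the two implications separately. Throughout, it will be important that both hypotheses imply $1$-spreading: for $1$-convex block homogeneity this is immediate, since picking out a subsequence $(e_{k_i})_i$ is itself a convex block sequence of $(e_i)_i$; for ESA it will be extracted below.

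For the direction $1$-convex block homogeneous $\Rightarrow$ ESA, I would argue as follows. Given scalars $(a_k)_k$ and an index $m$ with $a_m a_{m+1} \geqslant 0$, consider first the main case $a_m + a_{m+1} \neq 0$. Set $\la = a_m/(a_m + a_{m+1})$, which lies in $[0,1]$ since $a_m$ and $a_{m+1}$ share a sign, and define a convex block sequence $(y_k)_k$ of $(e_i)_i$ by $y_k = e_k$ for $k < m$, $y_m = \la e_m + (1-\la) e_{m+1}$, and $y_k = e_{k+1}$ for $k > m$. By $1$-convex block homogeneity, $\|\sum_k c_k y_k\| = \|\sum_k c_k e_k\|$ for all scalars $c_k$. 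Choosing $c_k = a_k$ for $k < m$, $c_m = a_m + a_{m+1}$, and $c_k = a_{k+1}$ for $k > m$, a short computation shows that the left-hand side reproduces $\sum_i a_i e_i$, while the right-hand side agrees with the ESA-merged vector of Definition~\ref{def esa}(i) after a single use of $1$-spreading to reinsert a zero at position $m+1$. The degenerate cases where one of $a_m,a_{m+1}$ vanishes reduce directly to $1$-spreading.

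For the reverse direction ESA $\Rightarrow$ $1$-convex block homogeneous, the first step is to derive $1$-spreading from ESA: the hypothesis $a_k a_{k+1} \geqslant 0$ is trivially satisfied whenever one of the two coefficients is zero, so ESA allows one to slide any nonzero entry into an adjacent empty slot without changing the norm; iterating gives $\|\sum_i a_i e_{k_i}\| = \|\sum_i a_i e_i\|$ for any $k_1 < k_2 < \cdots$. Next, let $(x_k)_k$ be a convex block sequence with $x_k = \sum_{i\in F_k}\la_i^k e_i$ on successive supports $F_k$, and let $(c_k)_k$ be scalars. Expanding $\sum_k c_k x_k = \sum_k\sum_{i\in F_k} c_k \la_i^k e_i$, I note that within each block $F_k$ every nonzero coefficient $c_k \la_i^k$ carries the sign of $c_k$, so ESA may be applied iteratively inside $F_k$ to amalgamate these entries into a single coordinate with coefficient $c_k$. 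This collapses $\sum_k c_k x_k$ to a vector of the form $\sum_k c_k e_{j_k}$ with one representative $j_k \in F_k$, and a final use of $1$-spreading yields $\|\sum_k c_k x_k\| = \|\sum_k c_k e_k\|$.

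The main obstacle is the combinatorial bookkeeping in the forward direction: the iterative amalgamation inside each $F_k$ must pair each $c_k \la_i^k$ with intermediate zero slots created by earlier merges, so one has to verify at every step that the same-sign condition holds. Fortunately this is automatic, since any pair involving a zero has nonnegative product, and no further ingredient beyond ESA and $1$-spreading appears to be needed.
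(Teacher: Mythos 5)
Your proof is correct and follows essentially the same route as the paper's: the forward direction uses the same convex block sequence that merges $e_m$ and $e_{m+1}$ with weights $a_m:(a_m+a_{m+1})$, and the reverse direction amalgamates same-sign runs via iterated ESA, which is what the paper's inductive passage to the identity \eqref{esalongvectors} accomplishes. The one cosmetic difference is that you make explicit the zero-sliding step needed to recover 1-spreading from ESA, whereas the paper absorbs it into the compression implicit in \eqref{esalongvectors}.
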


\begin{proof}
Assume that $(e_i)_i$ is 1-convex block homogeneous and let $(a_i)_i$ be a sequence of scalars. Let $k\in\N$ with $a_ka_{k+1}\geqslant 0$. We will show that Definition \ref{def esa} (i) is satisfied. If $a_k+a_{k+1} = 0$ then this is trivial. Otherwise, define the sequence $(x_i)_i$ with $x_i = e_i$ for $i<k$, $x_k = (a_k/(a_k + a_{k+1}))e_k + (a_{k+1}/(a_k+a_{k+1}))e_{k+1})$ and $x_i = e_{i+1}$ for $i>k$. Then, $(x_i)_i$ is a convex block sequence of the basis and hence isometrically equivalent to it. This yields
\begin{align*}
&\left\|\sum_{i=1}^{k-1}a_ie_i + (a_k + a_{k+1})e_k + \sum_{i=k+2}^\infty a_ie_i\right\|\\ &= \left\|\sum_{i=1}^{k-1}a_ix_i + (a_k + a_{k+1})x_k + \sum_{i=k+2}^\infty a_ix_i\right\| = \left\|\sum_{i=1}^{k+1} a_ie_i+\sum_{i=k+2}^\infty a_ie_{i+1}\right\|\\
&=\left\|\sum_{i=1}^\infty a_ie_i\right\|\text{ (by the spreading property of }(e_i)_i).
\end{align*}
Conversely, assume that $(e_i)_i$ is equal signs additive. A finite induction on $m\in\N$ applied to Definition \ref{def esa} (i) yields that for $k\in\N$ so that $a_k,\ldots,a_{k+m}$ are either all non-negative or all non-positive we have
\begin{equation*}
\left\|\sum_{i=1}^{k-1}a_ie_i + (a_k + \cdots + a_{k+m})e_k + \sum_{i=k+m+1}^\infty a_ie_i\right\| = \left\|\sum_{i=1}^\infty a_ie_i\right\|.
\end{equation*}
This implies that if $(E_k)_k$ is a partition of $\N$ into successive intervals, then for any choice of scalars so that for each $k\in\N$ the scalars $a_i$, $i\in E_k$ are either all non-negative or all non-positive we have
\begin{equation}
\label{esalongvectors}
\left\|\sum_{k=1}^\infty\left(\sum_{i\in E_k}a_i\right)e_k\right\| = \left\|\sum_{i=1}^\infty a_ie_i\right\|.
\end{equation}
Now, if $(x_k)_k$ is a convex block sequence of $(e_i)_i$, $(E_k)_k$ is a partition of $\N$ into successive intervals of $\N$, and $(c_i)_{i\in E_k}$, $k\in\N$ are finite sequences of non-negative scalars summing up to one with $x_k = \sum_{i\in E_k}c_ie_i$, then for any choice of scalars $(a_k)_k$ we have
\begin{equation*}
\begin{split}
\left\|\sum_{k=1}^\infty a_ke_k\right\| =& \left\|\sum_{k=1}^\infty \left(a_k\sum_{i\in E_k}c_i\right)e_k\right\| = \left\|\sum_{k=1}^\infty a_k\left(\sum_{i\in E_k}c_ie_i\right)\right\|\text{ (by \eqref{esalongvectors})}\\
=& \left\|\sum_{k=1}^\infty a_kx_k\right\|,
\end{split}
\end{equation*}
i.e. $(x_k)_k$ isometrically equivalent to the basis $(e_i)_i$ and the proof is complete.
\end{proof}

\begin{cor}
\label{renormto1cbh}
Let $X$ be a Banach space with a convex block homogeneous basis $(e_i)_i$. Then $X$ admits an equivalent norm with respect to which $(e_i)_i$ is bimonotone and equal signs additive. In particular, $X$ admits an equivalent norm with which $(e_i)_i$ is bimonotone and 1-convex block homogeneous.
\end{cor}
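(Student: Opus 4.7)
The plan is to construct the required equivalent norm as the conditional jamesification of a preliminary $1$-spreading renorming. First, since $(e_i)_i$ is spreading, the standard supremum-over-spreads formula
$$\|x\|' = \sup_{k_1 < k_2 < \cdots}\left\|\sum_i a_i e_{k_i}\right\|, \quad x = \sum_i a_i e_i,$$
yields an equivalent norm in which $(e_i)_i$ is $1$-spreading and clearly preserves convex block homogeneity (with a possibly degraded constant). So I may assume from the outset that $(e_i)_i$ is $1$-spreading. Now define
$$\n{x} = \max\left\{\left\|\sum_j s_{I_j}(x) e_j\right\| : (I_j)_{j=1}^n \text{ consecutive intervals of } \N\right\}.$$
This is the conditional jamesification of $\|\cdot\|$, and by Proposition \ref{equivalentdefinitions}(i)$\Leftrightarrow$(iii) it is equivalent to $\|\cdot\|$ on $c_{00}(\N)$.

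I would then check that $(e_i)_i$ is still $1$-spreading and is bimonotone with respect to $\n{\cdot}$. For $1$-spreading, given a spread $\sum_i a_i e_{k_i}$ and consecutive intervals $(I_j)$ of $\N$, the preimage sets $\{i : k_i \in I_j\}$, after discarding the empty ones, form consecutive intervals of the index set of $(a_i)$, and conversely any consecutive intervals in the index set of $(a_i)$ arise in this way from a suitable choice of $(I_j)$; together with $1$-spreading of $\|\cdot\|$ this yields $\n{\sum a_i e_{k_i}} = \n{\sum a_i e_i}$. For bimonotonicity, given $P_{[m,n]}x$ and consecutive intervals $(I_j)$, the nonempty intersections $I_j \cap [m,n]$ themselves form consecutive intervals inside $[m,n]$, and the identity $s_{I_j}(P_{[m,n]}x) = s_{I_j \cap [m,n]}(x)$ gives $\n{P_{[m,n]}x} \leq \n{x}$.

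The decisive step is to show that $(e_i)_i$ is $1$-convex block homogeneous in $\n{\cdot}$, from which equal signs additivity follows at once by Proposition \ref{isometriccharacterization}. Let $(y_k)_k$ be a convex block sequence of $(e_i)_i$. Using $1$-spreading of $\n{\cdot}$, I may assume the supports $(F_k)_k$ of $(y_k)_k$ form consecutive intervals of $\N$, so that $y_k = \sum_{i \in F_k}\lambda_i^k e_i$ with $\lambda_i^k \geq 0$ and $\sum_{i \in F_k}\lambda_i^k = 1$. The upper bound $\n{\sum_k c_k y_k} \leq \n{\sum_k c_k e_k}$ is Lemma \ref{convexdomination} applied in the $1$-spreading norm $\n{\cdot}$. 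For the lower bound, given any consecutive intervals $(K_j)$ of $\N$ (indexing $(e_k)_k$), set $I_j = \bigcup_{k \in K_j} F_k$; since $(F_k)_k$ is consecutive, so is $(I_j)_j$, and a direct computation gives
$$s_{I_j}\left(\sum_k c_k y_k\right) = \sum_{k \in K_j} c_k = s_{K_j}\left(\sum_k c_k e_k\right),$$
so that $\left\|\sum_j s_{I_j}(\sum_k c_k y_k) e_j\right\| = \left\|\sum_j s_{K_j}(\sum_k c_k e_k) e_j\right\|$. Maximizing over $(K_j)$ yields $\n{\sum_k c_k y_k} \geq \n{\sum_k c_k e_k}$, which completes the verification of $1$-convex block homogeneity. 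The bulk of the technical work lies in the bookkeeping with consecutive intervals for the $1$-spreading and bimonotonicity checks; the $1$-convex block homogeneity itself falls out cleanly because the interval construction $I_j = \bigcup_{k \in K_j} F_k$ is perfectly adapted to the summing-functional structure of the conditional jamesification.
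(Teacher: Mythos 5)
Your proof is correct, and the renorming you construct is exactly the paper's: pass to a $1$-spreading equivalent norm and then take the conditional jamesification, invoking Proposition \ref{equivalentdefinitions} for equivalence with the original norm. Where you genuinely diverge is in how the isometric property of the new norm is verified. The paper uses Remark \ref{conditional jamesification stable} (the conditional jamesification is its own conditional jamesification) to deduce that the new norm is subadditive, then invokes \cite[Lemma 1]{BS} to upgrade subadditivity to equal signs additivity, and only afterwards applies Proposition \ref{isometriccharacterization} to get $1$-convex block homogeneity. You instead prove $1$-convex block homogeneity directly --- the upper estimate coming from Lemma \ref{convexdomination} applied to the jamesified norm, the lower estimate from the identity $s_{\cup_{k\in K_j}F_k}\bigl(\sum_k c_k y_k\bigr) = s_{K_j}\bigl(\sum_k c_k e_k\bigr)$ --- and then read off equal signs additivity from Proposition \ref{isometriccharacterization} used in the opposite direction from the paper. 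Your route is more self-contained: it bypasses both the idempotence remark and the Brunel--Sucheston equivalence of subadditive and equal signs additive bases, at the cost of the interval bookkeeping (compressing the supports of the convex block sequence via $1$-spreading, discarding empty intervals or zero coefficients), all of which you either carry out or correctly flag as routine. The paper's route is shorter on the page but outsources the isometric step to \cite{BS}.
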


\begin{proof}
By passing to a first equivalent norm we may assume that $(e_i)_i$ is 1-spreading. We then pass to the conditional jamesification of $X$ which, by Proposition \ref{equivalentdefinitions} is equivalent to the initial norm. It is also clear that this is a bimonotone norm and also that it is 1-spreading. By Remark \ref{conditional jamesification stable} one can easily deduce that the norm is also subadditive and hence, by \cite[Lemma 1]{BS} it is equal signs additive. By Proposition \ref{isometriccharacterization} the sequence $(e_i)_i$ is 1-convex block homogeneous.
\end{proof}

As a consequence we obtain the following equivalence between these notions as mentioned above.

\begin{thm}\label{cbh is esa}
Let $X$ be a Banach space with a Schauder basis $(e_i)_i$. The following are equivalent.
\begin{itemize}

\item[(i)] The basis $(e_i)_i$ is convex block homogeneous.

\item[(ii)] The space $X$ admits an equivalent norm with respect to which $(e_i)_i$ is equal signs additive.

\end{itemize}
\end{thm}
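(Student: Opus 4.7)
The plan is to derive Theorem \ref{cbh is esa} by combining Corollary \ref{renormto1cbh} with Proposition \ref{isometriccharacterization}; no new arguments should be required beyond a short observation about invariance of convex block homogeneity under equivalent renorming.

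First, for the implication (i)$\Rightarrow$(ii) I would simply invoke Corollary \ref{renormto1cbh}: the assumption that $(e_i)_i$ is convex block homogeneous in the given norm produces an equivalent norm on $X$ with respect to which $(e_i)_i$ is bimonotone and (in particular) equal signs additive, which is precisely the conclusion sought.

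For the reverse direction (ii)$\Rightarrow$(i), suppose $\n{\cdot}$ is an equivalent norm on $X$ under which $(e_i)_i$ is equal signs additive. By Proposition \ref{isometriccharacterization}, $(e_i)_i$ is then $1$-convex block homogeneous with respect to $\n{\cdot}$. It remains to observe that convex block homogeneity is preserved under passing to an equivalent norm: if $M^{-1}\|\cdot\|\leqslant\n{\cdot}\leqslant M\|\cdot\|$ and every convex block sequence of $(e_i)_i$ is isometrically equivalent to $(e_i)_i$ in $\n{\cdot}$, then that same convex block sequence is $M^2$-equivalent to $(e_i)_i$ in $\|\cdot\|$. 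Consequently $(e_i)_i$ is convex block homogeneous in the original norm of $X$.

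The hard part is not in this proof itself but has already been completed in the preceding section: the substance lies in Proposition \ref{isometriccharacterization}, which identifies ESA as the isometric counterpart of $1$-convex block homogeneity, and in the chain of equivalences of Proposition \ref{equivalentdefinitions} underlying Corollary \ref{renormto1cbh} via passage to the conditional jamesification. Once these are in hand, Theorem \ref{cbh is esa} is essentially a formal assembly together with the trivial renorming remark above.
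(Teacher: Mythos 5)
Your proof is correct and follows the paper's own route exactly: the paper presents Theorem \ref{cbh is esa} without further argument as an immediate consequence of Corollary \ref{renormto1cbh} (for (i)$\Rightarrow$(ii)) and Proposition \ref{isometriccharacterization} (for (ii)$\Rightarrow$(i)), and you have simply made explicit the one small omitted observation that convex block homogeneity is an isomorphic property and hence stable under equivalent renormings.
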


\begin{rmk}
\label{esabimonotoneisthesameasbeingequaltoconditionaljamesification}
Similar arguments as those used above imply that the basis $(e_i)_i$ of $X$ is bimonotone and equal signs additive if and only if it is 1-spreading and equal to its conditional jamesification in the sense of \eqref{conditionaljamesificationformula}.
\end{rmk}

The following result is of no interest from an isomorphic scope, however it removes the annoying factor three in Proposition \ref{averagingprojection} in case one has a 1-convex block homogeneous basis instead of just a 1-spreading Schauder basic sequence.

\begin{prp}
\label{inthisnicecaseprojectionhasnormone}
Let $X$ be a Banach space with a is a 1-convex block homogeneous Schauder basis $(e_i)_i$ and let $\bar I = (I_k)_k$ be a partition of $\N$ into successive intervals of natural numbers. Then, the map $P_{\bar I}:X\to X$ with
$$P_{\bar{I}} x = \sum_{k=1}^\infty s_{I_k}(x)\left(\frac{1}{\#I_k}\sum_{i\in I_k}e_i\right)$$
is a norm-one linear projection.
\end{prp}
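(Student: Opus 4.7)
My plan is to reduce the statement to a subadditivity inequality for the norm, via the identification of $1$-convex block homogeneity with the equal signs additive property.

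First, note that $P_{\bar I}$ is already a bounded linear projection by Proposition \ref{averagingprojection}, and $\|P_{\bar I}\| \geqslant 1$ because if $x_1 := (\#I_1)^{-1}\sum_{i \in I_1} e_i$ then $P_{\bar I} x_1 = x_1$. So everything reduces to showing $\|P_{\bar I} x\| \leqslant \|x\|$ for every $x = \sum_i a_i e_i \in X$. Setting $x_k := (\#I_k)^{-1}\sum_{i \in I_k} e_i$, the sequence $(x_k)_k$ is a convex block sequence of $(e_i)_i$, hence by hypothesis isometrically equivalent to $(e_k)_k$. Thus
\begin{equation*}
\|P_{\bar I} x\| = \left\|\sum_{k=1}^\infty s_{I_k}(x)\, x_k\right\| = \left\|\sum_{k=1}^\infty s_{I_k}(x)\, e_k\right\| = \left\|\sum_{k=1}^\infty\Bigl(\sum_{i \in I_k} a_i\Bigr) e_k\right\|.
\end{equation*}

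It therefore suffices to prove the inequality
\begin{equation*}
\left\|\sum_{k=1}^\infty\Bigl(\sum_{i \in I_k} a_i\Bigr) e_k\right\| \leqslant \left\|\sum_i a_i e_i\right\|.
\end{equation*}
For this I would invoke the fact that $(e_i)_i$ is equal signs additive (by Proposition \ref{isometriccharacterization}) and hence, by \cite[Lemma 1]{BS}, subadditive in the sense of Definition \ref{def esa}(ii). Subadditivity says that replacing two adjacent coefficients $a_k,a_{k+1}$ by their sum at the single index $k$ (and keeping the rest) can only decrease the norm, with \emph{no} sign hypothesis required. Iterating this inside each interval $I_k$ lets me collapse the coefficients of $x$ over $I_k$ into a single coefficient $\sum_{i \in I_k} a_i$, decreasing (or preserving) the norm at every step.

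The one bookkeeping issue — and the main obstacle — is that each application of subadditivity leaves a gap in the index set, so after collapsing the result is supported on a proper subset of $\N$ rather than on an initial segment indexed by $k$. To deal with this I would use the $1$-spreading property (which follows from $1$-convex block homogeneity by taking degenerate convex blocks $x_k = e_{i_k}$) to relabel the surviving indices as $1,2,3,\dots$ without changing the norm, after each collapse. Performing the collapses interval by interval, then relabeling, eventually produces the vector $\sum_k(\sum_{i \in I_k}a_i)\,e_k$ on the left, with norm $\leqslant \|x\|$. Combined with the isometric identification in the first paragraph, this yields $\|P_{\bar I} x\| \leqslant \|x\|$, completing the proof.
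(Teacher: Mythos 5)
Your proof is correct and takes essentially the same route as the paper: both reduce the claim to the subadditivity inequality $\|\sum_k(\sum_{i\in I_k}a_i)e_k\|\leqslant\|\sum_ia_ie_i\|$, obtained by combining Proposition \ref{isometriccharacterization} with \cite[Lemma 1]{BS} and iterating Definition \ref{def esa}(ii), and both begin by comparing $P_{\bar I}x$ with $\sum_k s_{I_k}(x)e_k$ (you via the isometric equivalence of $(x_k)$ with $(e_k)$, the paper via the one-sided bound of Lemma \ref{convexdominated}, but these are interchangeable here). You are somewhat more explicit than the paper about the relabeling after each collapse, which is a point the paper glosses over by saying ``an argument identical to that used in Proposition \ref{isometriccharacterization}''.
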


\begin{proof}
By Proposition \ref{averagingprojection} $P_{\bar I}$ is indeed bounded and it has norm at most three. By \cite[Lemma 1]{BS}  the basis $(e_i)_i$ satisfies Definition \ref{def esa} (ii). An argument identical to that used in the of Proposition \ref{isometriccharacterization} yields that for any scalars $(a_i)_i$ we have
\begin{equation}
\label{subadditivelongvectors}
\left\|\sum_{k=1}^\infty\left(\sum_{i\in I_k}a_i\right)e_k\right\| \leqslant \left\|\sum_{i=1}^\infty a_ie_i\right\|.
\end{equation}
Let now $x = \sum_{i=1}^\infty a_ie_i$ be a vector in $X$. Then,
\begin{align*}
 \left\|P_{\bar I}x\right\| =& \left\|\sum_{k=1}^\infty s_{I_k}(x)\left(\frac{1}{\#I_k}\sum_{i\in I_k}e_i\right)\right\|\\
 \leqslant & \left\|\sum_{k=1}^\infty s_{I_k}(x)e_k\right\|\text{ (by Lemma \ref{convexdominated})}\\
 =&\left\|\sum_{k=1}^\infty\left(\sum_{i\in I_k}a_i\right)e_k\right\| \leqslant \left\|\sum_{i=1}^\infty a_ie_i\right\| = \|x\| \text{ (by \eqref{subadditivelongvectors})}.
\end{align*}
\end{proof}

We record the following duality result that can be proved directly without the use of equal signs additive bases, however, this tool makes the proof immediate. 
\begin{prp}
\label{dualizecbh}
Let $X$ be a Banach space with a spreading basis $(e_i)_i$. Then, $(e_i)_i$ is convex block homogeneous if and only if $(s_{[1,n]})_n$ is convex block homogeneous.
\end{prp}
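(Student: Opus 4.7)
The plan splits into two implications and exploits the ESA renorming of Theorem \ref{cbh is esa} and Corollary \ref{renormto1cbh}. The direction ``$(s_{[1,n]})_n$ convex block homogeneous $\Rightarrow$ $(e_i)_i$ convex block homogeneous'' is essentially free: convex block homogeneity of $(s_{[1,n]})_n$ implies in particular that it is equivalent to its subsequences, i.e.\ spreading, and Proposition \ref{equivalentdefinitions}, (ii)$\Rightarrow$(iii), then immediately yields convex block homogeneity of $(e_i)_i$.

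For the converse, assume $(e_i)_i$ is convex block homogeneous. By Corollary \ref{renormto1cbh}, renorm $X$ so that $(e_i)_i$ is bimonotone and $1$-convex block homogeneous, hence equal to its conditional jamesification by Remark \ref{esabimonotoneisthesameasbeingequaltoconditionaljamesification}. Under this renorming I first check that $(s_{[1,n]})_n$ is in fact $1$-spreading: given $k_1<\cdots<k_n$ and scalars $\la_1,\ldots,\la_n$, the map $\sum c_i e_i \mapsto \sum c_i e_{k_i}$ is a norm-preserving bijection on $c_{00}(\N)$ by $1$-spreading of $(e_i)_i$ and it transports $s_{[1,i]}$ to $s_{[1,k_i]}$; conversely, for $x\in X$ with $\|x\|\leqslant 1$, the vector $y=\sum_i s_{(k_{i-1},k_i]}(x)e_i$ satisfies $s_{[1,i]}(y)=s_{[1,k_i]}(x)$ and $\|y\|\leqslant\|x\|$ by the conditional jamesification formula, giving the two-sided isometric equivalence.

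Now let $(f_k)_k$ be a convex block of $(s_{[1,n]})_n$, written as $f_k=\sum_{n\in G_k}\mu_n^k s_{[1,n]}$ with consecutive finite $G_k\subset\N$, $\mu_n^k\geqslant 0$, $\sum_n\mu_n^k=1$, and fix scalars $(d_k)$. The upper bound
\begin{equation*}
\left\|\sum_k d_k f_k\right\| \leqslant \left\|\sum_k d_k s_{[1,k]}\right\|
\end{equation*}
is immediate from Lemma \ref{convexdominated} applied to the $1$-spreading sequence $(s_{[1,n]})_n$. For the matching lower bound, given $\e>0$ choose a finitely supported $x=\sum_{i=1}^K a_i e_i$ with $\|x\|\leqslant 1$ and $\sum_k d_k s_{[1,k]}(x)\geqslant \|\sum_k d_k s_{[1,k]}\|-\e$. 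Setting $m_i:=\min G_i$, define the spread vector $\tilde x=\sum_{i=1}^K a_i e_{m_i}$, so that $\|\tilde x\|=\|x\|$ by $1$-spreading of $(e_i)_i$. Since the $G_i$'s are consecutive, for $n\in G_k$ we have $m_i\leqslant n$ if and only if $i\leqslant k$, so $s_{[1,n]}(\tilde x)=\sum_{i=1}^k a_i=s_{[1,k]}(x)$ for every $n\in G_k$. Hence $f_k(\tilde x)=s_{[1,k]}(x)$ and $\sum_k d_k f_k(\tilde x)=\sum_k d_k s_{[1,k]}(x)\geqslant \|\sum_k d_k s_{[1,k]}\|-\e$, which gives $\|\sum_k d_k f_k\|\geqslant\|\sum_k d_k s_{[1,k]}\|-\e$. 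Letting $\e\to 0$ produces isometric equivalence, so $(s_{[1,n]})_n$ is $1$-convex block homogeneous under the renorming, and in particular convex block homogeneous in the original norm.

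The only non-routine step is identifying the spreading transformation $x\mapsto\tilde x$ which converts the action of $s_{[1,k]}$ on $x$ into the action of $f_k$ on $\tilde x$; once this is in hand both bounds follow mechanically and in fact yield the stronger isometric statement under the ESA renorming, so I do not anticipate a serious obstacle.
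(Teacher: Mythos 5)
Your proposal is correct, and it diverges from the paper's argument at the key step of the harder implication. Both proofs start the same way: the direction ``$(s_{[1,n]})_n$ CBH $\Rightarrow$ $(e_i)_i$ CBH'' is exactly Proposition~\ref{equivalentdefinitions} (ii)$\Rightarrow$(iii), and both proofs of the converse first apply Corollary~\ref{renormto1cbh} to pass to a bimonotone ESA (equivalently, bimonotone $1$-CBH) renorming. From here the paper simply cites \cite[Proposition 2, page 291]{BS}, which asserts that $(s_{[1,n]})_{n\geqslant 2}$ is again ESA, and then reconciles the shift via Proposition~\ref{equivalentdefinitions} (iii)$\Rightarrow$(ii). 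You instead give a self-contained argument: you reprove $1$-spreading of $(s_{[1,n]})_n$ quantitatively (the two auxiliary vectors you use are precisely the mechanisms of (iii)$\Rightarrow$(ii) and of the collapse in Remark~\ref{esabimonotoneisthesameasbeingequaltoconditionaljamesification}), get the upper convexity bound from Lemma~\ref{convexdominated} applied to the $1$-spreading sequence $(s_{[1,n]})_n$, and get the matching lower bound via the spread vector $\tilde x = \sum a_i e_{m_i}$, whose defining property $s_{[1,n]}(\tilde x) = s_{[1,k]}(x)$ for all $n\in G_k$ does all the work. This replaces the external Brunel--Sucheston citation with an explicit duality computation, recovers the same isometric conclusion under the renorming ($(s_{[1,n]})_n$ is $1$-CBH), and is essentially the ``dual'' incarnation of the $\tilde x$-construction already used in Proposition~\ref{equivalentdefinitions} (i)$\Rightarrow$(ii). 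The trade-off is a somewhat longer argument for the benefit of not relying on \cite{BS}; the net content is the same.
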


\begin{proof}
If $(s_{[1,n]})_n$ is convex block homogeneous then it is spreading and hence by Proposition \ref{equivalentdefinitions} (ii)$\Rightarrow$(iii) $(e_i)_i$ is convex block homogeneous. If, on the other hand, $(e_i)_i$ is convex block homogeneous then by Corollary \ref{renormto1cbh} we may assume that it is equal signs additive. By \cite[Proposition 2, page 291]{BS} $(s_{[1,n]})_{n\geqslant 2}$ is equal signs additive and hence, convex block homogeneous. By Proposition \ref{equivalentdefinitions} (iii)$\Rightarrow$(ii) $(s_{[1,n]})_{n}$ is equivalent to $(s_{[1,n]})_{n\geqslant 2}$.
\end{proof}

\section{A characterization: decomposing conditional spreading norms}\label{decomposition}




We devote this section to the statement and proof of the main result used throughout the paper, namely the decomposition of conditional spreading norms into two parts. The first part is subsymmetric and it is simply defined by taking skipped successive differences of the basis, whereas the second part is convex block homogeneous and it is obtained by taking averages of the basis that are growing sufficiently rapidly.

\begin{thm}\label{spreading-characterization}
Let $X$ be a Banach space with a conditional 1-spreading basis $(e_i)_i$. Let also $(u_i)_i$ denote the subsymmetric skipped difference sequence of $(e_i)_i$, i.e. $u_i = e_{2i} - e_{2i-1}$ for all $i\in\N$. Then, there exists a Banach space $Z$ with a 1-convex block homogeneous basis $(z_i)_i$ so that for all scalars $a_1,\ldots,a_n$ we have
\begin{equation}
\label{Mainmainmainmaintheoremequation}
\begin{split}
 \frac{1}{2}\max\left\{\left\|\sum_{i=1}^na_iu_i\right\|, \left\|\sum_{i=1}^na_iz_i\right\|\right\} \leqslant \left\|\sum_{i=1}^na_ie_i\right\|\\ \leqslant 2\max\left\{\left\|\sum_{i=1}^na_iu_i\right\|, \left\|\sum_{i=1}^na_iz_i\right\|\right\}.
\end{split}
 \end{equation}
Furthermore, the sequence $(u_i)_i $ is suppression unconditional and the norm on $Z$ is given by
\begin{equation}
\label{what is the convex block homogeneous part exactly formula}
\begin{split}
\left\|\sum_{i=1}^na_iz_i\right\| &= \lim_{N\to\infty}\left\|\sum_{i=1}^na_i\left(\frac{1}{N}\sum_{j = (i-1)N + 1}^{iN}e_j\right)\right\|\\
&= \inf\left\{\left\|\sum_{i=1}^na_iw_i\right\|: (w_i)_{i=1}^n\text{ is a conv. block seq. of }(e_i)_i\right\}.
\end{split}
\end{equation}
\end{thm}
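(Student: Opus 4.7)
The plan is to define $(z_i)$ via the averaging procedure stated in the theorem, verify that the proposed formula defines a well-defined norm on $c_{00}(\mathbb{N})$, and then establish each of the listed properties.

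Set $x_i^N := \frac{1}{N}\sum_{j=(i-1)N+1}^{iN}e_j$ and $\psi_N(a) := \|\sum_{i=1}^n a_i x_i^N\|$. A key preliminary is that $(x_i^N)_i$ is itself $1$-spreading for each fixed $N$, and that $(x_i^{KN})_i$ is a convex block sequence of $(x_j^N)_j$, so Lemma \ref{convexdomination} yields $\psi_{KN}\le\psi_N$ for all $K,N$. Writing an arbitrary $N=qM+r$ with $0\le r<M$ and decomposing $x_i^N$ into a convex combination of an $x$-average at scale $M$ plus an error of relative weight $r/N$, one concludes that $\lim_N\psi_N(a)$ exists and equals $\inf_N\psi_N(a)$; this defines $\|\sum a_i z_i\|_Z:=\lim_N\psi_N(a)$. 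The suppression unconditionality of $(u_i)$ is then immediate from Proposition \ref{summingzero}, since $s(u_i)=0$.

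The lower bound in the max inequality is routine: triangle and $1$-spreading give $\|\sum a_i u_i\|\le 2\|\sum a_i e_i\|$, while $\|\sum a_i z_i\|_Z\le\psi_1(a)=\|\sum a_i e_i\|$. The upper bound requires the main computation. Put $\tilde e_i:=e_{(i-1)N+1}$ so that $1$-spreading gives $\|\sum a_i e_i\|=\|\sum a_i\tilde e_i\|$, and observe
\begin{equation*}
\sum_{i=1}^n a_i(\tilde e_i-x_i^N)=\frac{1}{N}\sum_{j'=2}^{N}\sum_{i=1}^n a_i\bigl(e_{(i-1)N+1}-e_{(i-1)N+j'}\bigr).
\end{equation*}
For each fixed $j'$, the $2n$ positions $(i-1)N+1,(i-1)N+j'$, $i=1,\ldots,n$, are strictly increasing, so by $1$-spreading the inner sum has the same norm as $\sum_i a_i(e_{2i-1}-e_{2i})=-\sum_i a_i u_i$. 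Consequently $\|\sum a_i(\tilde e_i-x_i^N)\|\le\|\sum a_i u_i\|$, and the triangle inequality together with $N\to\infty$ yields $\|\sum a_i e_i\|\le\|\sum a_i z_i\|_Z+\|\sum a_i u_i\|\le 2\max\{\|\sum a_i u_i\|,\|\sum a_i z_i\|_Z\}$.

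That $(z_i)$ is $1$-spreading is immediate from the $1$-spreading of each $(x_i^N)_i$. For $1$-convex block homogeneity, let $(w_k)=(\sum_{j\in F_k}\lambda_j^k z_j)$ be a convex block of $(z_k)$ and set $c_j:=a_{k(j)}\lambda_j^{k(j)}$. The bound $\|\sum a_k w_k\|_Z\le\|\sum a_k z_k\|_Z$ follows from Lemma \ref{convexdomination} applied to the $1$-spreading sequence $(x_j^N)_j$ at each scale $N$, since $\sum_{j\in F_k}\lambda_j^k x_j^N$ is a convex block of $(x_j^N)_j$. The reverse inequality uses the infimum formula: given any convex block $(v_j)$ of $(e_i)$, the regrouped $v'_k:=\sum_{j\in F_k}\lambda_j^k v_j$ is again a convex block of $(e_i)$, so $\|\sum_j c_j v_j\|=\|\sum_k a_k v'_k\|\ge\|\sum a_k z_k\|_Z$; taking infimum over $(v_j)$ completes the argument.

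I expect the main obstacle to be the infimum formula. The inequality $\|\cdot\|_Z\ge\inf$ is trivial since each $x_i^N$ is a convex block of $(e_i)$. The reverse $\lim_N\psi_N(a)\le\|\sum a_i w_i\|$ for an arbitrary convex block $(w_i)_{i=1}^n$ of $(e_i)$ I plan to prove by a shifting-and-averaging argument: place $M$ translated copies $w_i^{(1)},\ldots,w_i^{(M)}$ of each $w_i$ successively, so that for each $r$ the sequence $(w_i^{(r)})_{i=1}^n$ is a block sequence with $\|\sum a_i w_i^{(r)}\|=\|\sum a_i w_i\|$ by $1$-spreading. Setting $w'_i:=\frac{1}{M}\sum_r w_i^{(r)}$, the triangle inequality gives $\|\sum a_i w'_i\|\le\|\sum a_i w_i\|$; arranging the shifts so that the coefficients of $w'_i$ become asymptotically uniform, together with a rationalization of $(\lambda_j^i)$ to a common denominator that permits $w'_i$ to be realized as a genuine uniform average, allows $\|\sum a_i w'_i\|$ to be compared with $\psi_N(a)$ for $N$ tending to infinity with $M$.
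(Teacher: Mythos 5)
Most of your argument is correct and, in two places, genuinely different from the paper's proof. The paper obtains the two-sided estimate from Lemma \ref{convexandunconditionaldomination}, proved by a norming-functional/minimal-coefficient selection; your telescoping identity $\sum_i a_i(\tilde e_i-x_i^N)=\frac{1}{N}\sum_{j'=2}^N\sum_i a_i(e_{(i-1)N+1}-e_{(i-1)N+j'})$, in which each inner sum is isometric to $\mp\sum_i a_iu_i$ by $1$-spreading, is a clean substitute giving the same constant. Your proof that $\lim_N\psi_N$ exists and equals $\inf_N\psi_N$ (via $\psi_{KN}\leqslant\psi_N$ and the $N=qM+r$ decomposition), the suppression unconditionality of $(u_i)$ via Proposition \ref{summingzero}, and the derivation of $1$-convex block homogeneity from Lemma \ref{convexdomination} together with the infimum formula all check out and coincide in spirit with the paper, which defines $(z_i)$ as the common spreading model of long block averages and extracts everything from Lemmas \ref{convexandunconditionaldomination} and \ref{averagesdecrease}. (One point both treatments leave implicit: that $\|\cdot\|_Z$ is a norm and not merely a seminorm; this follows because the functionals $s_{[1,m]}$ are uniformly bounded and $s_{[1,iN]}(\sum_j a_jx_j^N)=a_1+\cdots+a_i$.)

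The one genuine soft spot is exactly where you predicted it: the inequality $\lim_N\psi_N(a)\leqslant\|\sum_i a_iw_i\|$ for an arbitrary convex block sequence $(w_i)$, i.e. the paper's Lemma \ref{averagesdecrease}. As written, your shifting-and-averaging plan does not close. If the $M$ translated copies of $w_i$ have pairwise disjoint supports (which ``placed successively'' suggests), then $w_i'=\frac{1}{M}\sum_r w_i^{(r)}$ has coefficient multiset equal to $M$ repetitions of $\{\lambda_j^i/M\}_j$: the same non-uniform profile scaled down, never ``asymptotically uniform,'' and bounding the norm of $w_i'$ minus a uniform average is essentially the statement you are trying to prove. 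The idea is salvageable, but the copies must overlap: take $w_i^{(r)}=\sum_{j\in F_i}\lambda_j^ie_{p_j+r}$, $r=1,\ldots,M$ (unit translates of the same vector, with the translates of $w_i$ kept entirely to the left of those of $w_{i+1}$). Each $w_i^{(r)}$ is order-isometric to $w_i$ by $1$-spreading, and every interior coordinate of the support receives every weight $\lambda_j^i$ exactly once, so $w_i'$ is exactly $1/M$ times the indicator of an interval up to boundary mass at most $(\#\supp w_i)/M$; discarding this $O(1/M)$ mass and comparing uniform averages whose lengths differ by $O(1)$ (another $O(1/M)$ estimate) gives $\psi_N(a)\leqslant\|\sum_ia_iw_i\|+\e$, with no rationalization needed. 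Alternatively you can follow the paper: rationalize $\lambda_j^i\approx n_j/N$, write a long uniform average as $\sum_j(n_j/N)z_j$ with $z_j$ successive uniform sub-averages of proportional lengths, and apply Lemma \ref{convexdomination}; that is precisely Lemma \ref{averagesdecrease}. Until one of these constructions is actually carried out, the infimum formula, and with it your proof of $1$-convex block homogeneity, is not complete.
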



\begin{rmk}
\label{equivalentmaxnorm}
It is useful to restate the theorem in an isomorphic setting as follows. Let $X$ be a Banach space with a conditional spreading basis $(e_i)_i$. The sequence $(e_i)_i$ may be assumed to be 1-spreading. Let $(u_i)_i$ and $(z_i)_i$ be as above, and set $U = [(u_i)_i]$ and $Z = [(z_i)_i]$. Then, the theorem asserts that the map
\begin{align*}
T:X\to U\oplus Z\\
Te_i = (u_i,z_i)
\end{align*}
is an isomorphic embedding. In this decomposition we shall refer to $(u_i)_i$ and $(z_i)_i$ as unconditional and convex block homogenous parts of $(e_i)_i$, respectively. 

Moreover, as we shall observe in next section, $(z_i)$ is equivalent to a block basis of $(e_i)$ (Proposition \ref{theoremaboutblocksequences}), and it is unique in the following sense. If $(\tilde z_i)$ is a convex block homogeneous sequence spanning a space $\tilde Z$ so that the map $\tilde T:X\to U\oplus \tilde Z$ defined by $\tilde Te_i = (u_i,\tilde z_i)$ is an isomorphic embedding, then $(\tilde z_i)$ is equivalent to $(z_i)$. 
\end{rmk}

The proof of Theorem \ref{spreading-characterization}, which is postponed until the end of the section, is based on two lemmas given below.

\begin{lem}\label{convexandunconditionaldomination}
Let $X$ be a Banach space with a 1-spreading basis $(e_i)_i$. Then, for every scalars $c_1,\ldots,c_n$ and  convex block  vectors $x_1,\ldots,x_n$ of the basis:
\begin{equation*}
\left\|\sum_{k=1}^nc_ke_k\right\| \leqslant \left\|\sum_{k=1}^nc_kx_k\right\| + \left\|\sum_{k=1}^nc_k(e_{2k-1} - e_{2k})\right\|.
\end{equation*}
\end{lem}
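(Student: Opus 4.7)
The strategy is to exploit $1$-spreading to replace $e_k$ by a basis vector living inside the support of $x_k$, and then to decompose the resulting error as a convex combination of skipped differences. Write each convex block vector as $x_k = \sum_{i \in F_k} \lambda_i^k e_i$ with $\lambda_i^k \geqslant 0$, $\sum_i \lambda_i^k = 1$, and consecutive finite sets $F_1 < \cdots < F_n$, and set $i_k := \min F_k$. Since $i_1 < \cdots < i_n$, $1$-spreading gives $\|\sum_k c_k e_k\| = \|\sum_k c_k e_{i_k}\|$. Substituting $e_{i_k} = x_k + (e_{i_k} - x_k)$ and invoking the triangle inequality, the lemma reduces to showing
$$\Bigl\|\sum_k c_k(e_{i_k} - x_k)\Bigr\| \leqslant \Bigl\|\sum_k c_k(e_{2k-1} - e_{2k})\Bigr\|.$$

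To establish this, rewrite $e_{i_k} - x_k = \sum_{i \in F_k} \lambda_i^k(e_{i_k} - e_i)$ and regard $\sum_k c_k(e_{i_k} - x_k)$ as a convex combination, with weights $\prod_k \lambda^k$ coming from the product probability measure on $F_1 \times \cdots \times F_n$, of the vectors $v_I := \sum_k c_k(e_{i_k} - e_{I_k})$ indexed by tuples $I = (I_1, \ldots, I_n)$. By convexity of the norm it suffices to bound $\|v_I\|$ uniformly in $I$. For a fixed $I$, setting $A := \{k : I_k > i_k\}$ removes the vanishing terms, and consecutivity of the $F_k$'s ensures that the remaining positions interlace as $i_{k_1} < I_{k_1} < i_{k_2} < I_{k_2} < \cdots$ for $A = \{k_1 < \cdots < k_{|A|}\}$. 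Applying $1$-spreading of $(e_i)_i$ to the order-preserving identification $(i_{k_\ell}, I_{k_\ell}) \mapsto (2k_\ell - 1, 2k_\ell)$ then yields $\|v_I\| = \|\sum_{k \in A} c_k(e_{2k-1} - e_{2k})\|$.

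The bound is closed off by Proposition \ref{summingzero}: each skipped difference $e_{2k-1} - e_{2k}$ has summing value zero, so the block sequence $(e_{2k-1} - e_{2k})_k$ is suppression unconditional, and therefore $\|\sum_{k \in A} c_k(e_{2k-1} - e_{2k})\| \leqslant \|\sum_{k=1}^n c_k(e_{2k-1} - e_{2k})\|$. The main subtlety is preserving the sharp constant $1$ in front of the second term: this requires both the $1$-spreading assumption (rather than just $C$-spreading) to pass through the order-preserving identification of positions without loss, and the fact that Proposition \ref{summingzero} delivers suppression unconditionality with constant $1$ under the $1$-spreading hypothesis. Some care with the product-measure expansion is also needed so that the outer convexity step (Jensen for the norm) is applied cleanly.
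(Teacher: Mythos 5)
Your reduction is the same as the paper's up to a point: substitute $e_k$ by a well-placed basis vector and absorb $\sum_k c_kx_k$ by the triangle inequality, leaving a ``difference'' term to be bounded by $\|\sum_kc_k(e_{2k-1}-e_{2k})\|$. The paper then fixes a norming functional $x^*$ and chooses a \emph{single} $j_k\in F_k$ per $k$ adaptively (the one minimizing $c_kx^*(e_j)$ over $F_k$), so that $c_kx^*(e_{j_k})\leqslant c_kx^*(x_k)$, and is done in one line. You instead average over the entire product $F_1\times\cdots\times F_n$; that is a genuinely different route, and it would also work, but it introduces an extra complication that you then have to patch.

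The patch is where the gap is. Because you set $i_k:=\min F_k$, a tuple $I$ with $I_k=i_k$ for some $k$ produces a $v_I$ with strictly fewer than $n$ nonvanishing blocks, so you end up needing $\|\sum_{k\in A}c_k(e_{2k-1}-e_{2k})\|\leqslant\|\sum_{k=1}^nc_k(e_{2k-1}-e_{2k})\|$ for proper subsets $A$, i.e.\ $1$-suppression unconditionality of the skipped differences. You invoke Proposition~\ref{summingzero} for this, but that proposition is stated for a \emph{conditional} spreading basis, while Lemma~\ref{convexandunconditionaldomination} assumes only $1$-spreading; the conditionality hypothesis is not available here, so the citation does not close the argument. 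The cleanest repair is to avoid needing suppression at all: use $1$-spreading (which you already invoke at the outset) to reposition the supports so that each $i_k$ lies strictly \emph{before} $\min F_k$ and strictly after $\max F_{k-1}$, exactly as the paper does. Then $i_k<I_k$ for every $I_k\in F_k$, so $A=\{1,\dots,n\}$ for every tuple $I$, the interlacing $i_1<I_1<i_2<I_2<\cdots$ holds unconditionally, and $\|v_I\|=\|\sum_{k=1}^nc_k(e_{2k-1}-e_{2k})\|$ on the nose for every $I$. With that change your convexity argument goes through cleanly, and Proposition~\ref{summingzero} is not needed. (A minor aside: you say ``consecutivity of the $F_k$'s'' is used for the interlacing, but $F_1<\cdots<F_n$ being successive already gives $I_{k_\ell}\leqslant\max F_{k_\ell}<\min F_{k_{\ell+1}}=i_{k_{\ell+1}}$; no gap-free hypothesis is required.)
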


\begin{proof}
Assume that $x_k = \sum_{j\in F_k}\la_j^ke_j$ for $k=1,\ldots,n$. Using the spreading property of the basis, we may clearly assume that there exist natural number $i_1<\cdots<i_n$ with $i_1 < F_1 <i_2< F_2<\cdots<i_n < F_n$. Let $x^*$ be a functional with $\|x^*\| = 1$. Choose $j_k\in F_k$ with $c_kx^*(e_{j_k}) = \min\{c_kx^*(e_{j}): j\in F_k\}$. An easy computation yields the following.
\begin{eqnarray*}
x^*\left(\sum_{k=1}^nc_ke_{i_k}\right) &=& x^*\left(\sum_{k=1}^nc_k(e_{i_k} - e_{j_k})\right) + x^*\left(\sum_{k=1}^nc_ke_{j_k}\right)\\
&\leqslant& x^*\left(\sum_{k=1}^nc_k(e_{i_k} - e_{j_k})\right) + x^*\left(\sum_{k=1}^nc_kx_k\right)\\
&\leqslant& \left\|\sum_{k=1}^nc_k(e_{i_k} - e_{j_k})\right\| + \left\|\sum_{k=1}^nc_kx_k\right\|\\
&=& \left\|\sum_{k=1}^nc_k(e_{2k-1} - e_{2k})\right\| + \left\|\sum_{k=1}^nc_kx_k\right\|.
\end{eqnarray*}
The fact that $(e_i)_i$ is 1-spreading finishes the proof.
\end{proof}

\begin{lem}\label{averagesdecrease}
Let $X$ be a Banach space with a 1-spreading basis $(e_i)_i$ and let $w_1,\ldots,w_n$ be convex block vectors of the basis. Then for every $\e>0$ there exists $M_0\in\mathbb{N}$, such that for every $c_1,\ldots,c_n\in[-1,1]$ and block averages of the basis $y_1,\ldots,y_n$, with $\#\supp y_k \geqslant M_0$ for $k=1,\ldots,n$, the following holds:
\begin{equation*}
\left\|\sum_{k=1}^nc_ky_k\right\| < \left\|\sum_{k=1}^nc_kw_k\right\| + \e.
\end{equation*}
\end{lem}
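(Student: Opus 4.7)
The plan is to express each long average $y_k$ as the expected value, under a suitably chosen probability measure, of a random spread of $w_k$ supported inside $\supp y_k$, up to a small $\ell_1$-error. Because 1-spreading of $(e_i)_i$ forces $\|\sum_k c_k\tilde w_k\|=\|\sum_k c_k w_k\|$ whenever each $\tilde w_k$ is a spread of $w_k$ and the joint supports appear in the natural order, passing to the expectation will then yield $\|\sum_k c_k y_k\|\leq\|\sum_k c_k w_k\|+\e$ once $M_0$ is large.

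After normalizing so that $\|e_1\|=1$ (hence $\|\cdot\|_X\leq\|\cdot\|_{\ell_1}$ on $c_{00}(\N)$), I would write $w_k=\sum_{p=1}^{m_k}\la_p^k e_{j_p^k}$ with $\la_p^k>0$, $\sum_p\la_p^k=1$, and $y_k=\frac{1}{N_k}\sum_{j=1}^{N_k}e_{t_{k,j}}$ with $N_k\geq M_0$. For each $k$ I partition $\{1,\ldots,N_k\}$ into consecutive intervals $I_1^k<\cdots<I_{m_k}^k$ of cardinalities $N_{k,p}=|I_p^k|$ chosen so that $\sum_pN_{k,p}=N_k$ and $|N_{k,p}/N_k-\la_p^k|\leq 1/N_k$; this is possible as soon as $M_0\geq\max_{k,p}2/\la_p^k$. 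For each tuple $\vec s=(s_p^k)$ with $s_p^k\in I_p^k$, define the spread
$$\tilde w_k^{\vec s}=\sum_{p=1}^{m_k}\la_p^k\,e_{t_{k,s_p^k}}.$$
Since $s_1^k<\cdots<s_{m_k}^k$ and $\supp y_1<\cdots<\supp y_n$, the concatenation of the supports of $\tilde w_1^{\vec s},\ldots,\tilde w_n^{\vec s}$ is strictly increasing and carries the same coefficient sequence $(c_k\la_p^k)$ as the concatenation of the supports of the $w_k$'s, so the 1-spreading hypothesis gives $\|\sum_k c_k\tilde w_k^{\vec s}\|=\|\sum_k c_k w_k\|$ for every admissible $\vec s$.

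Next I would put the product probability measure on tuples $\vec s$ that makes each $s_p^k$ uniform on $I_p^k$ independently, compute by linearity
$$\mathbb{E}\bigl[\tilde w_k^{\vec s}\bigr]=\sum_p\frac{\la_p^k}{N_{k,p}}\sum_{s\in I_p^k}e_{t_{k,s}},\qquad y_k=\sum_p\frac{1}{N_k}\sum_{s\in I_p^k}e_{t_{k,s}},$$
and set $\de_k:=y_k-\mathbb{E}[\tilde w_k^{\vec s}]$, so that $\|\de_k\|_{\ell_1}=\sum_p|N_{k,p}/N_k-\la_p^k|\leq m_k/N_k\leq m_k/M_0$. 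Combining these ingredients,
\begin{align*}
\Bigl\|\sum_k c_k y_k\Bigr\|
&\leq\Bigl\|\sum_k c_k\,\mathbb{E}[\tilde w_k^{\vec s}]\Bigr\|+\Bigl\|\sum_k c_k\de_k\Bigr\|\\
&\leq\mathbb{E}\Bigl[\Bigl\|\sum_k c_k\tilde w_k^{\vec s}\Bigr\|\Bigr]+\sum_k|c_k|\,\|\de_k\|_{\ell_1}\\
&\leq\Bigl\|\sum_k c_k w_k\Bigr\|+\frac{n\max_k m_k}{M_0},
\end{align*}
using $|c_k|\leq 1$ in the last step. Choosing $M_0>n\max_k m_k/\e$ (in addition to the earlier lower bound on $M_0$) then yields the desired strict inequality. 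The main obstacle is recognizing that $y_k$ is, up to a vanishing $\ell_1$-error, a convex combination of spreads of $w_k$ for \emph{arbitrary} non-uniform $w_k$; a uniform average over all spreads does not work, but the proportional partition $|I_p^k|\approx\la_p^k N_k$ with independent uniform sampling inside each $I_p^k$ does—this is the discrete analog of writing Lebesgue measure on $[0,1]$ as the law of $(s_1,\ldots,s_m)$ with $s_p$ uniform on the $p$-th part of a partition of $[0,1]$ of lengths $(\la_1,\ldots,\la_m)$.
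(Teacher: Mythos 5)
Your proof is correct, and its combinatorial core is the same as the paper's: split the support of each long average $y_k$ into consecutive pieces whose sizes are (approximately) proportional to the coefficients $\la_p^k$ of $w_k$. Where you diverge is in the execution at both ends of the argument. The paper first perturbs the $w_k$ to have rational coefficients $n_j/N$ (at cost $\e/2$), discards a few terms of each $y_k$ so its length becomes a multiple of $N$, writes the truncated average \emph{exactly} as a convex combination of block averages $z_j^k$ with weights $n_j/N$, and then invokes Lemma \ref{convexdomination} to dominate by $\|\sum_k c_k w_k\|$. You avoid both the rational approximation and the appeal to Lemma \ref{convexdomination}: you round the proportional partition to integer sizes, control the discrepancy as an $\ell_1$-perturbation of size at most $m_k/N_k$ (this is where you need the harmless normalization $\|e_i\|=1$), and obtain the domination by writing the resulting proportional average as the expectation of random spreads of $w_k$, each of which has norm exactly $\|\sum_k c_k w_k\|$ by $1$-spreading, so convexity (Jensen/triangle inequality over a finite probability space) finishes. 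In effect your averaging-over-spreads step is a self-contained proof of the particular instance of Lemma \ref{convexdomination} that is needed, whereas the paper proves that lemma once by a norming-functional selection and then reuses it; your version keeps the lemma independent of that auxiliary result at the price of the $\ell_1$-error bookkeeping, while the paper's version keeps all errors expressed through exact convex-combination identities in the given norm. One small point you wave at ("this is possible") is the rounding of $(\la_p^k N_k)_p$ to integers summing to $N_k$ with each error at most $1/N_k$ and all parts nonempty; this is standard (round each entry up or down so the total is exact), but it deserves the one-line justification, since with naive flooring the last part can absorb an error of order $m_k/N_k$.
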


\begin{proof}
Allowing some error, say $\e/2$, we may assume that the $w_k$'s have rational coefficients, i.e. there are some $N\in\N$, successive sets $H_1,\ldots,H_n$ and $(n_j)_{j\in H_k}$ so that $\sum_{j\in H_k}(n_j/N) = 1$ and $w_k = \sum_{j\in H_k}(n_j/N)e_j$ for $k=1,\ldots,n$. Note that $N = \sum_{j\in H_k}n_j$ for $k=1,\ldots,n$. Choose $M_0 > 4n\e^{-1}N$.

Let now $y_k =(1/m_k)\sum_{i\in G_k}e_i$, with $\# G_k=m_k$, $m_k\geqslant M_0$, for $k=1,\ldots,n$ and $G_1<\dots<G_n$. Set $r_k = \lfloor m_k/N\rfloor$, choose $G_k^\prime\subset G_k$ with $\#G_k^\prime = Nr_k$ and set $y_k^\prime = (1/(Nr_k))\sum_{j\in G_k^\prime}e_j$. Some computations yield that $\|y_k - y_k^\prime\| < \e/(2n)$. It is therefore sufficient to prove that $\|\sum_{k=1}^nc_ky_k^\prime\| \leqslant \|\sum_{k=1}^nc_kw_k\|$.

Partition each $G_k^\prime$ into further successive sets $(E_j^k)_{j\in H_k}$ with $\#E_j^k = r_kn_j$ and define $z_j^k = (1/n_jr_k)\sum_{i\in E_j^k}e_i$ for $k=1,\ldots,n$ and $j\in H_k$.


Applying Lemma \ref{convexdomination} we finally conclude the following:
\begin{equation*}
\left\|\sum_{k=1}^nc_ky_k^\prime\right\| = \left\|\sum_{k=1}^nc_k\left(\sum_{j\in H_k}\frac{r_kn_j}{Nr_k}z_j^k\right)\right\| \leqslant \left\|\sum_{k=1}^nc_k\sum_{j\in H_k}\frac{n_j}{N}e_j\right\| = \left\|\sum_{k=1}^nc_kw_k\right\|.
\end{equation*}
\end{proof}

%
%
%
%
%
%
%

\begin{proof}[Proof of Theorem \ref{spreading-characterization}]
Lemma \ref{averagesdecrease} clearly yields that any pair of block sequences $(x_k)_k, (y_k)_k$ of averages of the basis with $\#\supp(x_k),\#\supp(y_k)$ tending to infinity must admit the same spreading model. We name this spreading model  $(z_i)_i$ and denote its closed linear span $Z$. Lemma \ref{averagesdecrease} also clearly implies \eqref{what is the convex block homogeneous part exactly formula}. The second line of \eqref{what is the convex block homogeneous part exactly formula} and Lemma \ref{convexdomination} easily imply that $(z_i)_i$ is 1-convex block homogeneous.

To finish the proof we need to show \eqref{Mainmainmainmaintheoremequation}. It follows from Lemmas \ref{convexdomination} and \ref{convexandunconditionaldomination} that for all real scalars $(c_k)_{k=1}^n$
\begin{equation*}
\left\|\sum_{k=1}^nc_kz_k\right\| \leqslant \left\|\sum_{k=1}^nc_ke_k\right\| \leqslant \left\|\sum_{k=1}^nc_kz_k\right\| + \left\|\sum_{k=1}^nc_k(e_{2k-1} - e_{2k})\right\|,
\end{equation*}
which implies conclusion.
\end{proof}

\begin{rmk}
It is tempting to conjecture that every convex block homogeneous norm is equivalent to the jamesification of a subsymmetric norm, and consequently every conditional spreading norm is up to equivalence generated by two subsymmetric norms. However, this turned out to be false. We present a counterexample in Section \ref{counterexamplesection}.
\end{rmk}


\section{Block sequences of conditional spreading bases}\label{block sequences}
This section is centered around understanding the structure of block sequences in a space with a conditional spreading basis. Although the basic statements are included in the proposition below, more precise information is given the subsequent lemmas.

\begin{prp}
\label{theoremaboutblocksequences}
Let $X$ be a Banach space with a conditional spreading basis $(e_i)_i$ and let $(u_i)_i$, $(z_i)_i$ be the unconditional and convex block homogeneous parts of $(e_i)_i$ respectively. The following hold.
\begin{itemize}
 \item[(i)] There exists a block sequence $(\tilde z_i)_i$ of averages of the basis $(e_i)_i$ that is equivalent to $(z_i)_i$.
 \item[(ii)] The closed linear span of every convex block sequence $(x_i)_i$ of the basis $(e_i)_i$ that is equivalent to $(z_i)_i$ is complemented in $X$. In particular, $Z = [(z_i)_i]$ is isomorphic to a complemented subspace of $X$.
 \item[(iii)] Every convex block sequence $(x_i)_i$ of the basis has a subsequence that is equivalent to either $(e_i)_i$ or to $(z_i)_i$.
 \item[(iv)] Every convex block sequence $(x_i)_i$ of the basis $(e_i)_i$ has a further convex block sequence that is equivalent to $(z_i)_i$.
 \item[(v)] Every seminormalized block sequence $(x_i)_i$ of the basis $(e_i)_i$ either has a subsequence that is unconditional or it has a convex block sequence that is equivalent to $(z_i)_i$.
\end{itemize}
\end{prp}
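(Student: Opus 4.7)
My plan is to prove parts (i)--(v) sequentially, relying on Theorem \ref{spreading-characterization} (the decomposition $T\colon X\hookrightarrow U\oplus Z$ with $Te_i=(u_i,z_i)$), Proposition \ref{averagingprojection} (the averaging projection), and Lemma \ref{averagesdecrease}. For (i), I take $\tilde z_i=\frac{1}{N_i}\sum_{j\in F_i}e_j$ with $(F_i)$ consecutive intervals and $N_i=|F_i|\to\infty$ sufficiently fast: the inequality $\|\sum a_iz_i\|\leq\|\sum a_i\tilde z_i\|$ is immediate from the infimum characterisation of $\|\cdot\|_Z$ in \eqref{what is the convex block homogeneous part exactly formula}, while the reverse follows from Lemma \ref{averagesdecrease} combined with a diagonal choice of $(N_i)$ and the $1$-spreading of $(z_i)$, which upgrades finite-level approximations to a uniform $K$-equivalence. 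For (ii), given a convex block $(x_i)\sim(z_i)$ with $x_i=\sum_{j\in F_i}\lambda_j^ie_j$, extend the supports to consecutive intervals $(I_i)$ covering $\N$ and put $Qx=\sum_is_{I_i}(x)x_i$. Since $s(x_j)=1$ and the $I_i$ are disjoint, $Qx_j=x_j$, and
\[
\|Qx\|\leq C\Bigl\|\sum_is_{I_i}(x)z_i\Bigr\|\leq C\Bigl\|\sum_is_{I_i}(x)\tfrac{1}{|I_i|}\sum_{j\in I_i}e_j\Bigr\|\leq 3C\|x\|
\]
by $(x_i)\sim(z_i)$, the infimum formula, and Proposition \ref{averagingprojection}; applying this to $(\tilde z_i)$ from (i) yields the second assertion of (ii).

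For (iii), I apply the decomposition to a convex block $(x_i)$: $T(x_i)=(\bar x_i,\tilde x_i)$, where $(\tilde x_i)\sim(z_i)$ automatically by $1$-convex block homogeneity of $(z_i)$. The dichotomy turns on whether $\|\bar x_i\|_U\to 0$ along a subsequence. If so, one extracts a sub-subsequence with $\sum_i\|\bar x_i\|_U<\infty$ and a Kadets--Pelczynski perturbation argument gives $(x_i)\sim(\tilde x_i)\sim(z_i)$. Otherwise $(\bar x_i)$ is a seminormalised convex block of the subsymmetric basis $(u_i)$, and is equivalent to $(u_i)$ by a standard argument---passing to a subsequence where the coefficient vectors of $\bar x_i$ converge to a probability vector with a positive coordinate and using $1$-unconditionality together with $1$-spreading of $(u_i)$ to transfer the norm comparison to the whole sequence---so that $(x_i)\sim(u_i,z_i)\sim(e_i)$. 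Part (iv) is then a corollary: by (iii) some subsequence of $(x_i)$ is $\sim(z_i)$ or $\sim(e_i)$; in the latter case long averages $w_n=\frac{1}{|J_n|}\sum_{i\in J_n}x_{k_i}$ are convex blocks of $(x_i)$ whose norms, under the equivalence with $(e_i)$, match those of the averages $\frac{1}{|J_n|}\sum_{i\in J_n}e_i$, which by (i) are equivalent to $(z_n)$.

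For (v), decompose a seminormalised block $x_i=s(x_i)\bar x_i+r_i$ where $\bar x_i=\frac{1}{|I_i|}\sum_{j\in I_i}e_j$ averages an interval $I_i\supseteq\supp x_i$ and $s(r_i)=0$, so $(r_i)$ is unconditional by Proposition \ref{summingzero}. If $s(x_i)\to 0$ on a subsequence, then $\|s(x_i)\bar x_i\|\to 0$ and a perturbation argument (using $\sum|s(x_i)|<\infty$ on a further subsequence together with the seminormalisation of $(r_i)$) gives an unconditional subsequence of $(x_i)$. Otherwise pass to a constant-sign subsequence with $|s(x_i)|\geq\delta>0$ and form convex combinations $w_n=\sum_{i\in J_n}\mu_i^nx_i$ with weights $\mu_i^n\propto 1/s(x_i)$, chosen so that the averaged part $\sum_i\mu_i^ns(x_i)\bar x_i$ of $w_n$ becomes a long average of the basis (equivalent to $(z_i)$ by (i)) while the unconditional remainder $\sum_i\mu_i^nr_i$ is controlled through the structure of $(r_i)$: since $s(r_i)=0$, the image $T(r_i)$ sits essentially in the $U$-factor, so taking $|J_n|$ large enough relative to $(N_i)$ makes this remainder asymptotically negligible in the $Z$-direction that detects the $(z_i)$-structure. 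The main obstacle is precisely this last step: orchestrating $|J_n|$ and the convex weights so that averaging extracts a clean $(z_i)$-equivalent sequence while suppressing the unconditional $r_i$-contribution is the most delicate point, and it is where the interplay between Lemma \ref{averagesdecrease}, Proposition \ref{summingzero}, and the decomposition of Theorem \ref{spreading-characterization} becomes essential.
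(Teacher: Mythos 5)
Parts (ii), (iii), and (iv) are sound and follow the paper's route: (ii) is the same averaging-projection argument via Proposition \ref{averagingprojection} and the infimum formula in \eqref{what is the convex block homogeneous part exactly formula}, (iii) uses the $U\oplus Z$ decomposition and the $\|\cdot\|_\infty\to 0$ dichotomy exactly as the paper does, and (iv) is the same corollary of (i) and (iii). Two of your parts, however, have gaps.

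For (i), the proposed ``diagonal choice of $(N_i)$ plus $1$-spreading of $(z_i)$'' does not close the argument. Lemma \ref{averagesdecrease} gives, for each fixed length $n$ and target convex blocks $w_1,\dots,w_n$, a threshold $M_0=M_0(n,\e)$ so that averages of support size $\geqslant M_0$ satisfy the desired inequality; but as $n$ grows, $M_0(n,\e)$ grows as well, while $N_1,\dots,N_n$ are already fixed. In particular $N_1\geqslant M_0(n,\e)$ will eventually fail, and $1$-spreading of $(z_i)$ lets you relabel $z_i$ but not $\tilde z_i$, which is a concrete, non-spreading sequence. The paper's Lemma \ref{Zxembeds} avoids this by passing through the embedding $T:X\to U\oplus Z$: with $\#A_i$ large, Lemma \ref{rosenthaldichotomy} makes $\sum_i\|P_{U}T\tilde z_i\|$ summable, so $(T\tilde z_i)$ is a summable perturbation of the convex block $(P_{Z}T\tilde z_i)$ of $(z_i)_i$, which is $1$-equivalent to $(z_i)_i$ by convex block homogeneity. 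You should route (i) through the decomposition in the same way; the purely intrinsic Lemma~\ref{averagesdecrease} argument does not yield a uniform constant.

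For (v), your decomposition $x_i=s(x_i)\bar x_i+r_i$ with $s(r_i)=0$ and $(r_i)$ unconditional is the right starting point, but the mechanism you then propose—choosing weights $\mu_i^n\propto 1/s(x_i)$ and ``suppressing'' the $r_i$-contribution through the $U\oplus Z$ structure—does not work, and indeed you flag it yourself as the unresolved obstacle. (The $Z$-components of the $r_i$ need not be small merely because $s(r_i)=0$.) The missing step is Mazur's theorem: after Rosenthal's $\ell_1$-theorem lets you assume $(x_i)$ is weak Cauchy (the $\ell_1$ case being trivially unconditional), $(r_i)$ is both unconditional (Proposition \ref{summingzero}) and weak Cauchy, hence weakly null; Mazur then gives convex coefficients $(\mu_i)_{i\in F_k}$ with $\sum_{i\in F_k}\mu_i r_i\to 0$ in norm. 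Applying the same coefficients to $(x_i)$ produces a convex block that is, up to a vanishing perturbation and a harmless rescaling by $\sum_{i\in F_k}\mu_i s(x_i)\to\alpha\neq 0$, a convex block of $(e_i)_i$, after which (iv) finishes. The paper's own proof is a streamlined version of this: with $s(x_i)=1$ it subtracts a single basis element $e_{k_i}$ rather than the average $\bar x_i$, which makes the Mazur step and the passage to a convex block of $(e_i)_i$ entirely transparent.
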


The following lemma is well known. Its proof, which we omit, is based based on a counting argument.

\begin{lem}\label{rosenthaldichotomy}
Let $(u_i)_i$ be a subsymmetric sequence in some Banach space, that is not equivalent to the unit vector basis of $\ell_1$. Then for every $\e>0$ there exists $\de>0$ such that for any real numbers $(a_i)_i$ with $\sum_i|a_i| \leqslant 1$ and $\sup_i|a_i| <\de$ we have that $\|\sum_ia_iu_i\| < \e$.
\end{lem}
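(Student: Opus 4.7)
The plan is to renorm so that $(u_i)_i$ is $1$-subsymmetric (that is, $1$-unconditional and $1$-spreading), which preserves the hypothesis and conclusion up to constants, and then to control everything through the sequence $\al_n = \|\sum_{i=1}^n u_i\|$. Subadditivity $\al_{n+m} \leqslant \al_n + \al_m$ follows from the triangle inequality together with $1$-spreading (a block of length $m$ placed in positions $n+1,\ldots,n+m$ has the same norm as the first $m$ basis vectors), so $\al_n/n$ converges to $\inf_n \al_n/n$. The first step is to observe that this infimum must vanish: if $\al_n \geqslant cn$ for some $c>0$, then by partitioning $\{1,\ldots,K\}$ into $n$ consecutive blocks of prescribed integer sizes, using $1$-spreading to compare each block with an initial segment, and $1$-unconditionality to bound coefficients from below, one shows that $\|\sum_i a_iu_i\| \geqslant c\sum_i|a_i|$ on rational vectors, contradicting the assumption that $(u_i)_i$ is not $\ell_1$-equivalent. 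Hence $\al_n/n \to 0$.

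Given $\e > 0$, choose $M \in \N$ with $\al_M/M < \e/4$, and then $\de > 0$ with $2\al_M\de < \e/2$. Let $(a_i)_i$ satisfy $\sum_i|a_i| \leqslant 1$ and $\sup_i|a_i| < \de$; by $1$-unconditionality we may assume $a_i \geqslant 0$. Partition the support into the dyadic level sets
\begin{equation*}
B_k = \left\{i : 2^{-k-1}\de \leqslant a_i < 2^{-k}\de\right\}, \quad k = 0, 1, 2, \ldots
\end{equation*}
By the triangle inequality, $1$-unconditionality (replacing each $a_i$ on $B_k$ by the upper bound $2^{-k}\de$), and $1$-spreading (which reindexes $\sum_{i \in B_k}u_i$ as $\sum_{j=1}^{|B_k|}u_j$, giving $\al_{|B_k|}$), one obtains
\begin{equation*}
\left\|\sum_i a_iu_i\right\| \leqslant \sum_{k \geqslant 0} 2^{-k}\de\,\al_{|B_k|}.
\end{equation*}
The inequalities $\sum_{i \in B_k}a_i \geqslant 2^{-k-1}\de\,|B_k|$ together with $\sum_ia_i \leqslant 1$ yield the crucial bookkeeping bound $\sum_{k \geqslant 0} 2^{-k}\de\,|B_k| \leqslant 2$.

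The finishing step splits this sum according to whether $|B_k| \geqslant M$ or $|B_k| < M$. For $|B_k| \geqslant M$, the choice of $M$ gives $\al_{|B_k|} \leqslant (\e/4)|B_k|$, so these terms contribute at most $(\e/4)\cdot 2 = \e/2$. For $|B_k| < M$, monotonicity of $\al_n$ (immediate from $1$-unconditionality) gives $\al_{|B_k|} \leqslant \al_M$, so these terms contribute at most $\al_M\sum_k 2^{-k}\de \leqslant 2\al_M\de < \e/2$. Adding the two estimates yields $\|\sum_ia_iu_i\| < \e$. The main obstacle is the preliminary reduction $\al_n/n \to 0$; once that is secured, the dyadic level-set decomposition---precisely the sort of counting argument to which the paper alludes---proceeds mechanically.
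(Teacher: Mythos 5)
The paper omits its own proof of this lemma, saying only that it is ``based on a counting argument,'' so there is nothing to compare against verbatim; your Fekete-plus-dyadic-level-set decomposition is a reasonable instantiation of exactly that kind of argument, and it is essentially correct.

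There is one small but genuine slip in the choice of $M$. You pick $M$ with $\al_M/M < \e/4$ and then assert that $|B_k|\geqslant M$ gives $\al_{|B_k|}\leqslant(\e/4)|B_k|$. That inference requires $\al_n/n\leqslant\al_M/M$ for all $n\geqslant M$, which a subadditive sequence need not satisfy (the ratio $\al_n/n$ is not monotone in general; it only tends to its infimum). The fix is immediate from what you already established: since $\al_n/n\to 0$, choose $M$ so that $\al_n/n<\e/4$ for \emph{every} $n\geqslant M$, not merely at $n=M$. With that wording corrected the two halves of the estimate close as you wrote them.

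For the preliminary reduction, your sketch of ``$\al_n\geqslant cn$ for all $n$ implies $\ell_1$-equivalence'' is a bit telegraphic, but the cleanest route is already in the paper: for positive integers $m_1,\dots,m_n$ with $K=\sum_i m_i$, write $\sum_{j=1}^K u_j=\sum_{i=1}^n m_i x_i$ where $x_i$ is the average of $m_i$ consecutive basis vectors, and apply Lemma \ref{convexdomination} to get $\al_K\leqslant\|\sum_i m_i u_i\|$. Then $\al_n\geqslant cn$ yields $\|\sum_i a_i u_i\|\geqslant c\sum_i|a_i|$ for rational positive $(a_i)$ by homogeneity, hence for all scalars by unconditionality and density, contradicting the hypothesis. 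That closes the one vague step and makes the whole argument self-contained.
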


The following lemma proves Proposition \ref{theoremaboutblocksequences} (i).

\begin{lem}\label{Zxembeds}
Let $X$ be a Banach space with a conditional spreading basis $(e_i)_i$ and let $(z_i)_i$ be its convex block homogeneous part.
Then, there exists a sequence of natural numbers $(n_i)_i$ so that for every sequence  $(A_i)_i$ of successive subsets of $\N$ with $\#A_i \geqslant n_i$ for all $i\inn$ the following holds:
if $\tilde{z}_i = (1/\#A_i)\sum_{j\in A_i}e_j$ for $i\inn$, then the sequence $(\tilde{z}_i)_i$ is equivalent to $(z_i)_i$.
\end{lem}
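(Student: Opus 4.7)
The lower bound $\|\sum a_iz_i\|\leq\|\sum a_i\tilde z_i\|$ is immediate and imposes no constraint on $(n_i)_i$: by construction $(\tilde z_i)_i$ is a convex block sequence of $(e_i)_i$, so the infimum characterization of the convex block homogeneous norm given in \eqref{what is the convex block homogeneous part exactly formula} of Theorem~\ref{spreading-characterization} yields it at once.

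For the reverse inequality I plan to invoke the decomposition of $\|\cdot\|$ from Theorem~\ref{spreading-characterization}. Expanding $\sum_i a_i\tilde z_i=\sum_k c_ke_k$ with $c_k=a_i/|A_i|$ for $k\in A_i$, that theorem gives
$$\left\|\sum_i a_i\tilde z_i\right\|\leq 2\max\left\{\left\|\sum_k c_ku_k\right\|,\ \left\|\sum_k c_kz_k\right\|\right\},$$
where $(u_k)=(e_{2k}-e_{2k-1})$ is the subsymmetric part and $(z_i)$ is the convex block homogeneous part. The second maximand is easy: $\sum_k c_kz_k=\sum_i a_iv_i^z$ with $v_i^z=(1/|A_i|)\sum_{k\in A_i}z_k$ a convex block of $(z_i)$, and $1$-convex block homogeneity of $(z_i)$ makes $(v_i^z)_i$ isometric to $(z_i)_i$, so this term equals $\|\sum a_iz_i\|$.

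For the first maximand, I first note that the conditionality of $(e_i)$ forces $(u_i)\not\sim\ell_1$: otherwise the same decomposition would give $\|\sum a_ie_i\|$ comparable to $\sum|a_i|$, making $(e_i)$ equivalent to the $\ell_1$-basis and hence unconditional, a contradiction. Writing $\sum_k c_ku_k=\sum_i a_iv_i^u$ with $v_i^u=(1/|A_i|)\sum_{k\in A_i}u_k$, the subsymmetric non-$\ell_1$ hypothesis together with Lemma~\ref{rosenthaldichotomy} makes each $\|v_i^u\|$ small, of order $\phi_u(|A_i|)/|A_i|\to 0$, where $\phi_u(n)=\|\sum_{k=1}^nu_k\|$. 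Combining with the $1$-unconditionality of $(u_k)$, one arrives at an estimate of the form $\|\sum_i a_iv_i^u\|\leq\eta(\min_i|A_i|)\cdot\|\sum a_iz_i\|$ with $\eta(M)\to 0$ as $M\to\infty$, and choosing $(n_i)_i$ so that $\eta(n_i)\leq 1$ yields $\|\sum a_i\tilde z_i\|\leq C\|\sum a_iz_i\|$ for some absolute constant $C$.

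The main obstacle is producing such a quantitative $\eta$-bound that is uniform in $n$ and in the scalars $(a_i)$. A direct application of Lemma~\ref{rosenthaldichotomy} only controls $\|\sum_k c_ku_k\|$ by a constant times $\sum_i|a_i|$, which is in general not dominated by $\|\sum a_iz_i\|$ — consider, for instance, alternating coefficients in a space whose convex block homogeneous part is the summing basis of $c_0$. The resolution, which drives the delicate choice of $(n_i)_i$, is to exploit that the $v_i^u$ are disjointly supported in the subsymmetric basis $(u_k)$ with norms decaying via $\phi_u$, so their weighted sums obey a sequence-space estimate governed by the geometry of $(u_k)$ rather than by the total mass $\sum|a_i|$.
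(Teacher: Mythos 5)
Your overall plan matches the paper's: decompose via Theorem~\ref{spreading-characterization}, resolve the $z$-part exactly through $1$-convex block homogeneity, and make the $u$-part small by using that $(u_k)$ is subsymmetric and not equivalent to the $\ell_1$ basis. The gap is in the estimate you aim for: the bound $\|\sum_i a_i v_i^u\| \leqslant \eta(\min_i\#A_i)\,\|\sum_i a_i z_i\|$ with $\eta(M)\to 0$ is false, and no function of $\min_i \# A_i$ can serve. Nothing in the hypotheses rules out $(u_k)$ being equivalent to the unit vector basis of $\ell_2$ while $(z_i)$ is equivalent to the summing basis of $c_0$ (for instance $\|x\| = \max\{\|x\|_{\tilde{J}(c_0)},\|x\|_{\ell_2}\}$); there, with $a_i=(-1)^i$ for $1\leqslant i\leqslant n$ and $\#A_i=M$ fixed, $\|\sum_{i\leqslant n}a_iv_i^u\|$ is of order $\sqrt{n/M}$ and unbounded in $n$, while $\|\sum_{i\leqslant n}a_iz_i\|$ stays bounded by $1$.

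What the lemma actually requires — and what drives the paper's choice of $(n_i)_i$ — is that the $n_i$ must \emph{grow with $i$}. Apply Lemma~\ref{rosenthaldichotomy} to each $v_i^u$ separately (it is a convex combination of $(u_k)$ with $\|\cdot\|_\infty$-norm at most $1/\#A_i$) and choose $n_i$ so that $\|v_i^u\|<2^{-i}$ whenever $\#A_i\geqslant n_i$. Then $\sum_i\|v_i^u\|<1$, and the correct domination is by a fixed constant, not a vanishing one: $\|\sum_i a_iv_i^u\|\leqslant(\sup_i|a_i|)\sum_i\|v_i^u\|<\sup_i|a_i|\leqslant\|z_1^*\|\cdot\|\sum_j a_jz_j\|$, the last inequality because $(z_i)_i$ is a spreading basic sequence and hence its biorthogonal functionals have uniformly bounded (indeed equal) norms. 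This closes your upper bound with the constant $2\max\{\|z_1^*\|,1\}$. The paper packages the same idea as a small-perturbation argument: since $\sum_i\|v_i^u\|$ is finite, $(T\tilde z_i)_i=((v_i^u,v_i^z))_i$ is an absolutely summable perturbation of $((0,v_i^z))_i$, and $(v_i^z)_i$ is a convex block of $(z_i)_i$ hence equivalent to it.
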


\begin{proof}
Let $(u_i)_i$ be the unconditional part of $(e_i)_i$ and applying Lemma \ref{rosenthaldichotomy}, for every $i\inn$ choose $n_i\inn$ so that for every convex combination $u$ of $(u_i)_i$ with $\|u\|_\infty \leqslant n_i^{-1}$,
we have that $\|u\| < 2^{-i}$.

Let now $(A_i)_i$ be successive subsets of $\N$ with  $\#A_i \geqslant n_i$ for all $i\inn$ and set $\tilde{z}_i = (1/\#A_i)\sum_{j\in A_i}e_j$ for $i\inn$.
Let $T$ be the isomorphic embedding from Remark \ref{equivalentmaxnorm}. By the choice of the sequence $(n_i)_i$ we have that $\sum_i\|P_{U_X}T\tilde{z}_i\| <1$ which implies that $(\tilde{z}_i)_i$ is equivalent to
$(P_{Z_X}T\tilde{z}_i)_i$ which is a convex block sequence of $(z_i)_i$ and hence equivalent to $(z_i)_i$.
\end{proof}


\begin{rmk}\label{convexblockhomogeneouspartisunique}
The proof of Lemma \ref{Zxembeds} implies that the sequence $(z_i)_i$ is unique in the sense explained in Remark \ref{equivalentmaxnorm}. Indeed, if $(w_i)_i$ is a convex block homogeneous sequence so that the map $\tilde T:X\to U\oplus\tilde W$ with $W = [(w_i)_i]$ and $\tilde Te_i = (u_i,w_i)$ is an isomorphic embedding,  repeating the argument from the proof of Lemma \ref{Zxembeds}, if $(\tilde z_i)_i$ is as in the statement of that lemma, we conclude that $(\tilde z_i)_i$ is equivalent to $(z_i)_i$ as well as to $(w_i)_i$.
\end{rmk}


The following Lemma proves Proposition \ref{theoremaboutblocksequences} (ii).

\begin{lem}\label{Zxconvexblockiscomplemented}
Let $X$ be a Banach space with a conditional spreading basis $(e_i)_i$ and let $(z_i)_i$ be its convex block homogeneous part. Let also $(x_i)_i$ be a convex block sequence of the basis which is equivalent to $(z_i)_i$.
Then for every sequence of consecutive intervals $(I_i)_i$ of $\N$  with $\supp x_i\subset I_i$ for all $i$, we have that the map $P:X\rightarrow X$
with $Px = \sum_{i=1}^\infty s_{I_i}(x)x_i$ is a bounded linear projection.
\end{lem}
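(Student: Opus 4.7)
The plan is to establish that $P$ is bounded, well-defined, and idempotent by comparing $P$ with the bounded averaging projection $P_{\bar{I}}$ of Proposition \ref{averagingprojection}. The bridge between the convex block sequence $(x_i)_i$ and $P_{\bar I}$ will be the infimum characterisation of the convex block homogeneous norm given in \eqref{what is the convex block homogeneous part exactly formula}, applied to the particular averages $\tilde z_i := (1/\#I_i)\sum_{j\in I_i} e_j$ supported on the given intervals $I_i$.

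For the norm bound, I will set $a_i = s_{I_i}(x)$ and let $D$ be a constant witnessing the equivalence $(x_i)_i \sim (z_i)_i$. For any $n\in\N$, three successive inequalities give
\begin{equation*}
\left\|\sum_{i=1}^{n} a_i x_i\right\| \leqslant D\left\|\sum_{i=1}^{n} a_i z_i\right\| \leqslant D\left\|\sum_{i=1}^{n} a_i \tilde z_i\right\| \leqslant DK\|P_{\bar I} x\|,
\end{equation*}
where the first step is the equivalence, the second applies \eqref{what is the convex block homogeneous part exactly formula} to the finite convex block sequence $(\tilde z_i)_{i=1}^{n}$ of $(e_i)_i$, and the third uses that $(\tilde z_i)_i$ is a block basis of $(e_i)_i$ with some basis constant $K$, so that initial-segment sums are bounded by $K\|P_{\bar I} x\|$. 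Since $\|P_{\bar I} x\|\leqslant \|P_{\bar I}\|\cdot\|x\|$ by Proposition \ref{averagingprojection}, this yields a uniform bound on all partial sums of $\sum_i a_i x_i$. Applying the identical chain to tails $\sum_{i=m}^{n} a_i x_i$ and using that $\sum_i a_i \tilde z_i$ converges in $X$ to $P_{\bar I} x$, the partial sums form a Cauchy sequence, so $Px$ is well-defined and $\|P\|\leqslant DK\|P_{\bar I}\|$.

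Idempotence reduces to checking that $Px_j = x_j$ for every $j$. This is immediate from the identity $s_{I_i}(x_j) = \delta_{ij}$, which holds because $x_j$ is a convex block of the basis supported in $I_j$ (so $s(x_j)=1$) and the intervals $(I_i)_i$ are pairwise disjoint. Linearity and boundedness of $P$ then extend $P|_{(x_j)_j} = \mathrm{id}$ to the identity $P^2 = P$ on all of $X$. No serious obstacle is anticipated: the whole argument is a clean composition of the boundedness of $P_{\bar I}$ with the infimum description of the norm on $Z$ together with the equivalence $(x_i)_i\sim(z_i)_i$, which is precisely the payoff of Theorem \ref{spreading-characterization}.
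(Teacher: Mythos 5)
Your proof is correct and takes essentially the same approach as the paper. The paper phrases it more compactly by composing three maps, $Q = SR P_{\bar I}$ with $R: [(y_i)] \to [(z_i)]$, $Ry_i = z_i$ (norm one by the infimum formula) and $S: [(z_i)] \to [(x_i)]$, $Sz_i = x_i$ (the equivalence isomorphism), while you unfold the same composition into an explicit chain of norm inequalities and verify idempotence via $s_{I_i}(x_j)=\delta_{ij}$; the two write-ups are mathematically identical.
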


\begin{proof}
We may assume that $(e_i)_i$ is 1-spreading. If we define for $i\in\N$ the vector $y_i = (1/\#I_i)\sum_{j\in I_i}e_j$ then Proposition \ref{averagingprojection} yields that the map $P_{\bar{I}}:X\rightarrow X$ with $P_{\bar{I}}x = \sum_{i=1}^\infty s_{I_i}(x)y_i$ is a bounded linear projection. Observe that by \eqref{what is the convex block homogeneous part exactly formula} the map $R:[(y_i)]\to [(z_i)_i]$ with $Ry_i = z_i$ has norm one. We easily conclude that if $S:[(z_i)]\to [(x_i)]$ is the isomorphism given by $Sz_i = x_i$ then $Q = SRP$ is a bounded linear projection.
\end{proof}

\begin{rmk}
As it is implied by Proposition \ref{theoremaboutblocksequences} (ii), if $X$ is  Banach space with a convex block homogeneous basis $(e_i)_i$, then every convex block sequence of the basis spans a complemented subspace of $X$. In particular, the space spanned by every subsequence of the basis is complemented, without the basis being unconditional.  
\end{rmk}


\begin{proof}[Proof of Proposition \ref{theoremaboutblocksequences} (iii) and (iv)]
We first observe that statements (i) and (iii) of Proposition \ref{theoremaboutblocksequences} immediately imply statement (iv), i.e. we only need to prove statement (iii).
Let $(x_i)_i$ be a convex block sequence of $(e_i)_i$ and let $T$ be the isomorphic embedding from Remark \ref{equivalentmaxnorm}. Assume first that $\|x_i\|_\infty\rightarrow 0$.
Lemma \ref{rosenthaldichotomy} implies that $\|P_{U_X}Tx_i\|\rightarrow 0$ and therefore $(x_i)_i$ has a subsequence equivalent to $(P_{Z_X}Tx_i)_i$ which is a convex block sequence of $(z_i)_i$ and hence equivalent to $(z_i)_i$.

Otherwise we may assume that there exists $\e>0$ such that $\|x_i\|_\infty>\e$ for all $i\inn$. In this case if $v_i =  P_{U_X}Tx_i$ for $i\inn$, then $(v_i)_i$ is a convex block sequence of $(u_i)_i$ with $\|v_i\|_\infty>\e $ for all $i\inn$.
Lemma \ref{convexdominated} and unconditionality imply that $(v_i)_i$ is equivalent to $(u_i)_i$. Moreover, if $w_i = P_{Z_X}Tx_i$, then $(w_i)_i$ is a convex block sequence of $(z_i)_i$ and hence equivalent to $(z_i)_i$.
We conclude that $(v_i, w_i)_i$ is equivalent to $(u_i,z_i)_i$ which is equivalent to $(e_i)_i$. Since $Tx_i = (v_i, w_i)$ for all $i\inn$ and $T$ is an isomorphic embedding, we have that $(x_i)_i$ is equivalent to $(e_i)_i$.
\end{proof}




\begin{proof}[Proof of Proposition \ref{theoremaboutblocksequences} (v)]
If $(x_i)_i$ has a subsequence equivalent to the unit vector basis of $\ell_1$, then obviously there is nothing more to prove. We may therefore assume that $(x_i)_i$ is weak Cauchy.

If $\lim_is(x_i) = 0$, then passing to a subsequence and perturbing we may assume that $s(x_i) = 0$ for all $i\inn$. By Proposition \ref{summingzero} $(x_i)_i$ is then unconditional. Otherwise, by passing to some subsequence of $(x_i)_i$, perturbing and scaling, we may assume that $s(x_i) = 1$ for all $i\inn$.
Choose $k_i\in\supp x_i$ for all $i\inn$ and set $y_i = x_i - e_{k_i}$. Observe that $s(y_i)=0$ for all $i\inn$ and therefore by Proposition \ref{summingzero} $(y_i)_i$ is unconditional. Moreover, as both $(x_i)_i$ and $(e_i)_i$ are weak Cauchy,
the same is true for $(y_i)_i$, which implies that $(y_i)_i$ is weakly null. Using Mazur's Theorem we conclude that there exists a convex block sequence of $(x_i)_i$ which is equivalent to some convex block sequence of $(e_i)_i$.
Finally, Proposition \ref{theoremaboutblocksequences} (iv) yields that (ii) is satisfied.
\end{proof}

\section{Characterizing strongly summing conditional spreading sequences}\label{strongly summing}
As it will become clear in the next section, within the class of conditional spreading sequences there is a distinction between those that are  strongly summing and those that are not. When proving various results, each case may have to be treated separately and sometimes they satisfy different properties. In this relatively brief section we prove a useful criterion for deciding when a given conditional spreading sequence is strongly summing. The concept of a strongly summing sequence is due to H. P. Rosenthal and it first appeared in \cite[Definition 1.1]{R}
\begin{dfn}
A Schauder basic sequence $(e_i)_i$ is called  strongly summing, if for every sequence of real numbers $(a_i)_i$ such that the sequence $(\|\sum_{i=1}^na_ie_i\|)_n$ is bounded, the real series $\sum_ia_i$ is convergent.\label{stronglysumming}
\end{dfn}

The following is proved in \cite[Theorem 1.1]{R}.
\begin{thm}\label{rosenthalstronglysumming}
Let $(x_i)_i$ be a non-trivial weak Cauchy sequence in some Banach space. Then one of the following holds.
\begin{itemize}

\item[(i)] There exists a subsequence of $(x_i)_i$ that is strongly summing.

\item[(ii)] There exists a convex block sequence of $(x_i)_i$ that is equivalent to the summing basis of $c_0$.

\end{itemize}
\end{thm}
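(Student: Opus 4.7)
The plan is to prove the dichotomy by contrapositive: assume no subsequence of $(x_i)_i$ is strongly summing, and construct a convex block sequence of $(x_i)_i$ equivalent to the summing basis of $c_0$. The guiding heuristic is that a strongly summing sequence admits a well-defined ``summing functional'' $\sum a_iy_i\mapsto\sum a_i$ on the vectors with bounded partial-sum norm; the failure of this property produces oscillating scalar witnesses that can be stitched together into a copy of the summing basis of $c_0$.

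First I would extract uniform witnesses of non-strong-summability. By hypothesis, for every subsequence $(y_i)_i$ of $(x_i)_i$ there exist scalars $(a_i^y)_i$ with $\sup_n\|\sum_{i\leqslant n}a_i^yy_i\|\leqslant 1$ while the scalar partial sums $\sigma_n^y=\sum_{i\leqslant n}a_i^y$ fail to converge. A Ramsey-type argument (Galvin--Prikry or Nash--Williams applied to a suitable Borel family of finite scalar configurations), combined with a compactness and diagonalization step for the scalars, would produce a subsequence $(y_i)_i$, a constant $C$, and an $\e>0$ so that for every choice of successive intervals $I_1<\cdots<I_N$ of $\N$ one can find scalars $(a_i)_{i\in\cup_k I_k}$ with $\sup_{k\leqslant N}\|\sum_{i\in\cup_{j\leqslant k}I_j}a_iy_i\|\leqslant C$ and $|\sum_{i\in I_k}a_i|\geqslant\e$ for every $k$. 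A sign refinement makes all interval sums positive.

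With this uniform witness I would recursively build convex blocks $w_k=\sum_{i\in F_k}\la_i^ky_i$ on successive supports $F_k$ by normalizing each block witness by its positive interval sum. The controlled partial-sum bound yields the upper estimate
\begin{equation*}
\left\|\sum_{k=1}^N b_k w_k\right\|\leqslant B\max_{1\leqslant n\leqslant N}\left|\sum_{k=n}^N b_k\right|,
\end{equation*}
while a Hahn--Banach argument---picking norming functionals on the truncated partial sums and combining them into a summing-like functional on $[(w_k)_k]$---delivers the matching lower bound. Together these estimates show that $(w_k)_k$ is equivalent to the summing basis of $c_0$.

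The principal obstacle is the uniformization step. The negation of ``strongly summing'' is a pointwise statement along each subsequence, and the scalar witnesses $(a_i^y)_i$ have no coherent a priori relationship as $y$ varies. Making them uniform requires an infinite-dimensional Ramsey-theoretic argument coupled with tight compactness on the scalars in a weak-$*$-compact space such as a ball of $\ell_\infty$. Once the uniform witness is secured, the convex block construction and the norm verification reduce to a careful bookkeeping with the partial-sum bounds.
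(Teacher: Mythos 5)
The paper does not prove this statement at all: it is quoted as \cite[Theorem 1.1]{R}, Rosenthal's $c_0$-theorem, so your attempt is effectively a blind reproof of Rosenthal's deep result rather than of an argument contained in this article. Measured against that, the sketch has genuine gaps, and the decisive one is the final estimate. Knowing that for every system of intervals there exist scalars $(a_i)$ whose vector partial sums are bounded by $C$ while every interval sum exceeds $\e$ gives you control of exactly one linear combination per witness; it gives no control of $\|\sum_k b_k w_k\|$ for \emph{arbitrary} scalars $(b_k)$, so the claimed upper bound $\|\sum_{k=1}^N b_k w_k\|\leqslant B\max_n|\sum_{k=n}^N b_k|$ simply does not follow from ``the controlled partial-sum bound''. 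Domination by the summing basis amounts to the difference series $\sum_k(w_{k+1}-w_k)$ being weakly unconditionally Cauchy, and manufacturing convex blocks with that property is precisely the hard content of Rosenthal's proof, which treats $(x_i)_i$ as continuous functions on $(B_{X^*},w^*)$ converging pointwise to a discontinuous limit and extracts the convex block basis via delicate lemmas about such function sequences; it is not obtained by aggregating non-strong-summability witnesses.

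Two further points. First, your blocks are built from the witness scalars $(a_i)$, which oscillate in sign, and ``normalizing by the positive interval sum'' does not make the coefficients non-negative; what you construct need not be a convex block sequence at all, whereas convexity is essential both for the statement and for every later use of it in the paper. Second, the Ramsey/Galvin--Prikry uniformization is only asserted: the witnesses range over an uncountable scalar parameter space, so you would have to exhibit a concrete analytic partition of infinite subsets to which such a theorem applies, and you yourself flag this as unresolved. Only the lower estimate in your scheme is genuinely routine, since any convex block of a non-trivial weak-Cauchy sequence has a subsequence dominating the summing basis; the rest of the argument, as written, does not close.
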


\begin{lem}
\label{xdoublestartrailsc0}
Let $X$ be a Banach space with a strongly summing conditional spreading basis $(e_i)_i$ and let $x^{**}\in X^{**}$. If the series $\sum_ix^{**}(e_i^*)e_i$ does not converge in norm, then there exists a strictly increasing sequence of natural numbers $(n_i)_i$, such that if $y_i = \sum_{j=1}^{n_i}x^{**}(e_j^*)e_j$ for all $i\inn$, we have that $(y_i)_i$ is equivalent to the summing basis of $c_0$.
\end{lem}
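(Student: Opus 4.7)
The plan is to extract a subsequence $(n_i)_i$ so that the block differences $v_k := S_{n_{k+1}}-S_{n_k}$ form a sequence equivalent to the unit vector basis of $c_0$, and then deduce, via Abel summation, that the partial sums $y_i=S_{n_i}$ are equivalent to the summing basis of $c_0$. Throughout, write $a_j=x^{**}(e_j^*)$ and $S_n=\sum_{j=1}^na_je_j$, and note that $S_n=P_n^{**}x^{**}$ (where $P_n$ is the $n$-th basis projection), so $(S_n)$ is uniformly bounded in $X$. Because $(e_i)$ is strongly summing and $(\|S_n\|)$ is bounded, the scalar series $\sum_ja_j$ converges.

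The extraction proceeds as follows. Since $(S_n)$ fails to converge in norm, it is not Cauchy, so for some fixed $\e>0$ one can choose $n_1<n_2<\cdots$ with $\|S_{n_{i+1}}-S_{n_i}\|>\e$. Thinning further one also arranges $|s(S_{n_{i+1}})-s(S_{n_i})|<2^{-i}$ (using convergence of $\sum_ja_j$) and $s(S_{n_1})\neq 0$, which is possible because if $s(S_n)=0$ for all large $n$ then $a_n=s(S_n)-s(S_{n-1})=0$ eventually and $\sum a_je_j$ would be a finite sum converging in norm. Setting $y_i=S_{n_i}$ and $v_k=y_{k+1}-y_k=\sum_{j=n_k+1}^{n_{k+1}}a_je_j$, each $v_k$ is a seminormalized block vector of $(e_i)$ with $|s(v_k)|<2^{-k}$.

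To show $(v_k)$ is equivalent to the unit vector basis of $c_0$, set $v_k'=v_k-s(v_k)e_{j_k}$ for some $j_k\in\supp v_k$, so that $s(v_k')=0$ and $(v_k')$ differs from $(v_k)$ by an absolutely summable perturbation. By Proposition \ref{summingzero} the sequence $(v_k')$ is suppression unconditional. Its partial sums $\sum_{k\leqslant m}v_k'$ agree with $y_{m+1}-y_1$ up to the summable error, so they are uniformly bounded. A standard argument now applies: unconditionality together with bounded partial sums gives an upper $c_0$-estimate, while unconditionality combined with $\inf_k\|v_k'\|>0$ gives the matching lower $c_0$-estimate. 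Hence $(v_k')$, and therefore $(v_k)$, is equivalent to the unit vector basis of $c_0$.

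Finally, for scalars $(c_i)_{i=1}^N$ with $\sigma_k=c_1+\cdots+c_k$, Abel summation yields
\[
\sum_{i=1}^Nc_iy_i = \sigma_Ny_N - \sum_{k=1}^{N-1}\sigma_kv_k.
\]
The upper bound $\|\sum c_iy_i\|\leqslant C\max_{k\leqslant N}|\sigma_k|$ is immediate from the $c_0$-equivalence of $(v_k)$ and the uniform boundedness of $\|y_N\|$. For the lower bound, the same $c_0$-equivalence gives $c\max_{k<N}|\sigma_k|\leqslant\|\sum\sigma_kv_k\|\leqslant\|\sum c_iy_i\|+|\sigma_N|\,\|y_N\|$, while the functional $s_{[1,n_1]}$ separately detects $\sigma_N$: since $\supp y_i\supseteq[1,n_1]$ and the coefficients there coincide with those of $y_1$, we have $s_{[1,n_1]}(y_i)=s(y_1)$ for every $i$, whence $s_{[1,n_1]}(\sum c_iy_i)=s(y_1)\sigma_N$ and thus $|\sigma_N|\leqslant|s(y_1)|^{-1}\|s_{[1,n_1]}\|\,\|\sum c_iy_i\|$. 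Combining these inequalities yields $\max_{k\leqslant N}|\sigma_k|\lesssim\|\sum c_iy_i\|$, and we are done. The main obstacle is precisely the isolation of a bound on $|\sigma_N|$: the $c_0$-equivalence of $(v_k)$ by itself controls only $\max_{k<N}|\sigma_k|$, and the deliberate arrangement $s(S_{n_1})\neq 0$ is what allows the summing-type functional $s_{[1,n_1]}$ to close the gap.
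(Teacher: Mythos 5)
Your proof is correct, and it takes a genuinely different (and in one place arguably more careful) route than the paper's. The paper folds the initial segment $S_{n_1}$ into the block sequence itself, setting $x_1 = S_{n_1}$ and $x_i = S_{n_i}-S_{n_{i-1}}$ for $i\geqslant 2$, shows that $(x_i)_i$ is seminormalized with $\sum_i|s(x_i)|<\infty$, perturbs by $s(x_i)e_{\min\supp x_i}$ to get a $s$-null (hence, by Proposition \ref{summingzero}, unconditional) sequence, concludes that $(x_i)_i$ is equivalent to the unit vector basis of $c_0$ since its partial sums are bounded, and then simply observes that the partial sums $y_i=\sum_{j\leqslant i}x_j$ of a $c_0$-unit-vector-basis sequence are equivalent to the summing basis. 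This is slicker but leans on the perturbation principle at the index $i=1$: when $\sum_j x^{**}(e_j^*)\neq 0$, the quantity $|s(x_1)|=|s(S_{n_1})|$ cannot be made small, so strictly speaking the perturbation argument only gives equivalence of tails, and one has to invoke a translate argument (Lemma \ref{weaklycauchyminusvector}, or relabel and discard an initial piece) to recover the full statement; the paper signals this with its parenthetical ``or at least, they have equivalent tails'' but does not spell out the patch. Your approach sidesteps this entirely: you keep $y_1=S_{n_1}$ out of the block sequence, show only that the differences $(v_k)$ are equivalent to the $c_0$ unit vector basis (the same Proposition \ref{summingzero} plus perturbation argument), and then do Abel summation explicitly, $\sum_i c_i y_i = \sigma_N y_N - \sum_{k<N}\sigma_k v_k$. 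The price is that $(v_k)$ alone only controls $\max_{k<N}|\sigma_k|$, and you correctly identify and plug the gap by arranging $s(S_{n_1})\neq 0$ (which is always possible, as you argue) so that the functional $s_{[1,n_1]}$ reads off $\sigma_N$ via $s_{[1,n_1]}(\sum_i c_i y_i)=s(y_1)\sigma_N$. Two small quibbles, neither fatal: you invoke \emph{suppression} unconditionality from Proposition \ref{summingzero}, but that conclusion requires $1$-spreading; plain unconditionality (which holds in general) suffices for the argument. And the perturbation from $(v_k)$ to $(v_k')$ needs $\sum_k|s(v_k)|$ small relative to the basis constant of $(v_k')$ and $\inf_k\|v_k'\|$, not merely finite; this is easily arranged by making the bounds $|s(v_k)|<\varepsilon_k$ as small as you like when choosing $(n_i)$, but the bound $2^{-k}$ as written does not automatically guarantee it. Overall a valid proof with a cleaner treatment of the boundary term than the original.
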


\begin{proof}
Since $(e_i)_i$ is strongly summing we have that the series $\sum_ix^{**}(e_i^*)$ is convergent. Combining this with the fact that the series $\sum_ix^{**}(e_i^*)e_i$ does not converge in norm, we may choose a strictly increasing sequence of natural numbers $(n_i)_i$, such that if $x_i = \sum_{j=n_{i-1}+1}^{n_i}x^{**}(e_j^*)e_j$ then $(x_i)_i$ is seminormalized and $\sum_i|s(x_i)| < \infty$. Then, by Proposition \ref{summingzero}, the sequence $(y_i)_i$ with $y_i = x_i - s(x_i)e_{\min\supp x_i}$ is unconditional. As the sequences $(x_i)_i$ and $(y_i)_i$  are equivalent (or at least, they have equivalent tails), $(x_i)_i$ must be unconditional as well. Since the sequence $(\|\sum_{j=1}^ix_i\|)_i$ is bounded we conclude that  $(x_i)_i$ is equivalent to the unit vector basis of $c_0$ and hence, if $y_i = \sum_{j=1}^ix_i = \sum_{j=1}^{n_i}x^{**}(e_j^*)e_j$, we have that $(y_i)_i$ is equivalent to the summing basis of $c_0$.
\end{proof}


\begin{lem}\label{dualdnstar}
Let $X$ be a Banach space with a conditional spreading basis $(e_i)_i$ and let $(z_i)_i$ be its convex block homogeneous part. Let $s_{[1,i]} = \sum_{j=1}^ie_j^*$ and $\overline{s}_{[1,i]} = \sum_{j=1}^iz_j^*$ for all $i\inn$.
Then, there exist a sequence of natural numbers $(n_i)_i$ so that for every strictly increasing sequence of natural numbers $(k_i)_i$ with $k_i - k_{i-1} \geqslant n_i$ for all $i\inn$ (where $k_0 = 0$),
the sequence $(s_{[1,k_i]})_i$ is equivalent to $(\overline{s}_{[1,i]})_i$. In particular, every subsequence of $(s_{[1,i]})_i$ has a further subsequence equivalent to $(\overline{s}_{[1,i]})_i$.
\end{lem}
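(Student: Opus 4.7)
The plan is to use the decomposition from Theorem \ref{spreading-characterization} together with Lemma \ref{Zxembeds} to translate the statement about partial sum functionals in $X^*$ into one about partial sums of duals in $Z^*$, where $Z=[(z_i)_i]$. I would take $(n_i)_i$ to be the sequence furnished by Lemma \ref{Zxembeds} and, given $(k_i)_i$ with $k_i-k_{i-1}\geqslant n_i$, set $A_i=[k_{i-1}+1,k_i]$ and $\tilde z_i=(\#A_i)^{-1}\sum_{j\in A_i}e_j$; Lemma \ref{Zxembeds} then guarantees that $(\tilde z_i)_i$ is equivalent to $(z_i)_i$. The key bookkeeping fact driving everything is that $s_{[1,k_i]}(\tilde z_j)$ equals $1$ if $j\leqslant i$ and $0$ if $j>i$, which is immediate from the location of $\supp \tilde z_j$ relative to the truncation point $k_i$.

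For the lower bound, I would test $\sum_i c_i s_{[1,k_i]}$ against $\sum_j d_j\tilde z_j$. The observation above gives $\sum_j d_j\sum_{i\geqslant j}c_i$, and the telescoping identity $\sum_i c_i\bar s_{[1,i]}=\sum_j\bigl(\sum_{i\geqslant j}c_i\bigr)z_j^*$ identifies this value with $\bigl(\sum_i c_i\bar s_{[1,i]}\bigr)\bigl(\sum_j d_j z_j\bigr)$. Taking the supremum over $(d_j)$ with $\|\sum_j d_j z_j\|_Z=1$ and using the equivalence between $(\tilde z_j)$ and $(z_j)$ yields $\|\sum_i c_i s_{[1,k_i]}\|_{X^*}\geqslant C_1\|\sum_i c_i\bar s_{[1,i]}\|_{Z^*}$ for some constant $C_1>0$. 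For the upper bound, let $T:X\to U\oplus Z$ be the embedding of Remark \ref{equivalentmaxnorm}; a direct computation shows $T^*(0,\bar s_{[1,k_i]})=s_{[1,k_i]}$ for every $i$, so applying $T^*$ to $(0,\sum_i c_i\bar s_{[1,k_i]})$ gives $\|\sum_i c_i s_{[1,k_i]}\|_{X^*}\leqslant\|T^*\|\cdot\|\sum_i c_i\bar s_{[1,k_i]}\|_{Z^*}$. Finally, because $(z_i)_i$ is convex block homogeneous, Proposition \ref{equivalentdefinitions} says that $(\bar s_{[1,i]})_i$ is spreading, so its subsequence $(\bar s_{[1,k_i]})_i$ is equivalent to $(\bar s_{[1,i]})_i$, closing the upper bound. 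The ``in particular'' assertion follows by sparsifying any given subsequence to meet the gap condition on the $n_i$'s.

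I expect the main subtlety to be the alignment between the supports $A_i$ and the truncation levels $k_i$: this is precisely what makes the evaluation $s_{[1,k_i]}(\tilde z_j)$ collapse to the clean indicator $\mathbf 1_{j\leqslant i}$, and thereby what causes the lower-bound pairing to reduce exactly to the dual pairing of $\sum_i c_i\bar s_{[1,i]}$ with $\sum_j d_j z_j$ on $Z$. Without this careful choice of averages adapted to the levels $(k_i)_i$, partial-mass contributions would appear and prevent the identification with $(\bar s_{[1,i]})_i$.
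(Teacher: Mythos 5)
Your proof is correct, and it starts from exactly the same place as the paper: take $(n_i)_i$ from Lemma \ref{Zxembeds}, form the averages over $I_i=(k_{i-1},k_i]$, and observe that these are equivalent to $(z_i)_i$. The difference is in how the two sides of the equivalence of partial sums are finished. The paper sets $x_i^*=\sum_{j\in I_i}e_j^*$ and checks that $P_{\bar I}^*x_i^*=x_i^*$ together with $x_i^*(x_j)=\delta_{ij}$; since the averaging projection $P_{\bar I}$ from Proposition \ref{averagingprojection} is bounded, these two relations give the two-sided estimate $\bigl(\sum a_i x_i^*\bigr)\sim\bigl(\sum a_i z_i^*\bigr)$ in a single stroke, and one just notes $\sum_{j\le i}x_j^*=s_{[1,k_i]}$. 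You instead split the estimate: your lower bound reproves, by explicit pairing against $\sum d_j\tilde z_j$, the same duality that underlies $x_i^*(x_j)=\delta_{ij}$; your upper bound, rather than passing through $P_{\bar I}^*$, uses the adjoint $T^*$ of the embedding $T:X\to U\oplus Z$ from Remark \ref{equivalentmaxnorm} (the identity $T^*(0,\bar s_{[1,k_i]})=s_{[1,k_i]}$ is a correct and nice computation) and then closes with the fact that $(\bar s_{[1,i]})_i$ is spreading, which follows from convex block homogeneity of $(z_i)_i$ via Proposition \ref{equivalentdefinitions}. Both routes are valid. The paper's is a bit more economical because it does not need the spreading property of $(\bar s_{[1,i]})_i$ and it never leaves the ambient space $X$; yours avoids the projection $P_{\bar I}$ in the upper bound at the modest cost of invoking two extra facts already established elsewhere in the paper. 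The ``in particular'' assertion is handled the same way in both.
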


\begin{proof}
Let $(n_i)_i$ be the sequence provided by Lemma \ref{Zxembeds} and let $(k_i)_i$ be a strictly increasing sequence of natural numbers with $k_i - k_{i-1} \geqslant n_i$ for all $i\inn$.
Set $I_i = \{k_{i-1}+1,\ldots,k_i\}$, $x_i = (1/\#I_i)\sum_{j\in s_i}e_j$ and $x_i^* = \sum_{j\in I_i}e_j^*$  for all $\inn$.
By the choice of the sequence $\bar I = (I_i)_i$ we have that $(x_i)_i$ is equivalent to $(z_i)_i$ and the map $P:X\rightarrow X$ with $P_{\bar{I}}x = \sum_{i=1}^\infty s_{I_i}(x)x_i$ is a bounded linear projection.
Observe that $P_{\bar{I}}^*x_i^* = x_i^*$ for all $i\inn$ and that $x^*_i(x_j) = \de_{i,j}$ for all $i,j\inn$.

The above imply that $(x_i^*)_i$ is equivalent to $(z_i^*)_i$. Finally, observe that $\sum_{j=1}^ix_j^* = s_{[1,k_i]}$ for all $i\inn$.
\end{proof}

The following is the main result of this section.

\begin{prp}\label{equivalentfornotsumming}
Let $X$ be a Banach space with a conditional spreading basis $(e_i)_i$ and let $(z_i)_i$ be its convex block homogeneous part. The following assertions are equivalent.
\begin{itemize}

\item[(i)] The space $X$ does not embed into a space with an unconditional basis.

\item[(ii)] The sequence $(z_i)_i$ is not equivalent to the summing basis of $c_0$.

\item[(iii)] The basis $(e_i)_i$ is strongly summing.

\item[(iv)] The sequence $(s_{[1,n]})_n$ is weak Cauchy.

\item[(v)] For every sequence of real numbers $(a_i)_i$ so that $(\sum_{i=1}^na_ie_i)_n$ is bounded, the series $\sum_ia_ie_i$ is weak Cauchy.


\end{itemize}
\end{prp}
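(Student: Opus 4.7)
The plan is to establish all five equivalences by routing everything through condition (ii), using the decomposition $T\colon X\hookrightarrow U\oplus Z$ of Theorem \ref{spreading-characterization} (with $(u_i)$ a subsymmetric basis of $U$ and $(z_i)$ a convex block homogeneous basis of $Z$) combined with Rosenthal's dichotomy (Theorem \ref{rosenthalstronglysumming}) applied to $(z_i)$. The foundational observation, obtained by applying the dichotomy to the non-trivial weak Cauchy basis $(z_i)$, is that $(z_i)$ has either a strongly summing subsequence---forcing $(z_i)$ itself to be strongly summing by the spreading property---or a convex block sequence equivalent to the summing basis of $c_0$---forcing $(z_i)\sim c_0$-summing by convex block homogeneity. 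These two alternatives are mutually exclusive, so (ii) is equivalent to $(z_i)$ being strongly summing.

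From here (ii) $\Leftrightarrow$ (iii) is routine: Theorem \ref{spreading-characterization} gives $\|\sum a_iz_i\|\leq 2\|\sum a_ie_i\|$, which passes boundedness of partial sums from $(e_i)$ to $(z_i)$ and hence strong summing too; conversely, the averaged block $(\tilde z_i)\sim(z_i)$ from Lemma \ref{Zxembeds} lets one rewrite any bounded-partial-sum sequence of $(z_i)$ as a bounded-partial-sum sequence of $(e_i)$, passing strong summing from $(z_i)$ up to $(e_i)$. For (iii) $\Leftrightarrow$ (iv), given $x^{**}\in X^{**}$ and $a_i:=x^{**}(e_i^*)$, the identity $x^*(y_n)=x^{**}(P_{[1,n]}^*x^*)$ together with $\|P_{[1,n]}^*\|\leq K$ (the basis constant) shows $\|y_n\|\leq K\|x^{**}\|$, and then (iii) yields $x^{**}(s_{[1,n]})=\sum_{i=1}^n a_i$ convergent; conversely, Banach--Alaoglu and separability of $X$ produce a weak-$*$ cluster point $x^{**}$ of bounded $(y_n)$ satisfying $x^{**}(e_i^*)=a_i$, and (iv) applied to this $x^{**}$ yields $\sum a_i$ convergent.

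For (i) $\Leftrightarrow$ (ii): if $(z_i)\sim c_0$-summing then $Z\cong c_0$ has an unconditional basis and $T$ embeds $X$ into $U\oplus c_0$ with unconditional basis; conversely, if $X$ embeds into a space $Y$ with an unconditional basis, then the non-trivial weak Cauchy sequence $(e_i)\subset Y$ admits, by the classical Rosenthal dichotomy for weak Cauchy sequences in unconditional basis spaces, a subsequence equivalent to summing $c_0$, and then the spreading property together with the uniqueness of the convex block homogeneous part (Remark \ref{convexblockhomogeneouspartisunique}) yields $(z_i)\sim c_0$-summing. Finally, for (ii) $\Leftrightarrow$ (v): (v) $\Rightarrow$ (iii) is immediate by applying $s\in X^*$ to the weak Cauchy $(y_n)$; for (ii) $\Rightarrow$ (v), I would decompose $Ty_n=(u_n,v_n)\in U\oplus Z$ with $u_n=\sum_{i=1}^n a_iu_i$ and $v_n=\sum_{i=1}^n a_iz_i$, both bounded. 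Since $(u_i)$ is unconditional, the bounded partial sums $(u_n)$ form a weakly unconditionally Cauchy series in $U$, hence $(u_n)$ is weakly convergent. In $Z$, the basis $(z_i)$ is strongly summing by (ii); if $(v_n)$ failed to converge in norm, Lemma \ref{xdoublestartrailsc0} applied inside $Z$ would produce a subsequence of $(v_n)$ equivalent to the summing basis of $c_0$, from which convex block homogeneity of $(z_i)$ together with Proposition \ref{theoremaboutblocksequences} would let one extract a convex block of $(z_i)$ equivalent to summing $c_0$, contradicting (ii). Hence $v_n$ converges in norm, so $Ty_n$ converges weakly in $U\oplus Z$ and $(y_n)$ is weakly convergent (in particular weak Cauchy) in $X$.

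The main obstacle will be the last step of (ii) $\Rightarrow$ (v), namely extracting a genuine convex block of $(z_i)$ equivalent to summing $c_0$ from a hypothetical summing-$c_0$ subsequence of $(v_n)$; this is where the rigidity of the convex block homogeneous part (through Proposition \ref{theoremaboutblocksequences}) combined with Lemma \ref{xdoublestartrailsc0} is essential.
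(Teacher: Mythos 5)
Your architecture (hub at (ii)--(iii) via the decomposition of Theorem \ref{spreading-characterization}, the direct (iii)$\Leftrightarrow$(iv) through a weak$^*$ cluster point, and the trivial (v)$\Rightarrow$(iii)) is sound, and in fact your (iv)$\Rightarrow$(iii) is slightly shorter than the paper's detour through Lemma \ref{dualdnstar}. However, the step you yourself flag in (ii)$\Rightarrow$(v) is a genuine gap. After Lemma \ref{xdoublestartrailsc0} (applied in $Z$) produces a subsequence $(v_{n_i})$ of the partial sums equivalent to the summing basis of $c_0$, no contradiction with (ii) follows from the tools you cite: the $v_{n_i}$ are partial sums, not (convex) block vectors of $(z_i)$, so Proposition \ref{theoremaboutblocksequences} does not apply to them, and every convex block sequence of $(v_{n_i})$ is again equivalent to the summing basis (that basis being convex block homogeneous), so the dichotomy of Theorem \ref{maintheoremweakcauchy} (i) applied to $(v_{n_i})$ can be satisfied by its first alternative and yields nothing about $(z_i)$. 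In effect, your claim that $v_n$ converges in norm asserts that every strongly summing convex block homogeneous basis is boundedly complete, i.e.\ that $Z$ cannot contain a copy of $c_0$ spanned by partial sums; nothing in the paper establishes this, and note that Corollary \ref{codim-1 in baire-1 if no c0 subspace} needs ``$X$ contains no $c_0$'' as an extra hypothesis beyond strong summing. The paper's proof of (iii)$\Rightarrow$(v) is built precisely for the case where the series does not converge in norm: using Lemma \ref{xdoublestartrailsc0} one chooses indices so that the chunks $\sum_{i=m_k'+1}^{n_k'}a_ie_i$ have absolutely summable sums, corrects by single basis vectors so that Proposition \ref{summingzero} gives unconditionality, and concludes the chunks are weak Cauchy and unconditional, hence weakly null; weak nullity of chunks, not norm convergence, is what gives (v). Your component-wise reduction can be repaired by running that chunk argument (in $Z$, or directly in $X$), but as written the step fails.

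Two smaller points. In $\neg$(i)$\Rightarrow\neg$(ii) you quote the classical fact as giving a \emph{subsequence} equivalent to the summing basis; the correct statement (and the one the paper uses) gives only a \emph{convex block sequence} --- e.g.\ the sequence $(s_n,e_n)$ in $c_0\oplus\ell_2$ ($s_n$ the summing basis, $e_n$ the $\ell_2$ basis) is non-trivial weak Cauchy with no such subsequence, since $\|\sum_{i=1}^n(-1)^i(s_{k_i},e_{k_i})\|\geqslant\sqrt{n}$. The repair is easy and does not go through Remark \ref{convexblockhomogeneouspartisunique}: the convex block of $(e_i)$ equivalent to the summing basis has, by Proposition \ref{theoremaboutblocksequences} (iv), a further convex block equivalent to $(z_i)$, and this further convex block is still equivalent to the summing basis because the summing basis is convex block homogeneous; hence $(z_i)$ is equivalent to it. Finally, bounded partial sums with respect to the unconditional $(u_i)$ give a WUC series, hence a weakly \emph{Cauchy} sequence $(u_n)$, not a weakly convergent one (consider $\sum_i e_i$ in $c_0$); this overclaim is harmless since only weak Cauchyness is needed in the end.
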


\begin{proof}
(i)$\Rightarrow$(ii): If $(z_i)_i$ is equivalent to the summing basis of $c_0$,
then by Remark \ref{equivalentmaxnorm} we have that $X$ embeds into $U\oplus c_0$ which has an unconditional basis.

(ii)$\Rightarrow$(iii): If $(z_i)_i$ is not equivalent to the summing basis of $c_0$, Proposition \ref{theoremaboutblocksequences} (iv) yields that no convex block sequence of $(e_i)_i$ can be equivalent to the summing basis of $c_0$.
The spreading property of $(e_i)_i$ and Theorem \ref{rosenthalstronglysumming} yield that $(e_i)_i$ is strongly summing.

(iii)$\Rightarrow$(i): Assume that the basis $(e_i)_i$ is strongly summing and $X$ embeds into a space with an unconditional basis. It is well known that a non-trivial weak Cauchy sequence in a space with an unconditional basis has a convex block sequence equivalent to the summing basis of $c_0$. Hence, this is true for $(e_i)_i$. It is straightforward to check that if a sequence, in this case $(e_i)_i$, has a convex block sequence equivalent to the summing basis of $c_0$, then it cannot be strongly summing.

(iv)$\Rightarrow$(ii): Assume that $(s_{[1,i]})_i$ is weak Cauchy. If $(z_i)_i$ is equivalent to the summing basis of $c_0$, Lemma \ref{dualdnstar} implies that $(s_{[1,i]})_i$ has a subsequence equivalent to the unit vector basis of $\ell_1$
and it cannot be weak Cauchy.

(iii)$\Rightarrow$(iv): Assume that the basis $(e_i)_i$ is strongly summing, we will show that $(s_{[1,i]})_i$ is weak Cauchy. Let $x^{**}\in X^{**}$.
The fact that $(e_i)_i$ is strongly summing implies that the series $\sum_jx^{**}(e_j^*)$ is convergent. Observe that $x^{**}(s_{[1,i]}) = \sum_{j=1}^ix^{**}(e_j^*)$ for all $i\inn$ and therefore the sequence $(x^{**}(s_{[1,i]}))_i$ is convergent.

(v)$\Rightarrow$(iv): Let $x^{**}\in X^{**}$ and set $a_i = x^{**}(e_i^*)$. Then $(x^{**}(s_{[1,i]}))_n = (s(\sum_{i=1}^na_ie_i))_n$, which by (v) is a convergent sequence.

(iii)$\Rightarrow$(v): If $\sum_ia_ie_i$ is convergent in norm then there is nothing more to prove. Otherwise, it is sufficient to show that if $(n_k)_k$ and $(m_k)_k$ are strictly increasing, then they have subsequences $(n_k')_k$ and $(m_k')_k$ so that the sequence $(\sum_{i=m_k'+1}^{n_k'}a_ie_i)_k$ is weak null.
Combining (iii) with Lemma \ref{xdoublestartrailsc0}, choose $(n_k')_k$ and $(m_k')_k$ so that $m_k' < n_k'$, $\sum_k|\sum_{i=m_k'+1}^{n_k'}a_i|<\infty$
and both  $(\sum_{i=1}^{m_k'}a_ie_i)_k$ and $(\sum_{i=1}^{n_k'}a_ie_i)_k$ are equivalent to the summing basis of $c_0$. We conclude that the sequence $x_k = \sum_{i=m_k'+1}^{n_k'}a_ie_i$ is unconditional and weak Cauchy, i.e. it is weakly null.
\end{proof}

\section{Non-trivial weak Cauchy sequences in spaces with conditional spreading bases.}\label{non-trivial weak Cauchy}
In this section we study the behavior of non-trivial weak Cauchy sequences in a space with a conditional and spreading basis. As it turns out, such sequences always have convex block sequence that are very well behaved. The main result is the following theorem that we prove in several steps.

\begin{thm}
\label{maintheoremweakcauchy}
 Let $X$ be a Banach space with a conditional spreading Schauder basis $(e_i)_i$ and let $(z_i)_i$ be its convex block homogeneous part. Let also $(x_i)_i$ be a non-trivial weak Cauchy sequence in $X$. The following statements hold.
 \begin{itemize}
  \item[(i)] The sequence $(x_i)_i$ has a convex block sequence $(w_i)_i$ that is either equivalent to the summing basis of $c_0$ or equivalent to $(z_i)_i$.
  \item[(ii)] The sequence $(x_i)_i$ has a convex block sequence $(w_i)_i$ the closed linear span of which is complemented in $X$.
 \end{itemize}
\end{thm}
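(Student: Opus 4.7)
My proof of Theorem \ref{maintheoremweakcauchy} proceeds in two parts: first establishing (i) by Rosenthal's dichotomy combined with the block decomposition tools already developed, and then deducing the complementation statement (ii) from the specific structure of $(w_i)_i$ in each case.

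For part (i), my first step is to apply Rosenthal's dichotomy (Theorem \ref{rosenthalstronglysumming}) to $(x_i)_i$. One alternative directly yields a convex block sequence of $(x_i)_i$ equivalent to the summing basis of $c_0$, giving the desired $(w_i)_i$. Otherwise, $(x_i)_i$ has a strongly summing subsequence, still denoted $(x_i)_i$, and my goal becomes to produce a convex block equivalent to the convex block homogeneous part $(z_i)_i$. For this, I approximate $(x_i)_i$ by a block-like object: using the weak Cauchy property and the basis expansion $x_i = \sum_j a_j^i e_j$, I select a subsequence $(x_{n_i})_i$ and an increasing sequence of integers $(N_i)_i$ such that $x_{n_i}$ is, up to a summable error, equal to $y_i + b_i$, where $y_i = \sum_{j \leqslant N_{i-1}} a_j e_j$ is a partial sum of the formal expansion of the weak-star limit $x^{**}$ (with coordinates $a_j = x^{**}(e_j^*)$), and $b_i = (P_{N_i} - P_{N_{i-1}})x_{n_i}$ is a block vector supported on $(N_{i-1}, N_i]$. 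Since $(x_i)_i$ is non-trivially weak Cauchy, $\sum_j a_j e_j$ cannot converge in $X$, so the increments $y_{i+1} - y_i$ form a seminormalized block sequence of $(e_j)_j$. Applying Proposition \ref{theoremaboutblocksequences} (parts (iv) and (v)) to this block structure extracts a convex block equivalent to $(z_i)_i$, which I then transfer back to a convex block of $(x_{n_i})_i$ through the approximation, with equivalence preserved by convex block homogeneity (the averaging projections from Proposition \ref{averagingprojection} play a key role here).

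For part (ii), the complementation claim is proved case by case according to which type of $(w_i)_i$ has been obtained in (i). When $(w_i)_i$ is equivalent to the summing basis of $c_0$, I construct a projection onto $[(w_i)_i]$ directly from summing functionals $s_{I_i}$ over suitable consecutive intervals $(I_i)_i$ containing the essential supports of $(w_i)_i$, adapting the construction from Lemma \ref{Zxconvexblockiscomplemented}. When $(w_i)_i$ is equivalent to $(z_i)_i$, I pass to a further convex block by a Mazur-style argument: the successive differences $w_{k+1} - w_k$ are weakly null (as $(w_k)_k$ is itself weak Cauchy, being a convex block of $(x_i)_i$) and can therefore be approximated arbitrarily well by block vectors in $X$. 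The resulting convex block of $(w_i)_i$ is close to a genuine convex block of $(e_i)_i$, remains equivalent to $(z_i)_i$ by convex block homogeneity, and Lemma \ref{Zxconvexblockiscomplemented} then delivers the projection.

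The central obstacle is the transfer step in part (i): given a convex block of the block-approximating structure equivalent to $(z_i)_i$, it must be translated into a convex block of $(x_i)_i$ itself while preserving the equivalence. The interplay between the ``head'' part $(y_i)_i$, whose growth reflects the non-convergence of $\sum_j a_j e_j$, and the ``block'' part $(b_i)_i$, which captures the weakly null remainder, demands delicate coefficient bookkeeping. The convex block homogeneity of $(z_i)_i$ from Theorem \ref{spreading-characterization}, together with the fact that convex blocks of averages of $(e_i)_i$ are equivalent to $(z_i)_i$ by Lemma \ref{Zxembeds}, is the key ingredient that ensures the contributions combine into an equivalent copy of $(z_i)_i$ regardless of the fine details of the approximation.
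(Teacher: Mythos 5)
Your outline starts from the right ingredients (Rosenthal's dichotomy, Proposition \ref{theoremaboutblocksequences}, Lemma \ref{Zxconvexblockiscomplemented}), and the paper indeed reduces part (i) to a block-sequence situation and treats part (ii) by cases. But two of your steps contain genuine gaps.

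First, in part (i) you assert that since $(x_i)_i$ is non-trivial weak Cauchy, the formal series $\sum_j x^{**}(e_j^*)e_j$ cannot converge in $X$, so the partial sums $y_i$ give a seminormalized block sequence. This is false. The series can perfectly well converge in norm even when $(x_i)_i$ is non-trivial weak Cauchy: for instance take $x_i = e_i$, where $x^{**}(e_j^*) = \lim_i e_j^*(e_i) = 0$ for every $j$, so the series is $0$. The reason is that if the series converges to some $x\in X$, the functional $s$ may still separate $x^{**}$ from $x$, i.e. $x^{**} - x\notin X$. The paper handles exactly this as a separate case: when the series converges, one passes to $(x_i - x)_i$, invokes Lemma \ref{weaklycauchyminusvector}, observes it is pointwise null (hence essentially a block sequence) and still non-weakly-null, and then applies Proposition \ref{theoremaboutblocksequences} (v). Your argument, as written, skips this case entirely.

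Second, in part (ii), for the case $(w_i)_i$ equivalent to $(z_i)_i$, your Mazur-style argument does not go through. You use that the differences $w_{k+1} - w_k$ are weakly null to approximate by block vectors and thereby "move $(w_i)_i$ close to a convex block of $(e_i)_i$". But convex combinations of the differences $w_{k+1}-w_k$ are not convex blocks of $(w_i)_i$ — they are differences of two convex blocks. More fundamentally, since $(w_i)_i$ converges weak-star to a nonzero $w^{**}\in X^{**}\setminus X$, no convex block of $(w_i)_i$ can converge weak-star to $0$, so no convex block of $(w_i)_i$ can be (approximated by) a block sequence of $(e_i)_i$. The paper's Proposition \ref{ntwc has complemented subseq} is precisely the technical device that overcomes this obstacle: one constructs an auxiliary vector $y_0$ so that a subsequence of $(x_i - y_0)_i$ becomes, up to a summable perturbation, biorthogonal to a system of functionals $s_{I_i}$ over consecutive intervals, and this is what produces the projection. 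That construction cannot be replaced by the standard Mazur argument. (For the $c_0$ case of part (ii), the paper simply quotes separable injectivity of $c_0$, which is cleaner and avoids the issue of defining "essential supports" for non-block sequences.)
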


\begin{lem}\label{weaklycauchyminusvector}
Let $X$ be a Banach space and $(x_i)_i$ be a Schauder basic sequence in $X$ so that the summing functional $s(\sum_ia_ix_i) = \sum_ia_i$ is bounded on the space spanned by $(x_i)_i$.
If $x\notin [(x_i)_i]$ then the sequence $(x_i - x)_i$ is equivalent to $(x_i)_i$.
\end{lem}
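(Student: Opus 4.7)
The plan is to define the linear operator $T : Y \to X$ on $Y := [(x_i)_i]$ by $Ty = y - s(y)x$, where $s$ denotes the bounded summing functional. Since $s(x_i) = 1$ for each $i$, we obtain $Tx_i = x_i - x$, and by linearity, $T\bigl(\sum_i a_i x_i\bigr) = \sum_i a_i (x_i - x)$ for every finitely supported scalar sequence $(a_i)$. Consequently, the claim that $(x_i - x)_i$ is equivalent to $(x_i)_i$ is exactly the statement that $T$ is an isomorphism from $Y$ onto its image, so the whole task reduces to verifying that $T$ is bounded above and bounded below.

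The upper bound is immediate: by the triangle inequality and boundedness of $s$,
\begin{equation*}
\|Ty\| \leqslant \|y\| + \|s\|\,\|x\|\,\|y\| = (1 + \|s\|\,\|x\|)\,\|y\|.
\end{equation*}
For the lower bound, the key point is that $(x_i)_i$ being Schauder basic makes $Y$ closed, so the hypothesis $x \notin Y$ yields $d := \dist(x,Y) > 0$. Given $y \in Y$, write $y = Ty + s(y)x$. If $s(y) = 0$ then $Ty = y$. Otherwise, observing that $y/s(y) \in Y$,
\begin{equation*}
\|Ty\| = |s(y)|\,\bigl\|y/s(y) - x\bigr\| \geqslant |s(y)|\,d,
\end{equation*}
so that $|s(y)| \leqslant \|Ty\|/d$. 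The triangle inequality then gives
\begin{equation*}
\|y\| \leqslant \|Ty\| + |s(y)|\,\|x\| \leqslant (1 + \|x\|/d)\,\|Ty\|,
\end{equation*}
completing the proof that $T$ is an isomorphism onto its range.

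There is no serious obstacle here: once one recognizes that $T = \mathrm{id}_Y - x\otimes s$ is the natural map sending $x_i$ to $x_i - x$, the boundedness below follows directly from the closedness of $Y$ via the positivity of $\dist(x,Y)$. The only subtlety worth flagging is the minor ambiguity in the hypothesis ``bounded on the space spanned by $(x_i)_i$'', which should be interpreted as an estimate $|\sum_i a_i| \leqslant \|s\|\,\|\sum_i a_i x_i\|$ on finitely supported scalars; this extends to the closed span and justifies the computation $s(x_i) = 1$ used throughout.
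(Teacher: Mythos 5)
Your proof is correct and follows essentially the same route as the paper: the upper bound is the identical triangle-inequality estimate, and the lower bound obtains the same constant $\frac{d}{d+\|x\|}$ from $\|y - s(y)x\| \geq |s(y)|\,\dist(x,[(x_i)_i])$. The only cosmetic difference is that the paper attributes this lower estimate to Hahn--Banach while you derive it directly from the positivity of the distance, which is cleaner and equivalent.
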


\begin{proof}
If $\delta = \dist(x,[(x_i)_i])$, then the Hahn-Banach theorem yields that for a sequence of scalars $a_1,\ldots,a_n$ we have
$$\left(\frac{\delta}{\|x\|+\delta}\right)\left\|\sum_{i=1}^na_ix_i\right\|\leqslant \left\|\sum_{i=1}^na_i(x_i-x)\right\|.$$
On the other hand,
$$\left\|\sum_{i=1}^na_i(x_i-x)\right\| \leqslant \left\|\sum_{i=1}^na_ix_i\right\| + \left|\sum_{i=1}^na_i\right|\|x\| \leqslant \left(1+\|s\|\|x\|\right)\left\|\sum_{i=1}^na_ix_i\right\|.$$
\end{proof}


\begin{proof}[Proof of Theorem \ref{maintheoremweakcauchy} (i)]
Assume first that the basis $(e_i)_i$ is not strongly summing. Proposition \ref{equivalentfornotsumming} yields that $X$ embeds into a space with an unconditional basis
and therefore there exists a convex block sequence of $(x_i)_i$ that is equivalent to the summing basis of $c_0$. Assume now that the basis $(e_i)_i$ is strongly summing and let $x^{**}$ be the weak star limit of $(x_i)_i$. We distinguish two cases.

{\em Case 1:} The series $\sum_ix^{**}(e_i^*)e_i$ converges in norm.
Take the vector $x =  \sum_{i=1}^\infty x^{**}(e_i^*)e_i$. Using Lemma \ref{weaklycauchyminusvector} we may assume that $(x_i)_i$ is equivalent to $(x_i - x)_i$. Observe that $(x_i - x)_i$ is point-wise null, with respect to the basis $(e_i)_i$,
and hence we may assume that $(x_i - x)_i$ is a block sequence. If $(x_i - x)_i$ had an unconditional subsequence then, since it is weak Cauchy, it would have to be weakly null, which is absurd.
Proposition \ref{theoremaboutblocksequences} (v) yields that there exists a convex block sequence of $(x_i - x)_i$, hence also of $(x_i)_i$, that is equivalent to $(z_i)_i$. 

{\em Case 2:} The series $\sum_ix^{**}(e_i^*)e_i$ does not converge in norm.
 Using Lemma \ref{xdoublestartrailsc0}, choose a strictly increasing sequence of natural numbers $(n_i)_i$, such that if $y_i = \sum_{j=1}^{n_i}x^{**}(e_j^*)e_j$ for all $i\inn$, then $(y_i)_i$ is equivalent to the summing basis of $c_0$
 and set $w_i = x_i - y_i$. Observe that $(w_i)_i$ is weak Cauchy and that it is point-wise null, with respect to the basis $(e_i)_i$. Passing to a subsequence, we have that $(w_i)_i$ is equivalent to a block sequence.
 If $(w_i)_i$ has an unconditional subsequence then, since it is weak Cauchy, it is weakly null. Mazur's Theorem implies that there exists a convex block sequence of $(w_i)_i$ that converges to zero in norm
 which further yields that there exists a convex block sequence of $(x_i)_i$ that is equivalent to the summing basis of $c_0$. If $(w_i)_i$ does not have an unconditional subsequence,
 Proposition \ref{theoremaboutblocksequences} (v) yields that there exists a convex block sequence $(w_i^\prime)_i$ of $(w_i)_i$ that is equivalent to $(z_i)_i$.

 If $w_i^\prime = \sum_{j\in F_i}a_jw_j$ with $\sum_{j\in F_i}a_j = 1$, set $x_i^\prime = \sum_{j\in F_i}a_jx_j$ and $y_i^\prime = \sum_{j\in F_i}a_jy_j$ for all $j\inn$. Since $(x_i^\prime)_i$ is non-trivial weak Cauchy,
 we may assume that it dominates the summing basis of $c_0$ and, of course, the same is true for $(z_i)_i$. Combining the above with the fact that $(y_i^\prime)_i$ is equivalent to the summing basis of $c_0$
 and $(x_i^\prime - y_i^\prime)_i$ is equivalent to $(z_i)_i$, a simple argument yields that $(x_i^\prime)_i$ is equivalent to $(z_i)_i$.
\end{proof}

\begin{lem}\label{non summing tails are positive}
Let $X$ be a Banach space with a conditional spreading basis $(e_i)_i$ and let also $(x_i)_i$ be a non-trivial weak Cauchy sequence in $X$. If $(x_i)_i$ has no convex block sequence equivalent to the summing basis of $c_0$,
then $\liminf_i\lim_k|s_{(i,+\infty)}(x_k)| > 0.$
\end{lem}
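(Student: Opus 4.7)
The plan is to argue by contradiction. Assume $\liminf_i\lim_k|s_{(i,+\infty)}(x_k)|=0$ and let $x^{**}\in X^{**}$ be the weak-star limit of $(x_k)_k$. Since $s$ and each $e_j^*$ are bounded on $X$, the limit
\[
\alpha_i:=\lim_k s_{(i,+\infty)}(x_k)=x^{**}(s)-\sum_{j=1}^{i}x^{**}(e_j^*)
\]
exists. Theorem \ref{maintheoremweakcauchy}(i) together with Proposition \ref{equivalentfornotsumming} forces $(e_i)_i$ to be strongly summing: otherwise the convex block homogeneous part $(z_i)_i$ would be equivalent to the summing basis of $c_0$, and Theorem \ref{maintheoremweakcauchy}(i) would hand us a convex block of $(x_k)_k$ equivalent to it. Hence, using that $(s_{[1,n]})_n$ is weak Cauchy, $\sum_j x^{**}(e_j^*)$ converges to some scalar $\tau$, $\alpha_i\to x^{**}(s)-\tau$, and the standing assumption $\liminf|\alpha_i|=0$ forces $\tau=x^{**}(s)$.

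I then split into two cases according to whether $\sum_j x^{**}(e_j^*)e_j$ converges in norm. \emph{Case A:} the series converges to some $x\in X$. Then $s(x)=\tau=x^{**}(s)$, so $(x_k-x)_k$ is weak Cauchy, point-wise null relative to $(e_i)_i$, and satisfies $s(x_k-x)\to 0$. After passing to a subsequence and a small perturbation, $(x_k-x)_k$ becomes a block sequence with summing functional identically zero, and Proposition \ref{summingzero} makes it unconditional. Since an unconditional weak Cauchy sequence is necessarily weakly null (a standard consequence of Rosenthal's $\ell_1$ theorem), we conclude $x^{**}=x\in X$, contradicting that $(x_k)_k$ is non-trivially weak Cauchy.

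\emph{Case B:} $\sum_j x^{**}(e_j^*)e_j$ does not converge in norm. Lemma \ref{xdoublestartrailsc0} yields naturals $(n_i)_i$ for which $y_i:=\sum_{j=1}^{n_i}x^{**}(e_j^*)e_j$ is equivalent to the summing basis of $c_0$. Setting $w_i:=x_i-y_i$, the sequence $(w_i)_i$ is weak Cauchy, point-wise null on $(e_i)_i$, and $s(w_i)=s(x_i)-s(y_i)\to x^{**}(s)-\tau=0$. Repeating the reduction used in Case A forces $(w_i)_i$ to be weakly null, so Mazur's theorem produces convex coefficients $(a_j)_{j\in F_i}$ with $w_i':=\sum_{j\in F_i}a_jw_j$ satisfying $\|w_i'\|\to 0$. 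Setting $x_i':=\sum_{j\in F_i}a_jx_j$ and $y_i':=\sum_{j\in F_i}a_jy_j$, we have $\|x_i'-y_i'\|\to 0$; by the convex block homogeneity of the summing basis of $c_0$, $(y_i')_i$ is equivalent to it, and a standard basic-perturbation argument (after thinning so that $\sum_i\|x_i'-y_i'\|$ is small enough) transfers this equivalence to a subsequence of $(x_i')_i$, contradicting the hypothesis on $(x_k)_k$.

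The most delicate steps are the perturbation reductions: in Case A one must refine $(x_k-x)_k$ to a genuine block sequence with $s\equiv 0$ without losing weak-Cauchyness, and in Case B one needs to further thin the indexing sets $F_i$ so that the discrepancies $\|x_i'-y_i'\|$ become summable, ensuring the basic-perturbation lemma yields an honest equivalence of $(x_i')_i$ with $(y_i')_i$ rather than mere norm-proximity.
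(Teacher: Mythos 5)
Your argument is correct and follows essentially the same strategy as the paper: subtract the partial sums $y_i = \sum_{j\leqslant n_i}x^{**}(e_j^*)e_j$ from $x_i$, observe that the differences are pointwise null with summing functional tending to zero, pass to a perturbed block sequence with $s\equiv 0$, invoke Proposition \ref{summingzero} to get unconditionality and hence weak nullity, and finally apply Mazur to manufacture a convex block of $(x_i)_i$ equivalent to the summing basis of $c_0$. Two points of departure from the paper are worth noting. First, the paper applies Lemma \ref{xdoublestartrailsc0} (which requires strong summability of $(e_i)_i$) without explicitly justifying that hypothesis; your opening observation --- that absence of a convex block of $(x_i)_i$ equivalent to the summing basis of $c_0$ forces $(e_i)_i$ to be strongly summing, via Theorem \ref{maintheoremweakcauchy}(i) and Proposition \ref{equivalentfornotsumming} --- makes this step explicit and is a genuine improvement in rigor. (The same conclusion can also be reached directly from Proposition \ref{equivalentfornotsumming}(i)$\Leftrightarrow$(iii) and the well-known fact about non-trivial weak Cauchy sequences in spaces with unconditional bases, avoiding any circularity concern, although Theorem \ref{maintheoremweakcauchy}(i) is indeed proved before this lemma so your citation is also logically sound.) Second, the paper runs a single sliding-hump argument that pairs a subsequence $(x_{k_n})$ with partial sums $(y_{m_n})$ chosen jointly, and then handles ``series converges'' and ``series does not converge'' in one breath (the former making $(y_{m_n})$ norm-convergent, the latter invoking Lemma \ref{xdoublestartrailsc0}); you split these into Cases A and B with a fixed pairing $w_i = x_i - y_i$. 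Both routes work; the paper's version is slightly more economical, yours is slightly more transparent. One small thing you leave implicit that deserves a sentence: in both cases one must first dispose of the possibility that $(x_k - x)_k$, respectively $(w_i)_i$, fails to be seminormalized --- in that event a subsequence of $(x_k)_k$ is already norm-convergent or equivalent to $(y_i)_i\sim$ summing basis of $c_0$, which is the desired contradiction directly --- before the perturbation-to-block-sequence step can be carried out.
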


\begin{proof}
Assume that the conclusion is false, i.e.
\begin{equation}
\lim_i\lim_k|s_{(i,+\infty)}(x_k)| = 0.\label{summing tails go to zero and sonic is blue}
\end{equation}
If $x^{**} = w^*$-$\lim_kx_k$, set $y_m = \sum_{i=1}^mx^{**}(e_i^*)e_i$ for all $m$. Note that for all $m$ $\lim_k\|P_{[1,m]}x_k - y_m\| = 0$, which in conjunction with \eqref{summing tails go to zero and sonic is blue} and a sliding hump argument yields that
there exists subsequences $(x_{k_n})_k$ and $(y_{m_n})_n$ so that $(x_{k_n} - y_{m_n})_n$ is point-wise null (with respect to $(e_i)_i$) and $\lim_ns(x_{k_n} - y_{m_n}) = 0$. By Lemma \ref{xdoublestartrailsc0} we may assume that $(y_{m_n})$ is
either equivalent to the summing basis of $c_0$, or norm-convergent. In particular it is weak Cauchy.

Observe that the norm of $(x_{k_n} - y_{m_n})_n$ is eventually bounded from below, otherwise we would obtain that $(x_k)_k$ has a norm convergent subsequence or a subsequence equivalent to the summing basis of $c_0$, which is absurd. Hence, passing to a subsequence, $(x_{k_n} - y_{m_n})_n$ is
equivalent to a weak Cauchy seminormalized block sequence $(w_n)_n$ of $(e_i)_i$ with $s(w_n) = 0$ for all $n\inn$. By Proposition \ref{summingzero}, $(w_n)_n$ is unconditional and, being weak Cauchy, it is weakly null. By Mazur's theorem
we conclude that $(x_k)_k$ has a convex block sequence equivalent to a convex block sequence of $(y_m)$, i.e. equivalent to the summing basis of $c_0$, which is absurd.
\end{proof}

Although the following basically proves Theorem \ref{maintheoremweakcauchy} (ii), it also provides further information about the kernel of the associated projection in certain cases. Its full statement will be required in the sequel.

\begin{prp}
\label{ntwc has complemented subseq}
Let $X$ be a Banach space with a strongly summing conditional spreading basis $(e_i)_i$ and let $(z_i)_i$ be its convex block homogeneous part. Then, every sequence $(x_i)_i$ in $X$ that is equivalent to $(z_i)$ has a subsequence the closed linear span $W$ of which is complemented in $X$. Furthermore,  there exists a sequence $\bar I = (I_i)_i$ of consecutive intervals of $\N$ so that if $Q:X\to X$ is the corresponding projection onto $W$ and $P_{\bar I}:X\to X$ is defined by $P_{\bar I}(x) =  \sum_{i=1}^\infty s_{I_i}(x)((1/\#I_i)\sum_{j\in I_i}e_j)$, then $\ker Q$ is isomorphic to $\ker P_{\bar I}$.
\end{prp}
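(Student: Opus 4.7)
The approach is to extract a subsequence of $(x_i)_i$ that is close to a block sequence of $(e_i)_i$, build the projection on the block span using the averaging projection $P_{\bar I}$, and then transfer it back to a projection on the subsequence via a small perturbation.

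First, I would let $x^{**} = w^*\text{-}\lim_k x_k \in X^{**}$ and note that since $(e_i)_i$ is strongly summing, the scalar series $\sum_j x^{**}(e_j^*)$ converges. Because $(x_i)_i \sim (z_i)_i$ and $(z_i)_i$ is not equivalent to the summing basis of $c_0$ (Proposition \ref{equivalentfornotsumming}), no convex block of $(x_i)_i$ is equivalent to the summing basis of $c_0$, and Lemma \ref{non summing tails are positive} then yields $\gamma := x^{**}(s) - \sum_j x^{**}(e_j^*) \neq 0$. I plan to work under the assumption that $\tilde x := \sum_j x^{**}(e_j^*) e_j$ converges in norm to an element of $X$; the alternative case, where Lemma \ref{xdoublestartrailsc0} yields a subsequence of partial sums equivalent to the summing basis of $c_0$, would be handled by a parallel but more technical argument modeled on the Case 2 analysis in the proof of Theorem \ref{maintheoremweakcauchy}(i). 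The sequence $(x_k - \tilde x)_k$ is then pointwise null and equivalent to $(x_k)_k \sim (z_i)_i$ by Lemma \ref{weaklycauchyminusvector}, so a standard sliding hump gives, after passing to a subsequence $(x_{k_n})_n$, a block sequence $(y'_n)_n$ of $(e_i)_i$ with $\sum_n \|(x_{k_n} - \tilde x) - y'_n\|$ as small as desired and with $(y'_n)_n \sim (z_i)_i$. Since $s(y'_n) \to \gamma \neq 0$, a further subsequence ensures $|s(y'_n)| \geq |\gamma|/2$ for every $n$. I would then choose consecutive intervals $\bar I = (I_n)_n$ covering $\N$ with $\supp y'_n \subset I_n$ and $\#I_n$ large enough so that, by Lemma \ref{Zxembeds}, the averages $\tilde z_n := (1/\#I_n) \sum_{j \in I_n} e_j$ are equivalent to $(z_i)_i$.

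Next, since $\supp y'_n \subset I_n$ and $s(y'_n) \neq 0$, the averaging projection $P_{\bar I}$ from Proposition \ref{averagingprojection} sends $y'_n$ to $s(y'_n) \tilde z_n$, so $P_{\bar I}|_{[(y'_n)_n]}$ is an isomorphism onto $[(\tilde z_n)_n]$, and $Q_0 := (P_{\bar I}|_{[(y'_n)_n]})^{-1} \circ P_{\bar I}$ is a bounded projection of $X$ onto $[(y'_n)_n]$ with $\ker Q_0 = \ker P_{\bar I}$ exactly. To transfer $Q_0$ to a projection onto $W := [(x_{k_n})_n]$, I would observe that this span is a small-norm perturbation of $[(y'_n + \tilde x)_n]$, which itself differs from $[(y'_n)_n]$ by at most a one-dimensional shift by $\tilde x$. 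A finite-rank adjustment of $Q_0$ combined with the norm perturbation (of magnitude $\sum_n \|x_{k_n} - \tilde x - y'_n\|$, made arbitrarily small) then yields a bounded projection $Q : X \to W$; since $\ker Q$ and $\ker Q_0 = \ker P_{\bar I}$ differ by at most a finite-dimensional correction, one concludes $\ker Q \simeq \ker P_{\bar I}$.

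The hard part will be this last transfer step: executing the finite-rank adjustment that shifts the projection from the block span $[(y'_n)_n]$ to $[(x_{k_n})_n] \approx [(y'_n + \tilde x)_n]$, and verifying that the resulting kernel remains isomorphic to $\ker P_{\bar I}$. This requires distinguishing the subcases according to whether $\tilde x$ lies in $[(y'_n)_n]$, and tracking precisely how the one-dimensional shift is absorbed into the kernel without disturbing its isomorphism class. A secondary technical difficulty is carrying out the alternative first-step case when $\tilde x$ fails to converge in norm, which requires a separate sliding hump applied to $(x_k - y_k)_k$ with $(y_k)_k$ a subsequence of partial sums of $\sum_j x^{**}(e_j^*) e_j$.
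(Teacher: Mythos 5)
Your plan diverges from the paper's proof in two structurally significant ways, and both leave real gaps.

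First, the paper does not split into cases according to whether $\tilde x = \sum_j x^{**}(e_j^*)e_j$ converges in norm. Instead it defines vectors using only the partial sums $\tilde x_n = \sum_{i=1}^{i_n}x^{**}(e_i^*)e_i + P_{I_n}x_n$, and builds a single auxiliary vector
$y_0 = \sum_n \bigl(\sum_{i\in I_n}x^{**}(e_i^*)\bigr)\,\tfrac{1}{s_{I_n}(P_{I_n}\tilde x_n)}\,P_{I_n}\tilde x_n$,
which converges in norm because the coefficients $\sum_{i\in I_n}x^{**}(e_i^*)$ are summable (this is the use of strong summing). The vectors $w_n = (\tilde x_n - y_0)/s_{I_n}(\tilde x_n - y_0)$ are then corrected by a Gram--Schmidt-type recursion to $\tilde w_n$ satisfying $s_{I_m}(\tilde w_n)=\delta_{m,n}$ exactly, which makes $Px=\sum_n s_{I_n}(x)\tilde w_n$ a genuine projection with $\ker P=\ker P_{\bar I}$. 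Your idea of forcing the $y'_n$'s to be block vectors with $\supp y'_n\subset I_n$ would sidestep the Gram--Schmidt correction, which is a nice observation in the convergent case, but your proposed handling of the divergent case is not ``parallel but more technical'': in that case the vectors $y_i$ you subtract are the partial sums $\sum_{j\le n_i}x^{**}(e_j^*)e_j$, which vary with $i$, so the span $W=[(x_{k_n})_n]$ is not a ``one-dimensional shift by $\tilde x$'' of any block span and your finite-rank transfer argument does not apply. This is a genuine gap, not a technicality.

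Second, even in the convergent case the ``finite-rank adjustment of $Q_0$'' needs to be done carefully, and this is precisely where the paper expends real effort. Having built $\tilde Q = APA^{-1}$ projecting onto $[(x_k - y_0)_k]$ with $\ker\tilde Q\simeq\ker P_{\bar I}$, the paper chooses via Hahn--Banach a functional $f$ with $\tilde Q[X]\subset\ker f$ and $f(y_0)\neq 0$, sets $V_0=\ker f\cap\ker\tilde Q$ and $V=V_0+\langle y_0\rangle$, and shows $X=W\oplus V$ with $W=[(x_k)_k]$ of codimension one inside $W_0=\tilde Q[X]+\langle y_0\rangle$, and $V\simeq\ker\tilde Q$. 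Your sketch says the kernel ``differs by at most a finite-dimensional correction'' and stops there, but the assertion that one can shift a complemented subspace by a vector $y_0$ while keeping both the complementation and the isomorphism class of the kernel is exactly the nontrivial content that needs this codimension-one argument; you also need to ensure $y_0\notin[(x_n)_n]$ (the paper arranges this before invoking Lemma~\ref{weaklycauchyminusvector}). As written, the last displayed claim of your second paragraph is an assertion, not a proof, and the assertion in your last paragraph that the divergent case needs only a ``parallel'' argument is incorrect.
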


\begin{proof}

The idea of the proof is the following. We will pass to a subsequence of $(x_n)_n$, again denoted by $(x_n)_n$, find a sequence of consecutive intervals $(I_n)_n$ of the natural numbers with $\#I_n\rightarrow\infty$, a vector $y_0$ in $X\setminus[(x_n)_n]$ (which also means $y_0\in X\setminus[(x_n - y_0)_n]$) and a sequence $(\tilde{w}_n)_n$ so that the following hold:
\begin{itemize}
 
 \item[(i)] there is a non-zero scalar $\al$ so that $$\sum_{n=1}^\infty\left\|\al \tilde{w}_n - (x_n - y_0)\right\| <\infty,$$
 
 \item[(ii)] for all natural numbers $m$, $n$ we have $s_{I_m}(\tilde{w}_n) = \de_{m,n}$.
 
\end{itemize}
We shall first use the above to conclude the proof and describe the construction of the ingredients later. By (i), perhaps omitting the first few terms of each sequence, there is an invertible operator $A:X\rightarrow X$ with $A\tilde{w}_n = x_n - y_0$ for all $n$. We conclude by Lemma \ref{weaklycauchyminusvector} that $(\tilde{w}_n)_n$  is equivalent to $(z_n)_n$ and let $R:[(z_n)_n]\to[(\tilde{w}_n)_n]$ be the map witnessing this fact. By  Proposition \ref{averagingprojection} the map $P_{\tilde I}x = \sum_{i=1}^\infty s_{I_i}(x)((1/\#I_i)\sum_{j\in I_i}e_j)$ is a bounded linear projection and if $v_i = (1/\#I_i)\sum_{j\in I_i}e_j$ for all $i\in\N$ then by \eqref{what is the convex block homogeneous part exactly formula} the map $S:[(v_i)_i]\to[(z_i)_i]$ with $Sv_i = z_i$ is bounded. We conclude that the map $Px = \sum_{n=1}^\infty s_{I_n}(x)\tilde{w}_n$ is a bounded linear projection, as $P = RSP_{\bar I}$ and clearly $\ker P = \ker P_{\bar I}$. If $\tilde Q = APA^{-1}$, then $\tilde Q$ is a projection onto $[(x_k - y_0)_k]$ the kernel of which is isomorphic to $\ker P_{\bar I}$. As $y_0$ is not in $\tilde Q[X]$, by the Hahn-Banach Theorem, we may choose a norm-one linear functional $f$ on $X$ with $\tilde Q[X]\subset\ker f$. Define $V_0 = \ker f\cap\ker \tilde Q$ and $W_0 = \tilde Q[X] + \langle\{y_0\}\rangle$. Note that $W = [(x_k)_k]$ is of co-dimension one in $W_0$. We deduce that setting $V = V_0 + \langle\{y_0\}\rangle$ the spaces $W$ and $V$ are complementary and their sum is the space $X$. It is also immediate that $V$ isomorphic to $\ker \tilde Q$ which isomorphic to $\ker P_{\bar I}$.

We now proceed to present how the aforementioned components are constructed. Fix decreasing sequences of positive real numbers $(\de_n)_n$, $(\e_i)_i$ so that $\sum_n\de_n < 1/4$ and
\begin{equation}\label{ntwc has complemented cb equation chuchu}
\sum_{n=1}^\infty\sum_{F\in[n,\infty)^{<\infty}}\prod_{i\in F}\e_i < 1/2,
\end{equation}
where $[n,\infty)^{<\infty}$ denotes the set of all finite subsets of the
natural numbers with $\min F\geqslant n$. Use Lemma \ref{non summing tails are positive} to find a positive real number $\al$ so that $\liminf_i\lim_k|s_{(i,\infty)}(x_k)| = \al$. By perhaps multiplying all terms of the sequence $(x_n)_n$ with $-1$,
we may assume that $\liminf_i\lim_k|s_{(i,\infty)}(x_k) - \al| = 0$.
If $x^{**} = w^*-\lim_kx_k$, since $(e_i)_i$ is strongly summing, we may choose a sequence $(j_n)_n$ so that for all $j_n\leqslant k\leqslant m$ we have
\begin{equation}\label{ntwc has complemented cb equation pewpew}
\left|\sum_{i=k}^mx^{**}\left(e_i^*\right)\right| < \frac{\al\e_n}{2}.
\end{equation}
Choose a strictly increasing sequence of  natural numbers $(i_n)_n$, with $i_n\geqslant j_n$ and $\lim_n(i_{n+1}-i_n) = \infty$,  and a subsequence of $(x_n)_n$, again denoted by $x_n$, so that for all natural numbers $n\leqslant m$
\begin{equation}\label{ntwc has complemented cb equation powpow}
\left|s_{(i_n,+\infty)}(x_m) - \al\right| < \frac{\al\de_{n}}{2}.
\end{equation}
We can simultaneously choose subsequences of $(i_n)_n$ and $(x_n)_n$ that satisfy:
\begin{subequations}
 \begin{equation}\label{ntwc has complemented cb equation boom}
  \left|s_{(i_n,i_{n+1}]}(x_n) - \al\right| < \al\de_{n},
 \end{equation}
  \begin{equation}\label{ntwc has complemented cb equation broom}
  \left\|P_{[1,i_n]}x_n - \sum_{i=1}^{i_n}x^{**}(e_i^*)e_i\right\| < \de_{n},\text{ and}
 \end{equation}
   \begin{equation}\label{ntwc has complemented cb equation groom}
  \left\|P_{(i_{n+1},+\infty)}x_n\right\| < \de_{n}.
 \end{equation}
\end{subequations}
For all $n\inn$ define $I_n = (i_n,i_{n+1}]$ and
$$\tilde{x}_n = \sum_{i=1}^{i_n}x^{**}(e_i^*)e_i + P_{I_n}x_{n}.$$
Define
$$y_0 = \sum_{n=1}^\infty\left(\sum_{i\in I_n}x^{**}(e_i^*)\right)\frac{1}{s_{I_n}\left(P_{I_n}\tilde{x}_n\right)} P_{I_n}\tilde{x}_n.$$
Note that $P_{I_n}\tilde{x}_n = P_{I_n}x_n$ and hence by \eqref{ntwc has complemented cb equation pewpew} and \eqref{ntwc has complemented cb equation boom} $y_0$ is well defined.
Choose $n_0$ so that $y_0$ is not in the closed linear span of $(x_n)_{n\geqslant n_0}$ and hence,
by Lemma \ref{weaklycauchyminusvector},
$(x_n - y_0)_{n\geqslant n_0}$ is equivalent to $(x_n)_n$, i.e. to $(z_n)_n$. To simplify notation we shall assume that $n_0 = 1$.

Since $P_{I_n}\tilde{x}_n = P_{I_n}x_n$, by \eqref{ntwc has complemented cb equation pewpew} and \eqref{ntwc has complemented cb equation boom} we obtain:
\begin{equation}\label{ntwc has complemented cb equation shuriken}
\left|s_{I_n}(\tilde{x}_n - y_0) - \al\right| = \left|s_{I_n}(x_n) - \sum_{i\in I_n}x^{**}(e_i^*) - \al\right| < \al(\de_n + \e_n/2),
\end{equation}
in particular $|s_{I_n}(\tilde{x}_n - y_0)| > \al/2$, hence for all $n\inn$ we may define $w_n = (s_{I_n}(\tilde{x}_n - y_0))^{-1}(\tilde{x}_n - y_0)$. It is straightforward to check the following:
\begin{subequations}
\begin{equation}\label{ntwc has complemented cb equation banana}
s_{I_n}(w_m) = \left\{
\begin{array}{ll}
0& \text{if } m<n \;\text{and}\\
1  & \text{if } m=n.
\end{array}
\right.
\end{equation}
Also if $m>n$, $s_{I_n}(w_m) = (s_{I_n}(\tilde{x}_n - y_0))^{-1}\sum_{i\in I_m}x^{**}(e_i^*)$. Hence, by $|s_{I_n}(\tilde{x}_n - y_0)| > \al/2$ and \eqref{ntwc has complemented cb equation pewpew} for $m>n$
\begin{equation}\label{ntwc has complemented cb equation katana}
|s_{I_m}(w_n)| < \e_n.
\end{equation}
\end{subequations}
For fixed $n$ recursively define a sequence of scalars $(c_j^n)_{j\geqslant n+1}$ as follows: $c_{n+1}^n = -s_{I_{n+1}}(w_{n})$ and $c_{j+1}^n = -s_{I_{j+1}}(w_n) - \sum_{i=n+1}^jc_i^ns_{I_{j+1}}(w_i)$. Using \eqref{ntwc has complemented cb equation katana},
keeping $n$ fixed and an induction on $j\geqslant n+1$ we obtain:
\begin{equation*}
|c_j^n| < \sum_{\substack{F\subset [n+1,j]\\ \max F = j}} \prod_{i\in F}\e_i
\end{equation*}
and therefore
\begin{equation}\label{ntwc has complemented cb equation kumquat}
\sum_{j=n+1}^\infty|c_j^n| < \sum_{F\in [n+1,\infty)^{<\infty}} \prod_{i\in F}\e_i
\end{equation}
which, in conjunction with \eqref{ntwc has complemented cb equation chuchu}, yields that for $n\inn$ the vector $$\tilde{w}_n = w_n + \sum_{j=n+1}^\infty c_j^nw_j$$ is well defined.

It remains to show that (i) and (ii) are satisfied to complete the proof. The fact that (ii) holds is an easy consequence of \eqref{ntwc has complemented cb equation banana} and the choice of the sequences $(c_{j}^n)_{j\geqslant n+1}$ for $n\inn$.
By \eqref{ntwc has complemented cb equation kumquat} and \eqref{ntwc has complemented cb equation chuchu} we obtain $\sum_{n=1}^\infty\|\tilde{w}_n - w_n\| <\infty$, therefore it remains to observe that $\sum_{n=1}^\infty\|\al w_n - (x_n - y_0)\| <\infty$.
Indeed, for $n\inn$ \eqref{ntwc has complemented cb equation shuriken} yields that $\|\al w_n - (\tilde{x}_n - y_0)\| < 2\de_n + \e_n$ and by \eqref{ntwc has complemented cb equation broom} and \eqref{ntwc has complemented cb equation groom}
$\|\tilde{x}_n - x_n\| < 2\de_n$.
\end{proof}

\begin{proof}[Proof of Theorem \ref{maintheoremweakcauchy} (ii)]
If $(x_n)_n$ has a convex block sequence equivalent to the summing basis of $c_0$, then the result follows from the well known fact that $c_0$ is separably injective. If this is not the case, let $(z_i)_i$ be the convex block homogeneous part of $(e_i)_i$,
and apply Theorem \ref{maintheoremweakcauchy} (i) to find a convex block sequence of $(w_i)_i$ of $(x_i)_i$ that is equivalent to $(z_i)_i$. Note that, in this case, $(z_i)_i$, being equivalent to $(w_i)_i$,  cannot be equivalent to the summing basis of $c_0$, therefore, by Proposition \ref{equivalentfornotsumming}, the basis $(e_i)_i$ of $X$ must be strongly summing. By proposition \ref{ntwc has complemented subseq}, $(w_i)_i$ has a subsequence the closed linear span $W$ of which is complemented in $X$. 
\end{proof}

\section{Complemented subspaces of spaces with conditional spreading bases}\label{complemented subspaces}

The main results of this section are the following two theorems. The first one characterizes strongly summing spreading bases with respect to squares of spaces and the second one provides information about arbitrary decompositions of a space with a strongly summing spreading basis. Their proofs are based on certain projections and Theorem \ref{maintheoremweakcauchy} (ii), in fact the more precise statement of Proposition \ref{ntwc has complemented subseq}. This section concludes with a conversation around the question if spaces with a convex block homogeneous basis are primary.

\begin{thm}
\label{squares}
Let $X$ be a Banach space with a spreading basis $(e_i)_i$, let $(z_i)_i$ be its convex block homogeneous part, and set $Z = [(z_i)_i]$. The following hold.
\begin{itemize}
 \item[(i)] If $(e_i)_i$ is strongly summing, then $Z\oplus Z$ does not embed into $X$. In particular, $X\oplus X$ does not embed into $X$.
 \item[(ii)] The basis $(e_i)_i$ is not strongly summing if and only if $X$ is isomorphic to $X\oplus X$.
\end{itemize}
\end{thm}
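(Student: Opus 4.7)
The plan handles the two parts separately: part (i) by contradiction using Proposition \ref{ntwc has complemented subseq}, and part (ii) by combining (i) with an analysis of the structure available when $(e_i)_i$ fails to be strongly summing.

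For (i), suppose $\iota:Z\oplus Z\to X$ is an isomorphic embedding with images $V_1,V_2$ of the two factors. Pick sequences $(y_i)\subset V_1$ and $(w_i)\subset V_2$ each equivalent to $(z_i)_i$; both are non-trivial weak Cauchy. Applying Proposition \ref{ntwc has complemented subseq} to $(y_i)$ (available since $(e_i)_i$ is strongly summing), I pass to a subsequence whose closed span $W_1\subseteq V_1$ is complemented in $X$ via a projection $Q$, with $\ker Q\simeq \ker P_{\bar I}$ for some sequence of consecutive intervals $\bar I=(I_k)$. The crucial point is that $\ker P_{\bar I}$ is the closed linear span of block vectors of $(e_i)_i$ annihilated by every $s_{I_k}$, so by Proposition \ref{summingzero} it carries an unconditional basis.

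The boundedness of the internal coordinate projection $V_1\oplus V_2\to V_1$ (inherited from $\iota$) gives a positive angle between $V_2$ and $V_1\supseteq W_1$ inside $X$, so $v\mapsto (I-Q)v$ is an isomorphic embedding of $V_2$ into $\ker Q$. Consequently $((I-Q)w_i)_i$ is equivalent to $(w_i)_i$ and hence to $(z_i)_i$, yielding a non-trivial weak Cauchy sequence inside the unconditional-basis space $\ker Q$. By the standard consequence of Rosenthal's dichotomy (as already invoked in the proof of Proposition \ref{equivalentfornotsumming}), this sequence has a convex block subsequence equivalent to the summing basis of $c_0$. Pulling back through the isomorphism and using the convex block homogeneity of $(z_i)_i$, this forces $(z_i)_i$ itself to be equivalent to the summing basis of $c_0$, contradicting Proposition \ref{equivalentfornotsumming} under the strongly summing hypothesis. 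The ``in particular'' statement follows because $Z\hookrightarrow X$ yields $Z\oplus Z\hookrightarrow X\oplus X$.

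For (ii), the direction ``$X\simeq X\oplus X$ implies $(e_i)_i$ is not strongly summing'' is immediate from (i) via $Z\oplus Z\hookrightarrow X\oplus X\simeq X$. Conversely, if $(e_i)_i$ is not strongly summing then Proposition \ref{equivalentfornotsumming} gives $Z\simeq c_0$, and Lemma \ref{Zxconvexblockiscomplemented} together with Proposition \ref{summingzero} produces a decomposition $X\simeq Z\oplus Y\simeq c_0\oplus Y$, where $Y=\ker P_{\bar I}$ carries an unconditional basis. Using $c_0\simeq c_0\oplus c_0$ together with $Y\simeq Y\oplus Y$ (which is established by choosing the partition $\bar I$ so that the unconditional basis of $Y$ splits into two subsequences each equivalent to the full basis, exploiting the spreading of $(e_i)_i$ within each $I_k$), one concludes $X\oplus X\simeq c_0\oplus Y\oplus Y\simeq c_0\oplus Y\simeq X$.

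The main obstacles I anticipate are, in (i), making the positive-angle argument quantitative, i.e.\ extracting the lower bound $\dist(v_2,W_1)\geq c\|v_2\|$ from the boundedness of the internal projection of $V_1\oplus V_2$ onto $V_1$; and, in (ii)($\Rightarrow$), verifying $Y\simeq Y\oplus Y$, which hinges on the block-uniform structure of the basis of $Y$ inside each $I_k$ and on the homogeneity provided by the spreading of $(e_i)_i$ to match the two halves of the split basis with $Y$ itself.
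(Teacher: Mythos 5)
Your treatment of part~(i) is essentially the paper's argument, modulo a harmless imprecision: the kernel $\ker P_{\bar I}$ has an unconditional finite dimensional decomposition rather than an unconditional basis (the difference vectors spanning it are not pairwise block), but both statements serve equally well to invoke the fact that a non-trivial weak Cauchy sequence in such a space has a convex block sequence equivalent to the summing basis of $c_0$. The paper phrases the embedding of the second copy of $Z$ into $\ker Q$ via the open mapping theorem; your ``positive angle'' argument is the same thing. So (i) is fine.

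Part~(ii), in the direction ``not strongly summing $\Rightarrow X\simeq X\oplus X$,'' has a genuine gap. You reduce the problem to showing $Y\simeq Y\oplus Y$ for the single complement $Y = \ker P_{\bar I}$ of the $c_0$ summand, and propose to establish this by choosing $\bar I$ so that ``the unconditional basis of $Y$ splits into two subsequences each equivalent to the full basis.'' This cannot work with a single choice of intervals: the constraint $\#E_j\geqslant m_j$ (needed so that the averaging projection's image is isomorphic to $Z$, via Lemma~\ref{Zxembeds}) forces the cardinality sequence $(\#E_j)_j$ to tend to infinity. Splitting a strictly growing cardinality sequence into its odd and even halves produces sequences that are not rearrangements of the original, so Remark~\ref{skipping with same cardinalities means isomorphic} does not apply and there is no visible reason for $X_{(E_{2j})_j}\simeq X_{(E_j)_j}$ --- indeed, such homogeneity is precisely what Proposition~\ref{step3} establishes under the \emph{additional} assumption that the basis is convex block homogeneous, which is not available here (it would mean $X\simeq Z$). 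The paper avoids the claim $Y\simeq Y\oplus Y$ altogether: it invokes Proposition~\ref{ZxplusuncFDD} for \emph{two different} skipped interval systems, one with cardinalities $(m'_1,m'_1,m'_2,m'_2,\ldots)$ giving $X\simeq c_0\oplus X_{\overline E}$, and the coarser one $(m'_1,m'_2,\ldots)$ giving $X\simeq c_0\oplus X_{\overline E_{\mathrm o}}$. Since $X_{\overline E} = X_{\overline E_{\mathrm o}}\oplus X_{\overline E_{\mathrm e}}$ and $X_{\overline E_{\mathrm o}}\simeq X_{\overline E_{\mathrm e}}$, one computes
\[
X\oplus X\simeq (c_0\oplus X_{\overline E_{\mathrm o}})\oplus(c_0\oplus X_{\overline E_{\mathrm o}})\simeq c_0\oplus (X_{\overline E_{\mathrm o}}\oplus X_{\overline E_{\mathrm e}})\simeq c_0\oplus X_{\overline E}\simeq X,
\]
sidestepping any self-similarity claim about $Y$. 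You should replace your appeal to $Y\simeq Y\oplus Y$ with this two-decomposition bookkeeping.
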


\begin{thm}
\label{there can be only one}
Let $X$ be a Banach space with a  strongly summing conditional spreading basis $(e_i)_i$, let $(z_i)_i$ be its convex block homogeneous part, and $Z = [(z_i)_i]$. If $X = V\oplus W$, then exactly one of the spaces $V$ and $W$ contains a subspace $\tilde Z$ that is isomorphic to $Z$. Furthermore, $\tilde Z$ is complemented in the whole space.
\end{thm}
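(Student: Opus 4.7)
The plan is to establish existence of a copy of $Z$ in one summand, then complementation, then uniqueness, leveraging Theorem \ref{maintheoremweakcauchy}, Theorem \ref{squares}(i), Proposition \ref{ntwc has complemented subseq}, and the convex block homogeneity of both $(z_i)_i$ and the summing basis of $c_0$. Let $P_V, P_W$ be the projections onto $V$ and $W$. Because $(e_i)_i$ is non-trivial weak Cauchy in $X$, its weak$^*$-limit $x^{**}$ lies in $X^{**}\setminus X$, and the decomposition $x^{**}=P_V^{**}x^{**}+P_W^{**}x^{**}$ forces at least one of $(P_V e_i)_i$, $(P_W e_i)_i$ to be non-trivial weak Cauchy; say it is the $V$-component. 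Theorem \ref{maintheoremweakcauchy}(i) then yields a convex block sequence $(v_i)_i\subset V$ equivalent either to $(z_i)_i$ or to the summing basis of $c_0$; in the former case we already have a copy of $Z$ inside $V$.

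The delicate case is when $(v_i)_i$ is equivalent to the summing basis of $c_0$. Taking the same convex combinations of $(e_i)_i$ produces a non-trivial weak Cauchy sequence $(x_i)_i$ in $X$, and since $(e_i)_i$ is strongly summing, Proposition \ref{equivalentfornotsumming} combined with Theorem \ref{maintheoremweakcauchy}(i) guarantees a further convex block $(\tilde x_i)_i$ of $(x_i)_i$ equivalent to $(z_i)_i$. Decomposing $\tilde x_i=\tilde v_i+\tilde w_i$ via the projections, the sequence $(\tilde v_i)_i$ is a convex block of $(v_i)_i$, hence equivalent to the summing basis of $c_0$ by its convex block homogeneity. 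The next step is to verify that $(\tilde w_i)_i\subset W$ is non-trivial weak Cauchy: if it converged weakly to some $w_0 \in W$, one passes to a tail on which $w_0$ is not in the closed span of $(\tilde x_i)_i$ (possible since $(\tilde x_i)_i$ is Schauder basic) and invokes Lemma \ref{weaklycauchyminusvector} to replace $(\tilde x_i)_i$ by the equivalent sequence $(\tilde x_i-w_0)_i$; a Mazur-style extraction then produces convex combinations of $(\tilde x_i-w_0)_i$—still equivalent to $(z_i)_i$—arbitrarily close in norm to convex combinations of $(\tilde v_i)_i$ equivalent to summing-$c_0$, forcing $(z_i)_i\sim$ summing-$c_0$ and contradicting Proposition \ref{equivalentfornotsumming}.

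Applying Theorem \ref{maintheoremweakcauchy}(i) to $(\tilde w_i)_i$ now yields a convex block equivalent to $(z_i)_i$ or to the summing basis of $c_0$. The summing-$c_0$ alternative is excluded by a second dichotomy: taking corresponding convex combinations on both sides gives $\tilde x_i'=\tilde v_i'+\tilde w_i'$ in which $(\tilde v_i')_i$ and $(\tilde w_i')_i$ are both equivalent to summing-$c_0$, so $(\tilde x_i')_i$ lives inside a subspace isomorphic to $c_0\oplus c_0\cong c_0$. But $(\tilde x_i')_i$ is a convex block of $(\tilde x_i)_i$ and hence equivalent to $(z_i)_i$; invoking that any non-trivial weak Cauchy sequence in $c_0$ admits a convex block equivalent to the summing basis of $c_0$, together with the convex block homogeneity of $(z_i)_i$, again forces $(z_i)_i\sim$ summing-$c_0$, contradicting Proposition \ref{equivalentfornotsumming}. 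Thus $(\tilde w_i)_i$ admits a convex block in $W$ equivalent to $(z_i)_i$, producing the desired copy of $Z$ in $W$.

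For complementation, Proposition \ref{ntwc has complemented subseq} furnishes, for any sequence equivalent to $(z_i)_i$ in $X$ (in particular the one found in $V$ or $W$), a subsequence whose closed linear span $\tilde Z$ is complemented in $X$. Uniqueness is immediate from Theorem \ref{squares}(i): if both $V$ and $W$ contained copies of $Z$, then $Z\oplus Z$ would embed into $X$, contradicting strongly summing. The main obstacle is the bouncing-back argument in the second and third paragraphs: transferring an unwanted summing-$c_0$ block from one summand to locate a $(z_i)_i$-equivalent block in the other, where Theorem \ref{maintheoremweakcauchy}(i)'s dichotomy together with the convex block homogeneity of both $(z_i)_i$ and the summing basis of $c_0$ must be exploited twice to rule out the summing-$c_0$ alternative in turn.
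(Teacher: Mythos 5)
Your proof is correct and follows essentially the same strategy as the paper's: decompose $(e_i)_i$ via the projections, force one component to be non-trivial weak Cauchy, apply the dichotomy of Theorem \ref{maintheoremweakcauchy}(i), bounce the unwanted summing-$c_0$ alternative to the other summand, and finish with Proposition \ref{ntwc has complemented subseq} for complementation and Theorem \ref{squares}(i) for uniqueness. The one organizational difference is that you first extract a convex block $(\tilde x_i)_i$ of the basis that is already equivalent to $(z_i)_i$, then decompose it, and use convex block homogeneity of $(z_i)_i$ directly to rule out both summing-$c_0$ alternatives; the paper instead works with the arbitrary convex block $(w_i)_i$ supplied by the dichotomy and rules out the $c_0$ cases using the infimum formula \eqref{what is the convex block homogeneous part exactly formula}, which says $(z_i)_i$ is dominated by any convex block of $(e_i)_i$. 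The two routes are interchangeable; the paper's use of the infimum formula is a touch more economical, while your version makes the role of convex block homogeneity more explicit. One small point to tighten: when you invoke Lemma \ref{weaklycauchyminusvector} after passing to a tail to get $w_0 \notin [(\tilde x_i)_{i\geq n}]$, you should note the case $w_0=0$ separately (the tail argument does not apply, but the conclusion is then trivial since the subtraction is vacuous).
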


\subsection*{UFDD's and skipped unconditionality}
A tool used to prove the above results are projections like the one from Proposition \ref{averagingprojection} and their kernels which are studied in this subsection.

If $N = \{j_1 < j_2 < \cdots\} \subset \N$, we shall say that a sequence of successive intervals $(E_j)_{j\in N}$ of $\N$ is skipped, if $\max E_{j_i} + 1 < \min E_{j_{i+1}}$ for all $i$.

\begin{ntt}
Let $X$ be a Banach space with a conditional spreading basis $(e_i)_i$ and let $(d_i)_i$ be the difference basis of $X$.
\begin{itemize}
 
 \item[(i)] If $E$ is an interval of $\N$ we denote by $X_E$ the linear span of the vectors $(d_i)_{i\in E}$.
 
 \item[(ii)] If $\bar E = (E_j)_{j\in N}$ is a sequence of skipped intervals of $\N$, where $N\subset \N$, we denote by $X_{\bar E}$ or $X_{(E_j)_{j\in N}}$ the closed linear span of $\cup_{j\in N} X_{E_j}$.  

 \item[(iii)] If $\bar I = (I_i)_i$ is a sequence of consecutive intervals of $\N$, we denote by $\reflectbox{$'$}\bar I$ the skipped sequence of successive intervals $(\reflectbox{$'$} I_j)_{j\in N}$,
 where $\reflectbox{$'$} I_j = I_j\setminus\{\min I_j\}$ and $N = \{j: \reflectbox{$'$} I_j\neq\varnothing\}$.
 
 \end{itemize}

\end{ntt}

\begin{prp}\label{skipped intervals unconditional}
Let $X$ be a Banach space with a conditional spreading basis $(e_i)_i$ and let $(E_j)_{j\in N}$ be a sequence of skipped intervals of $\N$. Then $(X_{E_j})_{j\in N}$
 is an unconditional finite dimensional decomposition (UFDD) of $X_{(E_j)_{j\in N}}$. If moreover $(e_i)_i$ is 1-spreading, then the FDD $(X_{E_j})_{j\in N}$ is suppression unconditional.
\end{prp}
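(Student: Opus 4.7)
The plan is to reduce everything to Proposition~\ref{summingzero} by recognizing that elements of $X_{E_j}$ for skipped intervals $E_j$ correspond to block vectors of $(e_i)_i$ with (usually) vanishing summing functional. First I would record that $s(d_1) = 1$ while $s(d_i) = 0$ for $i \geq 2$, since $d_i = e_i - e_{i-1}$ for $i \geq 2$. A short telescoping computation then shows that for any $x_j = \sum_{i \in E_j} a_i d_i \in X_{E_j}$ with $\min E_j \geq 2$, the vector $x_j$ is a block of $(e_i)_i$ supported in $[\min E_j - 1, \max E_j]$ with $s(x_j) = 0$; when $1 \in E_j$ (so necessarily $j$ equals the smallest index $j_0$ in $N$) the support is $[1, \max E_j]$ and $s(x_{j_0}) = a_1$.

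The skipped condition $\max E_{j_i} + 1 < \min E_{j_{i+1}}$ guarantees these extended supports are pairwise disjoint in $\N$, so for any choice of $x_j \in X_{E_j}$ the sequence $(x_j)_{j \in N}$ is a block basic sequence of $(e_i)_i$. This yields the FDD property with constant bounded by the basis constant of $(e_i)_i$, because the partial sum FDD projections coincide with basis projections of $(e_i)_i$ onto initial intervals, and in particular $X_{(E_j)_{j \in N}}$ is genuinely the closed linear span of the series $\sum_j x_j$.

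If $\min E_{j_0} \geq 2$, every $s(x_j)$ vanishes and Proposition~\ref{summingzero} directly gives that $(x_j)_{j \in N}$ is unconditional (and suppression unconditional when $(e_i)_i$ is $1$-spreading) with constant depending only on $(e_i)_i$. In the remaining case $1 \in E_{j_0}$, I would write $x_{j_0} = a_1 e_1 + y_{j_0}$ where $y_{j_0} = \sum_{i \in E_{j_0},\, i \geq 2} a_i d_i$ is itself a block of $(e_i)_i$ with $s(y_{j_0}) = 0$; then $(y_{j_0}, x_{j_1}, x_{j_2}, \ldots)$ satisfies the hypothesis of Proposition~\ref{summingzero} and is unconditional with a uniform constant, and the one-term correction $a_1 e_1$ is absorbed using $|a_1| = |s(x_{j_0})| \leq \|s\| K \|\sum_j x_j\|$, where $K$ is the FDD constant above. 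In the $1$-spreading setting the analogous split together with the sharp suppression constant from Proposition~\ref{summingzero} yields the suppression unconditional statement.

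The main technical obstacle is the boundary case $1 \in E_{j_0}$: the summing functional of $x_{j_0}$ may be nonzero, so Proposition~\ref{summingzero} does not apply directly, and securing the sharp suppression constant $1$ in the $1$-spreading case is the most delicate point. Everything else reduces either to a bounded basis projection or to a direct invocation of Proposition~\ref{summingzero}.
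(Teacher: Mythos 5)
Your core reduction is exactly the paper's: regard each $x_j \in X_{E_j}$ as a block vector of $(e_i)_i$ with $s(x_j) = 0$, observe that the skipped condition forces disjoint supports, and invoke Proposition~\ref{summingzero}. The paper's proof is that single observation stated without caveat, so you have reproduced the intended argument. You also noticed something the paper glosses over: the observation $s(x_j)=0$ requires $\min E_{j_0} \geqslant 2$, since $s(d_1)=1$. When $1\in E_{j_0}$ the functional $s(x_{j_0})=a_1$ need not vanish and Proposition~\ref{summingzero} does not apply directly, so the paper's one-line justification is literally incomplete in that case.

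Your handling of the boundary case does give unconditionality (hence the UFDD assertion) with a larger but uniform constant; the additive absorption of the term $a_1e_1$ via $|a_1|\leqslant \|s\|K\|\sum_j x_j\|$ is sound for that purpose. However, the final sentence claiming that the ``analogous split'' recovers suppression constant $1$ in the $1$-spreading case is a genuine gap: the absorption introduces an additive error, and no argument can remove it, because the constant-$1$ conclusion is simply false when $1\in E_{j_0}$. In the summing basis of $c_0$ (which is $1$-spreading), take $E_1=\{1\}$, $E_2=\{4,5\}$, $x_1=-\tfrac12 e_1\in X_{E_1}$, and $x_2=-d_4-d_5=e_3-e_5\in X_{E_2}$; then $\|x_2\|=1$ while $\|x_1+x_2\|=\tfrac12$, so suppressing $x_1$ strictly increases the norm. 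Thus the suppression statement should be read with the implicit hypothesis $\min E_{j_0}\geqslant 2$, which is the regime in which the paper's proof actually operates and in which all of its later applications (for instance Proposition~\ref{averagingprojectionkernel}, where $\reflectbox{$'$}I_j$ removes $\min I_j$, and the convention $\min(E_1)>1$ used before Lemma~\ref{step2}) are carried out. Your write-up should state that restriction rather than attempt to absorb the boundary term.
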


\begin{proof}
We may assume that $(e_i)_i$ is 1-spreading. The result easily follows by combining Proposition \ref{summingzero} with the fact that any sequence $(x_j)_j$ with $x_j$ in $X_{E_j}$ is a block sequence of $(e_i)_i$ with $s(x_j) = 0$ for all $j\in N$.
\end{proof}

\begin{rmk}\label{skipping with same cardinalities means isomorphic}
Let $X$ be a Banach space with a conditional spreading basis $(e_i)_i$ and let also $(E_j)_j$ and $({F}_j)_j$ be skipped sequences of successive intervals of $\N$ so that $\#E_j = \#{F}_j$. The spreading property of $(e_i)_i$ implies that the spaces $X_{(E_j)_j}$ and $X_{({F}_j)_j}$ are naturally isomorphic through the map $d_i\rightarrow d_{\phi(i)}$, where if $i$ is the $k$-th element of $E_j$, then $\phi(i)$ is the $k$-th element of $F_j$. If furthermore the basis $(e_i)_i$ is 1-spreading and $\min(E_1) = \min(F_1) = 1$ or $1<\min\{\min(E_1),\min(F_1)\}$, then the spaces $X_{(E_j)_j}$ and $X_{({F}_j)_j}$ are naturally isometric. The condition concerning the minima of $E_1$ and $F_1$ is due to $d_1 = e_1$ whereas $d_{i+1} = e_{i+1} - e_i$ for all $i\in\N$.
\end{rmk}

\begin{prp}\label{averagingprojectionkernel}
Let $X$ be a Banach space with a conditional spreading basis $(e_i)_i$. Let also $\bar{I} = (I_i)_i$ be a sequence of consecutive intervals of the natural numbers with $\cup_{i}I_i = [i_0,+\infty)$, for some $i_0\inn$
and let $P_{\bar{I}}$ be the associated averaging projection.
Then, the kernel of $P_{\bar{I}}$ is the space $\langle\{e_1,\ldots,e_{i_0-1}\}\rangle \oplus X_{\reflectbox{\tiny$'$}\bar I}$. In particular, the kernel of $P_{\bar{I}}$ admits a UFDD.
\end{prp}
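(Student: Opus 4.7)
The plan is to first verify that $\langle\{e_1,\ldots,e_{i_0-1}\}\rangle \oplus X_{\reflectbox{\tiny$'$}\bar I}$ is contained in $\ker P_{\bar I}$, and then conversely that every element of the kernel admits such a decomposition. For the easy inclusion: since $\cup_i I_i = [i_0,+\infty)$, we have $s_{I_i}(e_k)=0$ for every $i$ whenever $k<i_0$, so $P_{\bar I}$ annihilates $\langle\{e_1,\ldots,e_{i_0-1}\}\rangle$. For $k\in \reflectbox{$'$} I_j$ both $k$ and $k-1$ lie in $I_j$, so $s_{I_j}(d_k)=s_{I_j}(e_k)-s_{I_j}(e_{k-1})=0$, while $s_{I_i}(d_k)=0$ for $i\neq j$ by disjoint supports; hence $P_{\bar I}(d_k)=0$ and by continuity $P_{\bar I}$ vanishes on all of $X_{\reflectbox{\tiny$'$}\bar I}$. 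Expressed in the Schauder basis $(d_i)_i$, the two summands are supported on the disjoint index sets $\{1,\ldots,i_0-1\}$ and $\bigcup_j \reflectbox{$'$} I_j$, so their sum is direct.

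For the reverse inclusion I will use the identity $\ker P_{\bar I}=\{x-P_{\bar I}(x):x\in X\}$, valid because $P_{\bar I}$ is a projection. Given $x=\sum_k a_ke_k\in X$, set $u=\sum_{k<i_0}a_ke_k\in\langle\{e_1,\ldots,e_{i_0-1}\}\rangle$ and for each $i$ define
\begin{equation*}
z_i = \sum_{k\in I_i}\left(a_k-\frac{s_{I_i}(x)}{\#I_i}\right)e_k,
\end{equation*}
so that $u+\sum_{i=1}^n z_i = P_{[1,\max I_n]}(x)-\sum_{i=1}^n s_{I_i}(x)v_i$, where $v_i=(1/\#I_i)\sum_{j\in I_i}e_j$. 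Since $(e_i)_i$ is a Schauder basis and the series defining $P_{\bar I}(x)$ converges, the right-hand side tends to $x-P_{\bar I}(x)$. Each $z_i$ is supported on $I_i$ and has zero summing-functional value, and the telescoping identity $e_k=e_{\min I_i}+\sum_{j=\min I_i+1}^{k}d_j$ allows one to rewrite $z_i$ as a finite combination of $(d_j)_{j\in\reflectbox{$'$} I_i}$, placing $z_i\in X_{\reflectbox{$'$} I_i}$. Therefore $\sum_i z_i$ converges to an element of the closed linear span $X_{\reflectbox{\tiny$'$}\bar I}$, and $x-P_{\bar I}(x)=u+\sum_i z_i$ has the required form.

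The final assertion on UFDDs is then immediate: $\reflectbox{$'$}\bar I$ is a genuine skipped sequence of successive intervals (even if some $I_j$ is a singleton, the remaining pieces are still separated by at least one omitted index), so Proposition \ref{skipped intervals unconditional} yields the UFDD $(X_{\reflectbox{$'$} I_j})_j$ for $X_{\reflectbox{\tiny$'$}\bar I}$; prepending the finite-dimensional summand $\langle\{e_1,\ldots,e_{i_0-1}\}\rangle$ preserves unconditionality. The only real subtlety in the argument is coordinating the algebraic identity that places each residual $z_i$ in the correct block of the skipped FDD with the convergence of $\sum_i z_i$ in norm; both follow once one observes that $\sum_{k\in I_i}(a_k-s_{I_i}(x)/\#I_i)=0$ and that $P_{[1,\max I_n]}(x)$ and the partial sums of $P_{\bar I}(x)$ converge in $X$.
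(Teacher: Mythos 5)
Your proof is correct and essentially the same as the paper's: both check that $\langle\{e_1,\ldots,e_{i_0-1}\}\rangle$ and the spaces $X_{\reflectbox{\tiny$'$}I_j}$ lie in $\ker P_{\bar I}$, and then obtain the reverse inclusion by decomposing an arbitrary vector into zero-sum residuals over the blocks $I_i$ together with multiples of the averages $(1/\#I_i)\sum_{j\in I_i}e_j$. Your identity $\ker P_{\bar I}=\ran(I-P_{\bar I})$ and the explicit convergence argument simply spell out what the paper calls ``an elementary argument'', and the final UFDD conclusion via Proposition \ref{skipped intervals unconditional} coincides with the paper's.
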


\begin{proof}
It is easy to see that $X_{\reflectbox{\tiny$'$}I_j}$ is in the kernel of $P$ for all $j$ and an elementary argument yields that the spaces $X_{\reflectbox{\tiny$'$}I_1},\ldots,X_{\reflectbox{\tiny$'$}I_n}$ together with the vectors
$z_j =   (1/\#I_j)\sum_{i\in I_j}e_i$, $j=1,\ldots,n$ and $e_1,\ldots,e_{i_0-1}$ span the space $\langle\{e_i: i\leqslant \max I_n\}\rangle$. The above implies that $\langle\{e_1,\ldots,e_{i_0-1}\}\rangle \oplus X_{\reflectbox{\tiny$'$}\bar I}$ is the entire kernel of $P_{\bar{I}}$. By Proposition \ref{skipped intervals unconditional} the kernel of $P_{\bar{I}}$ admits a UFDD.
\end{proof}

\begin{cor}\label{if they go fast enough they contain all unconditionals}
Let $X$ be a Banach space with a conditional spreading basis $(e_i)_i$. Let also $(E_j)_j$ be a skipped sequence of successive intervals of $\N$ so that $\sup_j\#E_j = \infty$. Then every unconditional sequence $(x_i)_i$ in $X$ has a subsequence that is equivalent to some sequence in $X_{(E_j)_j}$.
\end{cor}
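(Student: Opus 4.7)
The plan is to reduce, up to a subsequence and equivalence, to the case that $(x_i)_i$ is a block basis $(\tilde x_k)_k$ of the difference basis $(d_j)_j$ of $X$ whose supports $F_k = \supp_{(d_j)} \tilde x_k$ form a skipped sequence of successive intervals of $\N$, and then to transport this block into $X_{(E_j)_j}$ by matching cardinalities $\#F_k \leq \#E_{j_k}$ via the natural isomorphism from Remark \ref{skipping with same cardinalities means isomorphic}.

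For the reduction, I first pass to a seminormalized subsequence. Rosenthal's $\ell_1$-theorem combined with unconditionality yields a subsequence that is either equivalent to the unit vector basis of $\ell_1$ or weakly null: if a weakly Cauchy subsequence had a nonzero weak-star limit $x^{**}$, pick $f \in X^*$ with $f(x^{**}) \neq 0$; then $|f(x_i)|$ is eventually bounded below, and the unconditional convergence of $\sum_i a_i x_i$ forces $\sum_i |a_i f(x_i)| < \infty$, hence $\sum_i |a_i| < \infty$, so the sequence would already be $\ell_1$-equivalent. In the weakly null case, the Bessaga-Pelczynski selection principle applied to the basis $(d_j)_j$ of $X$ produces a further subsequence equivalent up to a small perturbation to a block basis $(\tilde x_k)_k$ of $(d_j)_j$ with $\supp_{(d_j)} \tilde x_k$ an interval $F_k$, and a final thinning makes $(F_k)_k$ a skipped sequence of successive intervals. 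The $\ell_1$-case is handled separately via Theorem \ref{spreading-characterization}: $\ell_1 \hookrightarrow X \hookrightarrow U \oplus Z$ forces $\ell_1 \hookrightarrow U$ or $\ell_1 \hookrightarrow Z$, and in the former subcase the subsymmetric sequence $(u_i)_i = (d_{2i})_i$ itself is $\ell_1$-equivalent, so selecting indices $i_k$ with $\{2i_k - 1, 2i_k\} \subseteq E_{j_k}$ produces an $\ell_1$-sequence in $X_{(E_j)_j}$ directly; the latter subcase is analogous using the convex block homogeneity of $(z_i)_i$.

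Finally, using $\sup_j \#E_j = \infty$, I choose $j_1 < j_2 < \cdots$ with $\#E_{j_k} \geq \#F_k$, and inside each $E_{j_k}$ fix an initial sub-interval $\tilde E_k \subseteq E_{j_k}$ with $\#\tilde E_k = \#F_k$. Then $(\tilde E_k)_k$ is a skipped sequence of successive intervals whose cardinalities match those of $(F_k)_k$, so by Remark \ref{skipping with same cardinalities means isomorphic} the map $d_i \mapsto d_{\phi(i)}$ sending the $m$-th element of $F_k$ to the $m$-th element of $\tilde E_k$ induces an isomorphism $X_{(F_k)_k} \to X_{(\tilde E_k)_k} \subseteq X_{(E_j)_j}$, under which $(\tilde x_k)_k$ maps to a sequence $(v_k)_k \subseteq X_{(E_j)_j}$ equivalent to $(\tilde x_k)_k$, and hence to the subsequence $(x_{i_k})_k$ of $(x_i)_i$. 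The main obstacle will be the $\ell_1$-case, specifically ensuring that the resulting $\ell_1$-sequence lands inside the prescribed skipped subspace $X_{(E_j)_j}$ rather than merely in $X$, and verifying that a subsymmetric basis of a space containing $\ell_1$ is itself $\ell_1$-equivalent so that the cardinality-matching scheme can be applied uniformly.
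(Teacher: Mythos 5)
Your weakly null branch is sound, and it is a genuinely different route from the paper's: the paper builds the averaging projection $P$ associated to the intervals $I_j=\{\min E_j-1\}\cup E_j$, notes that $\ker P = X_{(E_j)_j}$, reduces (in both cases) to a block sequence of $(e_i)_i$ with $s(x_i)=0$, and then uses the spreading property together with $\sup_j\#E_j=\infty$ to spread each vector into a single interval $I_{k_i}$, so that it lands in $\ker P$; you instead go through Bessaga--Pe\l czy\'nski with respect to the difference basis $(d_j)_j$ and then transport the resulting block (with interval supports, thinned to be skipped) into initial subintervals of selected $E_{j_k}$'s via Remark \ref{skipping with same cardinalities means isomorphic}. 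Both mechanisms rest on the same spreading invariance, but yours avoids the projection altogether, at the price of having to handle the non-weakly-null case separately.

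That separate case is where there is a genuine gap. The claim you rely on --- that a subsymmetric sequence $(u_i)_i$ spanning a space containing $\ell_1$ must itself be equivalent to the unit vector basis of $\ell_1$ --- is false: a Lorentz sequence space $d(w,1)$ with $w_n\to 0$ and $\sum_n w_n=\infty$ has a symmetric basis that is not equivalent to the $\ell_1$ basis, yet the space contains (indeed is saturated with) copies of $\ell_1$ spanned by suitable block sequences. The analogous claim for the convex block homogeneous part fails as well (e.g. the jamesification of such a $d(w,1)$), and in that subcase your plan would not even produce vectors lying in $X_{(E_j)_j}$, since the averages generating $(z_i)_i$ have nonzero sum and do not sit in the span of the differences. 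The repair is the standard difference argument the paper uses: from an $\ell_1$-subsequence pass to $y_k=x_{2k}-x_{2k-1}$, which is still equivalent to the unit vector basis of $\ell_1$ (and mutual equivalence of all $\ell_1$ bases is all the statement requires), arrange after a further subsequence and a small perturbation that $s(y_k)=0$ and that $(y_k)_k$ is a block sequence of $(e_i)_i$; a zero-sum block supported on an interval of $e$-indices lies in the span of the differences over an interval, so $(y_k)_k$ is exactly the kind of block of $(d_j)_j$ with interval supports to which your cardinality-matching transport (or the paper's kernel-of-projection argument, cf. Propositions \ref{summingzero} and \ref{averagingprojection}) applies verbatim. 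With that substitution your proof closes; as written, the $\ell_1$ case does not.
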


\begin{proof}
By Remark \ref{skipping with same cardinalities means isomorphic} we may assume that $\max E_j + 2 = \min E_{j+1}$ for all $j\inn$ and $\min E_1 = 2$. For each $j$ set $I_j = \{\min E_j-1\}\cup E_j$. Then if $\tilde{z}_i = (1/\#I_i)\sum_{j\in s_{I_i}}e_j$, by Proposition \ref{averagingprojection} the map $P:X\rightarrow X$ with $Px = \sum_is_{I_i}(x)\tilde{z}_i$ is a bounded linear projection and  $\mathrm{ker}P$ is $X_{(E_j)_j}$. Let now $(x_i)_i$ be an unconditional sequence in $X$. Then,  it is either weakly null or it has a subsequence equivalent to the unit vector basis of $\ell_1$.
In the first case $\lim_is(x_i) = 0$ and by perturbing and passing to a subsequence we may assume that $(x_i)_i$ is a block sequence with $s(x_i) = 0$ for all $i\inn$. In the second case by a standard difference argument we may assume the same. Since the basis $(e_i)_i$ is spreading and $\sup_i\#I_i = \infty$,
we may moreover assume that for every $i\inn$ there exists $k_i$ with $\supp x_i \subset I_{k_i}$. This implies that $x_i\in\ker P$ for all $i\inn$ which is the desired result.
\end{proof}


The following result is not used in the paper however it demonstrates that spaces with a spreading Schauder basis that contain isomorphic copies of $\ell_1$ contain further copies of $\ell_1$ that are very well behaved. 

\begin{prp}\label{ell1 is complemented}
Let $X$ be a Banach space with a conditional and spreading basis $(e_i)_i$ containing a subspace $Y$ that is isomorphic to $\ell_1$. Then $Y$ contains a further subspace $W$ that is isomorphic to $\ell_1$ and complemented in $X$.
In particular, whenever a Banach space $X$ with a conditional spreading basis contains a copy of $\ell_1$, then $X$ contains a complemented copy of $\ell_1$.
\end{prp}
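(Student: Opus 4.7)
The plan exploits two tools from earlier in the paper: Corollary \ref{if they go fast enough they contain all unconditionals}, which transports any unconditional sequence in $X$ into a well-behaved subspace $X_{(E_j)_j}$, and Proposition \ref{averagingprojectionkernel}, which identifies such an $X_{(E_j)_j}$ as the kernel of a bounded averaging projection and hence as complemented in $X$. First I would pick a normalized sequence $(y_n)_n \subset Y$ equivalent to the unit vector basis of $\ell_1$; this sequence is automatically unconditional, so, applying Corollary \ref{if they go fast enough they contain all unconditionals} with any fixed skipped sequence of successive intervals $(E_j)_j$ satisfying $\sup_j \#E_j = \infty$, a subsequence of $(y_n)_n$ is equivalent to a sequence that lies inside $X_{(E_j)_j}$. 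A small-perturbation argument (legitimate since both $\ell_1$-equivalence and complementation are stable under small perturbations) reduces us to the case that the subsequence itself lies in $X_{(E_j)_j}$ and is still equivalent to the $\ell_1$-basis. Setting $I_j := \{\min E_j - 1\}\cup E_j$ in Proposition \ref{averagingprojectionkernel} shows that $X_{(E_j)_j}$ is complemented in $X$, so the problem reduces to producing a complemented $\ell_1$-subspace of the span of this subsequence inside $X_{(E_j)_j}$.

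Inside $X_{(E_j)_j}$, which carries the suppression UFDD $(X_{E_j})_j$ by Proposition \ref{skipped intervals unconditional}, I would use a gliding-hump argument to pass to a further subsequence (after a mild difference-of-terms modification if needed, since an $\ell_1$-sequence is not weakly null) equivalent to a block basis $(v_k)_k$ of this UFDD, still equivalent to the unit vector basis of $\ell_1$. I would then invoke the classical result that a block basis of an unconditional FDD equivalent to the $\ell_1$-basis spans a complemented subspace (see, for instance, Lindenstrauss and Tzafriri, \emph{Classical Banach Spaces I}): the projection has the form $Q(x) = \sum_k \widehat{\phi}_k(x)\, v_k$, where $\widehat{\phi}_k = \phi_k \circ Q_{F_k}$, with $Q_{F_k}$ the UFDD projection onto the finite-dimensional block $\sum_{j \in F_k} X_{E_j}$ that supports $v_k$, and $\phi_k$ a Hahn--Banach functional on this block satisfying $\phi_k(v_k) = 1$ with norm controlled by the $\ell_1$-equivalence constant of $(v_k)_k$. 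Composing $Q$ with the projection of $X$ onto $X_{(E_j)_j}$, and reversing the perturbations through the standard perturbation lemma, produces a subsequence of the original $(y_n)_n$ whose closed linear span $W$ is isomorphic to $\ell_1$ and is complemented in $X$, which is exactly the conclusion.

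The principal obstacle is the boundedness of the block-$\ell_1$ projection $Q$ inside the UFDD. The naive estimate $\|Q(x)\|\leq K\sum_k|\widehat{\phi}_k(x)|\leq K^2\sum_k\|Q_{F_k}x\|$ is useless because $\sum_k\|Q_{F_k}x\|$ is not controlled by $\|x\|$ in a generic UFDD. The classical argument circumvents this by exploiting the disjoint-support structure of $(\widehat{\phi}_k)_k$ in the dual UFDD together with the $\ell_1$-property of $(v_k)_k$, ultimately showing that $(\widehat{\phi}_k)_k$ behaves as a $c_0$-type sequence in the dual; this is the technical heart of the proof, while everything else is a direct combination of previously established results in the paper.
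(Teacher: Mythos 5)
Your proof takes essentially the same route as the paper's: reduce to a block sequence with vanishing summing functional, land it in the kernel of an averaging projection (which carries a UFDD by Proposition~\ref{averagingprojectionkernel}), and then invoke the classical fact that an $\ell_1$-copy in a UFDD space contains a complemented $\ell_1$-copy. The paper does this directly (pass to differences, perturb, choose intervals $I_i$ containing the new ranges, observe the perturbed sequence lies in $\ker P_{\bar I}$); you route through Corollary~\ref{if they go fast enough they contain all unconditionals}, fixing $(E_j)_j$ first and shifting the sequence into $X_{(E_j)_j}$. Same ingredients, mild stylistic difference.

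One imprecision is worth flagging. You write that a small-perturbation argument ``reduces us to the case that the subsequence itself lies in $X_{(E_j)_j}$.'' No subsequence of $(y_n)_n$ can lie in $X_{(E_j)_j}$: the space $X_{(E_j)_j}$ sits inside $\ker s$, whereas an $\ell_1$-sequence is not weak Cauchy and in particular $s(y_n)$ cannot be perturbed away termwise. What is true, and what the proof of Corollary~\ref{if they go fast enough they contain all unconditionals} actually delivers, is that a sequence of differences $y_{n_{2k}}-y_{n_{2k-1}}$ (which is again an $\ell_1$-sequence and lies in $Y$) is a small perturbation of a block sequence in $\ker s$ with supports inside the $E_j$'s. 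You do eventually acknowledge a ``mild difference-of-terms modification,'' but it is needed at the very first step, not as an afterthought inside $X_{(E_j)_j}$. Notice also that the bare \emph{statement} of the corollary only asserts abstract equivalence to a sequence in $X_{(E_j)_j}$; abstract equivalence is not enough to transport a complemented subspace back to $Y$. What you really need (and what the corollary's proof gives) is that the equivalence arises from a norm-summable perturbation of a sequence living in $Y$, so that the perturbation lemma applies. Make the dependence on the proof, not just the statement, explicit and the argument is complete.
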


\begin{proof}
If $(y_i)_i$ is a sequence in $X$ equivalent to the unit vector basis of $\ell_1$, then by passing to differences and perturbing, we may assume that it is a block sequence with $s(y_i) = 0$ for all $i\inn$. Choose a sequence of
consecutive intervals $(I_i)_i$ of $\N$ so that $\ran y_i \subset I_i$ for all $i\inn$. Then, by Proposition \ref{averagingprojection}, the map $P_{\bar{I}}x = \sum_is_{I_i}(x)/\#I_i\sum_{j\in I_i}e_j$ is a bounded linear projection. By Proposition \ref{averagingprojectionkernel} the kernel of $P_{\bar{I}}$ admits a UFDD. Observe that $(y_i)_i$ is in $\ker P_{\bar I}$. It is well known, and not hard to prove, that a copy of $\ell_1$ in a space with a UFDD contains a further copy of $\ell_1$ complemented in the whole space. That is, there is a copy of $\ell_1$ complemented in the kernel of $P_{\bar{I}}$, and thus complemented in $X$.
\end{proof}

\begin{prp}\label{ZxplusuncFDD}
Let $X$ be a Banach space with a conditional spreading basis $(e_i)_i$, let $(z_i)_i$ be its convex block homogeneous part, and $Z = [(z_i)_i]$. Then there exists a  sequence of natural numbers $(m_i)_i$, so that for every skipped
sequence of successive intervals $(E_j)_j$ of $\N$ with $\#E_j\geqslant m_j$ for all $j$, we have that $X$ is isomorphic to $Z\oplus X_{(E_j)_j}$.
\end{prp}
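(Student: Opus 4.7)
My plan is to construct a bounded linear projection $P$ on $X$ whose range is isomorphic to $Z$ and whose kernel is isomorphic to $X_{(E_j)_j}$, so that $X\simeq\ran P\oplus\ker P\simeq Z\oplus X_{(E_j)_j}$. The projection will be an averaging projection of the type studied in Proposition \ref{averagingprojection}; Lemma \ref{Zxembeds} will identify its range with $Z$, and Proposition \ref{averagingprojectionkernel} will identify its kernel with $X_{(E_j)_j}$.

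For the choice of $(m_i)_i$, let $(n_i)_i$ be the sequence provided by Lemma \ref{Zxembeds} and put $m_i = n_i$. Given a skipped sequence $(E_j)_j$ with $\#E_j\geqslant m_j$, my first move is to invoke Remark \ref{skipping with same cardinalities means isomorphic} to pass to a skipped sequence $(F_j)_j$ of successive intervals with $\#F_j = \#E_j$, chosen so that $\min F_1 = 2$ and $\min F_{j+1} = \max F_j + 2$ for every $j$. The natural isomorphism $X_{(E_j)_j}\simeq X_{(F_j)_j}$ then reduces the task to showing $X\simeq Z\oplus X_{(F_j)_j}$, and the reason for this particular arrangement of $(F_j)_j$ is that the intervals $I_j = \{\min F_j-1\}\cup F_j$ are then consecutive and their union is $[1,+\infty)$. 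This will allow Proposition \ref{averagingprojectionkernel} to apply with $i_0 = 1$, eliminating the finite-dimensional summand in the kernel description.

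Taking $P = P_{\bar I}$, the associated averaging projection from Proposition \ref{averagingprojection}, its range is spanned by the averages $\tilde z_j = (\#I_j)^{-1}\sum_{i\in I_j}e_i$; since $\#I_j = \#F_j + 1 \geqslant n_j$, Lemma \ref{Zxembeds} gives that $(\tilde z_j)_j$ is equivalent to $(z_j)_j$, whence $\ran P\simeq Z$. For the kernel, Proposition \ref{averagingprojectionkernel} yields $\ker P = \langle\{e_1,\ldots,e_{i_0-1}\}\rangle\oplus X_{\reflectbox{\tiny$'$}\bar I}$; with $i_0 = \min I_1 = 1$ the first summand is trivial, and the identity $\reflectbox{$'$} I_j = I_j\setminus\{\min I_j\} = F_j$ produces $\ker P = X_{(F_j)_j}$. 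Combining these observations gives $X\simeq Z\oplus X_{(F_j)_j}\simeq Z\oplus X_{(E_j)_j}$.

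The proof is essentially a matter of bookkeeping with the skipped/consecutive interval correspondence, and I do not anticipate any serious obstacle. The point that demands the most care is the precise choice of $(F_j)_j$ making $I_j$ cover $\N$ from position $1$ onward; this is what guarantees that the kernel identification is clean, with no finite-dimensional remainder that would otherwise have to be absorbed by an FDD shift argument on $X_{(F_j)_j}$.
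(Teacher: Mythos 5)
Your argument is correct and is essentially the paper's own proof: after normalizing the skipped intervals via Remark \ref{skipping with same cardinalities means isomorphic}, one forms the consecutive intervals $I_j=\{\min E_j-1\}\cup E_j$, applies the averaging projection of Proposition \ref{averagingprojection}, identifies its range with $Z$ by Lemma \ref{Zxembeds} and its kernel with $X_{(E_j)_j}$ (as in Proposition \ref{averagingprojectionkernel}). Your extra care in arranging $\min F_1=2$ so that the intervals cover $\N$ and the kernel has no finite-dimensional remainder is a harmless refinement of the same argument.
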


\begin{proof}
Choose a sequence of natural numbers $(m_i)_i$ satisfying $m_i \geqslant n_i - 1$, where $(n_i)_i$ is the sequence of Lemma \ref{Zxembeds}. Let $(E_j)_j$ be a skipped sequence of successive intervals  of $\N$ with $\#E_j\geqslant m_i$ for all $i$. By Remark \ref{skipping with same cardinalities means isomorphic} we may assume that $\max E_j + 2 = \min E_{j+1}$ for all $j\inn$. For each $j$ set $I_j = \{\min E_j-1\}\cup E_j$. Then $(I_j)_j$ satisfies the assumptions of Lemma \ref{Zxembeds} and if $\tilde{z}_i = (1/\#I_i)\sum_{j\in s_{I_i}}e_j$ then $(\tilde{z}_i)_i$ is equivalent to $(z_i)_i$. By Proposition \ref{averagingprojection} the map $P:X\rightarrow X$ with $Px = \sum_is_{I_i}(x)\tilde{z}_i$
is a bounded linear projection and  $\mathrm{Im}P$ is isomorphic to $Z$ and $\ker P$ is $X_{(E_j)_j}$.
\end{proof}

\begin{rmk}\label{if convex block homogeneous then all UFDDs are absorbed}
If the basis $(e_i)_i$ of a space $X$ is convex block homogeneous, then the sequence $(\tilde{z}_i)_i$ is equivalent to $(z_i)_i$  without imposing any restrictions on the cardinalities of the sets $E_i$. In particular, for every sequence of successive intervals $(E_j)_j$ of $\N$, the space $X$ is isomorphic to $X\oplus X_{(E_j)_j}$.
\end{rmk}

For clarity, we restate what Proposition \ref{ZxplusuncFDD} and Corollary \ref{if they go fast enough they contain all unconditionals} yield in the following.

\begin{prp}\label{uncfddplusZx}
Let $X$ be a Banach space with a conditional spreading basis $(e_i)_i$, let  $(z_i)_i$ be its convex block homogeneous part, and $Z = [(z_i)_i]$. Then, there exists a skipped sequence of successive intervals $(E_j)_j$ of $\N$ so that the following hold.

\begin{itemize}

\item[(i)] The space $X$ is isomorphic to $Z\oplus X_{(E_j)_j}$, i.e. $X$ is the direct sum of a space with a convex block homogeneous conditional spreading basis and a space with a UFDD.

\item[(ii)] Every unconditional sequence $(x_i)_i$ in $X$ has a subsequence equivalent to a sequence in $X_{(E_j)_j}$.

\end{itemize}
\end{prp}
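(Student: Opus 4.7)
The plan is to observe that the proposition is a direct combination of Proposition \ref{ZxplusuncFDD} and Corollary \ref{if they go fast enough they contain all unconditionals}, once a single skipped sequence of intervals is chosen that satisfies the hypotheses of both results simultaneously. First I would invoke Proposition \ref{ZxplusuncFDD} to obtain the sequence of natural numbers $(m_j)_j$ that controls how large the cardinalities $\#E_j$ must be for the decomposition $X \simeq Z \oplus X_{(E_j)_j}$ to hold. Then I would construct a skipped sequence of successive intervals $(E_j)_j$ of $\N$ satisfying $\#E_j \geqslant \max\{m_j, j\}$ for every $j$, which is trivially possible: pick $E_1 \subset \N\setminus\{1\}$ of cardinality at least $\max\{m_1,1\}$, and recursively, once $E_j$ has been chosen, let $E_{j+1}$ be any interval of cardinality at least $\max\{m_{j+1}, j+1\}$ whose minimum exceeds $\max E_j + 1$, thereby enforcing the skipping condition.

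With this construction, part (i) follows immediately from Proposition \ref{ZxplusuncFDD} applied to $(E_j)_j$, since $\#E_j \geqslant m_j$ for all $j$. For part (ii), note that the extra condition $\#E_j \geqslant j$ guarantees $\sup_j \#E_j = \infty$, which is precisely the hypothesis of Corollary \ref{if they go fast enough they contain all unconditionals}. Applying that corollary to the same $(E_j)_j$ yields that every unconditional sequence in $X$ has a subsequence equivalent to a sequence in $X_{(E_j)_j}$.

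I do not foresee a substantive obstacle, as the two earlier results already do the real work; the only thing to verify is the mutual compatibility of the two size conditions on $\#E_j$, which is handled at no cost by taking the maximum of the two required bounds. If one wished to be slightly more careful, one could note that neither the bound $m_j$ from Proposition \ref{ZxplusuncFDD} nor the bound $j$ forces any constraint on the relative positions of the intervals beyond being successive and skipped, so the existence of a concrete $(E_j)_j$ satisfying everything is immediate.
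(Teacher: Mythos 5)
Your proposal is correct and is essentially the paper's own argument: the paper explicitly presents this proposition as a restatement of what Proposition \ref{ZxplusuncFDD} and Corollary \ref{if they go fast enough they contain all unconditionals} yield, and your choice of a skipped sequence with $\#E_j\geqslant\max\{m_j,j\}$ is exactly the compatibility observation needed to apply both simultaneously.
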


\begin{proof}[Proof of Theorem \ref{squares}]
Let $X$ be a space with a conditional spreading basis $(e_i)_i$, let $(z_i)_i$ be its convex block homogeneous part, and $Z = [(z_i)_i]$. We first prove (i), hence in this case $(e_i)_i$ is strongly summing. Towards a contradiction, assume that there are sequences $(z_i^1)_i$ and $(z_i^2)_i$ in $X$, both equivalent to $(z_i)_i$, so that if $Z_1 = [(z_i^1)_i]$ and $Z_2 = [(z_i^2)_i]$, then $Z_1\cap Z_2 = \{0\}$ and $Z_1 + Z_2$ is a closed subspace of $X$. By Propositions \ref{ntwc has complemented subseq} and \ref{averagingprojectionkernel} there is a subsequence $(w_i)_i$ of $(z_i^1)_i$ and a projection $P:X\rightarrow X$ onto the space $W = [(w_i)_i]$ so that the kernel of $P$ has a UFDD. The open mapping theorem implies that $Z_2$ embeds into $\mathrm{ker}P$. Recall that non-trivial weak Cauchy sequences in spaces with a UFDD have convex block sequences equivalent to the summing basis of $c_0$.  We conclude that $(z_i^2)_i$ has a convex block sequence equivalent to the summing basis of $c_0$, which implies that $(z_i)_i$ is equivalent to the summing basis of $c_0$ and hence, by Proposition \ref{equivalentfornotsumming}, $(e_i)_i$ cannot be strongly summing.

We now prove (ii). By (i), it is enough to show that if $(e_i)_i$ is not strongly summing, then $X$ is isomorphic to $X\oplus X$. By Proposition \ref{equivalentfornotsumming}, the convex block homogeneous part $(z_i)_i$ of $(e_i)_i$ is equivalent to the summing basis of $c_0$. Let $(m_i)_i$ be the sequence provided by Proposition \ref{ZxplusuncFDD} and for $i\inn$ set $m'_i = \max\{m_i,m_{2i-1}, m_{2i}\}$. Choose a skipped sequence of successive intervals $\overline{E} = (E_j)_j$ of $\N$ so that for all $j$, $\#E_{2j-1} = \#E_{2j} = m_j'$. If $\overline{E}_{\mathrm{o}} = (E_{2j-1})_j$ and $\overline{E}_{\mathrm{e}} = (E_{2j})_j$, then by Proposition \ref{ZxplusuncFDD} we conclude that $X$ is isomorphic to $c_0\oplus X_{\overline{E}}$ as well as to $c_0\oplus X_{\overline{E}_{\mathrm{o}}}$. Proposition \ref{skipped intervals unconditional} implies that $X_{\overline{E}} = X_{\overline{E}_{\mathrm{o}}}\oplus X_{\overline{E}_{\mathrm{e}}}$ whereas by Remark \ref{skipping with same cardinalities means isomorphic} we obtain $X_{\overline{E}_{\mathrm{o}}}\simeq X_{\overline{E}_{\mathrm{e}}}$. We conclude:
\begin{eqnarray*}
X &\simeq& c_0\oplus X_{\overline{E}} \simeq c_0\oplus\left(X_{\overline{E}_{\mathrm{o}}}\oplus X_{\overline{E}_{\mathrm{e}}}\right)\simeq \left(c_0\oplus c_0\right)\oplus\left(X_{\overline{E}_{\mathrm{o}}}\oplus X_{\overline{E}_{\mathrm{o}}}\right)\\
&\simeq& \left(c_0\oplus X_{\overline{E}_{\mathrm{o}}} \right)\oplus\left(c_0\oplus X_{\overline{E}_{\mathrm{o}}}\right) \simeq X\oplus X.
\end{eqnarray*}
\end{proof}



\begin{proof}[Proof of Theorem \ref{there can be only one}]
We shall prove that $Z$ embed isomorphically into at least one of the spaces $V$ or $W$. The rest of the statement follows from Proposition \ref{ntwc has complemented subseq} and Theorem \ref{squares} (i).
Let $P:X\rightarrow X$ be the projection with $\mathrm{Im}P = V$ and $\mathrm{ker}P = W$. Note that either $(Pe_i)_i$ or $(e_i - Pe_i)_i$ has be non-trivial weak Cauchy and we shall assume that the first one holds. By Theorem \ref{maintheoremweakcauchy} (i), $(e_i)_i$ has  a convex block sequence $(w_i)_i$, so that $(Pw_i)_i$ is either equivalent to $(z_i)_i$, or to the summing basis of $c_0$. If the first one holds, then $Z$ embeds into $V$ and there is nothing left to prove.

Otherwise, $(Pw_i)_i$ is equivalent to the summing basis of $c_0$. This implies that $(w_i - Pw_i)_i$ has no convex block sequence equivalent to the summing basis of $c_0$. Indeed, assume that there is a convex block sequence $(\tilde{w}_i)_i$ of $(e_i)_i$ so that both $(P\tilde{w}_i)_i$ as well as $(\tilde{w}_i - P\tilde{w}_i)$ are equivalent to the summing basis of $c_0$. Then $(\tilde{w}_i)_i$, being non-trivial weak Cauchy, has to be equivalent to the summing basis of $c_0$ as well. By \eqref{what is the convex block homogeneous part exactly formula} it follows that $(z_i)_i$ is dominated by $(\tilde{w}_i)$ and hence it is equivalent to the summing basis of $c_0$ as well. Proposition \ref{equivalentfornotsumming} (ii)$\Leftrightarrow$(iii) yields a contradiction.

Note that $(w_i - Pw_i)_i$ is non-trivial weak Cauchy. If this were not the case and $w$-$\lim_i(w_i - Pw_i) = x_0$, then $((w_i - x_0) - (Pw_i))_i$ is $w$-null. By Mazur's theorem, $(w_i - x_0)_i$ has a convex block sequence $(\tilde{w}_i - x_0)_i$ equivalent to $(P\tilde{w}_i)_i$, i.e. equivalent to the summing basis of $c_0$. It follows that $(w_i)_i$ has a convex block sequence equivalent to the summing basis of $c_0$, which we showed is not possible.

We have concluded that $(w_i - Pw_i)_i$ is non-trivial weak Cauchy and it has no convex block sequence equivalent to the summing basis of $c_0$. By Theorem \ref{maintheoremweakcauchy} (i) the space $Z$ embeds into $W$.
\end{proof}

The most important examples of spaces with a convex block homogeneous basis are $c_0$, $\ell_1$, and James space $J$. The first two spaces were shown to be prime by A. Pe\l czy\'nski, i.e. their complemented subspaces are isomorphic to the whole space. This is no longer true for James space, however as it was shown in \cite{C} the space is primary. This means that if $J = X\oplus Y$ then one of the space $X$ or $Y$ is isomorphic to $J$. This can also be shown for the spaces $J_p$, $1<p<\infty$.
\begin{prb}
Let $X$ be a Banach space with a convex block homogeneous basis. Is $X$ primary?
\end{prb}
Note that a space with a conditional spreading basis that is not convex block homogeneous may fail to be primary. Consider the norm on $c_{00}(\N)$ with $\|x\| = \max\{\|x\|_J,\|x\|_p\}$, for some $1<p<2$, and denote its completion by $X$. Then $X\simeq J\oplus V$ where $V$ is a reflexive space with a UFDD that contains $\ell_p$, hence $X$ is not primary.


Before concluding this section we present some results concerning spaces with convex block homogeneous bases that hint towards a positive solution of the above mentioned problem. These results are also of independent interest as they exhibit strong similarity between an arbitrary Banach space with a convex block homogeneous basis and James space.

We introduce some further notation that will make stating and proving the subsequent results possible. If $X$ has a conditional convex block homogeneous basis and $(n_j)_{j\in L}$ is a sequence of natural numbers indexed by a subset $L$ of $\N$ denote by $X_{(n_j)_{j\in L}}$ any space $X_{(E_j)_{j\in L}}$ such that there exists a sequence of skipped intervals $(E_j)_{j\in L}$ with $\#E_j = n_j$ for all $j\in L$. By Remark \ref{skipping with same cardinalities means isomorphic}, the choice of the intervals does not matter. For isometric reasons we can choose $\min(E_1) >1$. We shall also allow some of the $n_j$'s to be zero and in this case by $X_{(n_j)_{j\in L}}$ we refer to the space $X_{(n_j)_{j\in\tilde L}}$ where $\tilde L = \{j\in L: n_j\neq 0\}$. For finite $F\subseteq E\subset \N$ we write $F\preceq E$ if $F$ is an initial segment of $E$.

\begin{lem}
\label{step2}
Let $X$ be a Banach with a 1-convex block homogeneous basis $(e_i)_i$.  If $(E_j)_j$ is a sequence of skipped intervals of $\N$ and $(F_j)_j$ is such that $F_j\preceq E_j$ for all $j\in\N$, then $X_{(F_j)_j}$ is naturally complemented in $X_{(E_j)_j}$. Hence,  if $(n_j)_{j\in L}$ and $(s_j)_{j\in L}$ are sequences of non-negative integers with $s_j\leqslant n_j$, then $X_{(n_j)_{j\in L}}\simeq X_{(n_j-s_j)_{j\in L}}\oplus X_{(s_j)_{j\in L}}$. 
\end{lem}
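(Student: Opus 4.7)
The plan is to construct a norm-one projection $P \colon X_{(E_j)_j} \to X_{(F_j)_j}$ that acts block-by-block, and then to derive the decomposition by identifying its kernel.

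By Proposition \ref{isometriccharacterization}, 1-convex block homogeneity of $(e_i)_i$ is the same as being equal signs additive, which by \cite[Lemma 1]{BS} implies subadditivity and, by taking subsequences as convex blocks, also 1-spreading. For each $j$, consider the natural map $P_j \colon X_{E_j} \to X_{F_j}$ determined by $P_j\bigl(\sum_{i \in E_j} a_i d_i\bigr) = \sum_{i \in F_j} a_i d_i$. Telescoping in the $e$-basis shows that if we write $x_j = \sum_i c_i e_i \in X_{E_j}$ (so $\supp x_j \subseteq E_j \cup \{\min E_j - 1\}$ and $\sum_i c_i = 0$), then $P_j(x_j)$ is obtained from $x_j$ by replacing the tail-block $\sum_{i=\max F_j}^{\max E_j} c_i e_i$ with the single coefficient $\bigl(\sum_{i=\max F_j}^{\max E_j} c_i\bigr)\, e_{\max F_j}$ (with the convention $P_j = 0$ when $F_j = \varnothing$). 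Summing over $j$ defines a linear map $P$ on $X_{(E_j)_j}$.

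To bound $\|P\|$, fix $x = \sum_i c_i e_i \in X_{(E_j)_j}$ and let $(I_k)_k$ be the partition of $\N$ into successive intervals obtained by declaring each $[\max F_j, \max E_j]$ (or $[\min E_j - 1, \max E_j]$ when $F_j = \varnothing$) to be a single interval, with all other positions as singletons; the fact that $(E_j)_j$ is skipped ensures this is a valid successive partition. A direct verification yields $P(x) = \sum_k \bigl(\sum_{i \in I_k} c_i\bigr)\, e_{\min I_k}$. The subadditivity inequality \eqref{subadditivelongvectors} gives $\bigl\|\sum_k \bigl(\sum_{i \in I_k} c_i\bigr) e_k\bigr\| \leq \|x\|$, and 1-spreading allows reindexing along the strictly increasing sequence $(\min I_k)_k$ without changing norms, so $\|P(x)\| \leq \|x\|$. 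It is routine to check that $P(X_{(E_j)_j}) \subseteq X_{(F_j)_j}$ (the sum-zero condition defining $X_{F_j}$ is preserved block-by-block) and $P|_{X_{(F_j)_j}} = \mathrm{Id}$, so $P$ is a norm-one projection.

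For the second statement, one verifies directly that $\ker P = X_{(E_j \setminus F_j)_j}$, where $(E_j \setminus F_j)_j$ is again a skipped sequence of successive intervals since $F_j \preceq E_j$ and $(E_j)_j$ is skipped. Given $(n_j)_{j \in L}$ and $(s_j)_{j \in L}$ with $s_j \leq n_j$, choose any skipped $(E_j)_{j \in L}$ with $\#E_j = n_j$ and let $F_j$ be the initial segment of $E_j$ of length $s_j$. Remark \ref{skipping with same cardinalities means isomorphic} then provides $X_{(E_j)_j} \simeq X_{(n_j)_{j \in L}}$, $X_{(F_j)_j} \simeq X_{(s_j)_{j \in L}}$, and $X_{(E_j \setminus F_j)_j} \simeq X_{(n_j - s_j)_{j \in L}}$, and the splitting $X_{(E_j)_j} = \mathrm{Im}\, P \oplus \ker P$ delivers the required isomorphism. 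The main technical hurdle is the careful matching of the $d$-basis definition of $P$ with its $e$-basis description as a collapse, which is exactly what lets the subadditivity inequality \eqref{subadditivelongvectors} apply; once this bridge is in place, the bound is immediate.
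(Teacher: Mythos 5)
Your proof is correct and follows essentially the same strategy as the paper's: define the block-by-block projection $P$ in the $d$-basis, express it in the $e$-basis as a collapse of certain intervals of coefficients onto their minima, and invoke the subadditivity consequence of equal signs additivity (the paper's \eqref{subadditivelongvectors}) to obtain the norm-one bound, then identify the kernel with $X_{(E_j\setminus F_j)_j}$ and transfer to the abstract statement via Remark \ref{skipping with same cardinalities means isomorphic}. The only cosmetic difference is that the paper replaces the collapsed coefficient at $\max F_j$ by an average over $[\max F_j,\max E_j]$ (using 1-convex block homogeneity) so that it can quote Proposition \ref{inthisnicecaseprojectionhasnormone}, while you reindex along the strictly increasing sequence $(\min I_k)_k$ via $1$-spreading and apply \eqref{subadditivelongvectors} directly; both routes reduce to the same subadditive estimate, and yours is slightly more direct and a bit more explicit about the $F_j=\varnothing$ case and the kernel identification.
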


\begin{proof}
We may assume that $\max(E_j) + 2 = \min(E_{j+1})$ for all $j\in\N$. Define $k_1 = 1$, for $j\geqslant 2$ $k_j = \min(E_j)-1$, and for all $j\in\N$ set $\ell_j = \max(F_j)$ and $m_j = \max(E_j)$. A calculation yields that if $x = \sum_{j=1}^N\sum_{i\in E_j}a_id_i$, then
$$y = \sum_{j=1}^N\sum_{i\in F_j}a_id_i = \sum_{j=1}^n\left(\left(\sum_{k_j\leqslant i <\ell_j}s_{\{i\}}(x)e_i\right)+s_{[\ell_j,m_j]}(x)e_{\ell_j}\right).$$
By 1-convex block homogeneity we obtain
\begin{align*}
\|y\| =& \left\|\sum_{j=1}^n\left(\left(\sum_{k_j\leqslant i <\ell_j}s_{\{i\}}(x)e_i\right)+s_{[\ell_j,m_j]}(x)\left(\frac{1}{\#[\ell_j,m_j]}\sum_{i\in[\ell_j,m_j]}e_i\right)\right)\right\|\\
\leqslant &\|x\|\text{ (by Proposition \ref{inthisnicecaseprojectionhasnormone}).}
\end{align*}
\end{proof}


\begin{prp}
\label{step3}
Let $X$ be a Banach space with a 1-convex block homogeneous basis $(e_i)_i$ and let $(n_j)_j$ and $(m_j)_j$ be unbounded sequences of natural numbers. Then, $X_{(n_j)_j}$ is 256-isomorphic to $X_{(m_j)_j}$.
\end{prp}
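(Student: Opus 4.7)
Plan: The proof proceeds by the Pełczyński decomposition method applied to $A = X_{(n_j)_j}$ and $B = X_{(m_j)_j}$.

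First, I would establish mutual complementation of $A$ in $B$ and $B$ in $A$ with an absolute constant. By the unboundedness of $(m_j)$, I extract a subsequence $(m_{j_k})_k$ with $m_{j_k} \ge n_k$. The UFDD structure of $X_{(m_j)_j}$ given by Proposition \ref{skipped intervals unconditional} is suppression unconditional with constant $1$ in the 1-spreading setting, so $X_{(m_j)_j}$ splits isometrically as $X_{(m_{j_k})_k} \oplus X_{(m_j)_{j\notin\{j_k\}}}$. Applying Lemma \ref{step2} inside each $E_{j_k}$ (splitting off an initial segment of size $n_k$ and a tail of size $m_{j_k}-n_k$) yields $X_{(m_{j_k})_k} \simeq X_{(n_k)_k} \oplus X_{(m_{j_k}-n_k)_k}$ via norm-one projections. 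Combining, $A$ sits as a $1$-complemented summand of $B$; exchanging the roles of $(n_j)$ and $(m_j)$ gives the symmetric statement.

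Next I would prove an absorption identity of the form $A \simeq A \oplus B$ (and symmetrically for $B$). The strategy is to exploit unboundedness of $(n_j)$ to pick a sparse subsequence $(j_k)$ with $n_{j_k}$ so large that, inside each $E_{j_k}$, one can split off (via Lemma \ref{step2}) an initial segment of size $m_k$ — which across $k$ assembles into a copy of $B$ — while the tails of sizes $n_{j_k}-m_k$, together with the entries $(n_j)_{j\notin\{j_k\}}$, can be reshuffled using UFDD reordering (constant $1$) and a bounded number of further applications of Lemma \ref{step2} into a sequence of skipped intervals whose multiset equals that of $(n_j)_j$ (perhaps up to finitely many entries, handled by a further local matching).

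Finally I would combine the mutual complementation and the absorption via the standard Pełczyński argument: having $B$ complemented in $A$ (so $A \simeq B \oplus V$) together with $A \simeq A \oplus B$, and using the unconditional $\ell_\infty$-type infinite-sum structure provided by the UFDD of Proposition \ref{skipped intervals unconditional}, one deduces $A \simeq B$. The main obstacle is the absorption step: unboundedness alone does not automatically furnish a multiset-preserving extraction of $B$ from $A$, so the combinatorial matching must be carried out with care. Once this is done, a careful bookkeeping of a fixed and small number of UFDD reorderings, applications of Lemma \ref{step2}, and Pełczyński-type absorptions — each contributing a factor of at most $2$ to the isomorphism constant — produces the stated bound $256 = 2^{8}$.
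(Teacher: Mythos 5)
Your proposal has a genuine gap at exactly the point you flag yourself: the absorption identity $A\simeq A\oplus B$. Splitting off an initial segment of size $m_k$ inside sparsely chosen intervals $E_{j_k}$ (via Lemma \ref{step2}) leaves behind the sequence $\{n_j: j\notin\{j_k\}\}\cup\{n_{j_k}-m_k: k\in\N\}$, and there is no reason this multiset can be ``reshuffled'' into $(n_j)_j$: infinitely many entries have been altered, and for a sequence such as $n_j=j$ no choice of $(j_k)$ restores the original multiset. What you would actually need is that the leftover space, spanned by another unbounded sequence of sizes, is isomorphic to $X_{(n_j)_j}$ --- but that is precisely Proposition \ref{step3} itself, so the argument as sketched is circular. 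The Pe\l czy\'nski finish is also not available in the form you invoke it: Proposition \ref{skipped intervals unconditional} gives an unconditional Schauder decomposition, not an $\ell_\infty$- or $c_0$-type sum with a shift-invariant outside norm, so the shift absorption $(\sum\oplus B)\oplus B\simeq\sum\oplus B$ cannot be taken for granted; the paper explicitly points to this ``outside norm'' problem in Remark \ref{veryclosetoprimarynotquitethereyet} as the obstruction to Pe\l czy\'nski-type arguments in this setting (see \eqref{Xinftyisanaccordion}). The claimed constant $256=2^8$ is likewise not derived.

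The paper's proof avoids decomposition-method machinery altogether and supplies the missing combinatorial mechanism by a \emph{symmetric, two-sided} exchange: one inductively chooses indices $k_1<k_2<\cdots$ and nonnegative integers $s_j\leqslant m_{k_j}$, $t_j\leqslant n_{k_j}$ so that, after applying Lemma \ref{step2} together with the suppression unconditionality of the UFDD to both spaces, $X_{(m_j)_j}\simeq X_{(m_j+s_j)_{j\notin K}}\oplus X_{(m_{k_j}-s_j+s_{k_j})_j}$ and $X_{(n_j)_j}\simeq X_{(n_j+t_j)_{j\notin K}}\oplus X_{(n_{k_j}-s_j+t_{k_j})_j}$, with the choices arranged so that the two resulting pairs of size-sequences are \emph{literally equal}. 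Because both sides are modified (borrowing from the large entries $m_{k_j}$, $n_{k_j}$, which exist by unboundedness), exact matching is achievable, whereas one-sided extraction as in your sketch is not. The constant $256$ then comes from tracking the fixed number of max-norm identifications in this chain. If you want to salvage your outline, the step to repair is precisely this matching; mutual complementation alone, plus an unproved absorption, does not suffice.
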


\begin{proof}
By induction on $d=1,2,\ldots$ choose non-negative integers $(s_j)_{j=1}^d$, $(t_j)_{j=1}^d$, $(k_j)_{j=1}^d$ so that
\begin{itemize}
 \item[(i)] $1\leqslant k_1<\cdots<k_d$,
 \item[(ii)] for $1\leqslant j\leqslant d$ we have $0\leqslant s_j\leqslant m_{k_j}$ and $0\leqslant t_j\leqslant n_{k_j}$,
 \item[(iii)] if $1\leqslant j\leqslant d$ is not in $\{k_1,\ldots,k_d\}$ then $m_j + s_j = n_j + t_j$, and
 \item[(iv)] if $1\leqslant j\leqslant d$ is equal to $k_i$ for some $i \leqslant j$ then $m_j - s_i + s_j = n_j - t_i + t_j$. 
\end{itemize}
There is no difficulty to the induction so we omit it. We  now perform a decomposition argument. Define $K = \{k_j:j\in\N\}$.
\begin{align*}
X_{(m_j)_{j}} &\simeq X_{(m_j)_{j\notin K}}\oplus X_{(m_j)_{j\in K}} \text{ (by unconditionality)}\\
&= X_{(m_j)_{j\notin K}}\oplus X_{(m_{k_j})_{j\in\N}}\\
& \simeq X_{(m_j)_{j\notin K}} \oplus\left(X_{(m_{k_j}-s_j)_{j\in\N}}\oplus X_{(s_j)_{j\in\N}}\right)\text{ (by Lemma \ref{step2} and (ii))}\\
& \simeq X_{(m_j)_{j\notin K}} \oplus\left(X_{(m_{k_j}-s_j)_{j\in\N}}\oplus\left(X_{(s_j)_{j\in K}}\oplus X_{(s_j)_{j\notin K}}\right)\right)\\
& = \left(X_{(m_j)_{j\notin K}} \oplus X_{(s_j)_{j\notin K}}\right)\oplus \left(X_{(m_{k_j}-s_j)_{j\in\N}}\oplus X_{(s_{k_j})_{j\in\N}}\right)\\
& \simeq X_{(m_j+s_j)_{j\notin K}} \oplus X_{(m_{k_j} - s_j + s_{k_j})_{j\in\N}}.
\end{align*}
An identical argument yields $X_{(n_j)_{j}}\simeq X_{(n_j+t_j)_{j\notin K}} \oplus X_{(n_{k_j} - s_j + t_{k_j})_{j\in\N}}$ and by (iii) and (iv) we obtain
$$X_{(m_j+s_j)_{j\notin K}} \oplus X_{(m_{k_j} - s_j + s_{k_j})_{j\in\N}} = X_{(n_j+t_j)_{j\notin K}} \oplus X_{(n_{k_j} - s_j + t_{k_j})_{j\in\N}},$$
i.e $X_{(m_j)_{j}} \simeq X_{(n_j)_{j}}$. If all direct sums are taken with the max-norm, then it follows that the spaces $X_{(m_j)_{j\in\N}}$ and $X_{(n_j)_{j\in\N}}$ are 256-isomorphic.
\end{proof}

\begin{rmk}
\label{ifboundedunconditionalpart}
It can also be shown that for a bounded sequence $(m_j)_j$ the space $X_{(m_j)_j}$ is isomorphic to $U = [(u_i)_i]$ where $(u_i)_i$ is the unconditional part of $(e_i)_i$. This is much easier and the isomorphism constant depends on the bound of the sequence $(m_j)_j$.
\end{rmk}

Given a Banach space with a 1-convex block homogeneous basis, by Proposition \ref{step3}, we may denote by $X_\infty$ the space $X_{(m_j)_j}$ for an arbitrary unbounded sequence of natural numbers. Up to a constant of 256 the choice of the sequence is not relevant.

\begin{prp}
\label{Xinftyusd}
Let $X$ be a Banach space with a 1-convex block homogeneous basis and let $(m_j)_j$ be an unbounded sequence of natural numbers. Then, the space $X_\infty = X_{(m_j)_j}$ admits an unconditional Schauder decomposition $(X_k)_k$ so that for each $k\in\N$ the spaces $X_\infty$ and $X_k$ are 256-isomorphic. Notationally,
\begin{equation}
\label{Xinftyisanaccordion}
X_\infty = \sum_{k=1}^\infty\oplus X_\infty\text{ unconditionally.} 
\end{equation}
\end{prp}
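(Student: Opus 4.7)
The plan is to realize $X_\infty$ as $X_{(E_j)_j}$ for a skipped sequence of intervals of cardinalities $(m_j)_j$, and then to split the index set $\N$ into countably many infinite blocks $(L_k)_k$ on each of which the cardinalities remain unbounded. The pieces of the resulting decomposition will then individually be 256-isomorphic to $X_\infty$ by Proposition \ref{step3}, while their overall decomposition will be unconditional by Proposition \ref{skipped intervals unconditional}.

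First I would fix a skipped sequence of successive intervals $(E_j)_j$ of $\N$ with $\min E_1 > 1$ and $\#E_j = m_j$, so that $X_\infty$ is naturally isometric to $X_{(E_j)_j}$ (Remark \ref{skipping with same cardinalities means isomorphic}). Next, partition $\N$ into pairwise disjoint infinite sets $L_1, L_2, \ldots$ such that for each $k$ the subsequence $(m_j)_{j \in L_k}$ is still unbounded. This is arranged by a routine diagonal selection: enumerate $\N$ and, at step $n$, assign the next index to $L_k$ where $k$ is chosen so that the current maximum of $(m_j)_{j \in L_k}$ has not yet reached the $n$-th value of an increasing sequence of naturals selected from the range of $(m_j)_j$. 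For each $k$ set $X_k = X_{(E_j)_{j \in L_k}}$, the closed linear span of $\{d_i : i \in \bigcup_{j \in L_k} E_j\}$.

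Proposition \ref{skipped intervals unconditional} gives that $(X_{E_j})_{j \in \N}$ is an unconditional FDD of $X_\infty$, and grouping the $X_{E_j}$'s according to the partition $(L_k)_k$ yields an unconditional Schauder decomposition $(X_k)_k$ of $X_\infty$ (grouping preserves unconditionality of an FDD with the same projection constant). On the other hand, Remark \ref{skipping with same cardinalities means isomorphic} identifies $X_k$ isometrically with $X_{(m_j)_{j \in L_k}}$, where the enumerated cardinality sequence is unbounded by construction. Proposition \ref{step3} then gives that $X_k$ is 256-isomorphic to $X_\infty$, uniformly in $k$. Combining the unconditional decomposition with the uniform isomorphisms yields the desired expression~\eqref{Xinftyisanaccordion}.

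There is no real obstacle; the only point requiring some care is the partitioning step, to ensure that each $(m_j)_{j \in L_k}$ remains unbounded, and the observation that the grouping of an unconditional FDD remains an unconditional Schauder decomposition with a uniform constant, allowing the 256-isomorphism constant from Proposition \ref{step3} to transfer unchanged to each summand.
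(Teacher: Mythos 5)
Your proposal is correct and follows essentially the same route as the paper's proof: partition $\N$ into infinitely many infinite sets on each of which the cardinality sequence $(m_j)$ remains unbounded, invoke unconditionality of the skipped-difference FDD (Proposition~\ref{skipped intervals unconditional}) to get the unconditional decomposition, and apply Proposition~\ref{step3} to each piece. You merely spell out the diagonal selection and the identification via Remark~\ref{skipping with same cardinalities means isomorphic}, which the paper leaves implicit.
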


\begin{proof}
Partition the natural numbers into infinitely many infinite sets $(N_k)_k$ so that $(m_j)_{j\in N_k}$ is unbounded for all $k\inn$. By unconditionality we obtain that the spaces $(X_{(m_j)_{j\in N_k}})_{k\in\N}$ form an unconditional Schauder decomposition for $X_{(m_j)_{j\in\N}}$. By Proposition \ref{step3} the result follows.
\end{proof}

\begin{rmk}
\label{veryclosetoprimarynotquitethereyet}
If $X$ is a Banach space with a 1-convex block homogeneous basis $(e_i)_i$, by using Theorem \ref{there can be only one} and Remark \ref{if convex block homogeneous then all UFDDs are absorbed} we conclude that if $X = V\oplus W$ and $X$ is complemented in $V$ (it is always complemented in either $V$ or $W$) then there exists a complemented subspace $Y$ of $X_\infty$ so that $V\simeq X\oplus (X_\infty\oplus Y)$ (this decomposition requires the information about the kernel of certain projections given by Proposition \ref{ntwc has complemented subseq}). This is not quite enough to imply that $X$ is primary however it is very close to having this property. There is hope that a Pe\l cyz\'nski decomposition type argument \cite{P} can be used to show that $X_\infty\oplus Y\simeq X_\infty$. This would imply that $X$ is primary. The problem in this approach is the poor understanding of the ``outside'' norm in \eqref{Xinftyisanaccordion} (unless $X = J_p$, the jamesification of $\ell_p$, there is no outside norm in the strict sense).
\end{rmk}

\section{The Baire-1 functions of a space with a spreading basis}\label{Baire-1}

We denote by $\mathcal{B}_1(X)$ the subspace of $X^{**}$ that consists of all Baire-1 functions, i.e. those $x^{**}$ for which there is a sequence $(x_n)_n$ in $X$ with
$x^{**} = w^*$-$\lim_nx_n$. In this rather small section we include some observations concerning the position of $\mathcal{B}_1(X)$ in $X^{**}$ and of $X$ in $\mathcal{B}_1(X)$ whenever $X$ is a Banach space with a conditional spreading basis. We do not use any of these results in the rest of the paper, however, we think that they are of independent interest since they witness the highly canonical behavior exhibited by spaces with conditional spreading bases.

\begin{prp}\label{baire-1 are complemented in strongly summing}
Let $X$ be a Banach space with a strongly summing conditional spreading basis $(e_i)_i$ and denote $e^{**} = w^*$-$\lim_ie_i$. Then, the map $P:X^{**}\rightarrow X^{**}$ with
$$Px^{**} = w^*\text{-}\sum_ix^{**}(e_i^*)e_i + \left(\lim_ix^{**}\left(s_{(i,\infty)}\right)\right)e^{**}$$
is a bounded linear projection onto $\mathcal{B}_1(X)$.
\end{prp}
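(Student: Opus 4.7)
I would verify four properties of $P$ in sequence: well-definedness, boundedness, idempotence, and that $\operatorname{Image}(P) = \mathcal{B}_1(X)$. For well-definedness, let $a_i = x^{**}(e_i^*)$; the partial sums $\sum_{i=1}^N a_ie_i = P_{[1,N]}^{**}x^{**}$ are uniformly bounded in $X$, so by Proposition \ref{equivalentfornotsumming}(v) the series is weakly Cauchy and hence $w^*$-convergent in $X^{**}$. Proposition \ref{equivalentfornotsumming}(iv) makes $(s_{[1,i]})_i$ a $\sigma(X^*,X^{**})$-Cauchy sequence, so $\alpha := \lim_i x^{**}(s_{(i,\infty)}) = x^{**}(s) - \lim_i x^{**}(s_{[1,i]})$ exists. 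Boundedness of $P$ is then immediate from the uniform bounds on $\|P_{[1,n]}\|$ and on $\|s_{(i,\infty)}\|$.

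For idempotence, set $y_1^{**} = w^*\text{-}\sum_i a_ie_i$, so that $Px^{**} = y_1^{**} + \alpha e^{**}$. Using $e^{**}(e_i^*) = \lim_n\delta_{ni} = 0$ gives $(Px^{**})(e_j^*) = a_j$ for each $j$. Using that $y_1^{**}(s_{(j,\infty)}) = \sum_{i>j}a_i \to 0$ (the series $\sum a_i$ converges by strong summing) together with $e^{**}(s_{(j,\infty)}) = \lim_n s_{(j,\infty)}(e_n) = 1$ for every $j$, one gets $\lim_j(Px^{**})(s_{(j,\infty)}) = \alpha$. Hence $P(Px^{**}) = Px^{**}$.

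To show $\operatorname{Image}(P)\subseteq\mathcal{B}_1(X)$, I recast $P$ through the difference basis $(d_i)$, whose biorthogonals are $d_i^* = s_{[i,\infty)}$. Writing $c_i = x^{**}(d_i^*)$ and noting that $a_i = c_i - c_{i+1}$, telescoping gives
\begin{equation*}
\sum_{i=1}^{N}c_i d_i \;=\; \sum_{i=1}^{N-1}a_i e_i + c_N e_N.
\end{equation*}
Since $c_N\to\alpha$ and $e_N\to e^{**}$ in $w^*$, the right-hand side converges $w^*$ to $Px^{**}$; equivalently $Px^{**} = w^*\text{-}\lim_N (Q^d_N)^{**} x^{**}$, where $Q^d_N$ is the finite-rank $N$-th partial-sum projection of the difference basis. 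Each $(Q^d_N)^{**} x^{**}$ lies in $X$, making $Px^{**}$ Baire-$1$.

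The main obstacle is the reverse inclusion $\mathcal{B}_1(X)\subseteq\operatorname{Image}(P)$. For $y^{**} = w^*\text{-}\lim_n y_n$ with $y_n\in X$, one checks $Py_n = y_n$ (since $s_{(i,\infty)}(y_n)\to 0$), so by idempotence it suffices to show $h^{**}:=Py^{**}-y^{**}=0$. The idempotence computation yields $h^{**}(e_i^*)=0$ for all $i$, and $h^{**}(s)=0$ follows from $y^{**}(s)=\sum_i a_i+\alpha$ (split $s=s_{[1,i]}+s_{(i,\infty)}$ and let $i\to\infty$). Representing $h^{**}=w^*\text{-}\lim_n h_n$ with $h_n\in X$, one has $e_i^*(h_n)\to 0$ for each $i$ and $s(h_n)\to 0$. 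A sliding hump reduction produces a subsequence equivalent to a block sequence $(g_n)$ of $(e_i)$ with $g_n\to h^{**}$ in $w^*$ and $s(g_n)\to 0$; perturbing by $-s(g_n)e_{\max\operatorname{supp}(g_n)}$ yields a block sequence $g_n'$ with $s(g_n')=0$ identically, still $w^*$-converging to $h^{**}$. Proposition \ref{summingzero} forces $(g_n')$ to be unconditional, and an unconditional bounded weakly Cauchy sequence must be weakly null: by the Bessaga--Pe\l czy\'nski dichotomy a non-weakly-null such sequence would have a subsequence equivalent to the unit vector basis of $c_0$ or of $\ell_1$, the former being weakly null (contradicting non-nullness) and the latter failing to be weakly Cauchy. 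Therefore $h^{**}=0$.
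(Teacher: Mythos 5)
Your proof is correct and, at its core, it is the same argument as the paper's: in both cases the decisive step is to take a $w^*$-representing sequence in $X$ of $Px^{**}-x^{**}$ (the paper uses the explicit representatives $x_n-y_n$ with $y_n=\sum_{i=1}^n x^{**}(e_i^*)e_i+x^{**}\left(s_{(n,\infty)}\right)e_{n+1}$, which is exactly a choice of your $h_n$ up to sign), observe that its coordinates and its values under the summing functional vanish, and then use a sliding hump together with Proposition \ref{summingzero} to get an unconditional weak Cauchy, hence weakly null, block sequence, forcing the difference to be zero. Your extra verifications (idempotence, and Baire-1 membership of $Px^{**}$ via the difference-basis partial sums) are correct elaborations of what the paper dispatches in one sentence; the only point you gloss over, which the paper treats explicitly, is the degenerate case where the block sequence produced by the sliding hump has a norm-null subsequence (then $Py^{**}-y^{**}=0$ at once), needed before you may treat it as a seminormalized unconditional basic sequence in the final dichotomy step.
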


\begin{proof}
By Proposition \ref{equivalentfornotsumming} (v) and (iv) the limit $w^*$-$\sum_ix^{**}(e_i^*)e_i$ and the limit $\lim_ix^{**}(s_{(i,\infty)})$ exist. Hence, $P$ is well defined, bounded and maps into the space of Baire-1 functions of $X^{**}$.
It remains to show that $Px^{**} = x^{**}$ whenever $x^{**}$ is Baire-1, i.e. there is a sequence $(x_n)_n$ in $X$ with $x^{**} = w^*$-$\lim_nx_n$. For each $n\inn$ define $y_n = \sum_{i=1}^nx^{**}(e_i^*) + x^{**}(s_{(n,\infty)})e_{n+1}$.
Then
\begin{equation}
w^*\text{-}\lim_n y_n = Px^{**}\;\text{and}\;s(y_n) = x^{**}(s)\;\text{for all}\;n.
\end{equation}
If $(x_n - y_n)_n$ has a subsequence that converges to zero in norm, then $x^{**} = w^*$-$\lim_ny_n$ and there is nothing left to prove. Otherwise, the sequence $(x_n - y_n)_n$ is seminormalized and point-wise null, with respect to $(e_i)_i$,
and $s(x_n - y_n) = s(x_n) - x^{**}(s)\rightarrow 0$. A sliding hump argument yields that, passing to a subsequence, $(x_n - y_n)_n$ is equivalent to a block sequence $(w_n)_n$ with $s(w_n) = 0$ for all $n$.
By Proposition \ref{summingzero}, $(x_n - y_n)_n$ is unconditional, and since it is weak Cauchy, it is weakly null. In conclusion, $P^{**}x^{**} = w^*$-$\lim_ny_n = w^*$-$\lim_nx_n = x^{**}$.
\end{proof}

It was proved in \cite[Theorem 2.3 f), page 4]{FOSZ} that a Banach space with a conditional spreading basis not containing $c_0$ and $\ell_1$ is quasi-reflexive of order one. The following can be viewed as a generalization of this result.

\begin{cor}\label{codim-1 in baire-1 if no c0 subspace}
Let $X$ be a Banach space with a conditional spreading basis $(e_i)_i$. If $X$ contains no subspace isomorphic to $c_0$, then $X$ is of co-dimension one in  $\mathcal{B}_1(X)$.
\end{cor}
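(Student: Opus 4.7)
The plan is to combine Proposition \ref{baire-1 are complemented in strongly summing}, Lemma \ref{xdoublestartrailsc0}, and the non-trivial weak Cauchy property of $(e_i)_i$: the explicit formula for the projection onto $\mathcal{B}_1(X)$ will show that every Baire-1 element differs from $X$ by exactly a scalar multiple of $e^{**}=w^*\text{-}\lim_i e_i$.

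First I would verify that the hypothesis of Proposition \ref{baire-1 are complemented in strongly summing} holds, i.e. that $(e_i)_i$ is strongly summing. By Proposition \ref{equivalentfornotsumming} this is equivalent to the convex block homogeneous part $(z_i)_i$ of $(e_i)_i$ not being equivalent to the summing basis of $c_0$. If it were, then by Proposition \ref{theoremaboutblocksequences}(i) there would exist a block sequence of averages of $(e_i)_i$ equivalent to the summing basis of $c_0$; the closed linear span of such a sequence is isomorphic to $c_0$, contradicting the assumption on $X$.

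Next, fix $x^{**}\in \mathcal{B}_1(X)$ and set $\alpha=\lim_i x^{**}(s_{(i,\infty)})$, which exists by Proposition \ref{equivalentfornotsumming}(iv). Proposition \ref{baire-1 are complemented in strongly summing} yields
\begin{equation*}
x^{**} \;=\; w^*\text{-}\sum_i x^{**}(e_i^*)\, e_i \;+\; \alpha\, e^{**}.
\end{equation*}
The decisive point is to upgrade the weak-star convergence of the series to norm convergence. If the series $\sum_i x^{**}(e_i^*)e_i$ failed to converge in norm, Lemma \ref{xdoublestartrailsc0} would produce an increasing sequence $(n_i)_i$ with $y_i=\sum_{j=1}^{n_i} x^{**}(e_j^*)e_j$ equivalent to the summing basis of $c_0$, and the closed linear span of $(y_i)_i$ would be an isomorphic copy of $c_0$ inside $X$, contradicting the hypothesis. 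Hence the series converges in norm, and $x^{**}-\alpha e^{**}\in X$.

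Finally, since $(e_i)_i$ is non-trivial weak Cauchy (item (iii) of the standing list in Section \ref{preliminaries}), $e^{**}\notin X$, so $\mathrm{span}\{e^{**}\}\cap X=\{0\}$, and $X+\mathrm{span}\{e^{**}\}$ is closed in $X^{**}$ as the sum of a closed subspace and a one-dimensional one. The two displays above combine to give $\mathcal{B}_1(X)=X\oplus \mathrm{span}\{e^{**}\}$, which is the required codimension-one conclusion. The only real subtlety is the first step (ruling out that $(z_i)_i$ is the summing basis of $c_0$), which follows cleanly from the already established machinery; everything else is a direct application of the two structural results just cited.
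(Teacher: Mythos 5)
Your argument is correct and matches the paper's own proof almost exactly: both reduce to strong summability via Proposition \ref{equivalentfornotsumming}, apply the formula from Proposition \ref{baire-1 are complemented in strongly summing}, and upgrade weak-star convergence to norm convergence via Lemma \ref{xdoublestartrailsc0} using the no-$c_0$ hypothesis. You fill in a small step the paper leaves implicit (why the no-$c_0$ hypothesis forces $(z_i)_i$ to not be the summing basis of $c_0$, via Proposition \ref{theoremaboutblocksequences}(i)), which is a welcome clarification but not a different route.
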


\begin{proof}
If $X$ does not contain $c_0$ then, by Proposition \ref{equivalentfornotsumming}, $(e_i)_i$ must be strongly summing. By Proposition \ref{baire-1 are complemented in strongly summing}, for each $x^{**}$ in $\mathcal{B}_1(X)$ we have that
$x^{**} = w^*$-$\sum_ix^{**}(e_i^*)e_i + (\lim_ix^{**}(s_{(i,\infty)}))e^{**}$. By Lemma \ref{xdoublestartrailsc0} we obtain that $\sum_ix^{**}(e_i^*)e_i$ is norm convergent for all $x^{**}$ in $X^{**}$, which implies that
$X = \mathcal{B}_1(X)\cap\mathrm{ker}(w$-$\lim_is_{(i,\infty)})$.
\end{proof}

\section{Spreading models of non-reflexive spaces}\label{nonreflexiveadmitcbhspreadingmodels}
In this section we show that every sequence generating a conditional spreading model has a block sequence of averages that generates a convex block homogeneous spreading model. We also observe that non-reflexive Banach spaces always have sequences that generate convex block homogeneous spreading models.

\begin{rmk}
\label{ntwcconvexblockdecreases}
Let $(x_i)_i$ be a non-trivial weak Cauchy sequence in some Banach space $X$ that generates a sequence $(e_i)_i$ as a spreading model. If $(y_i)_i$ is a convex block sequence of $(x_i)_i$ that generates a sequence $(\tilde{e}_i)_i$ as a spreading model, then the linear map $T: [(e_i)_i] \rightarrow [(\tilde{e}_i)_i]$ with $Te_i = \tilde e_i$ has norm at most one. Note that it is important for the sequence $(x_i)_i$ to already generate some spreading model.
\end{rmk}

\begin{lem}
Let $E_\xi$, $\xi<\omega_1$ be a transfinite hierarchy of Banach spaces so that each $E_\xi$ has a Schauder basis $(e_i^\xi)_i$. Assume moreover that for every $\xi<\zeta$ the linear map $T_{\xi,\zeta}: E_\xi \rightarrow E_\zeta$ defined by $T_{\xi,\zeta}e_i^\xi  = e_i^\zeta$ is well defined and has norm at most one. Then there exists $\xi_0<\omega_1$ such that for every $\xi_0\leqslant \xi<\omega_1$ the map $T_{\xi_0,\xi}$ is an isometry.\label{normstabilizes}
\end{lem}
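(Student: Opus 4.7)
The plan is to exploit the countability of a dense subset of each $E_\xi$ against the uncountable cofinality of $\omega_1$. For each finite rational vector $x = \sum_{i=1}^n a_i e_i$, where the $a_i\in\mathbb{Q}$ and $n\in\N$, I would define the function $r_x\colon\omega_1\to\mathbb{R}_{\geqslant 0}$ by $r_x(\xi) = \|\sum_{i=1}^n a_i e_i^\xi\|_{E_\xi}$. The hypothesis $\|T_{\xi,\zeta}\|\leqslant 1$ is precisely the statement that each $r_x$ is non-increasing.

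The main technical point is that every non-increasing function $r\colon\omega_1\to\mathbb{R}_{\geqslant 0}$ is eventually constant. To see this, set $v=\inf_{\xi<\omega_1}r(\xi)$ and, for each $n\in\N$, pick $\eta_n<\omega_1$ with $r(\eta_n)<v+1/n$. Since the supremum of countably many countable ordinals is countable, $\eta=\sup_n\eta_n<\omega_1$; and for every $\xi\geqslant\eta$ monotonicity gives $r(\xi)\leqslant r(\eta_n)<v+1/n$ for all $n$, whence $r(\xi)=v$. Applying this to each $r_x$, I obtain a countable ordinal $\xi_x$ beyond which $r_x$ is constant.

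Now I would let $D$ be the (countable) set of all finite $\mathbb{Q}$-linear combinations of the symbols $e_i$, and put $\xi_0=\sup_{x\in D}\xi_x$. This is a countable supremum of countable ordinals, so $\xi_0<\omega_1$. For $\xi\geqslant\xi_0$ and $y=\sum_{i=1}^n a_i e_i^{\xi_0}\in E_{\xi_0}$ with rational $a_i$, the construction yields $\|T_{\xi_0,\xi}(y)\|_\xi=r_y(\xi)=r_y(\xi_0)=\|y\|_{\xi_0}$. Since such vectors are dense in $E_{\xi_0}$ and $T_{\xi_0,\xi}$ is a norm-one operator, a routine approximation argument extends the isometric equality to all of $E_{\xi_0}$: given $x\in E_{\xi_0}$ and $\varepsilon>0$, choose a rational $y$ with $\|x-y\|_{\xi_0}<\varepsilon$, whence $\|T_{\xi_0,\xi}(y)\|_\xi=\|y\|_{\xi_0}$ and $\|T_{\xi_0,\xi}(x)-T_{\xi_0,\xi}(y)\|_\xi<\varepsilon$ together force $\bigl|\|T_{\xi_0,\xi}(x)\|_\xi-\|x\|_{\xi_0}\bigr|<2\varepsilon$.

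There is no serious obstacle; the crux is simply the order-theoretic fact that $\omega_1$ admits no strictly decreasing $\omega_1$-sequence of reals, which is what permits the eventual stabilization of each $r_x$ and, combined with the countability of $D$, of the norm on a dense subspace.
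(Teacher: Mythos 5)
Your proof is correct, and it reaches the conclusion by a different organization of the same underlying order-theoretic fact. The paper argues by contradiction: assuming no $\xi_0$ works, it attaches to (uncountably many) $\xi$ a rational-coefficient witness vector whose norm strictly drops at all later indices, pigeonholes over the countable set of such vectors to fix a single one, and then obtains a strictly decreasing $\omega_1$-indexed family of reals $\alpha_\xi=\|\sum_{i=1}^{n}c_ie_i^\xi\|$, which is absurd. You instead argue directly: each function $\xi\mapsto\|\sum_{i=1}^{n}a_ie_i^\xi\|$ with rational coefficients is non-increasing, hence eventually constant by the uncountable cofinality of $\omega_1$; the supremum of the countably many stabilization ordinals is still countable; and norm-preservation on the dense set of finitely supported rational vectors extends to all of $E_{\xi_0}$ because $\|T_{\xi_0,\xi}\|\leqslant 1$. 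Both arguments rest on the impossibility of a strictly decreasing (equivalently, the eventual constancy of a non-increasing) $\omega_1$-sequence of reals together with the countability of the rational vectors; your direct version dispenses with the paper's relabeling and uncountable pigeonhole and makes explicit the density and approximation step that the paper leaves implicit, at the mild cost of stabilizing all rational vectors simultaneously rather than reducing to a single one.
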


\begin{proof}
If we assume that the conclusion does not hold, then by passing to an uncountable subset and relabeling we may assume that for every $\xi<\omega_1$ there exist $n_\xi\inn$  and rational numbers $(c_i^\xi)_{i=1}^{n_\xi}$ such that for every $\xi<\zeta<\omega_1$ the following holds.
\begin{equation}
\left\|\sum_{i=1}^{n_\xi}c_i^\xi e_i^\zeta\right\| < \left\|\sum_{i=1}^{n_\xi}c_i^\xi e_i^\xi\right\|.
\end{equation}
By passing to a further uncountable subset and relabeling once more we may assume that there exist $n\inn$ and rational numbers $(c_i)_{i=1}^{n}$
such that for every $\xi<\zeta<\omega_1$ the following holds.
\begin{equation}
\left\|\sum_{i=1}^{n}c_ie_i^\zeta\right\| < \left\|\sum_{i=1}^{n}c_ie_i^\xi\right\|.
\end{equation}
Setting $\alpha_\xi = \|\sum_{i=1}^{n}c_ie_i^\xi\|$ for $\xi<\omega_1$, we conclude that $(\alpha_\xi)_{\xi<\omega_1}$ is strictly decreasing which is absurd.
\end{proof}

\begin{prp}
\label{convexsmstabilizes}
Let $X$ be a Banach space and $(x_i)_i$ be a bounded sequence in $X$ without a weakly convergent subsequence. Then, there exists a convex block sequence $(\tilde x_i)_i$ of $(x_i)_i$ generating a spreading model $(e_i)_i$ so that the spreading model admitted by any further convex block sequence $(y_i)_i$ of $(\tilde x_i)_i$ is isometrically equivalent to $(e_i)_i$.
\end{prp}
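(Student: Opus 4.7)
The plan is to build a transfinite decreasing hierarchy of convex block sequences of $(x_i)_i$ and invoke Lemma~\ref{normstabilizes}. By Brunel--Sucheston I first replace $(x_i)_i$ by a subsequence that already generates a spreading model; the absence of weakly convergent subsequences is preserved under subsequences. The central tool is a \emph{domination principle} generalising Remark~\ref{ntwcconvexblockdecreases}: if $(v_i)_i$ generates a spreading model $(f_i)_i$ and $(w_k)_k$ is a convex block sequence of $(v_i)_i$ that itself generates a spreading model $(\tilde f_i)_i$, then the linear extension of $f_i \mapsto \tilde f_i$ has norm at most one. Writing $w_k = \sum_{i \in F_k} c_i^k v_i$ and pushing the chosen indices $k_1 < \cdots < k_n$ far into the tail, the spreading-model property of $(v_i)_i$ lets one approximate $\|\sum_j a_j w_{k_j}\|$ in terms of the $f_i$'s, after which Lemma~\ref{convexdomination} combined with the $1$-spreading property of $(f_i)_i$ yields the bound $\|\sum_j a_j \tilde f_j\| \leq \|\sum_j a_j f_j\|$.

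I would then recursively define convex block sequences $(x_i^{(\xi)})_i$ with spreading models $(e_i^{(\xi)})_i$ for $\xi < \omega_1$. Set $(x_i^{(0)})_i := (x_i)_i$. At a successor stage $\xi+1$, if $(x_i^{(\xi)})_i$ admits a convex block sequence whose spreading model is dominated by $(e_i^{(\xi)})_i$ via a map that is \emph{not} an isometry, let $(x_i^{(\xi+1)})_i$ be such a sequence; otherwise halt and put $(\tilde x_i)_i := (x_i^{(\xi)})_i$. At a limit stage $\lambda$, fix $\xi_1 < \xi_2 < \cdots \to \lambda$ and form the diagonal $y_n := x_{k_n}^{(\xi_n)}$, choosing the $k_n$ so that the supports of the $y_n$ inside $(x_i)_i$ are successive. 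For every $N$, the tail $(y_n)_{n \geq N}$ is then a convex block sequence of $(x_i^{(\xi_N)})_i$; after one more Brunel--Sucheston extraction I obtain a sequence $(x_i^{(\lambda)})_i$ whose spreading model $(e_i^{(\lambda)})_i$, by the domination principle, is dominated via canonical norm-one maps by every $(e_i^{(\xi)})_i$ with $\xi < \lambda$.

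If the recursion never halts, the family $\{[(e_i^{(\xi)})_i] : \xi < \omega_1\}$ with the canonical norm-one maps satisfies the hypothesis of Lemma~\ref{normstabilizes}, producing $\xi_0 < \omega_1$ past which every canonical map is an isometry; this contradicts the fact that at stage $\xi_0 + 1$ the map $T_{\xi_0, \xi_0+1}$ was explicitly chosen not to be an isometry. The recursion must therefore halt at some countable $\xi_0$, and $(\tilde x_i)_i$ is the desired sequence: any convex block sequence of $(\tilde x_i)_i$ generating a spreading model admits, by the domination principle, a canonical norm-one map from $(e_i^{(\xi_0)})_i$, which the halting criterion forces to be an isometry. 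I expect the limit-ordinal diagonalisation to be the main obstacle, as one must simultaneously arrange successive supports in $(x_i)_i$, preserve the convex-block property with respect to each $(x_i^{(\xi_N)})_i$, and keep the Brunel--Sucheston extraction compatible with all predecessor canonical maps; the hypothesis that $(x_i)_i$ has no weakly convergent subsequence plays the background role of keeping every intermediate spreading model non-degenerate, so that the isometry-versus-strict-domination dichotomy remains meaningful throughout.
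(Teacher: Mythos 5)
Your proposal follows essentially the same route as the paper: pass to a subsequence so that every convex block sequence (after a further extraction) generates a Schauder basic spreading model, use the domination principle of Remark \ref{ntwcconvexblockdecreases} to get canonical norm-one maps between successive spreading models, build a transfinite hierarchy of convex block sequences (with the ``eventually a convex block sequence'' diagonal at limit ordinals), and derive a contradiction from Lemma \ref{normstabilizes} if the process never stabilizes. The only cosmetic differences are that you phrase the argument as ``halt or contradict'' rather than the paper's direct contradiction, and you sketch a proof of the domination remark via Lemma \ref{convexdomination}, both of which are consistent with the paper's proof.
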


\begin{proof}
By Rosenthal's $\ell_1$ theorem \cite{Rell1}, $(x_i)_i$ has either a subsequence that is equivalent to the unit vector basis of $\ell_1$ or it has  a non-trivial weak Cauchy subsequence. In either case, by passing to a subsequence, every further convex block sequence of $(x_i)_i$ has a subsequence generates a Schauder basic spreading model. Let us assume now that the conclusion is not satisfied. Using a transfinite recursion and  Remark \ref{ntwcconvexblockdecreases} we may construct a transfinite hierarchy   $(x_i^\xi)_i$ of convex block sequences of $(x_i)_i$, satisfying the following:
\begin{itemize}

\item[(i)] For every $\xi<\zeta<\omega_1$ the sequence $(x_i^\zeta)_i$ is eventually a convex block sequence of $(x_i^\xi)_i$ (``eventually'' is necessary to pass the properties to limit ordinals). 

\item[(ii)] For every $\xi<\omega_1$ the sequence $(x_i^\xi)_i$ generates a Schauder basic sequence $(e_i^\xi)_i$ as a spreading model.

\item[(iii)] For every $\xi<\zeta$ the natural linear map $T: [(e_i^\xi)_i] \rightarrow [(e_i^\zeta)_i]$ has norm at most one but it is not an isometry.

\end{itemize}
Lemma \ref{normstabilizes} and (iii) yield a contradiction.
\end{proof}

\begin{prp}
\label{avergestocbhsm}
Let $X$ be a Banach space and $(x_i)_i$ be a sequence in $X$ that generates a conditional spreading model $(e_i)_i$ and let also $(z_i)_i$ be the convex block homogeneous part of $(e_i)_i$. Then there exists a block sequence $(y_i)_i$ of averages of $(x_i)_i$ so that:
\begin{itemize}
 \item[(i)] the sequence $(y_i)_i$ generates a spreading model isometrically equivalent to $(z_i)_i$ and
 \item[(ii)] every convex block sequence $(w_i)_i$ of $(y_i)_i$ has a subsequence generates a spreading model isometrically equivalent to $(z_i)_i$.
\end{itemize}
\end{prp}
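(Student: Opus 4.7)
I would first reduce to the case where $(e_i)_i$ is $1$-spreading, which loses no generality for spreading-model statements. Then by Theorem \ref{spreading-characterization} the convex block homogeneous part $(z_i)_i$ is itself $1$-convex block homogeneous, and by Lemma \ref{averagesdecrease} together with \eqref{what is the convex block homogeneous part exactly formula} one has the following quantitative version of the averaging formula: for any scalars $a_1,\dots,a_n$ and any $\e>0$ there exists $M_0\in\N$ such that whenever $v_1,\dots,v_n$ are successive block averages of $(e_i)_i$ with $\#\supp v_i\geqslant M_0$, the norm $\|\sum_{i=1}^na_iv_i\|$ lies within $\e$ of $\|\sum_{i=1}^na_iz_i\|$.

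For (i) I would construct $y_k = N_k^{-1}\sum_{j\in A_k}x_j$, where $A_1<A_2<\cdots$ are successive finite intervals of $\N$ of sizes $N_k\to\infty$, and $\min A_k$ is chosen to grow sufficiently fast. The point is that for fixed scalars $a_1,\dots,a_n$ and any $k_1<\cdots<k_n$ the combination $\sum_{i=1}^na_iy_{k_i}$ is a linear combination of at most $\sum_iN_{k_i}$ of the $x_j$'s with coefficients bounded by $\max_i|a_i|$; choosing $\min A_{k_1}$ large enough relative to this count and $\e$, the spreading-model property of $(x_i)_i$ yields $\bigl|\|\sum_ia_iy_{k_i}\|-\|\sum_ia_iv_i\|\bigr|<\e$, where $v_i$ is the corresponding block average of $(e_i)_i$ of size $N_{k_i}$, and by $1$-spreading the indices of the $v_i$'s may be placed arbitrarily apart. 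Combining with the setup paragraph, once $N_{k_1}$ also exceeds the threshold $M_0$ this norm is within $\e$ of $\|\sum_ia_iz_i\|$. A standard diagonal argument over a countable dense family of scalar tuples and error thresholds produces single sequences $(N_k)_k$ and $(A_k)_k$ such that $(y_k)_k$ generates $(z_i)_i$ as a spreading model.

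For (ii) let $(w_l)_l$ be any convex block sequence of $(y_k)_k$ with $w_l=\sum_{k\in F_l}c^l_ky_k$, and pass to a subsequence generating some spreading model $(\tilde z_i)_i$. For far-apart $l_1<\cdots<l_n$, $\sum_ia_iw_{l_i}$ is a linear combination of the $y_k$'s indexed by $F_{l_1}<\cdots<F_{l_n}$, so the spreading-model property of $(y_k)_k$ proved in (i) gives $\|\sum_ia_iw_{l_i}\|$ arbitrarily close to $\|\sum_{i=1}^na_i\hat v_i\|$, where $\hat v_i=\sum_{k\in F_{l_i}}c^{l_i}_kz_{\tau_i(k)}$ for suitable order-preserving maps $\tau_i$ making $(\hat v_i)_{i=1}^n$ a successive convex block sequence of $(z_j)_j$. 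Since $(z_i)_i$ is $1$-convex block homogeneous one has $\|\sum_{i=1}^na_i\hat v_i\|=\|\sum_{i=1}^na_iz_i\|$, and letting $l_1\to\infty$ shows $\|\sum_ia_i\tilde z_i\|=\|\sum_ia_iz_i\|$, which is exactly the desired isometric equivalence.

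The main technical obstacle lies in the construction in (i): the spreading-model approximation for $(x_i)_i$ requires $\min A_k$ to grow with the number of coordinates $\sum_iN_{k_i}$ that can appear in a combination $\sum_ia_iy_{k_i}$, while the averaging-to-$(z_i)_i$ approximation requires $N_k\to\infty$. These two constraints must be interleaved carefully, but given an enumeration of a countable dense test set of scalars and error tolerances this is routine, and no further idea is needed to pass from (i) to (ii) beyond the $1$-convex block homogeneity of $(z_i)_i$ furnished by Theorem \ref{spreading-characterization}.
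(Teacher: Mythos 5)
Your plan for part (i) has a genuine gap at the step where you approximate $\|\sum_{i=1}^n a_i y_{k_i}\|$ using the spreading-model property of $(x_i)_i$. Expanding, $\sum_{i=1}^n a_i y_{k_i}=\sum_{i=1}^n\sum_{j\in A_{k_i}}(a_i/N_{k_i})x_j$ involves $m=\sum_{i=1}^n N_{k_i}$ of the $x_j$'s, so the spreading-model estimate with error $\e$ requires $\min A_{k_1}$ to exceed a threshold $n_0(m,\e)$ which in general grows without bound in $m$. But $m$ depends on $k_n$, which ranges over the whole tail, whereas $\min A_{k_1}$ is frozen the moment $A_{k_1}$ is chosen: for any fixed $k_1$, however rapidly $\min A_k$ and however slowly $N_k$ grow, one can take $k_n$ so large that $n_0(\sum_i N_{k_i},\e)>\min A_{k_1}$. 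The constraint couples a quantity determined by $k_1$ alone to one unbounded in $k_n$, and no recursive ``interleaving'' of $(N_k)_k$ and $(\min A_k)_k$ can break this circularity. The same difficulty reappears in your argument for (ii), where you invoke the spreading-model property of $(y_k)_k$ on combinations spread over $\sum_i\#F_{l_i}$ coordinates.

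What is missing is the stabilization given by Proposition \ref{convexsmstabilizes}, proved by the transfinite argument of Lemma \ref{normstabilizes}: one first passes to a convex block sequence $(\tilde x_i)_i$ of $(x_i)_i$ whose spreading model $(\tilde e_i)_i$ is reproduced isometrically by \emph{every} further convex block sequence of $(\tilde x_i)_i$. Block averages of $(\tilde x_i)_i$ of arbitrary (growing) size are themselves convex block sequences of $(\tilde x_i)_i$, so they generate $(\tilde e_i)_i$ as a structural fact, with no unbounded-coordinate estimate needed. The identification $(\tilde e_i)_i=(z_i)_i$ is then made by comparing, for each \emph{fixed} size $M$, the spreading model of size-$M$ block averages of $(\tilde x_i)_i$ (which is $(\tilde e_i)_i$ by stability) with that of size-$M$ block averages of $(x_i)_i$ (which, involving only $nM$ coordinates, is legitimately approximated by size-$M$ block averages of $(e_i)_i$), and finally letting $M\to\infty$ via \eqref{what is the convex block homogeneous part exactly formula}; the two sequences of averages are compared through the fact, obtained from Lemma \ref{rosenthaldichotomy}, that long averages of $(x_i-\tilde x_i)_i$ have small norm. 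Your intuition that suitable block averages of $(x_i)_i$ do the job is correct, but the stabilization step is what makes the norm estimates closable, not a dispensable refinement.
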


\begin{proof}
 By Rosenthal's $\ell_1$ theorem \cite{Rell1}, passing to a subsequence, the sequence $(x_i)_i$ is non-trivial weak Cauchy. Otherwise, if it had a weakly null subsequence, as it is well known, it would generates an unconditional spreading model. If it had a subsequence converging weakly to a non-zero element it would generate a singular spreading model or an $\ell_1$ spreading model (see \cite[Theorem 38, page 592]{AKT}), or if it had a norm convergent subsequence it would generate a trivial spreading model (i.e. a spreading sequence in seminormed space that is not a normed space). Find a convex block sequence $(\tilde x_i)_i$ of $(x_i)_i$ that satisfies the conclusion of Proposition \ref{convexsmstabilizes}. Denote by $(\tilde e_i)_i$ the spreading model generated by this sequence. The sequence $(x_i-\tilde x_i)_i$ is weakly null and, passing to a subsequence, it is either norm null or it generates some unconditional spreading model $(v_i)_i$. If it is norm null then the proof is complete. Otherwise, $(v_i)$ is not equivalent to the unit vector basis of $\ell_1$. If it were, then either $(e_i)_i$ or $(\tilde e_i)_i$ would have to be equivalent to the unit vector basis of $\ell_1$ and by Remark \ref{ntwcconvexblockdecreases} this cannot be the case. By Lemma \ref{rosenthaldichotomy} (applied to $(v_i)$) and a standard counting argument, passing to a subsequence of $(x_i - \tilde x_i)_i$ we have that for every $\e>0$ there exists $M_0,n_0\in\N$ so that for any $F\subset\N$ with $\min(F)\geqslant n_0$ and $\#F\geqslant M_0$ we have
\begin{equation}
\label{avergestocbhsm eq1}
\frac{1}{\#F}\left\|\sum_{i\in F}\left(x_i - \tilde x_i\right)\right\| < \e. 
\end{equation}
By Theorem \ref{spreading-characterization}, given $n\in\N$ and $\e>0$ there exists $M_n\in\N$ so that for all $a_1,\ldots,a_n\in[-1,1]$ and $M\geqslant M_n$ we have
\begin{equation}
\label{avergestocbhsm eq2}
\left|\left\|\sum_{i=1}^na_iz_i\right\| - \left\|\sum_{i=1}^na_i\left(\frac{1}{M}\sum_{j=((i-1)M+1)}^{iM}e_j\right)\right\|\right|<\e.
\end{equation}
Using \eqref{avergestocbhsm eq1}, the fact that the spreading model of $(\tilde x_i)_i$ is preserved when taking averages, and a limit argument one may deduce that for any $\e>0$ there exists $M_0\in\N$ so that for any $M\geqslant M_0$ and $a_1,\ldots,a_n\in[-1,1]$
\begin{equation}
\left|\left\|\sum_{i=1}^na_i\tilde e_i\right\| - \left\|\sum_{i=1}^na_i\left(\frac{1}{M}\sum_{j=((i-1)M+1)}^{iM}e_j\right)\right\|\right|<\e.
\end{equation}
It immediately follows that $(z_i)_i$ and $(\tilde e_i)_i$ are isometrically equivalent. Using \eqref{avergestocbhsm eq1} one may clearly now choose a sequence of averages of $(x_i)_i$ that satisfies the conclusion.
\end{proof}

\begin{cor}
\label{cbhsmofnr}
Let $X$ be a non-reflexive Banach space. Then there exists a sequence $(x_i)_i$ in $X$ that generates a 1-convex block homogeneous spreading model $(e_i)_i$. 
\end{cor}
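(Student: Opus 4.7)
My plan is to apply Proposition \ref{convexsmstabilizes} to a suitable sequence in $X$. Since $X$ is non-reflexive, the Eberlein--Smulian theorem provides a bounded sequence $(x_i)_i$ in $X$ with no weakly convergent subsequence. Proposition \ref{convexsmstabilizes} applied to $(x_i)_i$ then yields a convex block sequence $(\tilde x_i)_i$ whose spreading model $(e_i)_i$ is \emph{stable}, in the sense that every further convex block sequence of $(\tilde x_i)_i$ generates a spreading model isometrically equivalent to $(e_i)_i$. The whole task will be to transfer this stability, which lives on the $X$ side, into the isometric $1$-convex block homogeneity of $(e_i)_i$ inside its own spreading model space.

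The crucial identification is the following. Fix $N\in\N$ and consider in $X$ the convex block sequence $(y_k^N)_k$ of $(\tilde x_i)_i$ whose terms are length-$N$ averages $y_k^N = (1/N)\sum_{j=(k-1)N+1}^{kN}\tilde x_j$. Stability forces the spreading model of $(y_k^N)_k$ to be isometric to $(e_i)_i$. On the other hand, expanding $\sum_l a_l y_{k_l}^N$ as a linear combination of $\tilde x_j$'s at indices that grow with $k_l$, and then invoking the defining property of the spreading model of $(\tilde x_i)_i$ together with the $1$-spreading of $(e_i)_i$, one evaluates the same spreading-model norm as $\|\sum_l a_l v_l^N\|$, where $v_l^N = (1/N)\sum_{j=(l-1)N+1}^{lN}e_j$. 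Equating the two computations gives the key identity
\[ \left\|\sum_{l=1}^n a_l v_l^N\right\| = \left\|\sum_{l=1}^n a_l e_l\right\| \qquad \text{for every } N\in\N \text{ and all scalars } a_l. \]

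The $1$-convex block homogeneity of $(e_i)_i$ will then follow by a sandwich argument. Given an arbitrary convex block sequence $(w_k)_k$ of $(e_i)_i$ and coefficients $a_1,\ldots,a_n\in[-1,1]$, Lemma \ref{convexdomination} applied to the $1$-spreading sequence $(e_i)_i$ gives $\|\sum a_k w_k\| \leq \|\sum a_k e_k\|$. For the reverse inequality, Lemma \ref{averagesdecrease} supplies $N$ so large that $\|\sum a_k v_k^N\| \leq \|\sum a_k w_k\| + \e$; combining this with the displayed identity forces $\|\sum a_k e_k\| \leq \|\sum a_k w_k\| + \e$ for arbitrary $\e>0$, hence equality, which is what is needed.

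The most delicate step, I anticipate, is the second paragraph: one needs to pass rigorously from the mere existence of a spreading model for $(y_k^N)_k$ to its concrete identification as the length-$N$ average convex block of $(e_i)_i$. This involves carefully tracking how the coefficients in $\sum_l a_l y_{k_l}^N = \sum_{l,j}(a_l/N)\tilde x_{(k_l-1)N+j}$ distribute across positions tending to infinity along $k_l$, and then using the $1$-spreading property of $(e_i)_i$ to rearrange them into the clean length-$N$ average pattern described above.
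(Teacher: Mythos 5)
Your proof is correct and faithfully reconstructs the argument that the paper compresses into the phrase ``the argument used to obtain \eqref{avergestocbhsm eq2}''. The two-sided squeeze you describe --- computing the spreading model of the length-$N$ averages of $(\tilde x_i)_i$ directly as $(v_l^N)_l$, equating it with $(e_i)_i$ by the stability furnished by Proposition \ref{convexsmstabilizes}, and then pinning an arbitrary convex block between the resulting identity, Lemma \ref{convexdomination}, and Lemma \ref{averagesdecrease} --- is exactly the intended argument.
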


\begin{proof}
For a bounded sequence without a weakly convergent subsequence apply Proposition \ref{convexsmstabilizes} to find a sequence $(x_i)_i$ generating a spreading model $(e_i)_i$ so that the conclusion of that proposition is satisfied. The argument used to obtain \eqref{avergestocbhsm eq2} yields that $(e_i)_i$ is 1-convex block homogeneous.
\end{proof}

\begin{rmk}
Note that it is not true that a non-reflexive Banach space $X$ must necessarily admit a conditional convex block homogeneous spreading model. It may very well be the case that $X$ only admits the unit vector basis of $\ell_1$ as a spreading model.
\end{rmk}

In \cite{AMusm} a Banach space $\mathfrak{X}_{\mathrm{usm}}$ is constructed that is hereditarily spreading model universal for all subsymmetric sequences. This means that for every possible subsymmetric sequence $(x_i)_i$ and every subspace $Y$ of $\mathfrak{X}_{\mathrm{usm}}$ there exists a sequence $(y_i)_i$ in $Y$ that generates a spreading model equivalent to $(x_i)_i$. This is a hereditarily indecomposable reflexive Banach space and it is constructed via a Tsirelson-type saturation method with constraints. We restate a problem posed in that paper that is relevant to this section.
\begin{prb}
Does there exist a Banach space $X$ that is hereditarily universal for all spreading sequences, i.e. both for conditional and unconditional ones?
\end{prb}
A space $X$ with the aforementioned property would have to be saturated with non-reflexive hereditarily indecomposable subspaces. Before closing this section it is worth mentioning that Banach spaces with a conditional spreading basis don't have a large variety of conditional spreading models. 
\begin{prp}
Let $X$ be a Banach space with a conditional spreading basis $(e_i)_i$ and let $(z_i)_i$ be its convex block homogeneous part. Let also $(x_i)_i$ be a sequence in $X$ that generates a conditional spreading model $(d_i)_i$. Then, the convex block homogeneous part of $(d_i)_i$ is either equivalent to $(z_i)_i$ or to the summing basis of $c_0$.
\end{prp}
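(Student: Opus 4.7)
The plan is to combine Proposition \ref{avergestocbhsm} with Theorem \ref{maintheoremweakcauchy} (i), exploiting the fact that Proposition \ref{avergestocbhsm} provides a \emph{stable} choice of averages, in the sense that every subsequent convex block sequence still captures the same convex block homogeneous part. Write $(\tilde z_i)_i$ for the convex block homogeneous part of $(d_i)_i$. Since $(d_i)_i$ is conditional and Schauder basic, $(x_i)_i$ cannot have a weakly null subsequence (which would force an unconditional spreading model) nor a subsequence equivalent to the unit vector basis of $\ell_1$. By Rosenthal's $\ell_1$ theorem I may pass to a subsequence and assume that $(x_i)_i$ is non-trivial weak Cauchy in $X$, still generating $(d_i)_i$.

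Applying Proposition \ref{avergestocbhsm} to $(x_i)_i$, I will obtain a block sequence $(y_i)_i$ of averages of $(x_i)_i$ satisfying (i) $(y_i)_i$ generates $(\tilde z_i)_i$ isometrically, and (ii) every convex block sequence of $(y_i)_i$ has a subsequence generating $(\tilde z_i)_i$ isometrically as a spreading model. The sequence $(y_i)_i$ is itself non-trivial weak Cauchy in $X$, since the weak$^*$-limit of tail convex combinations coincides with the weak$^*$-limit $x^{**}\in X^{**}\setminus X$ of $(x_i)_i$. Theorem \ref{maintheoremweakcauchy} (i) applied to $(y_i)_i$ then produces a convex block sequence $(w_i)_i$ of $(y_i)_i$ equivalent either to the summing basis of $c_0$ or to $(z_i)_i$. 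Denote this target sequence by $(s_i)_i$, and note that in either case $(s_i)_i$ is 1-spreading and hence generates itself as a spreading model.

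To conclude, I will invoke property (ii) to pass to a subsequence $(w_i')_i$ of $(w_i)_i$ that generates a spreading model isometrically equivalent to $(\tilde z_i)_i$. Since $(w_i')_i$ is equivalent to $(s_i)_i$ with the same constant as $(w_i)_i$, and $(s_i)_i$ is 1-spreading, any spreading model of $(w_i')_i$ is equivalent to $(s_i)_i$. Comparing the two descriptions of the spreading model of $(w_i')_i$ forces $(\tilde z_i)_i$ to be equivalent to $(s_i)_i$, that is, either to the summing basis of $c_0$ or to $(z_i)_i$, as required.

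The proof is essentially a bookkeeping exercise once Proposition \ref{avergestocbhsm} is in hand; the only point requiring any care is verifying that the sequence of averages $(y_i)_i$ remains non-trivial weak Cauchy, which follows from the linearity of the weak$^*$-limit along tail convex combinations. There is no serious obstacle beyond tracking which spreading model is generated by which sequence.
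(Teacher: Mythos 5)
Your proof is correct and follows essentially the same route as the paper: apply Proposition \ref{avergestocbhsm} to obtain the stabilized block sequence $(y_i)_i$ of averages, apply Theorem \ref{maintheoremweakcauchy} (i) to $(y_i)_i$, and then use conclusion (ii) of Proposition \ref{avergestocbhsm} to identify the convex block homogeneous part of $(d_i)_i$ with the result. The extra detail you supply (verifying that $(y_i)_i$ remains non-trivial weak Cauchy via tail convex combinations) is a step the paper leaves implicit, but the argument is the same.
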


\begin{proof}
Apply Proposition \ref{avergestocbhsm} to find a sequence $(y_i)_i$ that generates the convex block homogeneous part $(\tilde d_i)_i$ of $(d_i)_i$ and crucially also satisfies the second conclusion of that proposition. By Theorem \ref{maintheoremweakcauchy} (i) $(y_i)_i$ has a convex block sequence $(\tilde y_i)_i$ that is either equivalent to $(z_i)_i$ or to the summing basis of $c_0$. By Proposition \ref{avergestocbhsm} (ii) $(\tilde d_i)_i$ is equivalent to $(z_i)_i$ or equivalent to the summing basis of $c_0$.
\end{proof}

\begin{rmk}
The above proposition easily implies that there exists no space with a spreading Schauder basis that admits all conditional spreading models.
\end{rmk}

\section{Universality of $C(\omega^\omega)$ for all spreading models}\label{comegaomegaadmitsall}
It was proved by E. Odell in \cite[Proposition 5.10, page 419]{O} that every subsymmetric sequence is generated as a spreading model by some sequence in the space $C(\omega^\omega)$. In this section we prove that this can be extended to include conditional spreading sequences as well. This is interesting because the ordinal number $\omega^\omega$ is the first $\alpha$ for which $C(\alpha)$ is not isomorphic to $c_0(\N)$. Whereas the space $c_0(\N)$ admits only the unit vector basis of $c_0(\N)$ and the summing basis of $c_0(\N)$ as spreading models, the ``successor'' space $C(\omega^\omega)$ admits all possible spreading models. We mention here that in \cite[Proposition 63, page 606]{AKT} it was proved that $c_0(\N)$ admits all spreading sequences as 2-spreading models in the sense of \cite{AKT}. We first show the result for convex block homogeneous sequences. The final statement is then an easy consequence of Theorem \ref{spreading-characterization}. The proof requires a relatively short construction in the flavor of Schreier space \cite{S}. In this section we shall denote $e_i^*$ the elements of the unit vector basis of $\R^n$, of $c_{00}(\N)$, or of $\ell_\infty(\N)$.

We fix a Banach space $Z$ with a normalized bimonotone equal signs additive basis $(z_i)_i$ (recall that by Corollary \ref{renormto1cbh} any space with a convex block homogeneous basis may be renormed to have this property). Then, $(z_i)_i$ is 1-spreading and furthermore, by Remark \ref{esabimonotoneisthesameasbeingequaltoconditionaljamesification}, for all intervals $E_1<E_2<\cdots<E_n$ of $\N$ with $\max(E_k)+1 = \min(E_k)$, for $1\leqslant k<n$, and all scalars $a_1,\ldots,a_m$ we have
\begin{equation}
\label{self-jamesification}
\left\|\sum_{k=1}^n\left(\sum_{j\in E_k}a_j\right)z_k\right\| \leqslant \left\|\sum_{j=1}^ma_jz_j\right\|.
\end{equation}
For each $n\inn$ take a $(1/2n)$-dense finite set $F_n$ in the unit ball of the subspace spanned by the first $n$ vectors of $(z_i^*)_i$, the biorthogonal functionals to $(z_i)$. We require $F_n$ to satisfy the following.
\begin{itemize}
 \item[(a)] $F_n$ is symmetric and $z_i^*\in F_n$ for $1\leqslant i\leqslant n$,
 \item[(b)] $F_n$ is closed under projections onto intervals, and
 \item[(c)] if $x^* = \sum_{i=1}^na_iz_i^*$ is in $F_n$ and $1\leqslant k_0<n$, then $y^* = \sum_{i=1}^{k_0}a_iz^*_i + \sum_{i=k_0+1}^{n-1}a_{i+1}z_i^*$ is in $F_n$ as well.\label{propertycoffn}
\end{itemize}
Note that property (c) makes sense because, using only the fact that  $(z_i)_i$ is 1-spreading, if $x^*$, $y^*$ are as above then $\|y^*\|\leqslant \|x^*\|$. We furthermore require $F_n\subset F_{n+1}$ for all $n\inn$. 

In the definition bellow, by $\sum_{i\in A}e_i^*$ we mean the sequence of scalars $(c_i)_i$ in $\ell_\infty(\N)$ with $c_i = 1$ for $i\in A$ and $c_i = 0$ otherwise. The set $A$ may be finite or infinite.
Define the subset of the unit ball of $(\ell_\infty(\N),\|\cdot\|_{\ell_\infty})$
\begin{equation}
\label{omega to the omega (not so) fancily written}
\begin{split}
\mathcal{K}_Z = \Bigg\{& \sum_{k=1}^na_k\Bigg(\sum_{m_k\leqslant i < m_{k+1}}e_i^*\Bigg): n\leqslant m_1<\cdots<m_n < m_{n+1} \leqslant \infty,\\
& n\in\N\text{ and }\sum_{k=1}^na_kz_k^*\in F_n\Bigg\}.
\end{split}
\end{equation}
It is rather standard to verify that $\mathcal{K}_Z$ is a countable and compact subset of the unit ball of $\ell_\infty(\N)$ endowed with the topology of pointwise convergence and that the Cantor-Bendixson index of $\mathcal{K}_Z$ is $\omega+1$. In fact, the $\omega$'th derivative of $\mathcal{K}_Z$ is a the singleton that contains the zero element of $\ell_\infty(\N)$. To see this, note that an element of $\mathcal{K}_Z$ depends on a set $\{m_1<\cdots<m_n<m_{n+1}\}$ in $\N\cup\{\infty\}$ with $n\leqslant m_1$ and scalar coefficients $a_1,\ldots,a_n$ that may be chosen from a finite set that depends on $n$. Therefore, $\mathcal{K}_Z$ is homeomorphic to $\omega^\omega$ and the space $C(\mathcal{K}_Z)$ is isometric to $C(\omega^\omega)$.

We define a norm on $c_{00}(\N)$ by setting $\|x\|_{\tilde{\mathcal{S}}(Z)} = \sup\{|f(x)|: f\in\mathcal{K}_Z\}$, where for $f\in\mathcal{K}_Z$ and $x\in c_{00}(\N)$ the symbol $f(x)$ denotes the usual inner product of $f$ with $x$. We denote $\tilde{\mathcal{S}}(Z)$ to be the the completion of the space $(c_{00}(\N),\|\cdot\|_{\tilde{\mathcal{S}}(Z)})$. One may refer to the space $\tilde{\mathcal{S}}(Z)$ as the ``conditional schreierification of $Z$''. Note that the map $T:\tilde{\mathcal{S}}(Z)\to C(\mathcal{K}_Z)$ defined by $T(e_i)(f) = f(e_i)$ is a linear isometric embedding. We note for later use the following, which follows from $\mathcal{K}_Z^{(\omega)} = \{0\}$.

\begin{rmk}
\label{omegatotheomegamustgotozero}
Any onto homeomorphism $\phi:\omega^\omega\to \mathcal{K}_Z$ satisfies $\phi(\omega^\omega) = 0$. Hence, if $T:\tilde{S}(Z)\to C(\mathcal{K}_Z)$ is the above isometry and $\tilde T:\tilde{\mathcal{S}}(Z)\to C(\omega^\omega)$ is defined by $\tilde Tx(\alpha) = Tx(\phi(a))$, then for every $x\in\tilde{\mathcal{S}}(Z)$ we have $\tilde Tx(\omega^\omega) = 0$. That is,  $\tilde{\mathcal{S}}(Z)$ isometrically embeds into $C_0(\omega^\omega)$.
\end{rmk}

It remains to observe that the sequence $(e_i)_i$ endowed with $\|\cdot\|_{\tilde{\mathcal{S}}(Z)}$ admits $(z_i)_i$ as a spreading model. Indeed, \eqref{omega to the omega (not so) fancily written} easily implies that for $n < k_1 <\cdots <k_n$ and $c_1,\ldots,c_n\in\R$ setting $m_1=k_1,  \ldots, m_n=k_n$ we have
\begin{equation}
\label{lowerspreadingmodelestimatethisiseasy}
\left\|\sum_{i=1}^nc_ie_{k_i}\right\| \geqslant \max\left\{x^*\left(\sum_{i=1}^nc_iz_i\right): x^*\in F_n\right\} \geqslant \frac{2n-1}{2n}\left\|\sum_{i=1}^nc_iz_i\right\|,
\end{equation}
whereas on the other hand by \eqref{omega to the omega (not so) fancily written}, if $x = \sum_{i=1}^nc_kz_{k_i}$ we also obtain
\begin{equation}
\label{lowerspreadingmodelestimatethisiseasyaswell}
\begin{split}
\left\|\sum_{i=1}^nc_ie_{k_i}\right\| \leqslant \max\left\{\vphantom{\left\|\sum_{j=1}^ms_{E_j}(x)z_i\right\|}\right.&\left\|\sum_{j=1}^ms_{E_j}(x)z_i\right\|: (E_j)_{j=1}^m \text{ is a sequence}\\
  &\left.\vphantom{\left\|\sum_{j=1}^ms_{E_j}(x)z_i\right\|}\text{of consecutive intervals of }\N\right\}\\
 =& \left\|\sum_{i=1}^nc_iz_i\right\| \text{ (by \eqref{self-jamesification})}.
\end{split}
\end{equation}
The desired result follows from \eqref{lowerspreadingmodelestimatethisiseasy} and \eqref{lowerspreadingmodelestimatethisiseasyaswell}. We summarize what we have shown in the following statement.

\begin{rmk}
\label{S(X)isbimonotone}
Given a Banach space with a bimonotone equal signs additive basis $(z_i)_i$, the unit vector basis $(e_i)_i$ of $c_{00}(\N)$ endowed with $\|\cdot\|_{\tilde{\mathcal{S}}(Z)}$
\begin{itemize}
 \item[(i)] forms a bimonotone Schauder basis for the space $\tilde{\mathcal{S}}(Z)$,
 \item[(ii)] generates a spreading model isometrically equivalent to $(z_i)_i$, and
 \item[(iii)] the summing functional $s$ defined on it is bounded, in fact it has norm one.
\end{itemize}
To see (i), note that in \eqref{omega to the omega (not so) fancily written}, by property (b) of the sets $F_n$, the coefficients $(a_i)_{i=1}^n$ may be restricted to intervals of $\{1,\ldots,n\}$ by replacing the initial part and the tail part with zeros. This is possible because the sets $F_n$ are chosen to be closed under taking projections onto intervals.
\end{rmk}

Translating the above remark and using the isometric embedding $\tilde T$ of $\tilde{\mathcal{S}}(Z)$ into $C(\omega^\omega)$ we obtain the following.
\begin{prp}
\label{if its nice enough its isometric}
Let $Z$ be a Banach space with a bimonotone and equal signs additive Schauder basis $(z_i)_i$. Then, $C(\omega^\omega)$ contains a sequence $(f_i)_i$ that generates a spreading model isometrically equivalent to $(z_i)_i$.
\end{prp}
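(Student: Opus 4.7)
The plan is essentially to bolt together the pieces already assembled in the construction preceding the statement. First, by Remark~\ref{S(X)isbimonotone}(ii), the unit vector basis $(e_i)_i$ of $\tilde{\mathcal{S}}(Z)$ already generates a spreading model isometrically equivalent to $(z_i)_i$. Second, by Remark~\ref{omegatotheomegamustgotozero}, there is an isometric linear embedding $\tilde T : \tilde{\mathcal{S}}(Z) \to C(\omega^\omega)$ (in fact into $C_0(\omega^\omega)$). So I would simply set $f_i = \tilde T e_i$ and observe that since $\tilde T$ preserves the norm of every finite linear combination, the sequence $(f_i)_i$ satisfies exactly the same spreading-model estimates \eqref{lowerspreadingmodelestimatethisiseasy}--\eqref{lowerspreadingmodelestimatethisiseasyaswell} as $(e_i)_i$, and therefore generates a spreading model isometrically equivalent to $(z_i)_i$.

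The only assertion used above that is not literally proved in the preceding text is that $\mathcal{K}_Z$ is homeomorphic to the ordinal space $[0,\omega^\omega]$, which is needed to identify $C(\mathcal{K}_Z)$ with $C(\omega^\omega)$. I would justify this by the Mazurkiewicz--Sierpi\'nski theorem: a countable compact metrizable space is homeomorphic to a countable successor ordinal with its order topology, and the ordinal is determined by the Cantor--Bendixson index of the space. From the parametrization of $\mathcal{K}_Z$ by a finite increasing sequence $n \leqslant m_1 < \cdots < m_{n+1} \leqslant \infty$ together with finitely many admissible coefficient choices, one reads off that $\mathcal{K}_Z^{(\omega)} = \{0\}$ and $\mathcal{K}_Z^{(\omega+1)} = \varnothing$, which forces $\mathcal{K}_Z \cong [0,\omega^\omega]$ with the zero functional corresponding to the top ordinal $\omega^\omega$. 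This is exactly what makes the image land in $C_0(\omega^\omega)$.

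There is no real obstacle: the proposition is a one-line corollary of the construction. The actual intellectual work sits earlier, in engineering $\mathcal{K}_Z$ so that two competing demands are met simultaneously — the Banach space side (the spreading average $\frac{1}{N}\sum_{j=(i-1)N+1}^{iN} e_j$ must satisfy norm estimates governed by $(z_i)_i$, which is why the nets $F_n$ are chosen to be $\frac{1}{2n}$-dense in $B_{Z^*}\cap\mathrm{span}\{z_1^*,\ldots,z_n^*\}$ and why \eqref{self-jamesification} is needed for the upper bound) and the topological side (the Cantor--Bendixson index of $\mathcal{K}_Z$ must equal $\omega + 1$, which dictates the constraint $n \leqslant m_1$ in the definition \eqref{omega to the omega (not so) fancily written}). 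Once both are in place, the proposition is immediate.
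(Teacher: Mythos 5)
Your proposal is correct and matches the paper's own reasoning exactly: the paper introduces this proposition precisely as the translation of Remark~\ref{S(X)isbimonotone}(ii) through the isometric embedding $\tilde T:\tilde{\mathcal{S}}(Z)\to C(\omega^\omega)$ constructed via $\mathcal{K}_Z$ and Remark~\ref{omegatotheomegamustgotozero}, so setting $f_i=\tilde T e_i$ is exactly the intended one-line proof. Your supplementary justification that $\mathcal{K}_Z\cong[0,\omega^\omega]$ via the Mazurkiewicz--Sierpi\'nski theorem fills in a detail the paper labels ``rather standard to verify,'' and is accurate.
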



We now state and prove the main result of this section.

\begin{thm}
\label{comegaomegaadmitsallspreadingmodelsisomorphically}
Let $X$ be a Banach space with a spreading Schauder basis $(x_i)_i$. Then there exists a sequence $(y_i)_i$ in $C(\omega^\omega)$ that generates a spreading model equivalent to $(x_i)_i$.  
\end{thm}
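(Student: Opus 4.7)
The plan is to reduce to two known building blocks and then combine them via the decomposition from Theorem~\ref{spreading-characterization}. If $(x_i)_i$ is unconditional, then being also spreading it is subsymmetric, and the conclusion is exactly Odell's theorem from \cite{O}. So I may assume $(x_i)_i$ is conditional spreading. After passing to an equivalent $1$-spreading norm, Theorem~\ref{spreading-characterization} (see also Remark~\ref{equivalentmaxnorm}) asserts that $(x_i)_i$ is equivalent to the sequence $(u_i,z_i)_i$ in the $\ell_\infty$-sum $U\oplus Z$, where $u_i=x_{2i}-x_{2i-1}$ is a suppression unconditional subsymmetric sequence and $(z_i)_i$ is $1$-convex block homogeneous. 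Using Corollary~\ref{renormto1cbh} I renorm $Z$ so that $(z_i)_i$ is bimonotone and equal signs additive; since this only affects $Z$ up to equivalence, the equivalence class of $(x_i)_i$ as a target spreading model is preserved.

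Now I invoke the two building blocks inside $C(\omega^\omega)$. Odell's theorem \cite{O} supplies a sequence $(g_i)_i\subset C(\omega^\omega)$ whose spreading model is equivalent to $(u_i)_i$, and Proposition~\ref{if its nice enough its isometric} supplies a sequence $(h_i)_i\subset C(\omega^\omega)$ whose spreading model is isometrically equivalent to the renormed $(z_i)_i$. Form the pair $y_i=(g_i,h_i)$ in $C(\omega^\omega)\oplus_\infty C(\omega^\omega)$. For fixed $n$ and scalars $a_1,\ldots,a_n$, the $\ell_\infty$-sum norm of $\sum_{i=1}^n a_i y_{k_i}$ equals $\max\{\|\sum a_i g_{k_i}\|,\|\sum a_i h_{k_i}\|\}$; letting $k_1<\cdots<k_n$ with $k_1\to\infty$, this converges to $\max\{\|\sum a_i u_i\|,\|\sum a_i z_i\|\}$. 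Hence $(y_i)_i$ generates a spreading model equivalent to $(u_i,z_i)_i$, and therefore to $(x_i)_i$.

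To land the sequence inside a single copy of $C(\omega^\omega)$, I use the standard fact that $[0,\omega^\omega]$ is clopen in $[0,\omega^\omega\cdot 2]$, so the latter is homeomorphic to the disjoint union of two copies of $[0,\omega^\omega]$; consequently $C(\omega^\omega\cdot 2)$ is isometric to $C(\omega^\omega)\oplus_\infty C(\omega^\omega)$. By the Bessaga--Pe\l czy\'nski isomorphic classification of countable $C(\alpha)$ spaces, $C(\omega^\omega\cdot 2)\simeq C(\omega^\omega)$, since $\omega^\omega\leq \omega^\omega\cdot 2<\omega^{\omega^2}$. Transporting $(y_i)_i$ through this isomorphism yields the desired sequence in $C(\omega^\omega)$ (equivalence of spreading models is preserved by any isomorphism).

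The genuinely non-trivial step has already been carried out in Proposition~\ref{if its nice enough its isometric}, which builds, via the Schreier-type family $\mathcal{K}_Z$, a sequence in $C(\omega^\omega)$ producing an arbitrary $1$-convex block homogeneous spreading model isometrically. Everything else is assembly: the decomposition of Theorem~\ref{spreading-characterization} to isolate the unconditional and convex block homogeneous parts of $(x_i)_i$, Odell's theorem to handle the unconditional part, and the isomorphism $C(\omega^\omega)\oplus C(\omega^\omega)\simeq C(\omega^\omega)$ to merge the two constructions. The main obstacle one might anticipate --- simultaneously realizing both parts of the decomposition within $C(\omega^\omega)$ --- is bypassed precisely because both ingredients already live in $C(\omega^\omega)$ and the direct sum is absorbed by the Bessaga--Pe\l czy\'nski self-isomorphism.
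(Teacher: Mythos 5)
Your proof is correct and follows essentially the same route as the paper: reduce the unconditional case to Odell's theorem, decompose a conditional $(x_i)_i$ via Theorem~\ref{spreading-characterization} into its unconditional and convex block homogeneous parts, realize each part in $C(\omega^\omega)$ (the latter via Proposition~\ref{if its nice enough its isometric} after the renorming of Corollary~\ref{renormto1cbh}), pair the two sequences in the direct sum, and absorb the square using $C(\omega^\omega)\simeq C(\omega^\omega)\oplus C(\omega^\omega)$. Your additional details (the limit computation for the paired sequence and the Bessaga--Pe\l czy\'nski justification of the self-isomorphism) simply make explicit what the paper leaves as standard.
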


\begin{proof}
As we mentioned earlier, the unconditional case was already proved by E. Odell. In the conditional case and assuming that $(x_i)_i$ is 1-spreading, apply Theorem \ref{spreading-characterization} and let $(u_i)_i$, $(z_i)_i$ be the unconditional and convex block homogeneous parts of $(x_i)_i$ respectively. Find sequences $(f_i)_i$ and $(g_i)_i$ in $C(\omega^\omega)$ generating $(u_i)_i$ and $(z_i)_i$ as spreading models. Then, the sequence $(f_i,g_i)_i$ in $C(\omega^\omega)\oplus C(\omega^\omega)$ generates a spreading model equivalent to $(x_i)_i$. Since $C(\omega^\omega)$ is isomorphic to its square the proof is complete.
\end{proof}

\begin{rmk}
A somewhat similar proof yields that for a countable ordinal number $\alpha$, the space $C(\omega^{\omega^\alpha})$ admits all Schauder basic spreading sequences as $\mathcal{S}_\alpha$-spreading models. That is, if $(x_i)_i$ is a Schauder basic spreading sequence then there exists a sequence $(f_i)_i$ in $C(\omega^{\omega^\alpha})$ and positive constants $\kappa$, $K$ so that for all $F\in\mathcal{S}_\alpha$ and choice of scalars $(a_i)_{i\in F}$ we have
$$\kappa\left\|\sum_{i\in F}a_i x_i\right\|\leqslant \left\|\sum_{i\in F}a_i f_i\right\|\leqslant K\left\|\sum_{i\in F}a_i x_i\right\|.$$
The proof requires a variation of the set $\mathcal{K}_Z$ with an $\mathcal{S}_\alpha$ condition. This has to do with the convex block homogeneous part of $(x_i)_i$ and a similar set has to be defined for the unconditional part as well.
\end{rmk}

We shall now orient our attention towards proving a slightly more precise statement of Theorem \ref{comegaomegaadmitsallspreadingmodelsisomorphically}. The reason for this is that we will require an extra condition for the sequence in $C(\omega^\omega)$ that generates the desired conditional spreading model, in order to prove the main result of Section \ref{quasireflexivesm}.

\begin{rmk}
\label{unconditional schreierification}
Given a Banach space $U$ with a suppression unconditional spreading basis $(u_i)_i$, one may define a norm $\|\cdot\|_{\mathcal{S}(U)}$ on $c_{00}(\N)$ with completion $\mathcal{S}(U)$ so that the unit vector basis $(e_i)_i$ of $c_{00}(\N)$, endowed with $\|\cdot\|_{\mathcal{S}(U)}$,
\begin{itemize}
 \item[(i)]  forms a suppression unconditional Schauder basis for the space $\mathcal{S}(U)$,
 \item[(ii)] it generates a spreading model isometrically equivalent to $(u_i)_i$, and
 \item[(iii)] it is isometrically equivalent to some sequence $(g_i)_i$ in $C_0(\omega^\omega)$.
\end{itemize}
The definition of the norm $\|\cdot\|_{\mathcal{S}(U)}$ is a simpler version of the definition of $\|\cdot\|_{\tilde{\mathcal{S}}(Z)}$ and it uses a set similar to $\mathcal{K}_Z$ that contains elements of the form $\sum_{i=1}^na_ie_{m_i}$ where $n\leqslant m_1<\cdots<m_n$ and $y^* = \sum_{k=1}^na_iu_i^*$ is in an appropriate subset $G_n$ of the unit ball of $U^*$ that is closed under projections onto subsets. This is a more precise statement than what was proved by Odell in \cite[Proposition 5.10, page 419]{O} and we shall use it in the sequel.
\end{rmk}

\begin{lem}
\label{conditionalschreierificationsuppressionunconditionalinkernelofsummingfunctional}
Let $Z$ be a Banach space with a bimonotone equal signs additive basis. Let $x = \sum_{i=1}^mc_ie_i$ be a vector in $c_{00}(\N)$ and $1\leqslant p\leqslant q\leqslant m$ be natural numbers so that $\sum_{i=p}^qc_i = 0$. Then, if $y = \sum_{1\leqslant i<p}c_ie_i + \sum_{q<i\leqslant m}c_ie_i$ we have $\|y\|_{\tilde{\mathcal{S}}(Z)}\leqslant \|x\|_{\tilde{\mathcal{S}}(Z)}$. In particular, any block sequence $(y_i)_i$ in $\tilde{\mathcal{S}}(Z)$, with $s(y_i) = 0$ for all $i\in\N$, is suppression unconditional.
\end{lem}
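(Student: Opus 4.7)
My plan is to establish the inequality $\|y\|_{\tilde{\mathcal{S}}(Z)} \leq \|x\|_{\tilde{\mathcal{S}}(Z)}$ by showing that for every functional $f \in \mathcal{K}_Z$ one can exhibit a functional $g \in \mathcal{K}_Z$ satisfying $g(x) = f(y)$, and then take the supremum over $f$. The ``in particular'' clause follows by iteration: given a block sequence $(y_i)_i$ with $s(y_i)=0$ and scalars $(a_i)_{i=1}^N$, deleting one summand $a_{i_0}y_{i_0}$ from $v=\sum_{i=1}^N a_iy_i$ corresponds to removing the block $[p,q]=\mathrm{supp}(y_{i_0})$ of $v$, which satisfies $s_{[p,q]}(v)=a_{i_0}s(y_{i_0})=0$, so the first statement applies; iterating over any subset of indices yields suppression unconditionality.

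For the construction, write $f=\sum_{k=1}^n a_k\chi_{J_k}$ with $J_k=[m_k,m_{k+1})$, $n\le m_1<\cdots<m_{n+1}\le\infty$, and $\sum_{k=1}^n a_kz_k^*\in F_n$. If $[p,q]$ lies inside a single $J_{k_0}$, then $f(y)=f(x)-a_{k_0}s_{[p,q]}(x)=f(x)$ and I take $g=f$. Otherwise let $k_1<k_2$ be the smallest and largest indices with $J_{k_i}\cap[p,q]\neq\varnothing$. In the generic subcase $m_{k_1}<p$, I define $g$ to have exactly $n$ intervals: retain $J_k$ with coefficient $a_k$ for $k<k_1$ and $k>k_2$; replace $J_{k_1},\ldots,J_{k_2}$ by the two intervals $[m_{k_1},p)$ and $[p,m_{k_2+1})$ carrying coefficients $a_{k_1}$ and $a_{k_2}$ respectively; and append $k_2-k_1-1$ single-point ``filler'' intervals beyond $m_{n+1}$, each with coefficient $0$. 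A direct computation, using the identity $a_{k_2}s_{[p,m_{k_2+1})}(x)=a_{k_2}s_{(q,m_{k_2+1})}(x)$ (which is exactly where the hypothesis $s_{[p,q]}(x)=0$ is used), then yields $g(x)=f(y)$, while the inherited bound $n\le m_1$ and the interval count $n$ are straightforward.

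The key verification is that $g\in\mathcal{K}_Z$. Its coefficient vector, read off in order, is
\[
(a_1,\ldots,a_{k_1},a_{k_2},a_{k_2+1},\ldots,a_n,\underbrace{0,\ldots,0}_{k_2-k_1-1}),
\]
and this is produced from $\sum_{k=1}^n a_kz_k^*\in F_n$ by $k_2-k_1-1$ successive applications of property (c) of the sets $F_n$, each with $k_0=k_1$; each application deletes one of $a_{k_1+1},\ldots,a_{k_2-1}$ and appends a trailing zero while staying in $F_n$. Hence $g\in\mathcal{K}_Z$ and $|f(y)|=|g(x)|\le\|x\|_{\tilde{\mathcal{S}}(Z)}$, as required. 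The main obstacle to the clean argument above is the handling of edge cases such as $p=m_{k_1}$ (where $[m_{k_1},p)$ is empty) or $k_1=1$ (where property (c) cannot be used to delete leading coefficients). These are dispatched by minor variants: when $[m_{k_1},p)$ collapses one uses a single merged interval $[p,m_{k_2+1})$ with coefficient $a_{k_2}$ plus one extra filler, while leading coefficients can be zeroed out via property (b), the closure of $F_n$ under projections onto intervals. These variants fit into the same framework and yield a $g\in\mathcal{K}_Z$ with $g(x)=f(y)$ in every case.
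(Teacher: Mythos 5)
Your proposal is correct and follows essentially the same route as the paper's proof: for each $f\in\mathcal{K}_Z$ you build $g\in\mathcal{K}_Z$ with $g(x)=f(y)$ by splitting the intervals of $f$ at the boundary of the zero-sum block and deleting the swallowed coefficients via property (c) of the sets $F_n$, exactly as the paper does (your variant, which merges $[p,m_{k_2+1})$ into the interval carrying $a_{k_2}$ and pads with zero-coefficient filler intervals to keep exactly $n$ intervals, is only cosmetically different and in fact sidesteps the bookkeeping of passing from $F_n$ to a smaller index set). The remaining edge cases you defer (e.g.\ $p=m_{k_1}$, $k_1=1$, the case where the last interval of $f$ ends inside $[p,q]$ so one truncates instead of invoking the zero-sum identity, or $m_{n+1}=\infty$ when appending fillers, where one first truncates the last interval beyond $\supp x$) are genuinely minor and are treated with the same brevity in the paper itself.
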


\begin{proof}
Let $f = \sum_{k=1}^na_k\sum_{m_k\leqslant j<m_{k+1}}e_j^*$ be in $\mathcal{K}_Z$, as in \eqref{omega to the omega (not so) fancily written}, i.e. $x^* = \sum_{k=1}^na_kz_k^*\in F_n$ and $n\leqslant m_1<\cdots<m_n<m_{n+1}\leqslant \infty$. We will show that there is $g\in\mathcal{K}_Z$ with $f(y) = g(x)$. We distinguish two cases. If there is $1\leqslant k\leqslant n$ so that $m_k\leqslant p$ and $q < m_{k+1}$ observe that $f(y) = f(x)$. Otherwise, set $k_1 = \min\{1\leqslant k\leqslant n:$ with $p\leqslant m_{k}<q\}$ and $k_2 = \max\{1\leqslant k\leqslant n:$ with $p\leqslant m_{k}\leqslant q\}$. We shall assume that $k_1 < k_2$ as the case $k_1 = k_2$ is treated slightly differently but very similarly.

By property (c) of $F_n$ (see page \pageref{propertycoffn}) we have that $y^* = \sum_{k=1}^{k_1}a_kz_k^* + \sum_{k=k_1+1}^{n-k_2+k_1+1}a_{k+k_2-k_1-1}z_k^*$ is in $F_n$. We define $(\tilde m_k)_{k=1}^{n + 1 - k_2 + k_1 + 1}$ as follows.
$$
\tilde m_k =
\left\{
	\begin{array}{ll}
		m_k  & \mbox{if } 1\leqslant k <k_1,\\
		p & \mbox{if } k=k_1,\\
		q & \mbox{if } k=k_1+1, \text{ and}\\
		m_{k+k_2-k_1-1} & \mbox{if } k_1+1<k\leqslant n +1 - k_2 + k_1.
	\end{array}
\right.
$$
Define
$$g = \sum_{k=1}^{k_1}a_k\sum_{\tilde m_k\leqslant j < \tilde m_{k+1}}e_j^* + \sum_{k=k_1+1}^{n-k_2+k_1+1}a_{k+k_2-k_1-1}\sum_{\tilde m_k\leqslant j < \tilde m_{k+1}}e_j^*.$$
Then, $g$ is in $\mathcal{K}_Z$. Some computations yields that $f(y) = g(x)$.
\end{proof}

In isomorphic terms, the only improvement of the following statement when compared to Theorem \ref{comegaomegaadmitsallspreadingmodelsisomorphically} is conclusion (iii). This is however a very important condition necessary to prove the main result of Section \ref{quasireflexivesm}.

\begin{prp}
\label{whatonecangetisometricallyincomegaomega}
Let $Z$ be a Banach space with a bimonotone equal signs additive basis $(z_i)_i$ and $U$ be a Banach space with a suppression unconditional spreading basis $(u_i)_i$. Denote by $(f_i)_i$ the basis of $\tilde{\mathcal{S}}(Z)$ and by $(g_i)_i$ the basis of $\mathcal{S}(U)$. Also denote by $(x_i)_i$ the sequence $(u_i,z_i)_i$ in $(U\oplus Z)_0$ and set $X = [(x_i)_i]$. Finally, by $(h_i)_i$ denote the sequence $(g_i,f_i)_i$ in $(\mathcal{S}(U)\oplus\tilde{\mathcal{S}}(Z))_0$ and set $\tilde X = [(h_i)_i]$. Then, the following hold.
\begin{itemize}
 \item[(i)] The sequence $(h_i)_i$ generates a spreading model isometrically equivalent to $(x_i)_i$.
 \item[(ii)] The summing functional defined on $(h_i)_i$ is bounded, in fact it has norm one.
 \item[(iii)] Every block sequence $(w_i)_i$ of $(h_i)_i$ with $s(w_i) = 0$ for all $i\in\N$ is suppression unconditional.
 \item[(iv)] The space $\tilde X$ embeds isometrically into $C(\omega^\omega)$. In particular, $\tilde X$ is $c_0$-saturated.
\end{itemize}
\end{prp}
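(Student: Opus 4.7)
The plan is to verify (i)--(iii) by componentwise analysis on the $c_0$-sum, and to prove (iv) by a topological gluing of the compact sets that witness the Schreier-type embeddings of $\mathcal{S}(U)$ and $\tilde{\mathcal{S}}(Z)$ into $C(\omega^\omega)$. The $c_0$-sum norm gives
\[
\Big\|\sum_i a_i h_i\Big\|_{\tilde X} = \max\Big\{\Big\|\sum_i a_i g_i\Big\|_{\mathcal{S}(U)},\;\Big\|\sum_i a_i f_i\Big\|_{\tilde{\mathcal{S}}(Z)}\Big\},
\]
and since $(g_i),(f_i)$ isometrically generate $(u_i),(z_i)$ respectively by Remarks \ref{unconditional schreierification} and \ref{S(X)isbimonotone}, the max of two norms that converge isometrically to prescribed spreading-model norms yields a spreading model isometric to $(x_i) = (u_i,z_i)$, proving (i). For (ii), the summing functional on $(h_i)$ agrees with that on either coordinate, so $|s(\sum a_i h_i)| = |s(\sum a_i f_i)| \leqslant \|\sum a_i f_i\|_{\tilde{\mathcal{S}}(Z)} \leqslant \|\sum a_i h_i\|_{\tilde X}$ by Remark \ref{S(X)isbimonotone}(iii), with equality attained at $h_1$.

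For (iii), decompose any block sequence $(w_i)$ of $(h_i)$ with $s(w_i)=0$ as $w_i=(\alpha_i,\beta_i)$, so that $(\alpha_i),(\beta_i)$ are block sequences of $(g_i),(f_i)$ with matching coefficients and $s(\alpha_i)=s(\beta_i)=0$. The basis of $\mathcal{S}(U)$ is suppression unconditional by Remark \ref{unconditional schreierification}(i), hence so is $(\alpha_i)$; meanwhile $(\beta_i)$ is suppression unconditional by Lemma \ref{conditionalschreierificationsuppressionunconditionalinkernelofsummingfunctional}. Since the maximum of two suppression-unconditional norms preserves the defining inequality coordinatewise, $(w_i)$ itself is suppression unconditional in the $c_0$-sum.

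The main step is (iv). Let $\mathcal{K}_U$ denote the compact subset of $\ell_\infty(\N)$ analogous to $\mathcal{K}_Z$ that defines $\|\cdot\|_{\mathcal{S}(U)}$ (as in Remark \ref{unconditional schreierification}), and let $K$ be the compact metric space obtained from the topological disjoint union $\mathcal{K}_U \sqcup \mathcal{K}_Z$ by identifying the two zero functionals; as this is a closed equivalence relation on a countable compact metric space, $K$ is compact metric and countable. Define $T\colon \tilde X \to C(K)$ by $(Tw)(\phi) = \phi((a_i)_i)$ for $w=\sum_i a_ih_i$, where $\phi\in K$ acts on the coefficient sequence via its original embedding in $\ell_\infty(\N)$; continuity at the common point holds because both copies of $0$ annihilate $w$, and the identity $\|Tw\|_\infty = \max\{\sup_{\phi\in\mathcal{K}_U}|\phi(w)|,\sup_{\phi\in\mathcal{K}_Z}|\phi(w)|\} = \|w\|_{\tilde X}$ makes $T$ an isometric embedding. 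To identify $K$ with $[0,\omega^\omega]$ I will invoke the Mazurkiewicz--Sierpi\'nski classification of countable compact metric spaces by Cantor--Bendixson rank and top-level cardinality. The hard part will be verifying that the Cantor--Bendixson derivatives of $K$ are inherited from those of $\mathcal{K}_U$ and $\mathcal{K}_Z$ in the expected way, so that $K^{(\omega)}=\{[0]\}$ and $K^{(\omega+1)}=\varnothing$; this reduces to compactness of each $\mathcal{K}_\bullet$ together with the fact that the quotient topology away from $[0]$ agrees with the disjoint-union topology, so isolated points remain isolated. The resulting isometric embedding $\tilde X\hookrightarrow C(\omega^\omega)$ immediately gives the $c_0$-saturation of $\tilde X$ via the classical $c_0$-saturation of $C(\omega^\omega)$.
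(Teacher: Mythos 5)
Your proposal is correct and follows essentially the same route as the paper: parts (i)--(iii) are read off component-wise from the cited remarks and Lemma \ref{conditionalschreierificationsuppressionunconditionalinkernelofsummingfunctional}, exactly as in the text. Part (iv) simply makes explicit, via the wedge of $\mathcal{K}_U$ and $\mathcal{K}_Z$ at the zero functional together with the Mazurkiewicz--Sierpi\'nski classification, the classical fact that the paper cites as known -- namely that $(C_0(\omega^\omega)\oplus C_0(\omega^\omega))_0$ embeds isometrically into $C(\omega^\omega)$.
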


\begin{proof}
The first statement is an immediate consequence of Remark \ref{S(X)isbimonotone} (ii) and Remark  \ref{unconditional schreierification} (ii). The second statement follows from Remark \ref{S(X)isbimonotone} (iii). The third statement follows from Remark \ref{unconditional schreierification} (i) and Lemma \ref{conditionalschreierificationsuppressionunconditionalinkernelofsummingfunctional}. To see the last statement recall that $(C_0(\omega^\omega)\oplus C_0(\omega^\omega))_0$ embeds isometrically into $C(\omega^\omega)$. By Remark \ref{omegatotheomegamustgotozero} the space $\tilde{\mathcal{S}}(Z)$ embeds isometrically into $C_0(\omega^\omega)$ and by Remark \ref{unconditional schreierification} (iii) $\mathcal{S}(U)$ embeds isometrically into $C_0(\omega^\omega)$ as well. We conclude that $(\mathcal{S}(U)\oplus\tilde{\mathcal{S}}(Z))_0$ embeds isometrically into $C(\omega^\omega)$ which yields the desired result.
\end{proof}

\section{Spreading models of quasi-reflexive spaces}\label{quasireflexivesm}
As it was proved in Section \ref{nonreflexiveadmitcbhspreadingmodels}, non-reflexive Banach spaces always admit convex block homogeneous spreading models whereas in Section \ref{comegaomegaadmitsall} we showed that $C(\omega^\omega)$ admits all possible spreading sequences as spreading models. In this section we show that every conditional spreading sequence is admitted as a spreading model of a Banach space that is quasi-reflexive of order one, i.e. a non-reflexive space that is almost reflexive.









The most difficult part of the present section is to prove the proposition below. It involves a Tsirelson-type construction with saturation under constraints. For now, we state it and use it to prove the main theorem of this section. We present the proof of the proposition later.

\begin{prp}
\label{saturationconstruction}
Let $X$ be a Banach space with a normalized bimonotone Schauder basis $(x_i)_i$ so that
\begin{itemize}

\item[(i)] the space $\ell_1$ does not embed into $X$ and

\item[(ii)] the summing functional $s:X\rightarrow\mathbb{R}$ with respect to the basis $(x_i)_i$ is bounded and any block sequence $(y_i)_i$ of $(x_i)_i$, with $s(y_i) = 0$ for all $i\in\N$, is suppression unconditional.

\end{itemize}
Then there exists a Banach space $\mathfrak{X}$ with a bimonotone Schauder basis $(e_i)_i$  satisfying the following.
\begin{itemize}

\item[(a)] The linear map $T:\mathfrak{X}\to X$ defined by $Te_i = x_i$ is bounded and has norm one.

\item[(b)] For any natural numbers $n <i_1<\cdots<i_n$ and real numbers $c_1,\ldots,c_n$ we have that $\|\sum_{k=1}^nc_kx_{i_k}\| = \|\sum_{k=1}^nc_ke_{i_k}\|$.

\item[(c)] The basis $(e_i)_i$ is boundedly complete.

\item[(d)] The summing functional $s:\mathfrak{X}\rightarrow\mathbb{R}$ with respect to the basis $(e_i)_i$ is bounded and every block sequence $(y_i)_i$ of $(e_i)_i$ with $s(y_i) = 0$ for all $i\in\N$ spans a reflexive subspace of $\mathfrak{X}$.

\end{itemize}
\end{prp}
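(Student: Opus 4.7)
The plan is to construct $\mathfrak{X}$ via a norming set $W \subset c_{00}(\N)$ defined by a Tsirelson-type saturation procedure with constraints, with associated norm $\|y\|_\mathfrak{X} = \sup\{f(y) : f \in W\}$. I would take $W$ as the smallest subset of $c_{00}(\N)$ containing a rational dense subset of the unit ball of $X^*$ (identified coordinate-wise via $(x_i^*)_i$), closed under restrictions to intervals and sign changes, and closed under the averaging operation $(f_j)_{j=1}^d \mapsto \tfrac{1}{2}\sum_{j=1}^d f_j$ for admissible families $d \leqslant f_1 < \cdots < f_d$ whose initial supports form a Schreier set. The essential constraint --- the saturation-under-constraints device --- is that averaging is permitted only on families $(f_j)_j$ whose entries annihilate the summing functional in a suitable block-tree sense; this tames $s$ while leaving enough averages active to force reflexivity off $\ker s$. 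Define $(e_i)_i$ as the unit vector basis of $(c_{00}(\N), \|\cdot\|_\mathfrak{X})$ and let $\mathfrak{X}$ be its completion.

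Property (a) is immediate from $B_{X^*} \subset W$, which yields $\|Ty\|_X \leqslant \|y\|_\mathfrak{X}$ with $Te_i = x_i$, and one normalizes so that $\|T\| = 1$. For property (b), a tree-analysis of each $f \in W$ shows that when $f$ is evaluated on $\sum_{k=1}^n c_k e_{i_k}$ with $n < i_1 < \cdots < i_n$, the admissibility bound $d \leqslant \min\supp(f_1)$ together with the factor $\tfrac{1}{2}$ forces the internal tree-nodes to contribute estimates no larger than those obtained by a single functional from $B_{X^*}$; hence the two norms coincide on such vectors and the spreading-model equality holds. Property (c) follows from a standard blocking argument: if $\sum a_i e_i$ had bounded partial sums but failed to converge, then by perturbation (splitting off the summing contribution and using hypothesis (ii) on $X$) one extracts a seminormalized block sequence $(y_k)_k$ in $\ker s$ on which the admissible averaging rule in $W$ produces arbitrarily large functional values, contradicting boundedness.

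The decisive step is property (d). Let $(y_i)_i$ be a normalized block sequence of $(e_i)_i$ with $s(y_i) = 0$ for all $i$. Hypothesis (ii) makes $(Ty_i)_i$ suppression unconditional in $X$, while hypothesis (i) forbids an $\ell_1$-copy in $X$; by James's theorem $\overline{\spn}(Ty_i)$ is reflexive provided it contains no isomorph of $c_0$, and the latter is automatic because any such block in $\ker s$ inherits a Tsirelson-type lower bound from the admissible averages, which is incompatible with boundedness of the summing functional on a $c_0$-sequence. To rule out $(y_i)_i$ being equivalent to the unit vector basis of $\ell_1$ inside $\mathfrak{X}$, one invokes the constraint: admissible averages respect the $s=0$ condition, so on blocks in $\ker s$ the classical Tsirelson dichotomy applies and one obtains a sublinear upper estimate on $\|\sum_{i=1}^n y_i\|_\mathfrak{X}$ contradicting an $\ell_1$ lower bound. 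Finally, to exclude a non-trivial weakly Cauchy subsequence, one combines Rosenthal's dichotomy with Mazur's theorem to pass to convex block averages in $\ker s$; the saturation rule, applied once more, drives these averages to zero in norm, and hence $(y_i)_i$ is weakly null and $\overline{\spn}(y_i)$ is reflexive.

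The main technical obstacle is calibrating the constraint so that it simultaneously (i) is loose enough to preserve exact equality of norms on tail-supported vectors required by (b), while (ii) remains strong enough to trigger the Tsirelson dichotomy on every block sequence of $(e_i)_i$ in $\ker s$. Making this balance work requires a careful tree-analysis of each $f \in W$ along a block sequence with $s(y_i) = 0$, showing that every node either reduces to a functional in $B_{X^*}$ (controlled via hypothesis (ii)) or produces a genuine Tsirelson-type average (giving the reflexivity of $\ker s$), and that these two regimes do not interfere on far-supported vectors.
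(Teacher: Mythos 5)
Your plan correctly identifies the general strategy (a norming set built by saturation under constraints over the ball of $X^*$), but the constraint you propose is both under-specified and, under either natural reading, incompatible with the conclusions. If the operation $f=\tfrac12\sum_{j=1}^d f_j$ over Schreier-admissible families is allowed whenever the entries merely have zero coefficient sum (your ``annihilate the summing functional'' reading), then (b) fails: take $f_j=\tfrac12(x^*_{i_{2j-1}}-x^*_{i_{2j}})\in B_{X^*}$ for a spread-out admissible family and evaluate $\tfrac12\sum_j f_j$ on $\sum_k(-1)^k e_{i_k}$; this gives $\|\sum_k(-1)^ke_{i_k}\|_{\mathfrak X}\geqslant d/4$, while in the intended application (where the spreading model can be, e.g., the summing basis of $c_0$) $\|\sum_k(-1)^kx_{i_k}\|_X$ stays bounded, so the required equality of norms on tail vectors is destroyed. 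If instead you read the constraint so strictly that every admissible average essentially vanishes on vectors with constant coefficients, then the saturation cannot inflate the partial sums $\sum_{i=1}^ne_i$; these remain bounded (since $\sup_n\|\sum_{i=1}^nx_i\|_X<\infty$ in the application) while $\|e_i\|_{\mathfrak X}\geqslant 1$, so $\sum_ie_i$ has bounded, non-convergent partial sums and (c) fails. Note that the paper imposes no $\ker s$ restriction in the norming set at all; the balance you worry about in your last paragraph is achieved by a different device.

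Concretely, the paper uses a two-level scheme: $\al$-averages $\tfrac1n(f_1+\cdots+f_k)$, $k\leqslant n$, of \emph{unbounded} size, combined only in \emph{very fast growing} sequences ($s(\al_{q+1})>2^{\max\supp\al_q}$), and then weighted functionals $\tfrac1{m_j}\sum_{q=1}^d\al_q$ with $d\leqslant n_j$ and $\sum_jm_j/n_j<1$. Property (b) is preserved not by restricting which functionals may be averaged, but because on a vector with $n$ coordinates all beyond $n$ at most one $\al$-average acts significantly (the later ones have $\|\cdot\|_\infty<2^{-(n+1)}$) and the factor $1/m_j\leqslant 1/2$ absorbs the two contributions; bounded completeness is proved for \emph{all} block sequences using unrestricted averages of norming functionals. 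For (d) the paper proves reflexivity as bounded completeness plus shrinking on $\ker s$, and the shrinking part needs two substantial arguments your sketch only gestures at: first, $\ell_1$ does not embed into $\mathfrak X$, proved via strict singularity of $T$ (this is where hypothesis (i) enters) together with a quantitative induction over the norming set; second, long averages of any block sequence in $\ker s$ are unconditional in $\mathfrak X$, proved by an inductive replacement of each norming functional by one compatible with the blocks, which is exactly where hypothesis (ii) is used. Your appeal to James's theorem for $\overline{\spn}(Ty_i)$ addresses reflexivity of a subspace of $X$, which is neither needed nor sufficient for reflexivity of $[(y_i)_i]$ inside $\mathfrak X$, and the claim that convex block averages of a $\ker s$ block sequence are ``driven to zero in norm'' by the saturation is not what happens (nor what is needed); as it stands the proposal does not yield (b), (c), or (d).
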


We now state the main result of this section. This result has two statements, one for unconditional spreading sequences and one for conditional ones. We shall give a proof of the second statement, which is also the more difficult one and it uses Proposition \ref{saturationconstruction}. Afterwards, we provide a proof of Proposition \ref{saturationconstruction}. At the end of the section we shall point out some of the steps required to achieve the first statement of the following theorem.

\begin{thm}
\label{conditionalassmofquasireflexive}
Let $(x_i)_i$ be a spreading Schauder basic sequence.
\begin{itemize}

  \item[(i)] If $(x_i)_i$ is unconditional, then there exists a reflexive Banach space $X$ with an unconditional Schauder basis $(e_i)_i$ that generates a spreading model equivalent to $(x_i)_i$.

 \item[(ii)] If $(x_i)_i$ is conditional, then there exists a Banach space $X$ that is quasi-reflexive of order one with a Schauder basis $(e_i)_i$ that generates a spreading model equivalent to $(x_i)_i$.
 
\end{itemize}
\end{thm}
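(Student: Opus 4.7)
For part (ii), the plan is to combine Proposition \ref{whatonecangetisometricallyincomegaomega} with Proposition \ref{saturationconstruction}: the first produces an initial basic sequence inside $C(\omega^\omega)$ that generates the prescribed conditional spreading model in a $c_0$-saturated ambient space, and the second would rigidify this into a quasi-reflexive space while preserving the spreading model on long spread tuples. Concretely, after applying Theorem \ref{spreading-characterization} and Corollary \ref{renormto1cbh} to decompose $(x_i)_i$ so that its unconditional part $(u_i)_i$ is suppression unconditional and 1-spreading and its convex block homogeneous part $(z_i)_i$ is bimonotone equal signs additive, I would invoke Proposition \ref{whatonecangetisometricallyincomegaomega} to obtain a Banach space $\tilde X$ with Schauder basis $(h_i)_i$ that isometrically embeds into $C(\omega^\omega)$, generates a spreading model isometrically equivalent to $(x_i)_i$, has bounded summing functional, and has every zero-sum block sequence suppression unconditional. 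Since $\tilde X$ is $c_0$-saturated, $\ell_1 \not\hookrightarrow \tilde X$, so the hypotheses of Proposition \ref{saturationconstruction} are met.

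Applying Proposition \ref{saturationconstruction} to $(\tilde X, (h_i)_i)$ then produces the desired space $\mathfrak{X}$ with basis $(e_i)_i$ satisfying (a)--(d). Property (b) gives $\|\sum_k c_k e_{i_k}\| = \|\sum_k c_k h_{i_k}\|$ whenever $n < i_1 < \cdots < i_n$, so $(e_i)_i$ generates a spreading model equivalent to $(x_i)_i$. For quasi-reflexivity of order one, I first note that $\mathfrak{X}$ is non-reflexive: were it reflexive, $(e_i)_i$ would admit a weakly convergent subsequence with limit $e$; testing against the basis functionals would force $e = 0$, while $s(e) = \lim_i s(e_i) = 1$ would give a contradiction. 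Next, fixing a weak-$*$ accumulation point $e^{**}$ of $(e_i)_i$ in $\mathfrak{X}^{**}$ (with $s^{**}(e^{**}) = 1$), any $x^{**} \in \mathfrak{X}^{**}$ splits as $x^{**} = (x^{**} - s^{**}(x^{**}) e^{**}) + s^{**}(x^{**}) e^{**}$; a standard sliding-hump extraction together with property (d) will show that the first summand $y^{**}$ lies in $\mathfrak{X}$, by finding a sequence $(x_n) \subset \mathfrak{X}$ with $x_n \to y^{**}$ weak-$*$, passing to a block-like subsequence $(w_n)_n$ with $s(w_n)=0$, and using (d) to conclude that $[(w_n)_n]$ is reflexive hence weak-$*$ closed in $\mathfrak{X}^{**}$. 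Thus $\dim(\mathfrak{X}^{**}/\mathfrak{X}) = 1$.

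Part (i) follows from the same template: Remark \ref{unconditional schreierification} provides a suppression unconditional basic sequence $(g_i)_i \subseteq C_0(\omega^\omega)$ generating $(x_i)_i$ as a spreading model, and a parallel unconditional Tsirelson-type saturation (a simplification of Proposition \ref{saturationconstruction} in which the summing-functional machinery is dropped, since in the unconditional case every block is handled by unconditionality alone) yields a reflexive Banach space with an unconditional basis generating $(x_i)_i$ as a spreading model.

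The hard part will be Proposition \ref{saturationconstruction} itself, which I use as a black box here: its proof is a Tsirelson-type norming-set construction with Schreier-family constraints, arranged so that spread-tuple norms from $(h_i)_i$ are preserved while the basis becomes boundedly complete, the summing functional remains bounded, and every zero-sum block subspace becomes reflexive. Balancing these four constraints simultaneously, so that the resulting space is genuinely quasi-reflexive of order one rather than $c_0$-saturated (as $\tilde X$ was) or fully reflexive, is the real technical engine behind the theorem.
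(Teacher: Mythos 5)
Your proposal follows essentially the same route as the paper for both parts: in (ii), the identical pipeline of Theorem \ref{spreading-characterization} and Corollary \ref{renormto1cbh} to decompose, Proposition \ref{whatonecangetisometricallyincomegaomega} to land inside $C(\omega^\omega)$, then Proposition \ref{saturationconstruction} with property (b) preserving the spreading model; and in (i), the same Tsirelson-type saturation of $\mathcal{S}(U)$ where bounded completeness plus $\ell_1$-free plus unconditionality gives reflexivity. The one place you deviate is the final quasi-reflexivity-of-order-one step in (ii), where the paper simply invokes \cite[Theorem 2.1]{FOSZ} after observing that skipped blocks of the difference basis span reflexive subspaces, whereas you sketch a direct $\mathfrak{X}^{**}$-splitting argument; this is fine in spirit but should explicitly appeal to the Odell--Rosenthal theorem (using $\ell_1\not\hookrightarrow\mathfrak{X}$ from Proposition \ref{noell1}) to justify that a given $y^{**}\in\mathfrak{X}^{**}$ is a weak-$*$ limit of a sequence from $\mathfrak{X}$.
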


\begin{proof}[Proof of (ii)]
We may assume that $(x_i)_i$ is bimonotone 1-spreading. Using Theorem \ref{spreading-characterization} and Corollary \ref{renormto1cbh}, by passing to an equivalent norm on $X$, there are a suppression unconditional sequence $(u_i)_i$ and a bimonotone equal signs additive sequence $(z_i)_i$ spanning spaces $U$ and $Z$ respectively, so that $(x_i)_i$ is isometrically equivalent to the sequence $(u_i,z_i)_i$ in $(U\oplus Z)_0$. Take the space $\tilde X$ with basis $(h_i)_i$ given by Proposition \ref{whatonecangetisometricallyincomegaomega}. Apply Proposition \ref{saturationconstruction} to the space $\tilde X$ to obtain a space $\mathfrak{X}$ with a Schauder basis $(e_i)_i$ satisfying the conclusion of that Proposition. We claim that this space has the desired properties.

By Propositions \ref{whatonecangetisometricallyincomegaomega} (i) and \ref{saturationconstruction} (b) we deduce that $(e_i)_i$ generates $(x_i)_i$ as a spreading model. Furthermore, Propositions \ref{whatonecangetisometricallyincomegaomega} (ii) and \ref{saturationconstruction} (a) clearly yield that the summing functional is bounded on $(e_i)_i$ which implies that the sequence $(d_i)_i$ with $d_1 = e_1$ and $d_i = e_i - e_{i-1}$ for $i\geqslant 2$ is a Schauder basis for $\mathfrak{X}$. Indeed, if $(P_n)_n$ denotes the sequence of projections associated to $(e_i)_i$, then the sequence of projections $(Q_n)_n$ associated to $(d_i)_i$ is given by $Q_nx = P_nx + s_{[n+1,\infty)}(x)e_n$. By Proposition \ref{saturationconstruction} one may conclude that any skipped block sequence of $(d_i)_i$ spans a reflexive subspace of $\mathfrak{X}$. Since $\mathfrak{X}$ generates a conditional spreading model it cannot be reflexive and therefore by \cite[Theorem 2.1, page 3]{FOSZ} it is quasi-reflexive of order one.
\end{proof}

\subsection*{Proof of Proposition \ref{saturationconstruction}}
We shall brake up the proof of Proposition \ref{saturationconstruction} into several steps. Let us from now on fix a Banach space $X$ with a normalized bimonotone Schauder basis $(x_i)_i$ satisfying the assumptions of Proposition \ref{saturationconstruction}. Let us also fix strictly increasing sequences of natural numbers  $(m_j)_j, (n_j)_j$, with $m_1\geqslant 2, n_1\geqslant 4$, $m_j < n_j <m_{j+1}$ for all $j\inn$ and $\sum_j(m_j/n_j) < 1$. A norming set is a symmetric subset $G$ of the unit ball of $c_{00}(\N)$ that contains all elements of the unit vector basis $(e_i)_i$.

\begin{ntt}
If $G$ is a norming set, a functional
$$\al = \frac{1}{n}(f_1 + \cdots + f_k),$$
where $k\leqslant n\inn$ and $f_1 <\cdots <f_k\in G$, will be called an {\em $\al$-average} of $G$. The {\em size of $\al$} is defined to be $s(\al) = n$.

A sequence $\al_1 < \cdots < \al_k <\cdots$ of $\al$-averages will be called {\em very fast growing} if $s(\al_{k+1}) > \max\{ s(\al_k), 2^{\max\supp\al_k}\}$ for all $k$.

A functional
$$f = \frac{1}{m_j}\sum_{q=1}^d\al_q,$$
where $j\inn,\; d\leqslant n_j$ and $\al_1 < \cdots < \al_d$ is a very fast growing sequence of $\al$-averages of $G$, will be called a {\em weighted functional} of $G$. The {\em weight of $f$} is defined to be $w(f) = j$.
\end{ntt}

We shall recursively define an increasing sequence of subsets of $c_{00}(\N)$ and use their union to define a norm on $c_{00}(\N)$. Set
\begin{equation}
\label{W0conditonalcase}
W_0 = \left\{\sum_{i=1}^n\la_ie_i: \left\|\sum_{i=1}^n\la_ix_i^*\right\| \leqslant 1\right\}
\end{equation}
and assuming that $W_n$ has been defined, we set
\begin{eqnarray*}
W_{n+1}^\al &=& \{\al:\; \al\;\text{is an}\;\al\text{-average of}\;W_n\}\\
W_{n+1}^w &=& \{f:\; f\;\text{is a weighted functional of}\;W_n\}\\
W_{n+1} &=& W_n\cup W_{n+1}^\al\cup W_{n+1}^w
\end{eqnarray*}
Finally we set $W = \cup_{n=0}^\infty W_n$. For $x\in c_{00}(\N)$ we define
\begin{equation*}
\|x\| = \sup\left\{f(x): f\in W\right\}
\end{equation*}
and we set $\mathfrak{X}$ to be the completion of $(c_{00}(\N),\|\cdot\|)$.

\begin{rmks}
\label{basicremarks}
The following are easy observations that follow from the definition of $W$.
\begin{itemize}

\item[(i)] For every $f\in W$ and $E$ interval of the natural numbers, $f|_E$ is also in $W$, therefore the unit vector basis of $(e_i)_i$ forms a bimonotone Schauder basis for $\mathfrak{X}$.

\item[(ii)] Since $W_0\subset W$, the linear map $T:\mathfrak{X}\to X$ defined by $Te_i = x_i$ is bounded and it has norm one. It follows that the summing functional $s:\mathfrak{X}\to \R$ is bounded.

\item[(iii)] The set $W$ is the smallest norming set such that $W_0\subset W$ and for every $j\inn, d\leqslant n_j$ and  very fast growing sequence of $\al$-averages $\al_1<\cdots <\al_d$ of $W$, the functional $f = \frac{1}{m_j}\sum_{q=1}^d\al_q$ is also in $W$.

\end{itemize}
\end{rmks}

\begin{prp}
The unit vector basis of $(e_i)_i$ forms a boundedly complete Schauder basis for $\mathfrak{X}$.\label{boundedlycomplete}
\end{prp}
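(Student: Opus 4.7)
The plan is to argue by contradiction using the saturation of $W$ under $\alpha$-averages and weighted functionals, in the spirit of the classical bounded completeness proof for Tsirelson-type spaces. Suppose $(e_i)_i$ is not boundedly complete. Then some uniformly bounded sequence $(\sum_{i=1}^n a_i e_i)_n$ fails to be Cauchy, so a standard gliding-hump extraction produces a constant $\varepsilon>0$ and a block sequence $(y_l)_l$ of $(e_i)_i$ with $\|y_l\|\ge\varepsilon$ for every $l$ and $\|\sum_{l=1}^N y_l\|\le C$ for every $N$, where $C$ is twice the bound on the initial sums. The aim is to produce, for each $j$, a functional in $W$ whose evaluation on a suitable partial sum is at least $n_j\varepsilon/(2m_j)$, forcing a contradiction via $n_j/m_j\to\infty$.

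For each $l$ I pick $g_l\in W$ with $g_l(y_l)>\varepsilon/2$ and set $f_l=g_l|_{\ran(y_l)}$; by Remark~\ref{basicremarks}(i) this restriction remains in $W$, has $\supp(f_l)\subset\ran(y_l)$, and still satisfies $f_l(y_l)>\varepsilon/2$. In particular the $f_l$'s are successively supported. Fix $j\in\mathbb{N}$, to be chosen at the end of the argument. I inductively select successive finite index sets $I_1<I_2<\cdots<I_{n_j}$ with $|I_1|=1$ and
\[
|I_{q+1}|>\max\bigl\{|I_q|,\ 2^{\max_{l\in I_q}\max\supp(f_l)}\bigr\}\qquad(1\le q<n_j).
\]
Put $\alpha_q=(1/|I_q|)\sum_{l\in I_q}f_l$, so each $\alpha_q$ is an $\alpha$-average of $W$ of size $|I_q|$, and the growth condition guarantees that $\alpha_1<\alpha_2<\cdots<\alpha_{n_j}$ is a very fast growing sequence. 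Hence $f=(1/m_j)\sum_{q=1}^{n_j}\alpha_q$ is a weighted functional of weight $j$, and by Remark~\ref{basicremarks}(iii) we have $f\in W$.

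Because $\supp(f_l)\subset\ran(y_l)$ and the $y_l$'s are successively supported, $f_l(y_{l'})=0$ for $l\ne l'$, so
\[
f\Bigl(\sum_{l\in\bigcup_q I_q}y_l\Bigr)=\frac{1}{m_j}\sum_{q=1}^{n_j}\frac{1}{|I_q|}\sum_{l\in I_q}f_l(y_l)\ \ge\ \frac{n_j\,\varepsilon}{2m_j}.
\]
The left-hand side is bounded by $C$, so $n_j\varepsilon/(2m_j)\le C$ for every $j$. But $\sum_j m_j/n_j<1$ forces $n_j/m_j\to\infty$, yielding a contradiction once $j$ is chosen large enough. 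The only real obstacle I foresee is the combinatorial bookkeeping required to realize the very-fast-growing condition and to confirm that the restricted functionals remain in $W$; neither hypothesis (i) nor hypothesis (ii) on $X$ is used for bounded completeness, and those will only be needed later for the reflexivity clause (d) of Proposition~\ref{saturationconstruction}.
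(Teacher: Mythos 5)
Your argument is correct and is essentially the same as the paper's: both proofs assume failure of bounded completeness, extract a block sequence $(y_l)_l$ with uniformly bounded partial sums but norms bounded below, restrict norming functionals to the ranges of the $y_l$'s, average them in successive batches with rapidly growing cardinalities to produce a very fast growing sequence of $\alpha$-averages, and then package $n_j$ of them into a weighted functional of weight $j$ to contradict the boundedness once $n_j/m_j$ is large. The only differences are cosmetic (the paper groups the $y_l$'s into consecutive intervals and writes $w_k$, while you index the batches by $I_q$ and evaluate directly), and your closing observation that hypotheses (i) and (ii) on $X$ are not needed here matches the paper.
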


\begin{proof}
Towards a contradiction, let us assume that the conclusion fails. Then, there exists a block sequence $(y_k)_k$ and $\e >0$, such that the following hold.
\begin{itemize}

\item[(i)] For every $k\inn$\;$\|y_k\| > \e$, i.e. there exists $f_k\in W$ with $\ran f_k\subset \ran(y_k)$ and $f_k(y_k) > \e$.
    
\item[(ii)] For every $n\inn$ we have that $\|\sum_{k=1}^ny_k\|\leqslant 1$

\end{itemize}
Set $E_1 = \{1\}$ and choose a sequence $E_1<E_2<\cdots$ of intervals of the natural numbers satisfying the following.
\begin{itemize}

\item[(a)] For every $k\inn$, $\max E_k + 1 = \min E_{k+1}$.

\item[(b)] For every $k\inn$, $\#E_{k+1} > \max\{\#E_k, 2^{\max\supp y_{\max E_k}}\}$.

\end{itemize}
Set $w_k = \sum_{i\in E_k}y_i$ and $\al_k = (1/\#E_k)\sum_{i\in E_k}f_i$. The following hold.
\begin{itemize}

\item[($\alpha$)] For every $n\inn$, $\|\sum_{k=1}^nw_k\| = \|\sum_{i=1}^{\max E_n}y_i\| \leqslant 1$.

\item[($\beta$)] For every $k\inn$, $\al_k(w_k) > \e$.

\item[($\gamma$)] The sequence $(\al_k)_k$ is very fast growing.

\end{itemize}
Choose $j\inn$ such that $(\e n_j)/m_j > 1$. Then from ($\gamma$) we have that $f = \frac{1}{m_j}\sum_{k=1}^{n_j}\al_k$ is in $W$. From ($\beta$) we have that $f(\sum_{k=1}^{n_j}w_k) > 1$, i.e. $\|\sum_{k=1}^{n_j}w_k\| > 1$. This contradicts ($\alpha$) and the proof is complete.
\end{proof}

\begin{prp}
\label{equalonschreier}
For any natural numbers $n <i_1<\cdots<i_n$ and real numbers $c_1,\ldots,c_n$ we have that
$$\left\|\sum_{k=1}^nc_kx_{i_k}\right\| = \left\|\sum_{k=1}^nc_ke_{i_k}\right\|.$$
\end{prp}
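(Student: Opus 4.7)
For the direction $\|\sum_{k=1}^n c_k x_{i_k}\|_X \leqslant \|\sum_{k=1}^n c_k e_{i_k}\|_{\mathfrak{X}}$, one simply invokes Remark \ref{basicremarks} (ii): the operator $T:\mathfrak{X} \to X$ defined by $Te_i = x_i$ is a norm-one contraction. The reverse direction reduces, after setting $v = \sum_{k=1}^n c_k e_{i_k}$, to showing that for every $f \in W$ we have $f(v) \leqslant \|Tv\|_X$. The plan is to prove this by induction on the smallest $N$ with $f \in W_N$, exploiting the Schreier admissibility $n < i_1$.

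The base case $f \in W_0$ is immediate from the definition: such an $f$ corresponds to a functional $\sum_i \lambda_i x_i^*$ of norm at most one in $X^*$, whence $f(v) = (\sum_i \lambda_i x_i^*)(Tv) \leqslant \|Tv\|_X$. For the inductive step with $f$ an $\alpha$-average $(1/s)(f_1 + \cdots + f_r)$, $r \leqslant s$, decompose $v = \sum_q v_q$ where $v_q = v|_{\ran f_q}$. Since $\min \supp v_q \geqslant i_1 > n \geqslant |\supp v_q|$, each $v_q$ remains Schreier admissible; the inductive hypothesis gives $f_q(v_q) \leqslant \|Tv_q\|_X$, and bimonotonicity of $(x_i)$ yields $\|Tv_q\|_X \leqslant \|Tv\|_X$, so $f(v) \leqslant (r/s)\|Tv\|_X \leqslant \|Tv\|_X$.

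The main obstacle is the case of a weighted functional $f = (1/m_j) \sum_{q=1}^d \alpha_q$ with $d \leqslant n_j$, since the crude bound $(d/m_j)\|Tv\|_X$ may exceed $\|Tv\|_X$ when $n_j > m_j$. The key is to exploit the very fast growing condition. Let $q_0 = \min\{q : \ran \alpha_q \cap \{i_1,\ldots,i_n\} \neq \varnothing\}$; successive disjoint supports force $\max \supp \alpha_{q_0+k} \geqslant i_1 + k$ for $k \geqslant 0$, hence $s(\alpha_q) > 2^{i_1 + q - q_0 - 1}$ for every $q > q_0$. Writing $\alpha_q = (1/s(\alpha_q)) \sum_p g_{q,p}$ with $g_{q,p} \in W_{N-1}$, the inductive hypothesis applied to each $g_{q,p}$ combined with bimonotonicity gives $g_{q,p}(v_q|_{\ran g_{q,p}}) \leqslant \|Tv\|_X$; since at most $|\supp v_q| \leqslant n$ of the $g_{q,p}$ act nontrivially on $v_q$, one obtains the sharpened estimate $\alpha_q(v_q) \leqslant (n/s(\alpha_q))\|Tv\|_X$ for $q > q_0$. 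Summing the resulting geometric series yields $\sum_{q > q_0} \alpha_q(v_q) \leqslant (n/2^{i_1 - 1})\|Tv\|_X$, and the Schreier inequality $i_1 > n$ forces $n/2^{i_1-1} \leqslant n/2^n \leqslant 1/2$. The same decomposition applied to $\alpha_{q_0}$ (without the size advantage) gives $\alpha_{q_0}(v_{q_0}) \leqslant \|Tv\|_X$, exactly as in the $\alpha$-average case. Dividing by $m_j \geqslant 2$, we conclude $f(v) \leqslant (3/(2m_j)) \|Tv\|_X \leqslant (3/4)\|Tv\|_X$, completing the induction.
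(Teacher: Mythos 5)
Your proposal is correct and follows essentially the same approach as the paper: induction on the level of $f$ in the Tsirelson hierarchy, treating $W_0$, $\alpha$-averages, and weighted functionals, and in the last case isolating the first $\alpha_q$ whose range meets $\supp v$ and using the very-fast-growing condition together with the Schreier constraint $n<i_1$ to make the tail contribution at most $1/2$, so that division by $m_j\geqslant 2$ closes the induction. The only deviation is cosmetic and occurs in the tail estimate: the paper bounds $\sum_{q>q_1}\alpha_q(x)$ directly from $\|\alpha_q\|_\infty\leqslant 1/s(\alpha_q)$ and the normalization $|c_k|\leqslant 1$ coming from bimonotonicity, whereas you re-expand each $\alpha_q$ into its constituent functionals, re-apply the inductive hypothesis, and sum a geometric series — both give the required $\leqslant\tfrac12\|Tv\|_X$ bound.
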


\begin{proof}
Fix $n <i_1<\cdots<i_n$ and real numbers $c_1,\ldots,c_n$ such that if $y = \sum_{k=1}^nc_kx_{i_k}$, then $\|y\| = 1$. Remark \ref{basicremarks} (ii) yields that $\|\sum_{k=1}^nc_ke_{i_k}\| \geqslant 1$. We shall prove that $\|x\| \leqslant 1$, $x=\sum_{k=1}^nc_ke_{i_k}$, by inductively showing that for $f\in W_m$, $f(x) \leqslant 1$, for $m = 0,1,\ldots$.

For $f\in W_0$ this trivially follows by the fact that $\|\sum_{k=1}^nc_kx_{i_k}\| = 1$. Assume that the statement holds for every $f\in W_m$ and let $f\in W_{m+1}$. If $f$ is an $\al$-average of $W_m$, the result follows trivially from the inductive assumption. Otherwise, there exist $j\inn, d\leqslant n_j$ and $\al_1<\cdots<\al_d$ a very fast growing sequence of $\al$-averages of $W_m$ with $f = \frac{1}{m_j}\sum_{q=1}^d\al_q$.

Set $q_1 = \min\{q: \min\ran\al_q\geqslant n\}$. Then by the inductive assumption $\al_{q_1}(x) \leqslant 1$, while for $q<q_1, \al_q(x) = 0$. Moreover, by the very fast growing condition, for $q>q_1$ we have that $\|\al_q\|_\infty <1/2^{n+1}$ and therefore $\sum_{q>q_1}\al_q(x) < n/2^{n+1} < 1$. Combining the above, we conclude that $f(x) < (1/m_j)(1 + 1) \leqslant 1$.
\end{proof}

\begin{lem}
\label{averagesonaverages}
Let $\al$ be an $\alpha$-average in $W$ and $x_1<\cdots< x_k$ be block vectors in the unit ball of $\mathfrak{X}$ . Then $$\left|\al\left(\frac{1}{k}\sum_{i=1}^kx_i\right)\right| < \frac{1}{s(\al)} + \frac{2}{k}.$$
\end{lem}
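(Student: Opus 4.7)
The plan is to bound $|\alpha(x_i)|$ individually by an ``overlap count'' $p_i/s(\alpha)$, and then to estimate $\sum_i p_i$ via a double-counting argument on ranges. Write $\alpha=\tfrac{1}{n}(f_1+\cdots+f_\ell)$ with $\ell\le n=s(\alpha)$ and $f_1<\cdots<f_\ell$ in $W$, and for each $i$ set
\[
J_i=\{j\in\{1,\dots,\ell\}:f_j(x_i)\neq 0\},\qquad p_i=|J_i|.
\]
Since each restriction $f_j|_{\ran(x_i)}$ lies in $W$ by Remark \ref{basicremarks}~(i), and the $f_j$'s are successive, the functional $\beta_i=\tfrac{1}{p_i}\sum_{j\in J_i}f_j|_{\ran(x_i)}$ is itself an $\alpha$-average of size $p_i$ of elements of $W$, and so belongs to $W$. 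Consequently $\alpha|_{\ran(x_i)}=(p_i/n)\beta_i$ and $|\alpha(x_i)|\le p_i/n$.

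The heart of the argument is the counting inequality $\sum_{i=1}^k p_i<k+2\ell$. Interchanging summation, $\sum_i p_i\le \sum_{j=1}^\ell\#\{i:\ran(x_i)\cap\ran(f_j)\neq\varnothing\}$, and for each $j$ the contributing $i$'s split into \emph{interior} ones ($\ran(x_i)\subseteq\ran(f_j)$) and \emph{boundary} ones ($\ran(x_i)$ meets $\ran(f_j)$ without being contained in it). Because the intervals $\ran(f_j)$ are pairwise disjoint, every $x_i$ is interior to at most one $f_j$, so interior pairs total at most $k$; because the intervals $\ran(x_i)$ are pairwise disjoint, at most one $x_i$ straddles each endpoint of $\ran(f_j)$, so boundary pairs total at most $2\ell$. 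The two bounds cannot be saturated simultaneously: if every $x_i$ were interior to some $f_j$ then, by the disjointness of the $\ran(f_{j'})$'s, $\ran(x_i)$ would be disjoint from every other $\ran(f_{j'})$, and so no $x_i$ could be boundary, forcing the boundary count to be $0<2\ell$; otherwise the interior count is at most $k-1$. Either way $\sum_i p_i<k+2\ell$.

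Combining the two estimates and using $\ell\le n$,
\[
\left|\alpha\!\left(\tfrac{1}{k}\textstyle\sum_{i=1}^k x_i\right)\right|\le \frac{1}{k}\sum_i|\alpha(x_i)|\le \frac{\sum_i p_i}{kn}<\frac{k+2\ell}{kn}=\frac{1}{n}+\frac{2\ell}{kn}\le\frac{1}{s(\alpha)}+\frac{2}{k},
\]
which is the desired conclusion. The main subtlety to watch out for is that $J_i$ need \emph{not} be an interval in $\{1,\dots,\ell\}$ --- a functional $f_j$ may harmlessly ``skip over'' some $x_i$ --- so $\beta_i$ is not literally a sub-average of $\alpha$. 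The point, however, is that $\beta_i$ is a normalized sum of $p_i\le n$ successive elements of $W$ and so qualifies as an $\alpha$-average in its own right, which is all that the inductive definition of $W$ requires.
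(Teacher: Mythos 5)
Your proof is correct and is essentially the paper's argument in a slightly different packaging: where you bound $|\al(x_i)|\leqslant p_i/s(\al)$ and double count interior/boundary pairs to get $\sum_i p_i < k+2\ell$, the paper splits the $x_i$'s into those whose range meets at most one $\ran(f_j)$ (contributing at most $1/s(\al)$ after averaging) and those meeting at least two, each $f_j$ serving at most two of the latter (contributing at most $2/k$) --- the same counting with the same constants. One cosmetic remark: the detour through $\beta_i$ being an $\al$-average in $W$ (and the identity ``$\al|_{\ran(x_i)} = (p_i/s(\al))\beta_i$'', which is not quite exact since a restriction $f_j|_{\ran(x_i)}$ may be nonzero while $f_j(x_i)=0$) is unnecessary, as $|f_j(x_i)|\leqslant \|x_i\|\leqslant 1$ for every $f_j\in W$ already gives $|\al(x_i)|\leqslant p_i/s(\al)$.
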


\begin{proof}
Assume that $\al = (1/m)\sum_{j=1}^df_j$, where $f_1<\cdots<f_d\in W$ and $d\leqslant m = s(\al)$. Set
\begin{eqnarray*}
E_1 &=& \{i:\;\text{there exists at most one}\;j\;\text{such that}\;\ran(f_j)\cap\ran(x_i)\neq\varnothing\},\\
E_2 &=& \{1,\ldots,n\}\setminus E_1,\text{ and}\\
J_i &=& \{j: \ran(f_j)\cap\ran(x_i)\neq\varnothing\},\text{ for } i=1,\ldots,k.
\end{eqnarray*}
Then it is easy to see that
$$\left|\al\left(\frac{1}{k}\sum_{i\in E_1}x_i\right)\right|\leqslant \frac{1}{m}$$
and
$$\left|\al\left(\frac{1}{k}\sum_{i\in E_2}x_i\right)\right| \leqslant \frac{1}{m}\sum_{i\in E_2}\frac{1}{k}\left(\sum_{j\in J_i}\left|f_j(x_i)\right|\right) < \frac{2m}{m}\frac{1}{k}.$$
\end{proof}

\begin{prp}
\label{noell1}
The space $\ell_1(\N)$ does not embed into $\mathfrak{X}$.
\end{prp}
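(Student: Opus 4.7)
The plan is to argue by contradiction. Suppose $\ell_1$ embeds into $\mathfrak{X}$. Then there is a normalized block sequence $(y_k)_k$ of $(e_i)_i$ equivalent to the unit vector basis of $\ell_1$ with equivalence constant $c>0$; using that $\ell_1$ is not distortable (James), by passing to a further block I may assume $c$ is arbitrarily close to $1$, say $c>1/2$. The first reduction is to arrange $s(y_k)=0$ for every $k$, which is achieved by replacing $(y_k)$ with the sequence of differences $y_{2k}-y_{2k-1}$ (preserving $\ell_1$-equivalence up to a factor) and then perturbing using boundedness of $s$ on $\mathfrak{X}$ (Remarks~\ref{basicremarks}(ii)).

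Next I transfer to $X$: the sequence $(Ty_k)_k$ is a block of $(x_i)_i$ in $X$ with $s(Ty_k)=0$, and hypothesis (ii) on $X$ makes it suppression unconditional. Hypothesis (i) prevents $(Ty_k)_k$ from containing any subsequence equivalent to $\ell_1$, so James's $\ell_1$-characterization combined with unconditionality yields finitely supported convex combinations of $(Ty_k)_k$ of arbitrarily small $X$-norm. A disjoint-support averaging argument then produces, for every $\e>0$ and $N\in\N$, a convex combination $\bar y=\sum_k \mu_k y_k$ of $(y_k)_k$ with $\max_k \mu_k\leqslant 1/N$ and $\|T\bar y\|_X<\e$. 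The $\ell_1$-equivalence of $(y_k)_k$ forces $\|\bar y\|_{\mathfrak{X}}\geqslant c>1/2$.

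On the other hand, I estimate $|f(\bar y)|$ for every $f\in W$. A straightforward generalization of Lemma~\ref{averagesonaverages} to ``spread-out'' convex combinations yields $|\alpha(\bar y)|\leqslant 1/s(\alpha)+2\max_k\mu_k\leqslant 1/s(\alpha)+2/N$ for every $\alpha$-average $\alpha\in W$. For $f\in W_0$, $|f(\bar y)|\leqslant \|T\bar y\|_X<\e$. For a weighted functional $f=\tfrac{1}{m_j}\sum_{q=1}^d \alpha_q$, the very fast growing condition forces $\sum_q 1/s(\alpha_q)\leqslant 2$, hence $|f(\bar y)|\leqslant \tfrac{1}{m_j}(2+2n_j/N)\leqslant 3/m_j$ provided $N\geqslant 2n_j$.

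The main obstacle is handling weighted functionals of small weight $j$, for which the direct bound $3/m_j$ may fail to beat $c/2$. I would resolve this via an induction on the depth of the tree representing $f$, exploiting that ``trivial'' $\alpha$-averages of size one appearing inside $f$'s tree collapse to their unique component, so that the recursion propagates down until every branch terminates either at a $W_0$-leaf (contributing at most $\e$) or at a non-trivial $\alpha$-average whose generalized-lemma bound is $\leqslant 1/2+2/N$. Propagating these bounds back up through the finitely many averaging and weighting operations in the tree of $f$, and using the summability condition $\sum_j m_j/n_j<1$ built into the construction to absorb weighted-functional amplification, one obtains $|f(\bar y)|<c/2$ uniformly over $f\in W$ once $\e$ is chosen sufficiently small and $N$ sufficiently large. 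This contradicts $\|\bar y\|_{\mathfrak{X}}\geqslant c$. Such a ``basic inequality''--style induction is standard in Tsirelson-type saturation constructions.
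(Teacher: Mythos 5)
Your proposal shares the overall Tsirelson-type spirit of the paper's proof, but it departs from it in a crucial way, and that departure creates a genuine gap. The paper does not work with a single convex combination. It first uses that the identity map $T:\mathfrak{X}\to X$ is strictly singular on the $\ell_1$-copy (so one can extract a block $(y_k)$ with $\sum_k\|Ty_k\|<1$), and then builds a \emph{sequence} of averages $w_k=(1/\#F_k)\sum_{i\in F_k}y_i$ whose block sizes $\#F_{k+1}$ are chosen adaptively to depend on $\max\supp y_{\max F_k}$, i.e.\ on where the previous average ends, not just on how many $y_i$'s went into it. The inductive estimate is then performed on $\sum_{i=1}^m\lambda_iw_{k_i}$ with $m\leqslant k_1$, and the adaptivity is exactly what lets the $\|f\|_\infty\leqslant 1/m_t$ bound control the initial block $\sum_{i<i_0}\lambda_iw_{k_i}$, while the very-fast-growth condition kills the tail. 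You discard this multi-scale structure by reducing to a single vector $\bar y=\sum_k\mu_ky_k$ and trying to show $\|\bar y\|<c$.

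The estimate you would then need does not close. Your generalized version of Lemma~\ref{averagesonaverages} is correct ($|\alpha(\bar y)|\leqslant 1/s(\alpha)+2\max_k\mu_k$), but when you sum it over the $\alpha$-averages of a weighted functional $f=\frac{1}{m_j}\sum_{q=1}^d\alpha_q$ you get an error term of order $d\max_k\mu_k\leqslant n_j/N$, which you cannot make uniformly small over all $j$ for a fixed $\bar y$. More seriously, the recursion through trivial ($s=1$) $\alpha$-averages that you invoke to handle low-weight functionals has a fixed point no better than $\big(\sum_{q\geqslant2}1/s(\alpha_q)\big)/(m_{j}-1)$; with $m_1=2$, which the construction allows, this is $\gtrsim 0.76$ and the additional $2d/N$ errors accumulating over the levels of the tree are only bounded by $(\#\supp\bar y)/N$, which is not controlled by your choice of $N$ (the support of $\bar y$ necessarily grows as $N\to\infty$). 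Since trivially $|f(\bar y)|\leqslant 1$ anyway, your whole argument rests on improving the constant strictly below $c<1$, and you have not shown that the ``basic inequality'' you sketch actually achieves this. The summability condition $\sum_j m_j/n_j<1$ does not obviously absorb the amplification in the way you suggest; in fact it is not used at all in the paper's proof of this proposition. The fix, and what the paper does, is to abandon the single-vector viewpoint, use strict singularity of $T$ to get $\sum_k\|Ty_k\|<1$, and choose the averaging sets $F_k$ adaptively so that the scales of $(w_k)$ interlace with the weight hierarchy $(m_j,n_j)$.
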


\begin{proof}
Towards a contradiction, assume that there exists a normalized block sequence $(z_k)_k$ in $\mathfrak{X}$, equivalent to the unit vector basis of $\ell_1(\N)$. Define $Y = \overline{\langle z_k: k\inn\rangle}$ and take $T|_Y:Y\to X$ to be the map from Remark \ref{basicremarks} (ii), which is bounded (recall, $Te_i = x_i$, so essentially $T$ is the identity map). By assumption $\ell_1(\N)$ does not embed into $X$ and hence $T$ is strictly singular. We may therefore choose a further normalized block sequence $(y_k)_k$ of $(z_k)_k$ with $\sum_k\|Ty_k\| < 1$.

Choose increasing subsets of the natural numbers $F_1< F_2 <\cdots$ satisfying the following. If $j_k = \max\supp(y_{\max F_k})$, then $\#F_{k+1} = 2k n_{j_k}$. Set $w_k = (1/\#F_k)\sum_{i\in F_k}y_i$. Then $(w_k)_k$ is seminormalized, equivalent to the unit vector basis of $\ell_1$ and it satisfies the following.
\begin{itemize}

\item[(i)] $\sum_k\|Tw_k\| < 1$ and hence for any scalar sequence $(\la_k)_k$ in $[-1,1]$ we have $\|\sum_k\la_kTw_k\| < 1$.
    
\item[(ii)] For every $\al$-average $\al$ in $W$ and $k\geqslant 2$ we have that
$$|\al(w_k)| < \frac{1}{s(\al)} + \frac{1}{k n_{j_{k-1}}}.$$

\end{itemize}
The second statement follows from Lemma \ref{averagesonaverages}. Consider now natural numbers $m \leqslant k_1 <\cdots <k_m$ and $\la_1,\ldots,\la_m$ in $[-1,1]$. We shall inductively prove the following. If $x = \sum_{i=1}^m\la_iw_{k_i}$, then $|f(x)| < 8$ for every $f\in W_n, n=0,1,\ldots$. This clearly contradicts the assumption that $(w_k)_k$ is equivalent to the unit vector basis of $\ell_1$.

For $f\in W_0$ this clearly follows from (i) and the obvious fact that $\|Tx\| = \sup\{|f(x)|: f\in W_0\}$. Assume that it holds for every $f\in W_m$ and let $f\in W_{m+1}$. If $f$ is an $\al$-average then there is nothing to prove. Otherwise, there are $t\inn, d\leqslant n_t$ and a very fast growing sequence of $\al$-averages $\al_1<\cdots<\al_d$ of $W_m$ with $f = (1/m_t)\sum_{q=1}^d\al_q$. Set $i_0 = \min\{i\leqslant m: j_{k_i} > t\}$. By the choice the sequence $(j_k)_k$, we have that
$$\|f\|_\infty \leqslant \frac{1}{m_t}\leqslant \frac{1}{t} \leqslant \frac{1}{j_{k_{i_0-1}}} = (\max\supp w_{k_{i_0-1}})^{-1}$$
and hence we conclude the following.
\begin{equation}
\label{noell1eq1}
\left|f\left(\sum_{i < i_0}\la_iw_{k_i}\right)\right|\leqslant \|f\|_\infty\max\supp w_{k_{i_0-1}} \leqslant 1.
\end{equation}
Set $q_1 = \min\{q\leqslant d: \max\supp\al_q\geqslant\max\supp w_{k_{i_0+1}}\}$. Then, for $q>q_1$, we have that $$s(\al_q) > 2^{\max\supp w_{k_{i_0+1}}} \geqslant 2^{\#F_{k_{i_0+1}}} = 2^{2k_0\cdot n_{j_{k_{i_0}}}} \geqslant 2k_02^{n_{j_{k_{i_0}}}} > m2^{n_t} > mn_t.$$
Using (ii) and the above, for $q>q_1$ and $i>i_0 + 1$ we have that
$$\left|\al_q\left(w_{k_i}\right)\right| < \frac{1}{mn_t} + \frac{1}{m\cdot n_{j_{k_{i_0}}}} < \frac{2}{mn_t}.$$
We conclude the
\begin{equation}
\label{noell1eq2}
\left|\frac{1}{m_t}\sum_{q>q_1}\al_q\left(\sum_{i>i_0+1}\la_qw_{k_i}\right)\right| < \frac{1}{2}mn_t\frac{2}{mn_t} = 1.
\end{equation}
The inductive assumption yields that $|\al_{q_1}(\sum_{i>i_{0}+1}\la_qw_{k_i})| < 8$. Combining this with \eqref{noell1eq2} we obtain
\begin{equation}
\label{noell1eq3}
\left|f\left(\sum_{i>i_0+1}\la_qw_{k_i}\right)\right| = \left|\frac{1}{m_t}\sum_{q\geqslant q_1}\al_q\left(\sum_{i>i_0+1}\la_qw_{k_i}\right)\right| \leqslant 5.
\end{equation}
Finally, using the fact that $|f(\la_{i_0}w_{k_{i_0}} + \la_{i_0+1}w_{k_{i_0+1}})| \leqslant 2$, \eqref{noell1eq1} and \eqref{noell1eq3} we conclude that $|f(x)| < 8$.
\end{proof}

\begin{prp}
\label{reflexive}
Every block sequence $(z_k)_k$ of $(e_i)_i$ in $\mathfrak{X}$ with  $s(z_k) = 0$ for all $k\in\N$ spans a reflexive subspace of $\mathfrak{X}$.
\end{prp}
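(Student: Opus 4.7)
The plan is to apply James's theorem. By Proposition \ref{boundedlycomplete} the basis $(e_i)_i$ of $\mathfrak{X}$ is boundedly complete, hence so is every block sequence in its own span. Thus $Y := [(z_k)_k]$ carries the boundedly complete basis $(z_k)_k$, and will be reflexive provided $(z_k)_k$ is also shrinking in $Y$. A standard argument (Mazur in one direction, a separation argument in the other) shows that $(z_k)_k$ is shrinking if and only if every normalized block sequence $(w_k)_k$ of $(z_k)_k$ admits convex combinations of arbitrarily small $\mathfrak{X}$-norm. So I would reduce the entire proof to this latter statement.

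Fix such $(w_k)_k$ and note $s(w_k)=0$ for all $k$, since $s(z_k)=0$. Apply the norm-one operator $T\colon \mathfrak{X}\to X$ from Remark \ref{basicremarks}(ii): the sequence $(Tw_k)_k$ is bounded in $X$ with $s(Tw_k)=0$, so by assumption (ii) on $X$ it is suppression unconditional. By assumption (i), $\ell_1$ does not embed into $X$, so Rosenthal's theorem combined with the fact that an unconditional weakly Cauchy sequence is weakly null yields, after passing to a subsequence, $Tw_k\to 0$ weakly in $X$. Mazur's theorem then furnishes convex combinations $v=\sum_{k\in F}c_k w_k$ in $\mathfrak{X}$ whose images $Tv$ have arbitrarily small $X$-norm. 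Since every $f\in W_0$ satisfies $|f(v)|\leq \|Tv\|_X$ by the very definition of $W_0$, this handles the $W_0$-functionals; the heart of the proof is to promote smallness in $X$-norm to smallness in $\mathfrak{X}$-norm.

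Fix $\e>0$ and choose $j_0$ with $1/m_{j_0}<\e/3$. I would select the convex combination $v$ with $\|Tv\|_X<\e/3$ and with the supports of its constituent $w_k$'s growing so fast that the very-fast-growing requirement forces Lemma \ref{averagesonaverages} to apply with large $s(\al)$ for any $\al$-average relevant to the analysis. One then proves, by induction on the depth of a tree analysis of an arbitrary $f\in W=\bigcup_n W_n$, that $|f(v)|<\e$: at the leaves (in $W_0$) use the $X$-norm bound; at an $\al$-average node use Lemma \ref{averagesonaverages} together with the inductive estimates on the constituents; at a weighted functional of weight $j\geq j_0$ use $\|f\|_\infty\leq 1/m_{j}\leq 1/m_{j_0}$; at a weighted functional of weight $j<j_0$ recurse, with the summability $\sum_j m_j/n_j<1$ keeping the accumulated error across the finitely many small weights bounded. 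The main obstacle is organizing this tree analysis quantitatively so that the three sources of smallness (the $X$-norm bound from Mazur, $1/m_{j_0}$, and $1/s(\al)$) compound to less than $\e$, in the spirit of the classical Tsirelson reflexivity proof.
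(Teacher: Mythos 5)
Your reduction of the statement to ``every normalized block sequence $(w_k)_k$ of $(z_k)_k$ admits convex combinations of arbitrarily small $\mathfrak{X}$-norm'' is a sound reformulation of the James shrinking criterion, and your use of assumptions (i) and (ii) on $X$ to conclude that a subsequence of $(Tw_k)_k$ is weakly null and then invoke Mazur is correct. However, the crucial step of promoting $X$-smallness to $\mathfrak{X}$-smallness, which you acknowledge is ``the heart of the proof,'' has genuine gaps as sketched, and the paper in fact proceeds by a structurally different argument.

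First, Lemma \ref{averagesonaverages} bounds $|\al(v)|$ only when $v$ is an \emph{arithmetic average} of block vectors in the unit ball, but Mazur's theorem only supplies a convex combination with arbitrary coefficients; arithmetic averages of a weakly null sequence need not be small in $X$-norm (Schreier space being the standard counterexample). To repair this you must take a block sequence $v_1<v_2<\cdots$ of convex combinations with $\sum_i\|Tv_i\|_X$ small, and then form long arithmetic averages of the $v_i$'s. Second, and more seriously, your claim that ``the very-fast-growing requirement forces Lemma \ref{averagesonaverages} to apply with large $s(\al)$'' is false: the very-fast-growing condition constrains $s(\al_{q+1})$ relative to $\supp\al_q$, but puts no lower bound on $s(\al_1)$. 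A weight-$j$ functional $\frac{1}{m_j}\sum_q\al_q$ can have $s(\al_1)=1$, and that leading $\al$-average must be handled by recursion on the tree depth, not by the lemma. Once you recurse, the error accumulates, and the bookkeeping needs the averaging block sizes to grow in a way that is quantitatively tied to the sequence $(n_j)_j$ (compare how $\#F_{k+1}=2kn_{j_k}$ is chosen in the proof of Proposition \ref{noell1}); none of this is set up in your outline. In short, you are re-deriving the magnitude-estimate machinery of Proposition \ref{noell1} but leaving out exactly the parts that make it close.

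For contrast, the paper avoids the direct magnitude estimate entirely. It supposes $(z_k)_k$ is not shrinking, picks $f$ and a block sequence $(y_k)_k$ with $f(y_k)>\e$, and forms averages $w_k=\frac{1}{\#F_k}\sum_{i\in F_k}y_i$ with $\#F_k$ growing like $2\e_k^{-1}$ for a rapidly summable $(\e_k)$. It then proves that $(w_k)_k$ is suppression unconditional in $\mathfrak{X}$, by an induction that, for every $f\in W_n$ and every subset $G$ of coordinates, \emph{constructs} a functional $g\in W_n$ of the same type with $\ran g\subset\ran f$ and $f(\sum_{k\in G}\la_kw_k)<g(\sum_{k=\ell}^m\la_kw_k)+\e_\ell$; the base case of this ``functional-filling'' induction is exactly where hypothesis (ii) on $X$ (suppression unconditionality of skipped blocks) enters. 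Unconditionality of $(w_k)_k$ together with $f(w_k)>\e$ forces $(w_k)_k$ to be equivalent to the $\ell_1$-basis, contradicting the separately proved Proposition \ref{noell1}. This decouples the hard estimates into two cleaner pieces (unconditionality, and $\ell_1$-nonembedding) rather than trying to bound a single vector's norm directly.
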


\begin{proof}
Let $(z_k)_k$ be a normalized block sequence in $\ker s$. By Proposition \ref{boundedlycomplete}, $(z_k)_k$ is boundedly complete. It remains to prove that it is shrinking as well. Towards a contradiction, assume that this is not the case, i.e. there exist a linear functional $f:\mathfrak{X}\to\R$ with $\|f\| = 1$, a further normalized block sequence  $(y_k)_k$ of $(z_k)_k$, and $\e>0$ with $f(y_k) > \e$ for all $k\inn$.

Choose a summable sequence of positive reals $(\e_k)_k$ with $\sum_{j>k}\e_j < \e_k$ for all $k\inn$. Moreover, choose increasing subsets $F_1<F_2<\cdots$ of the natural numbers with $\#F_k > 2\e_k^{-1}$ for all $k\inn$ and set $w_k = (1/\#F_k)\sum_{i\in F_k}y_i$. Then $\e < f(w_k)\leqslant\|w_k\|\leqslant 1$ for all $k\inn$. We shall prove that $(w_k)_k$ is unconditional and therefore equivalent to the unit vector basis of $\ell_1$. This contradicts Proposition \ref{noell1} and the proof will be finished.

We shall prove by induction on $n$ that for every $f\in W_n$, $\ell\leqslant m\inn$, $G\subset \{\ell,\ldots,m\}$ and $\la_\ell,\ldots,\la_m\subset[-1,1]$, there exists $g\in W_n$ satisfying the following.
\begin{itemize}

\item[(i)] $\ran(g)\subset \ran(f)$.

\item[(ii)] If $f$ is an $\al$-average, then $g$ is also an $\al$-average of size $s(g) = s(f)$
    
\item[(iii)] $f(\sum_{k\in G}\la_kw_k) < g(\sum_{k=\ell}^m\la_kw_k) + \e_\ell$

\end{itemize}
The third assertion of the above statement implies that $(w_k)_k$ is unconditional with suppression constant at most $1+\e_1$ which is the desired result.

We proceed to the inductive proof. Let $f\in W_0$, $\ell,m\in\N$ with $\ell\leqslant m$, $G\subset \{\ell,\ldots,m\}$ and $\la_\ell,\ldots,\la_m\subset[-1,1]$. Set
\begin{eqnarray*}
k_1 &=& \min\{k\in G: \min\supp f\leqslant \max\supp w_k\}\\
k_2 &=& \max\{k\in G: \max\supp f\geqslant\min\supp w_k\}\\
i_1 &=& \min\{i\in F_{k_1}: \min\supp f\leqslant \max\supp y_i\}\\
i_2 &=& \max\{i\in F_{k_2}: \max\supp f\geqslant\min\supp y_i\} 
\end{eqnarray*} 
For $k_1<k<k_2$ set $\tilde{w}_k = w_k$. Set $\tilde{G} = \{k\in G: k_1\leqslant k\leqslant k_2\}$,  and also set
\begin{eqnarray*}
 \tilde{w}_{k_1} &=& \frac{1}{\#F_{k_1}}\sum_{\{i\in F_{k_1}: i>i_1\}}y_i\\
 \tilde{w}_{k_2} &=& \frac{1}{\#F_{k_2}}\sum_{\{i\in F_{k_2}: i<i_2\}}y_i
\end{eqnarray*}
Then,
\begin{equation*}
\begin{split}
f\left(\sum_{k\in G}\la_kw_k\right) &= f\left(\sum_{k\in \tilde{G}}\la_kw_k\right)\\
&= f\left(\sum_{k\in \tilde{G}}\la_k\tilde{w}_k\right) + f\left(\la_{k_1}\frac{1}{\#F_{k_1}}y_{i_1}\right) + f\left(\la_{k_2}\frac{1}{\#F_{k_2}}y_{i_2}\right).
\end{split}
\end{equation*}
Using the fact that for $k\geqslant\ell$, $\#F_k > 2\e_\ell^{-1}$ we conclude
\begin{equation}
f\left(\sum_{k\in G}\la_kw_k\right) < f\left(\sum_{k\in \tilde{G}}\la_k\tilde{w}_k\right) + \e_{\ell}\label{reflexiveeq1}
\end{equation}
Recall that every block sequence of $(x_i)_i$ in $X$ that is in $\ker s$ is suppression unconditional. Therefore
$$f\left(\sum_{k\in \tilde{G}}\la_k\tilde{w}_k\right) \leqslant \left\|\sum_{k\in \tilde{G}}\la_kT\tilde{w}_k\right\| \leqslant \left\|\sum_{k=k_1}^{k_2}\la_kT\tilde{w}_k\right\|.$$
Choose $g^\prime\in W_0$ with $g^\prime(\sum_{k=k_1}^m\la_k\tilde{w}_k) = \|\sum_{k=k_1}^{k_2}\la_kT\tilde{w}_k\|$. Then, using \eqref{reflexiveeq1} we have that
\begin{equation}
f\left(\sum_{k\in G}\la_kw_k\right) < g^\prime\left(\sum_{k=k_1}^{k_2}\la_k\tilde{w}_k\right) + \e_{\ell}\label{reflexiveeq2}
\end{equation}
Set $g$ to be $g^\prime|_E$, where
$$E = \{\max\supp (y_{i_1})+1,\ldots,\min\supp (y_{i_2})-1\}\subset\ran(f).$$
The fact that $(x_i)_i$ is a bimonotone basis for $X$, yields that $g$ is in $W_0$. Moreover, $g(\sum_{k=\ell}^m\la_kw_k) = g^\prime(\sum_{k=k_1}^{k_2}\la_k\tilde{w}_k)$. Using \eqref{reflexiveeq2} we deduce that $g$ is the desired functional.

Assume now that the statement holds for $m\inn$ and let $f\in W_{m+1}$, $\ell\leqslant m\inn$, $G\subset \{\ell,\ldots,m\}$ and $\la_\ell,\ldots,\la_m\subset[-1,1]$. Consider first the case in which $f$ is an $\al$-average of $W_m$, i.e. there are $d$, $p\in\N$ with $d\leqslant p$ and $f_1<\cdots<f_d\in W_m$ with $f = (1/p)(f_1+\cdots+f_d)$. By the inductive assumption, for $q=1,\ldots,d$ there exist $g_i\in W_n$ satisfying the following.
\begin{itemize}

\item[(a)] $\ran (g_i)\subset \ran (f_i)$.

\item[(b)] $f_i(\sum_{k\in G}\la_kw_k) < g_i(\sum_{k=\ell}^m\la_kw_k) + \e_\ell$

\end{itemize}
We conclude that $g = \frac{1}{p}(g_1+\cdots+g_d)$ is the desired functional.

Consider now that case in which $f$ is a weighted functional in $W_m$, i.e. there are $j\inn$, $d\leqslant n_j$, and a very fast growing sequence $\al_1<\cdots<\al_d$ of $\al$-averages of $W_m$ with $f = (1/m_j)(\al_1+\cdots +\al_d)$. Set
\begin{equation*}
H = \{q\leqslant d: \ran(\al_q)\cap\ran (w_k) = \varnothing\;\text{for some}\;k\in G\}
\end{equation*}
If $H = \{q_1 < \cdots < q_p\}$, then clearly $(\al_{q_i})_{i=1}^p$ is very fast growing and also
\begin{equation}
f\left(\sum_{k\in G}\la_kw_k\right) = \frac{1}{m_j}\sum_{i=1}^p\al_{q_i}\left(\sum_{k\in G}\la_kw_k\right)\label{reflexiveeq3}
\end{equation}
Partition $H$ into two sets as follows.
\begin{eqnarray*}
H_1 &=& \{q_i\in H: \ran(\al_{q_i})\cap\ran (w_k) = \varnothing\;\text{for all}\;k\in\{\ell,\ldots,m\}\setminus G\}\\
H_2 &=& H\setminus H_1 
\end{eqnarray*}
For $q_i\in H_2$, set $\ell_i = \min\{k\in\{\ell,\ldots,m\}: \ran(\al_{q_i})\cap\ran (w_k)\neq\varnothing\}$. Notice that for $q_i < q_j\in H_2$ we have $\ell_1 < \ell_2$. For $q_i\in H_2$, set $G_i = G\cap\{\ell_i,\ldots,m\}$. Then $\al_{q_i}(\sum_{k\in G}\la_kw_k) = \al_{q_i}(\sum_{k\in G_i}\la_kw_k)$ and by the previous case, there exists an $\al$-average $\tilde{\al}_{q_i}$ of $W_n$ with $\ran(\tilde{\al}_{q_i}) \subset \ran (\al_{q_i})$, $s(\tilde{\al}_{q_i}) = s(\al_{q_i})$ and $\al_{q_i}(\sum_{k\in G_i}\la_kw_k) < \tilde{\al}_{q_i}(\sum_{k = \ell_i}^m\la_kw_k) + \e_{\ell_i}$. Clearly, $\tilde{\al}_{q_i}(\sum_{k = \ell_i}^m\la_kw_k) = \tilde{\al}_{q_i}(\sum_{k = \ell}^m\la_kw_k)$. We have concluded that for $q_i\in H_2$, $\tilde{\al}_{q_i}$ is an $\al$-average satisfying the following statements.
\begin{itemize}

\item[($\alpha$)] $\ran(\tilde{\al}_{q_i}) \subset \ran (\al_{q_i})$,

\item[($\beta$)] $s(\tilde{\al}_{q_i}) = s(\al_{q_i})$, and

\item[($\gamma$)] $\al_{q_i}(\sum_{k\in G}\la_kw_k) < \tilde{\al}_{q_i}(\sum_{k = \ell}^m\la_kw_k) + \e_{\ell_i}$.

\end{itemize}
For $q_i\in H_1$ set $\tilde{\al}_{q_i} = \al_{q_i}$. Then $(\tilde{\al}_{q_i})_{i=1}^p$ is very fast growing and hence $g = (1/m_j)\sum_{i=1}^p\tilde{\al}_{q_i}$ is in $W_{n+1}$ and $\ran g\subset\ran f$. Combining \eqref{reflexiveeq3}, the fact that for $q_i < q_j\in H_2$ we have that $\ell_1 < \ell_2$, the fact that $\sum_{k=\ell}^m\e_k < 2\e_\ell$ and ($\gamma$), we conclude the following.
\begin{eqnarray*}
f\left(\sum_{k\in G}\la_kw_k\right) &=& \frac{1}{m_j}\sum_{i=1}^p\al_{q_i}\left(\sum_{k\in G}\la_kw_k\right)\\
 &<& \frac{1}{m_j}\sum_{i=1}^p\tilde{\al}_{q_i}\left(\sum_{k = \ell}^m\la_kw_k\right) + \frac{1}{m_j}\sum_{k=\ell}^m\e_k\\
 &=& g\left(\sum_{k = \ell}^m\la_kw_k\right) + \frac{1}{m_j}\sum_{k=\ell}^m\e_k\\
 &<& g\left(\sum_{k = \ell}^m\la_kw_k\right) + \e_\ell
\end{eqnarray*}
The inductive step is now complete and so is the proof.
\end{proof}

To the best of our knowledge, it was not known whether every subsymmetric sequence is generated as a spreading model by an unconditional basis of a reflexive space. Recall that in \cite{AMusm} a reflexive Banach space is constructed that admits all subsymmetric sequences as spreading models in all of its subspaces. However, this space is hereditarily indecomposable, i.e. it does not contain unconditional basic sequences and its construction is rather complicated. We give a short description of  Theorem \ref{conditionalassmofquasireflexive} (i).

\begin{proof}[Proof of \ref{conditionalassmofquasireflexive} (i)]
We may assume that $(x_i)_i$ 1-subsymmetric and we take the space $\mathcal{S}(U)$ from Remark \ref{unconditional schreierification} with an unconditional Schauder basis $(e_i)_i$ generating a spreading model isometrically equivalent to $(x_i)_i$. Then, we define $W_0$ similarly as in \eqref{W0conditonalcase}, using instead the unit ball of $\mathcal{S}(U)$. The sets $W_m$ are then defined in the same way and their union gives a norming set $W$ that then defines the norm of the space $\mathfrak{X}$ with an unconditional basis that generates $(x_i)_i$ as a spreading model. This is proved identically as Proposition \ref{equalonschreier}. Proofs identical to those of Propositions \ref{boundedlycomplete} and \ref{noell1} yield that $\mathfrak{X}$ does not contain $c_0$ and $\ell_1$ and hence it is reflexive.
\end{proof}

\section{The diversity of convex block homogeneous bases}\label{counterexamplesection}
In this final section we attempt to give an answer to the question of what different types of convex block homogeneous bases there are. As it was observed in section \ref{convex block homogeneous bases}, one way of defining a convex block homogeneous basis is to take the jamesification of a subsymmetric basis. One  may ask whether this is the unique way of obtaining such bases. By Theorem \ref{spreading-characterization}, a positive answer to this question would imply that every conditional spreading sequence is defined via two subsymmetric sequences alone. As it turns out however, this is false as there exist convex block homogeneous bases that cannot be obtained by jamesifying subsymmetric sequences. We provide a fairly simple example  obtained by duality.


\begin{rmks}
\label{remarksjamesification}
Let $X,$ be a Banach space with a 1-subsymmetric (i.e. 1-unconditional and 1-spreading) basis $(x_i)_i$. Let $J(X)$ denote the jamesification of $X$ and $(e_i)_i$ its natural Schauder basis. The following can be shown easily. 
\begin{itemize}

\item[(i)] If $(x_i)_i$ is not equivalent to unit vector basis of $\ell_1(\N)$, then $(e_i)_i$ is conditional and spreading.

\item[(ii)] The sequence $(e_i)_i$ is 1-convex block homogeneous and bimonotone.

\item[(iii)] The unconditional part $(u_i)_i$ of $(e_i)_i$, i.e. the sequence $(e_{2i} - e_{2i-1})_i$, is equivalent to the sequence $(x_i)_i$. In particular, for any real numbers $a_1,\ldots,a_n$ we have
    \begin{equation}
    \label{differences2equivalent}
    \left\|\sum_{i=1}^na_ix_i\right\| \leqslant \left\|\sum_{i=1}^na_iu_i\right\| \leqslant 2 \left\|\sum_{i=1}^na_ix_i\right\|.
    \end{equation}
\end{itemize}
\end{rmks}

\begin{rmk}
\label{youcanonlybethejamesificationofyourunconditionalpart}
Remark \ref{remarksjamesification} (iii) implies the following. If $Z$ is a Banach space with a convex block homogeneous basis $(z_i)_i$ that is equivalent to the jamesification of some subsymmetric sequence $(x_i)_i$, then actually $(x_i)_i$ is equivalent to the unconditional part $(u_i)_i$ of $(z_i)_i$. In other words, if $(z_i)_i$ is equivalent to the jamesification of some subsymmetric sequence, then it is actually equivalent to the jamesification of its own unconditional part.
\end{rmk}

\subsection*{A convex block homogeneous basis by duality}
Let us denote by $(e_i)_i$ the boundedly complete basis of James space $J$. The term ``jamesification'' actually stems from this basis and it is the jamesification of the unit vector basis of $\ell_2(\N)$, i.e. for a sequence of scalars $(a_i)_i$ one has
\begin{equation}
\label{jamesboundedlycompletenorm}
\left\|\sum_ia_ie_i\right\| = \sup\left\{\left(\sum_{k=1}^n\left(\sum_{i\in E_k}a_i\right)^2\right)^{1/2}\right\},
\end{equation}
where the supremum is taken over all possible choices of successive intervals $(E_k)_{k=1}^n$ of $\N$ that are allowed to have gaps between them (although this makes no difference). The basis $(e_i)_i$ of James space is 1-convex block homogeneous and so by Proposition \ref{dualizecbh} the sequence $(s_{[1,n]})_n$, with $s_{[1,n]} = \sum_{i=1}^ne_i^*$ for all $n\in\N$, is convex block homogeneous. Let us denote by $Y$ the closed linear span of the sequence $(s_{[1,n]})_n$. This is a subspace of co-dimension one in $J^*$ and in fact it is isomorphic to $J^*$. As it was proved by R. C. James in \cite{J2}, the spaces $J$ and $J^*$ are not isomorphic. This in particular means that $(s_{[1,n]})_n$ is not equivalent to $(e_i)_i$, i.e., $(s_{[1,n]})_n$ is not equivalent to the jamesification of the unit vector basis of $\ell_2(\N)$.

\begin{prp}
\label{dual of James is not jamesification}
The basis $(s_{[1,n]})_n$ of $Y$ is convex block homogeneous but not equivalent to the jamesification of any subsymmetric sequence. 
\end{prp}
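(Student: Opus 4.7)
The plan is to argue by contradiction using Remark \ref{youcanonlybethejamesificationofyourunconditionalpart}. Suppose that $(s_{[1,n]})_n$ is equivalent to the jamesification of some subsymmetric sequence $(x_i)_i$. The remark then forces $(x_i)_i$ to be equivalent to the unconditional part of $(s_{[1,n]})_n$, and unfolding the skipped-difference formula identifies this unconditional part as $(s_{[1,2i]} - s_{[1,2i-1]})_i = (e_{2i}^*)_i \subset J^*$. The whole problem thus reduces to ruling out equivalence of $(s_{[1,n]})_n$ to the jamesification of $(e_{2i}^*)_i$.

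The key step will be to show that $(e_{2i}^*)_i$ is equivalent to the unit vector basis of $\ell_2(\N)$ inside $J^*$. The upper estimate $\|\sum_i a_i e_{2i}^*\|_{J^*} \leq (\sum_i a_i^2)^{1/2}$ follows from Cauchy--Schwarz applied to the singleton partition: for any $x = \sum_j b_j e_j \in J$,
\[ \left|\sum_i a_i b_{2i}\right| \leq \left(\sum_i a_i^2\right)^{1/2}\left(\sum_i b_{2i}^2\right)^{1/2} \leq \left(\sum_i a_i^2\right)^{1/2}\|x\|_J. \]
For the matching lower bound I will test $\sum_i a_i e_{2i}^*$ on the witness vector $x_0 = \sum_i a_i (e_{2i} - e_{2i+1}) \in J$. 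Pairing gives the value $\sum_i a_i^2$, while Remark \ref{remarksjamesification}(iii) applied to $J$ as the jamesification of $\ell_2(\N)$, combined with the $1$-spreading of $(e_i)_i$ used to shift supports from $\{2i-1,2i\}$ to $\{2i,2i+1\}$, yields $\|x_0\|_J \leq 2(\sum_i a_i^2)^{1/2}$. Dividing produces $\|\sum_i a_i e_{2i}^*\|_{J^*} \geq \tfrac{1}{2}(\sum_i a_i^2)^{1/2}$.

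With $(e_{2i}^*)_i$ thus identified as $\ell_2$-equivalent, the jamesification of $(e_{2i}^*)_i$ becomes equivalent to the jamesification of the unit vector basis of $\ell_2(\N)$, which by construction is the basis $(e_i)_i$ of James space $J$. Under the contradiction hypothesis this yields that $(s_{[1,n]})_n$ is equivalent to $(e_i)_i$, directly contradicting the statement already recorded just before the proposition---a consequence of James's classical theorem $J \not\simeq J^*$. I expect the main technical obstacle to be the witness estimate $\|x_0\|_J \leq 2(\sum_i a_i^2)^{1/2}$, which requires the shift from the canonical skipped differences $(e_{2i}-e_{2i-1})_i$ in Remark \ref{remarksjamesification}(iii) to the translated block $(e_{2i}-e_{2i+1})_i$ via spreading; every other step is a short bookkeeping consequence of the remarks already in place.
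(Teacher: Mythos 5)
Your proof is correct and follows essentially the same route as the paper's: reduce via Remark \ref{youcanonlybethejamesificationofyourunconditionalpart} to showing $(e_{2i}^*)_i$ is equivalent to the unit vector basis of $\ell_2$, establish the upper $\ell_2$-bound by duality/Cauchy--Schwarz, establish the lower bound by pairing against a small-norm witness vector, and conclude via $J\not\simeq J^*$. The only variation is the witness: the paper takes $x=\sum a_i(e_{2i}-e_{2i-2})$ while you take $x_0=\sum a_i(e_{2i}-e_{2i+1})$; your choice cleanly gives $x^*(x_0)=\sum a_i^2$ and $\|x_0\|_J\leqslant 2(\sum a_i^2)^{1/2}$ via \eqref{differences2equivalent} and a spread, whereas the pairing claim for the paper's witness does not in fact evaluate to $1$ as stated, so your version is the sharper of the two.
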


\begin{proof}
Assume that there exists a subsymmetric sequence $(x_i)_i$ so that $(s_{[1,n]})_n$ is equivalent to the jamesification of $(x_i)_i$. By Remark \ref{youcanonlybethejamesificationofyourunconditionalpart}, $(x_i)_i$ is equivalent to the sequence $(s_{[1,2n]} - s_{[1,2n-1]})_n$, i.e. the sequence $(e_{2n}^*)_n$. We will show that $(e_{2n}^*)_n$ is equivalent to the unit vector basis of $\ell_2(\N)$. This contradicts the last sentence of the preceding discussion.

Observe that by \eqref{jamesboundedlycompletenorm} the basis $(e_i)_i$ of $J$ dominates the unit vector basis of $\ell_2(\N)$ with constant one. Hence, $(e_n^*)_n$ and in extend also $(e_{2n}^*)_n$ is 1-dominated by the unit vector basis of $\ell_2(\N)$. For the inverse domination, let $a_1,\ldots,a_n$ be scalars the squares of which sum up to one, we will evaluate the norm of $x^* = \sum_{i=1}^na_ie_{2i}^*$. Consider the vector $x = \sum_{i=1}^na_i(e_{2i} - e_{2i-2})$ in $J$. Then, by \eqref{jamesboundedlycompletenorm} it follows that $1\leqslant\|x\|\leqslant \sqrt{2}$ and an easy calculation yields $x^*(x) = 1$. In conclusion, $1/\sqrt{2}\leqslant \|x^*\|\leqslant 1$. Therefore, $(e_{2n}^*)_n$ is equivalent to the unit vector basis of $\ell_2(\N)$ which is absurd.
\end{proof}

\end{document}